\setlist[enumerate]{label=(\roman*)}
\algrenewcommand\algorithmicrequire{\textbf{Input:}}
\algrenewcommand\algorithmicensure{\textbf{Output:}}
\preto\section{%
  \ifnum\value{section}=0\addtocontents{toc}{\vskip11pt}\fi
}
\pretocmd{\chapter}{\addtocontents{toc}{\protect\addvspace{15\p@}}}{}{}
\pretocmd{\section}{\addtocontents{toc}{\protect\addvspace{2.5\p@}}}{}{}
\newcounter{mycounter}[chapter]
\numberwithin{mycounter}{chapter}
\numberwithin{equation}{chapter}
\theoremstyle{plain}
\newtheorem{lemma}[mycounter]{Lemma}
\newtheorem{theorem}[mycounter]{Theorem}
\newtheorem{proposition}[mycounter]{Proposition}
\newtheorem{corollary}[mycounter]{Corollary}
\newtheorem{definition}[mycounter]{Definition}
\newtheorem{remark}[mycounter]{Remark}
\newtheorem{example}[mycounter]{Example}
\newtheorem{problem}[mycounter]{Problem}
\newtheorem{observation}[mycounter]{Observation}
\def\N			{\mathbb N}
\def\Z			{\mathbb Z}
\def\Q			{\mathbb Q}
\def\R			{\mathbb R}
\def\C			{\mathbb C}
\def\Sphere		{\mathbb S}
\def\A			{\mathcal A}
\def\E			{\mathcal E}
\def\Q			{\mathcal Q}
\def\T			{\mathcal T}
\def\Radon		{\mathcal R}
\def\Back		{\mathcal B}
\def\Fourier	{\mathcal F}
\def\Int		{\mathcal I}
\def\Fan		{\mathcal D}
\def\Xray		{\mathcal P}
\def\Xback		{\mathcal K}
\def\Ell		{\mathcal L}
\def\Cont		{\mathscr C}
\def\Test		{\mathscr D}
\def\L			{\mathrm L}
\def\H			{\mathrm H}
\def\Schwartz	{\mathcal S}
\def\X			{\mathcal X}
\def\Y			{\mathcal Y}
\def\D			{\mathrm D}
\def\d			{\mathrm d}
\def\e			{\mathrm e}
\def\i			{\mathrm i}
\def\bfn		{\mathbf n}
\def\bfx		{\mathbf x}
\def\bfy		{\mathbf y}
\def\loc		{\mathrm{loc}}
\def\FBP		{\mathrm{FBP}}
\DeclareMathOperator{\sinc}{sinc}
\DeclareMathOperator{\atan}{atan}
\DeclareMathOperator{\supp}{supp}
\DeclareMathOperator{\sgn}{sgn}
\DeclareMathOperator{\diam}{diam}
\DeclareMathOperator{\im}{im}
\DeclareMathOperator{\Id}{Id}
\DeclareMathOperator{\rank}{rank}
\DeclareMathOperator{\Length}{Length}
\DeclareMathOperator*{\argmin}{arg\,min}
\def\seq		{\subseteq}
\newcommand{\HRule}{\rule{.9\linewidth}{.6pt}}
\providecommand{\bigsqcap}{\mathop{\mathpalette\@updown\bigsqcup}}
\newcommand*{\@updown}[2]{\rotatebox[origin=c]{180}{$\m@th#1#2$}}
\newcommand*{\rect}[1]{\mathchoice{\textstyle\bigsqcap_{#1}\displaystyle}{\textstyle\bigsqcap_{#1}}{\scriptstyle\bigsqcap_{#1}}{\scriptscriptstyle\bigsqcap_{#1}}}
\newcommand{\tendsto}{\longrightarrow}
\begin{document}
%
%--- Page design
%---------------------------------------------------------------------------
\pagestyle{myheadings}
\renewcommand{\chaptermark}[1]{\markboth{\thechapter\ #1}{\thechapter\ #1}}
\renewcommand{\sectionmark}[1]{\markright{\thesection\ #1}}

\makeatletter
\let\ps@plain\ps@empty
\makeatother

\pagenumbering{Roman}

%###########################################################################
%### Title page
%###########################################################################--

\begin{titlepage}
\begin{center}

\vspace*{.1\textheight}
{\scshape\LARGE Lecture notes\par}

\vspace{1.5cm}

\HRule \\[0.5cm]
{\huge \bfseries Computer Tomography \par}\vspace{0.5cm}
\HRule \\[1.5cm]

\vfill

{\Large Matthias Beckmann} \\[0.5cm]
{\large \href{mailto:info@mbeckmann.de}{info@mbeckmann.de} \\ \url{https://www.mbeckmann.de/}}

\vspace*{.05\textheight}
\end{center}
\end{titlepage}

%--- Author information
%---------------------------------------------------------------------------
\newpage
\thispagestyle{empty}
\setcounter{page}{2}

\phantom{.}

\vfill

\begin{center}
--- December 04, 2023 ---
\end{center}

%###########################################################################
%### Front matter
%###########################################################################
\frontmatter

%--- Preface
%---------------------------------------------------------------------------
\chapter{Preface}
\markboth{Preface}{Preface}

The development of {\em computerized tomography} (CT) has revolutionized the field of diagnostic radiology and CT is by now one of the standard modalities in medical imaging.
Its goal consists in imaging the interior structure of a scanned object by measuring and processing the attenuation of X-rays along a large number of lines through the cross-sections of the object to be examined.
In this process, a fundamental feature of CT is the mathematical reconstruction of an image by the application of a suitable and sophisticated algorithm.

The impact of CT in diagnostic medicine has been revolutionary, since it has provided a non-invasive imaging modality and has enabled doctors to view internal organs with mould-breaking precision and safety for the patient.
Since the invention of the first CT scanner in the 1970s the number of CT scans for diagnostic purpose has been growing extensively.
In addition, there are numerous non-medical imaging applications which are also based on the methods of computerized tomography.
One example is non-destructive testing (NDT) in materials science, where we want to evaluate the properties of a material without causing damage.
Another application is electron microscopy, which is a typical example for an incomplete data problem, because only observations in a limited angular range are available.

Mathematically, an X-ray scan provides the line integral values of the object's attenuation function along lines in the plane.
Hence, the CT reconstruction problem requires the recovery of a bivariate function $f: \R^2 \to \R$ from the knowledge of its line integrals
\begin{equation*}
\Radon f(t,\theta) = \int_{\{x \cos(\theta) + y \sin(\theta) = t\}} f(x,y) \: \d (x,y)
\quad \mbox{ for } (t,\theta) \in \R \times [0,\pi).
\end{equation*}
The purely mathematical problem of reconstructing a function from its line integral values was first studied and analytically solved by the Austrian mathematician J. Radon in 1917 in his pioneering paper ``Über die Bestimmung von Funktionen durch ihre Integralwerte längs gewisser Mannigfaltigkeiten'', cf.~\cite{Radon1917}.
In that work, Radon derived an explicit inversion formula for the linear integral transform
\begin{equation*}
\Radon: f \longmapsto \Radon f
\end{equation*}
under the assumption that the data $\Radon f(t,\theta)$ is complete, i.e., available for all possible values $(t,\theta) \in \R \times [0,\pi)$.
In his honour, the operator $\Radon$ is now known as the {\em Radon transform} and the corresponding integral values are called {\em Radon data}.

Historically, the foundation of CT was laid in 1895 by the German physicist W. Röntgen, who discovered a new kind of radiation, which he called {\em X-radiation} to emphasize its unknown type.
Immediately after the discovery, X-rays have been used to image the interior of the human body.
In 1901 his achievements earned Röntgen the first Nobel Prize in Physics.
The two pioneering scientists who were primarily responsible for the development of CT in the 1960s and 1970s were A. Cormack and G. Hounsfield.
With their work, the hitherto purely mathematical problem of reconstructing a bivariate function from the knowledge of its Radon transform has finally become relevant for practical applications.
In~\cite{Cormack1963, Cormack1964}, Cormack developed mathematical algorithms to create an image from X-ray scans.
At about the same time, but working completely independently of Cormack, Hounsfield designed the first operational CT scanner as well as the first commercially available model, see~\cite{Hounsfield1973}.
In 1979 the Nobel Prize for Medicine and Physiology was jointly awarded to Cormack and Hounsfield for their fundamental achievements.

\bigbreak

This lecture gives an introduction to the mathematics of computer(ized) tomography (CT).
We will discuss the principle of X-ray tomography, the mathematical model for the measurement process, the underlying mathematical reconstruction problem and some classical reconstruction techniques.
Since tomography is a typical example of an inverse problem, we will also discuss the ill-posedness of the CT reconstruction problem.
The following topics will be addressed:
\begin{itemize}
\item Imaging principle of X-ray tomography
\item Radon transform and its properties
\item Filtered back projection
\item Ill-posedness of CT reconstruction problem
\item Reconstruction techniques
\end{itemize}
The lecture is mainly based on the following standard references (in alphabetical order):
\begin{itemize}
\item[\cite{Buzug2010}] T.~Buzug: {\it Computed Tomography}
\item[\cite{Epstein2008}] C.~Epstein: {\it Introduction to the Mathematics of Medical Imaging}
\item[\cite{Feeman2015}] T.~Feeman:  {\it The Mathematics of Medical Imaging}
\item[\cite{Iske2018}] A.~Iske: {\it Approximation Theory and Algorithms for Data Analysis}
\item[\cite{Kak2001}] A.~Kak, M.~Slaney: {\it Principles of Computerized Tomographic Imaging}
\item[\cite{Natterer2001}] F.~Natterer: {\it The Mathematics of Computerized Tomography}
\item[\cite{Natterer2001a}] F.~Natterer, F.~Wübbeling: {\it Mathematical Methods in Image Reconstruction}
\end{itemize}

%--- Table of contents
%---------------------------------------------------------------------------
\tableofcontents
\markboth{Contents}{Contents}

%###########################################################################
%### Main matter
%###########################################################################
\mainmatter

%--- Chapter 1
%---------------------------------------------------------------------------
\chapter{Introduction}

The term {\em computerized tomography} (CT) refers to the reconstruction of a bivariate function from its line integral values.
The expression 'tomography' is derived from the Ancient Greek words $\tau \acute{o} \mu o \varsigma$, which means 'slice', and $\gamma \rho \acute{\alpha} \varphi \omega$ meaning 'to write'.

One of the most prominent examples of X-ray tomography is still transmission CT in medical imaging and non-destructive testing.
Here, the aim consists in recovering the interior of an unknown two-dimensional object or cross-section of a three-dimensional object from measurements of one-dimensional X-ray projections.
In order to explore the two-dimensional structure of the object, the X-ray projections are taken from different views.
To this end, a source-detector pair is rotated around the object, see Figure~\ref{fig:CT_principle}, where the source emits X-ray beams of a given initial intensity and the detector measures the intensity of the beams after passing the object.

The imaging principle of X-ray tomography is now based on the fact that the X-ray beams are attenuated when passing matter.
The attenuation of the X-rays depends on the inner structure of the scanned medium and, thus, carries information about the interior of the unknown object.
When a single X-ray beam of known intensity travels along a straight line from source to detector, a fraction of the X-ray photons present in the beam is absorbed by the material and the remaining portion passes through.
The intensity of the beam, as it emerges from the medium, is measured at the detector by counting the arriving photons and the difference of the initial and the final intensities describes the ability of the material to absorb X-ray photons.
These measurements can be transformed into line integral values of the object's attenuation function to be recovered.

\begin{figure}[b]
\centering
\includegraphics[height=0.375\textwidth]{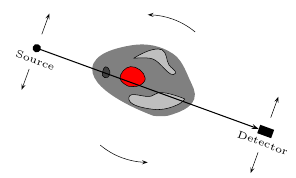}
\caption{Imaging principle of X-ray tomography. To explore the inner structure of an object, a source-detector pair is rotated around the object collecting X-ray projections at different views.}
\label{fig:CT_principle}
\end{figure}

In this chapter, we briefly explain the physical fundamentals of X-ray tomography, derive its mathematical model and give a short historical overview of the milestones in CT development.

\section{Physical principles}

X-ray radiation is generated by the deceleration of fast electrons entering a solid metal anode and consists of electromagnetic waves with wavelengths roughly between $10^{-8} \, \text{m}$ and $10^{-13} \, \text{m}$.
The electrons are emitted from a cathode filament, which is heated by applying the heating voltage $U_h$ to overcome the binding energy of the electrons to the metal of the filament.
After this {\em thermionic emission} the electrons are accelerated in the electric field between the cathode and the anode, which are contained in a vacuum tube called X-ray tube.
The electron velocity~$\nu$ depends on the acceleration voltage $U_a$ via the conservation law of energy
\begin{equation*}
e \, U_a = \frac{1}{2} \, m_e \, \nu^2,
\end{equation*}
where $e = 1.602 \cdot 10^{-19} \, \text{C}$ denotes the charge and $m_e = 9.109 \cdot 10^{-31} \, \text{kg}$ the mass of electrons.
In medical diagnostics $U_a$ is usually chosen between 25kV and 150kV, whereas in material testing it can reach up to 500kV.
A schematic drawing of an X-ray source can be seen in Figure~\ref{fig:X_ray_tube}.

\begin{figure}[b]
\centering
\includegraphics[width=0.925\textwidth]{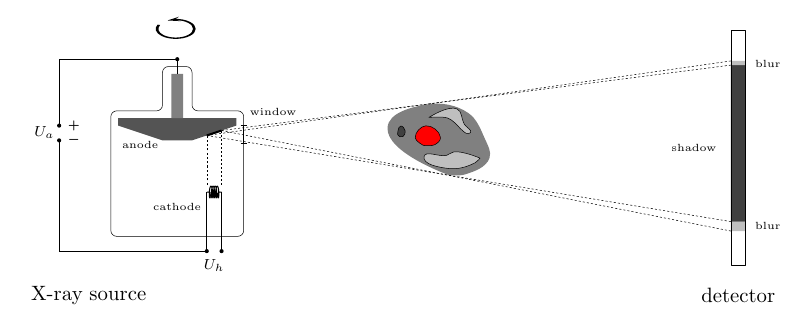}
\caption{Schematic drawing of an X-ray tube and the projection of an object on the detector.}
\label{fig:X_ray_tube}
\end{figure}

When the electrons reach the surface of the anode, they are abruptly stopped and electromagnetic waves are radiated, which leave the tube through a window.
Usually, several photons emerge through the deceleration of one single electron.
It can happen, however, that the entire energy of an electron is transferred into a single photon.
This determines the maximum energy of the X-ray radiation via
\begin{equation*}
E_{\text{max}} = h \,  \nu = e \, U_a
\end{equation*}
and the minimal wavelength of the X-ray spectrum via
\begin{equation*}
\lambda_{\text{min}} = \frac{h \, c}{e \, U_a},
\end{equation*}
where $h$ is Planck's constant and $c$ is the speed of light.
Thus, the range of wavelengths of the generated X-ray spectrum depends on the anode voltage $U_a$.
In contrast to this, the intensity of the X-ray spectrum or the number of photons is solely controlled by the anode current $I_a$.

The efficiency of the conversion from kinetic energy into X-ray radiation depends on the anode material, usually tungsten, and the acceleration voltage.
Most of the kinetic energy, approximately 99\%, is transferred into thermal energy so that the anode heats up and suffers from a serious heat problem.
Thus, rotating anode disks are used to distribute the thermal load over the entire anode.
The heat capacity $E_h$ of an X-ray tube depends on $U_a$, $I_a$ and time $t$ via
\begin{equation*}
E_h = U_a \cdot I_a \cdot t
\end{equation*}
and one has to balance the acceleration voltage, anode current and exposure time appropriately.

\bigbreak

The anode surface is angulated with respect to the electron beam and the target area of the electron beam on the anode is called {\em X-ray focus}.
The size of the electron beam can be controlled by manipulating the trajectories of the accelerated electrons with a focusing device called Wehnelt cylinder.
For a fixed electron focus the size of the X-ray focus can be controlled by varying the anode angle $\varphi$.
The larger $\varphi$, the larger the X-ray focus on which the thermal load is distributed, but the larger also the effective target area seen by the detector, see Figure~\ref{fig:X_ray_focus}.
This projection of the X-ray focus on the detector is called {\em optical focus} and its size determines the quality of the resulting image.
Ideally, the X-rays are created from a point source and an increase in source size results in penumbra regions on the detector and, thus, in a blurred image, as illustrated in Figure~\ref{fig:X_ray_tube}.
However, a very small anode angle is generally not desired because the probability of a backscattering of the electrons increases as the angle between the anode surface normal and the anode rotation axis decreases.

\begin{figure}[t]
\centering
\subfigure[$\varphi = 18.43^\circ$]{\includegraphics[width=0.45\textwidth]{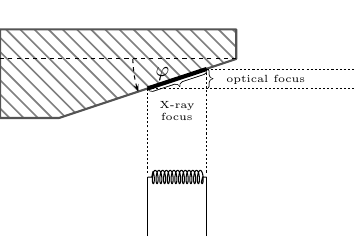}}
\hfil
\subfigure[$\varphi = 26.57^\circ$]{\includegraphics[width=0.45\textwidth]{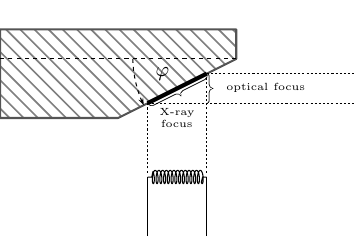}}
\caption{X-ray focus and optical focus depending on the anode angle $\varphi$.}
\label{fig:X_ray_focus}
\end{figure}

\smallskip

As noted before, the imaging principle of computerized tomography is based on the fact that X-ray beams are attenuated when passing through matter.
The attenuation of X-rays, however, is a complicated function of the wavelength.
In general, low-energy X-rays, i.e., radiation with a large wavelength, are more attenuated than high-energy X-rays.
This is the origin of what is called {\em beam hardening}, which produces artefacts in the reconstructed image, because it is standard to consider the X-rays to be monochromatic, i.e., radiation of only one wavelength, within the mathematical reconstruction process.
In practice, the beam-hardening artefacts can be reduced by pre-hardening of the radiation with a metal filter in the X-ray tube.

The X-ray attenuation is due to absorption and scattering of the X-ray photons.
Generally, photon interaction with matter can result in a change in incoming photon energy, photon number or travelling direction.
The most important mechanisms of photon-matter-interaction are\\[-2.5ex]
\begin{tabular}{p{0.325\linewidth}p{0.3\linewidth}p{0.275\linewidth}}
\begin{itemize}
\item photoelectric absorption,
\end{itemize}
&
\begin{itemize}
\item Compton scattering,
\end{itemize}
&
\begin{itemize}
\item pair production.
\end{itemize}
\end{tabular}\\[-1.5ex]
As a consequence of such kind of interactions a photon that interacts with matter is completely removed from the incident beam, in other words an X-ray beam that crosses a medium is not degraded in energy but only attenuated in intensity.
More precisely, the intensity $I(x)$ of a beam travelling through an homogeneous medium with thickness $x$ decreases exponential via
\begin{equation*}
I(x) = I_0 \, \e^{-\mu x},
\end{equation*}
where $I_0$ is the initial intensity of the beam and $\mu$ is the attenuation coefficient of the medium.

In greater detail, photoelectric absorption can happen if the energy of the photon is just larger than the binding energy of the atomic electrons.
In this case, the entire energy of the photon is absorbed by an electron and the photon vanishes.
This process results in a ionization followed by a subsequent ejection of the electron from the atom, where the energy of the liberated electron is the difference between the photon energy and the binding energy of the electron.

\bigbreak

Compton scattering can happen if the energy of the X-ray photon is high with respect to the binding energy of the orbital electrons.
In this case, this latter energy can be ignored and the electrons can be treated as essentially free.
If a photon collides with a quasi-free electron, the photon passes a part of its energy to the electron and the electron is scattered away.
Thus, the scattered photon has a lower energy when it continues to travel through the matter and may be further attenuated until it is completely absorbed if the material thickness is sufficiently large.

For very high photon energy the pair production effect starts to be relevant.
In this process the photon interacts with an orbital electron or atomic nucleus so that the photon disappears and its entire energy is absorbed to produce a positron-electron pair.

\smallskip

After passing the matter the remaining photons in the X-ray beam are detected via their interaction with the detector material.
A commonly used detector architecture consists of a scintillation layer followed by a photon detector.
The short-wave X-ray radiation is converted into long-wave light inside the scintillation medium, which is subsequently detected by a photodiode.

\medskip

In conclusion, the attenuation of X-rays through matter is well understood and the gray values of CT images are a direct physical representation of the material properties.
High values of the attenuation coefficient, describing the extent to which the X-ray intensity is reduced, are due to a high density or a high atomic number of the material the X-rays travelled through.
Traditionally, the quantitative {\em Hounsfield unit} (HU) scale is used to measure X-ray attenuation, which compares the attenuation coefficient of the medium passed through with that of water.

\section{Mathematical model}

Let $f:\R^2 \to \R$ denote the spatially varying {\em attenuation function} of the scanned object, which describes the proportion of X-ray photons being absorbed by the materials travelled through and whose support is assumed to lie in a convex set $\Omega \seq \R^2$, see Figure~\ref{fig:X_ray_beam}.
Thus, $f$ is a characteristic quantity of the scanned body and, mathematically, the goal of computerized tomography is to recover the function~$f$ from the given X-ray scans.

\begin{figure}[b]
\centering
\includegraphics[height=0.375\textwidth]{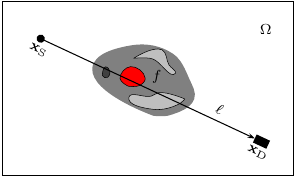}
\caption{A single X-ray beam travels along straight line $\ell$ from source $\bfx_{\text{S}}$ to detector $\bfx_{\text{D}}$.}
\label{fig:X_ray_beam}
\end{figure}

In the following we assume that the X-ray beam is monochromatic, i.e., each X-ray photon has the same energy and the beam propagates with a constant wavelength.
Furthermore, we assume that the beam has zero width and is neither refracted nor diffracted.
Hence, the X-ray beam travels along a straight line $\ell \subset \R^2$ through the object under investigation.
If $I(\bfx)$ denotes the intensity of the X-ray beam at position $\bfx \in \Omega$, the intensity loss $\Delta I(\bfx)$ in a small segment of $\ell$ of length $\Delta \bfx$ is approximately given by
\begin{equation*}
\Delta I(\bfx) \approx -f(\bfx) \cdot I(\bfx) \cdot \Delta \bfx.
\end{equation*}
Taking the limit $\Delta \bfx \to 0$ leads us to the ordinary differential equation
\begin{equation}
\frac{\d}{\d \bfx}I(\bfx) = -f(\bfx) \cdot I(\bfx),
\label{eq:Beer_law}
\end{equation}
which is known as {\em Beer's law}.
If $I_{\text{S}}$ denotes the initial intensity of the X-ray at the source $\bfx_{\text{S}}$ and $I_{\text{D}}$ its final intensity at the detector $\bfx_{\text{D}}$, integrating~\eqref{eq:Beer_law} from source to detector yields
\begin{equation*}
\int_{I_{\text{S}}}^{I_{\text{D}}} \frac{1}{I} \: \d I = -\int_\ell f(\bfx) \: \d \bfx,
\end{equation*}
which implies that
\begin{equation}
\log\biggl(\frac{I_{\text{S}}}{I_{\text{D}}}\biggr) = \int_\ell f(\bfx) \: \d \bfx.
\label{eq:reconstruction_problem}
\end{equation}
Since the values $I_{\text{S}}$ and $I_{\text{D}}$ are measured during the scanning process, the X-ray data provides us with the line integral values of the attenuation function $f$ along the straight line $\ell \subset \R^2$.
Recovering the absorption coefficient $f$ or, equivalently, reconstructing an object from X-ray projections therefore reduces to solving the integral equation~\eqref{eq:reconstruction_problem}.

\smallskip

Consequently, the basic CT reconstruction problem can be formulated as follows.

\begin{problem}[Basic reconstruction problem]
Reconstruct a bivariate function $f \equiv f(x,y)$ on its domain $\Omega \seq \R^2$ from given line integral values
\begin{equation*}
\int_{\ell} f(x,y) \: \d (x,y)
\end{equation*}
along all straight lines $\ell \subset \R^2$ passing through $\Omega$.
\end{problem}

We wish to remark that in reality the attenuation function $f$ not only depends on the spatial variable $\bfx$ but also on the energy $E$ of the X-rays.
Assuming $I_{\text{S}}(E)$ to be the energy spectrum of the X-ray source, equation~\eqref{eq:reconstruction_problem} has to be replaced by
\begin{equation}
I_{\text{D}} = \int_0^{E_{\text{max}}} I_{\text{S}}(E) \, \e^{-\int_\ell f(\bfx,E) \: \d \bfx} \: \d E.
\label{eq:reconstruction_problem_extended}
\end{equation}
Using~\eqref{eq:reconstruction_problem} instead of~\eqref{eq:reconstruction_problem_extended} results in beam-hardening artefacts in the reconstructed image, as explained before.
In practice, however, only equation~\eqref{eq:reconstruction_problem} is used.
Therefore, this is the basic mathematical model for the CT imaging process that will be used throughout the lecture.

\section{Historical milestones}

Historically, the foundation of computerized tomography (CT) was laid in 1895 by the German physicist Wilhelm Röntgen, who discovered X-rays and their capability of penetrating matter.

The purely mathematical problem of reconstructing a function from its line integral values was analytically solved by the Austrian mathematician Johann Radon in 1917 in his pioneering paper ``Über die Bestimmung von Funktionen durch ihre Integralwerte längs gewisser Mannigfaltigkeiten''.
However, due to the complexity and depth of his mathematical publication the consequences of his ground-breaking results were revealed only very late in the mid-20th century.
Moreover, the paper was published in German, which hindered a wide distribution of the work.

The two pioneering scientists who were primarily responsible for the development of CT in the 1960s and 1970s were Allan Cormack and Godfrey Hounsfield.
Cormack developed mathematical algorithms to create an image from X-ray scans and Hounsfield designed the first operational CT scanner as well as the first commercially available model.
In 1979 the Nobel Prize for Medicine was jointly awarded to Cormack and Hounsfield for their fundamental achievements.
In Table~\ref{tab:CT_milestones}, some of the historical milestones and development steps of CT are summarized.

\begin{figure}[t]
\centering
\subfigure[Pencil beam geometry]{\label{fig:pencil_beam}\includegraphics[width=0.25\textwidth]{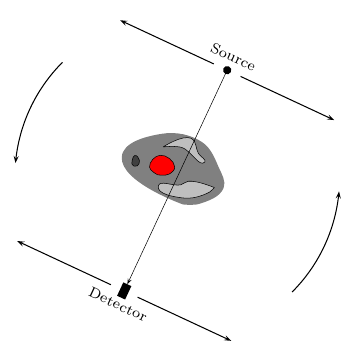}}
\hfil
\subfigure[Parallel beam geometry]{\label{fig:parallel_beam}\includegraphics[width=0.25\textwidth]{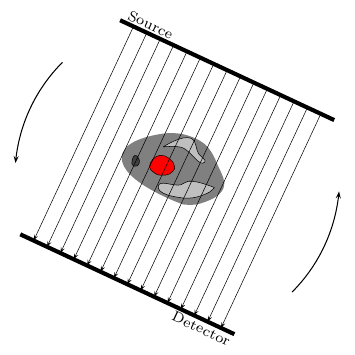}}
\hfil
\subfigure[Fan beam geometry]{\label{fig:fan_beam}\includegraphics[width=0.25\textwidth]{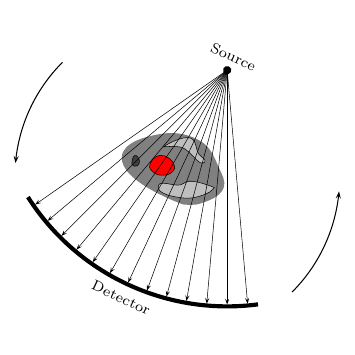}}
\caption{Different scanning geometries.}
\end{figure}

\bigbreak

Retrospectively, four distinct generations of classical CT scanners have emerged, which are still developed further.
Their classification relates to both the way that X-ray tube and detector are constructed and the source-detector pair moves around the object under investigation.

The first generation involves an X-ray tube that emits a single X-ray beam.
A single detector is situated opposite of the X-ray source and this configuration is moved linearly as well as rotated through different angles, cf.\ Figure~\ref{fig:pencil_beam}.
This scanning geometry is called {\em pencil beam geometry}.
The time-consuming linear movement of the source-detector pair can be avoided by using an X-ray source emitting parallel X-ray beams and a detector array, see Figure~\ref{fig:parallel_beam}.
In this {\em parallel beam geometry}, the source-detector pair only needs to be rotated around the object.

The second generation involves an X-ray source that emits a narrow fan of X-ray beams and a short detector array consisting of multiple elements.
However, since the aperture angle of the fan beam is small, the source-detector pair still needs to be translated linearly before it is rotated.
Despite the need for linear displacement, the acquisition time was reduced as the detector array could measure several intensities simultaneous.

The third generation has a substantially larger angle of the X-ray fan and a longer detector array such that the entire measuring field can be X-rayed simultaneously for one single projection angle, see Figure~\ref{fig:fan_beam}.
In this way, the need for linear displacement of the source-detector system is removed and the acquisition time is drastically reduced.
This scanning geometry is referred to as {\em fan beam geometry}.

The fourth generation does not differ from the third generation with respect to the X-ray tube.
A fan beam source rotates around the object without linear displacement.
The difference is a closed stationary detector ring and the source path can be either inside or outside the ring.

\begin{table}[b]
\caption{Summary of historical CT milestones.}
\centering
\begin{tabular}{lp{0.899\linewidth}}
Year & Milestone \\[0.25ex]
\hline \\[-1.5ex]
1895 & Röntgen discovers a new kind of radiation, which he named X-rays. \\[0.5ex]
1901 & Röntgen receives the first Nobel Prize for Physics. \\[0.5ex]
1917 & Radon publishes his epochal work on the solution to the problem of reconstructing a function from its line integral values. \\[0.5ex]
1963 & Cormack contributes the first mathematical algorithms for tomographic reconstruction from X-ray scans. \\[0.5ex]
1969 & Hounsfield shows proof of the principle with the first CT scanner based on a radioactive source at the EMI research laboratories. \\[0.5ex]
1972 & Hounsfield and Ambrose publish the first clinical scans with an EMI head scanner. \\[0.5ex]
1975 & Hounsfield and Ambrose set-up the first whole body scanner with a fan beam system. \\[0.5ex]
1979 & Cormack and Hounsfield receive the Nobel Prize for Medicine.
\end{tabular}
\label{tab:CT_milestones}
\end{table}

%--- Chapter 2
%---------------------------------------------------------------------------
\chapter{The Radon transform}

In this chapter we introduce the Radon transform~$\Radon$ used in the mathematical model for the measurement process in computerized tomography and study some of its fundamental properties.

\section{Lines in the plane}

We have seen that the CT scanner provides line integral values of the function to be reconstructed along straight lines in the plane.
In order to derive a reconstruction theory, we now introduce a suitable parametrization of these lines, as illustrated in Figure~\ref{fig:straight_line}.

\begin{definition}[Straight line in the plane]
For any pair $(t,\theta) \in \R^2$ of parameters, we define $\ell_{t,\theta} \subset \R^2$ to be the unique {\em straight line} that passes through the point $\bfx_{t,\theta} = (t \cos(\theta),t \sin(\theta)) \in \R^2$ and is perpendicular to the unit vector $\bfn_\theta = (\cos(\theta),\sin(\theta)) \in \R^2$.
\end{definition}

\begin{figure}[b]
\centering
\includegraphics[height=5.75cm,keepaspectratio]{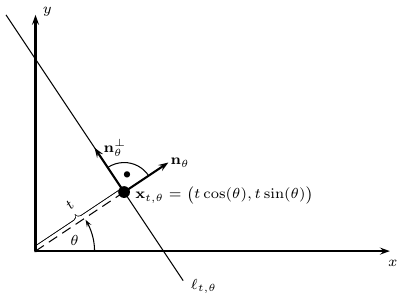}
\caption{Representation of the straight line $\ell_{t,\theta} \subset \R^2$ with parameters $(t,\theta) \in \R \times [0,\pi)$.}
\label{fig:straight_line}
\end{figure}

We remark that every straight line in the plane can be characterized as $\ell_{t,\theta}$ for some real numbers $t, \theta \in \R$.
Furthermore, for any pair of parameters $(t,\theta) \in \R^2$ holds that
\begin{equation}
\ell_{t,\theta+2\pi} = \ell_{t,\theta}
\quad \mbox{ and } \quad
\ell_{t,\theta+\pi} = \ell_{-t,\theta}.
\label{eq:lines_periodicity}
\end{equation}
Thus, each line in the plane has infinitely many different representations of the form $\ell_{t,\theta}$.
To enforce uniqueness, we restrict the parameter pair $(t,\theta)$ to $t \in \R$ and $\theta \in [0,\pi)$ so that the set
\begin{equation*}
\bigl\{ \ell_{t,\theta} \bigm| t \in \R \mbox{, } \theta \in [0,\pi) \bigr\}
\end{equation*}
contains exactly all straight lines in the plane.
For fixed $t \in \R$ and $\theta \in [0,\pi)$ we now parametrize the line $\ell_{t,\theta}$ as follows.

\begin{remark}[Parametrization of the straight line $\ell_{t,\theta}$]
For any fixed angle $\theta \in [0,\pi)$, we have
\begin{equation*}
\bfn_\theta = (\cos(\theta),\sin(\theta)) \perp (-\sin(\theta),\cos(\theta)) = \bfn^{\perp}_\theta.
\end{equation*}
Therefore, every point $(x,y)$ on the line $\ell_{t,\theta}$ is of the form
\begin{equation}
(x,y) = t \cdot \textbf{n}_\theta + s \cdot \textbf{n}^{\perp}_\theta = (t \cos(\theta) - s \sin(\theta),t \sin(\theta) + s \cos(\theta))
\label{eq:line_parametrization}
\end{equation}
for some $s \in \R$ and, consequently,
\begin{equation*}
\ell_{t,\theta} = \bigl\{ (t \cos(\theta) - s \sin(\theta),t \sin(\theta) + s \cos(\theta)) \bigm| s \in \R \bigr\}.
\end{equation*}
\end{remark}

We close this paragraph on lines in the plane by noting that for fixed angle $\theta \in [0,\pi)$ there is a unique straight line $\ell_{t,\theta}$ passing through a given point $(x,y) \in \R^2$.

\begin{remark}
\label{rem:points_on_lines}
For any point $(x,y) \in \R^2$ in the plane and a given angle $\theta \in [0,\pi)$ there exists a unique value for $t \in \R$ such that the straight line $\ell_{t,\theta}$ passes through $(x,y)$.
The unique $t,s \in \R$ satisfying
\begin{equation*}
x = t \cos(\theta) - s \sin(\theta)
\quad \mbox{ and } \quad
y = t \sin(\theta) + s \cos(\theta)
\end{equation*}
are given by
\begin{equation*}
t = x \cos(\theta) + y \sin(\theta)
\quad \mbox{ and } \quad
s = -x \sin(\theta) + y \cos(\theta)
\end{equation*}
and meet the relation
\begin{equation*}
x^2 + y^2 = t^2 + s^2.
\end{equation*}
\end{remark}

With the described parametrization of straight lines in the plane, we finally reformulate the basic CT reconstruction problem as follows.

\begin{problem}[Basic CT reconstruction problem]
On given domain $\Omega \seq \R^2$, reconstruct a bivariate function $f \equiv f(x,y)$ from its line integral values
\begin{equation}
\int_{\ell_{t,\theta}} f(x,y) \: \d (x,y),
\label{eq:line_integral}
\end{equation}
which are assumed to be given for all parameters $(t,\theta) \in \R \times [0,\pi)$.
\end{problem}

\section{Definition and basic properties}

We have seen that the scanning process in CT provides us with the line integrals of a bivariate function $f$ along straight lines in the plane, from which we have to reconstruct $f$.
For $f \in \L^1(\R^2)$ and any parameter pair $(t,\theta) \in \R^2$ the line integral in~\eqref{eq:line_integral} can be rewritten as
\begin{equation*}
\int_{\ell_{t,\theta}} f(x,y) \: \d (x,y) = \int_{\R} f(t \cos(\theta) - s \sin(\theta),t \sin(\theta) + s \cos(\theta)) \: \d s,
\end{equation*}
where we used the parametrization~\eqref{eq:line_parametrization} with the arclength element
\begin{equation*}
\|(\dot{x}(s),\dot{y}(s))\|_{\R^2} \: \d s = \sqrt{(-\sin(\theta))^2 + (\cos(\theta))^2} \: \d s = \d s.
\end{equation*}
The integral transform which maps a function $f:\R^2 \to \R$ into the set of its line integrals was firstly investigated by the Austrian mathematician Johann Radon in 1917.
In his honour, this integral transform is called {\em Radon transform} and, thus, the CT reconstruction problem simply seeks for the inversion of the Radon transform on $\R^2$.

\begin{definition}[Radon transform]
\label{def:Radon_transform}
Let $f \in \L^1(\R^2)$ be a bivariate function in Cartesian coordinates.
Then, the {\em Radon transform} $\Radon f$ of $f$ at the point $(t,\theta) \in \R^2$ is defined as
\begin{equation}
\Radon f(t,\theta) = \int_{\ell_{t,\theta}} f(x,y) \: \d (x,y) = \int_{\R} f(t \cos(\theta) - s \sin(\theta),t \sin(\theta) + s \cos(\theta)) \: \d s.
\label{eq:Radon_transform}
\end{equation}
The graph of the Radon transform $\Radon f$ in the $(t,\theta)$-plane is called {\em sinogram} of $f$.
\end{definition}

Note that the Radon transform $\Radon f$ of a bivariate function $f$ is $2\pi$-periodic in the angular variable $\theta$ and, due to relation~\eqref{eq:lines_periodicity}, it suffices to consider $\Radon f$ only on the domain $\R \times [0,\pi)$.
Further, an application of Fubini's theorem shows that the Radon transform $\Radon f$ of $f \in \L^1(\R^2)$ is well-defined almost everywhere on $\R \times [0,\pi)$ in the sense that for any angle $\theta \in [0,\pi)$ the integral in~\eqref{eq:Radon_transform} is well-defined for almost all $t \in \R$.

\begin{proposition}[Well-definedness of the Radon transform]
For $f \in \L^1(\R^2)$, the Radon transform $\Radon f$ is well-defined almost everywhere on $\R \times [0,\pi)$.
\end{proposition}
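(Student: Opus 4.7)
The plan is to fix an angle $\theta \in [0,\pi)$ and reduce the inner integral defining $\Radon f(t,\theta)$ to an application of Fubini's theorem in $\R^2$, exploiting the fact that the change of variables underlying the parametrization of $\ell_{t,\theta}$ is a rotation and hence measure-preserving.

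Concretely, I would proceed as follows. First, introduce, for the fixed angle $\theta$, the map $\Phi_\theta:\R^2 \to \R^2$ defined by $\Phi_\theta(t,s) = (t\cos(\theta) - s\sin(\theta),\, t\sin(\theta) + s\cos(\theta))$. This is a rigid rotation (of angle $\theta$) on $\R^2$, so it is a measure-preserving $C^\infty$-diffeomorphism with Jacobian determinant equal to $1$. Next, consider the nonnegative measurable function $g_\theta(t,s) = |f(\Phi_\theta(t,s))|$. By the change-of-variables formula applied to $|f|$,
\begin{equation*}
\int_{\R^2} g_\theta(t,s) \: \d(t,s) = \int_{\R^2} |f(\Phi_\theta(t,s))| \: \d(t,s) = \int_{\R^2} |f(x,y)| \: \d(x,y) = \|f\|_{\L^1(\R^2)} < \infty,
\end{equation*}
so $g_\theta \in \L^1(\R^2)$.

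Now apply Fubini's theorem (or Tonelli to $g_\theta \ge 0$) to conclude that the partial integral
\begin{equation*}
t \longmapsto \int_{\R} |f(t\cos(\theta) - s\sin(\theta),\, t\sin(\theta) + s\cos(\theta))| \: \d s
\end{equation*}
is finite for almost every $t \in \R$. For any such $t$, the integrand $s \mapsto f(t\cos(\theta) - s\sin(\theta), t\sin(\theta) + s\cos(\theta))$ lies in $\L^1(\R)$, so the integral on the right-hand side of~\eqref{eq:Radon_transform} exists as a Lebesgue integral and $\Radon f(t,\theta)$ is well-defined.

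Since this argument works for every individual $\theta \in [0,\pi)$, we obtain that for each fixed $\theta$ the set of exceptional $t$ has one-dimensional Lebesgue measure zero, which is precisely the stated well-definedness almost everywhere on $\R \times [0,\pi)$. There is no real obstacle here; the only point requiring a little care is noting that the parametrization of $\ell_{t,\theta}$ corresponds to a measure-preserving rotation of $\R^2$, so that $\L^1$-integrability transfers from $f$ to $g_\theta$ and Fubini becomes applicable without further estimates.
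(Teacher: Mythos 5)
Your proof is correct and follows essentially the same route as the paper: you fix $\theta$, observe that the parametrization $(t,s) \mapsto (t\cos(\theta)-s\sin(\theta),\,t\sin(\theta)+s\cos(\theta))$ is a measure-preserving rotation so that $f$ composed with it remains in $\L^1(\R^2)$, and then invoke Fubini--Tonelli to get integrability of the partial map $s \mapsto f(\cdot)$ for almost every $t$. No gaps; this matches the paper's argument step for step.
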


\begin{proof}
Let $f \in \L^1(\R^2)$. We define the auxiliary function $H: \R^2 \times [0,\pi) \to \R^2$ as
\begin{equation*}
H(s,t,\theta) = (t \cos(\theta) - s \sin(\theta),t \sin(\theta) + s \cos(\theta))
\quad \mbox{ for } (s,t,\theta) \in \R^2 \times [0,\pi).
\end{equation*}
Then, $H$ is continuous on $\R^2 \times [0,\pi)$ and we have
\begin{equation*}
\Radon f(t,\theta) = \int_\R f(H(s,t,\theta)) \: \d s
\quad \forall \, (t,\theta) \in \R \times [0,\pi).
\end{equation*}
Note that, for fixed angle $\theta \in [0,\pi)$, the mapping
\begin{equation*}
(s,t) \longmapsto (t \cos(\theta) - s \sin(\theta),t \sin(\theta) + s \cos(\theta))
\end{equation*}
is a rotation in $\R^2$ and, thus, measure preserving.
In particular, the mapping
\begin{equation*}
(s,t) \longmapsto f(H(s,t,\theta))
\end{equation*}
is in $\L^1(\R^2)$ for all $\theta \in [0,\pi)$, since $f \in \L^1(\R^2)$ by assumption.
Therefore, Fubini's theorem shows that the partial mapping
\begin{equation*}
s \longmapsto f(H(s,t,\theta))
\end{equation*}
is integrable on $\R$ for almost all $t \in \R$ and all $\theta \in [0,\pi)$.
This implies that
\begin{equation*}
(t,\theta) \longmapsto \int_\R f(H(s,t,\theta)) \: \d s = \Radon f(t,\theta)
\end{equation*}
is well-defined for almost all $t \in \R$ and all $\theta \in [0,\pi)$.
\end{proof}

\begin{remark}
The Radon transform $\Radon f$ of a function $f$ is defined everywhere on $\R \times [0,\pi)$ if the integral of $f$ along the line $\ell_{t,\theta}$ exists for all pairs $(t,\theta) \in \R \times [0,\pi)$.
To ensure this, it suffices to require that $f$ is continuous on $\R^2$ and has compact support.
\end{remark}

With Definition~\ref{def:Radon_transform} the classical CT reconstruction problem can be reformulated as follows.

\begin{problem}[Reconstruction problem]
\label{prob:CT_reconstruction_problem_Radon}
For given domain $\Omega \seq \R^2$, reconstruct a bivariate function $f \in \L^1(\Omega)$ from its Radon data
\begin{equation*}
\bigl\{ \Radon f(t,\theta) \bigm| t \in \R,\, \theta \in [0,\pi) \bigr\}.
\end{equation*}
\end{problem}

Therefore, the CT reconstruction problem seeks for the inversion of the Radon transform $\Radon$.
However, before explaining the inversion of $\Radon$, we collect some of its fundamental properties.

\begin{observation}
The Radon transform $\Radon$ maps a bivariate function $f \equiv f(x,y)$ in Cartesian coordinates onto a bivariate function $\Radon f \equiv \Radon f(t,\theta)$ in polar coordinates.
\end{observation}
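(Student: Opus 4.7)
The plan is to unpack Definition~\ref{def:Radon_transform} and identify the geometric roles of the variables involved. Since the observation is essentially a statement about the domains and variables of the input and output of $\Radon$, the proof amounts to verifying that the parameters $(t,\theta)$ really do carry polar-coordinate significance.

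First I would note that by the definition of the Radon transform, the input $f$ is a function of the pair $(x,y) \in \R^2$, i.e.\ of standard Cartesian coordinates on the plane, and its image $\Radon f$ is, by construction, a function of the parameter pair $(t,\theta) \in \R \times [0,\pi)$ indexing the family of straight lines $\ell_{t,\theta}$. Next I would exhibit the polar nature of $(t,\theta)$: by Definition of $\ell_{t,\theta}$, the parameter $\theta$ is the angle the unit normal $\bfn_\theta=(\cos\theta,\sin\theta)$ makes with the $x$-axis, and $t$ is the signed distance from the origin to $\ell_{t,\theta}$ measured along $\bfn_\theta$. The foot of the perpendicular from the origin onto $\ell_{t,\theta}$ is
\begin{equation*}
\bfx_{t,\theta} = (t\cos\theta,\, t\sin\theta),
\end{equation*}
which is precisely the Cartesian realization of the point with polar coordinates $(t,\theta)$. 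Hence the arguments of $\Radon f$ are literally the polar coordinates of the foot of perpendicular that uniquely determines the line along which integration is performed.

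The only subtlety — and the closest thing to an obstacle — is that polar coordinates are customarily taken with non-negative radius and angle in $[0,2\pi)$, whereas here $t$ ranges over all of $\R$ and $\theta$ over $[0,\pi)$. I would reconcile this using the symmetry relation $\ell_{t,\theta+\pi} = \ell_{-t,\theta}$ from~\eqref{eq:lines_periodicity}: a negative radius is absorbed into the angular reflection $\theta \mapsto \theta+\pi$, so the domain $\R \times [0,\pi)$ is just a convenient half-plane normalization of a genuinely polar parametrization. With that remark the observation is established.
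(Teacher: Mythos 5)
Your argument is correct: the paper states this as an unproved observation that follows immediately from Definition~\ref{def:Radon_transform}, and your unpacking of the parametrization --- identifying $\bfx_{t,\theta}=(t\cos\theta,t\sin\theta)$ as the polar realization of $(t,\theta)$ and using~\eqref{eq:lines_periodicity} to handle negative $t$ --- is exactly the intended reading. Nothing is missing.
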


The first two important properties of the Radon transform are its linearity and evenness.

\begin{proposition}[Linearity of the Radon transform]
The Radon transform $\Radon$ is a positive linear integral operator, i.e., for all $\alpha,\beta \in \R$ and $f,g \in \L^1(\R^2)$ we have
\begin{equation*}
\Radon (\alpha f + \beta g) = \alpha \, \Radon f + \beta \, \Radon g
\end{equation*}
and
\begin{equation*}
f \geq 0
\quad \implies \quad
\Radon f \geq 0.
\end{equation*}
Furthermore, the Radon transform $\Radon f$ of $f \in \L^1(\R^2)$ satisfies the evenness condition
\begin{equation*}
\Radon f(-t,\theta+\pi) = \Radon f(t,\theta)
\quad \forall \, (t,\theta) \in \R^2.
\end{equation*}
\end{proposition}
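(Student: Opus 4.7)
The plan is to dispatch all three claims directly from the integral representation~\eqref{eq:Radon_transform}, exploiting elementary properties of the Lebesgue integral on $\R$ together with the identity $\ell_{t,\theta+\pi} = \ell_{-t,\theta}$ from~\eqref{eq:lines_periodicity}.

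For linearity, I would fix $(t,\theta) \in \R^2$ and simply use that for $f,g \in \L^1(\R^2)$ the partial integrands $s \mapsto f(t\cos(\theta)-s\sin(\theta),t\sin(\theta)+s\cos(\theta))$ and the analogue for $g$ are each integrable in $s$ for almost every $t$ (by the preceding proposition). Linearity of the Lebesgue integral then immediately gives $\Radon(\alpha f + \beta g)(t,\theta) = \alpha\,\Radon f(t,\theta) + \beta\,\Radon g(t,\theta)$. Positivity is even more immediate: if $f \geq 0$ pointwise, the integrand in~\eqref{eq:Radon_transform} is nonnegative for every $s$, hence $\Radon f(t,\theta) \geq 0$.

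For the evenness condition I see two equally short routes, and I would present the geometric one. By~\eqref{eq:lines_periodicity} we have $\ell_{-t,\theta+\pi} = \ell_{-(-t),\theta} = \ell_{t,\theta}$, so the two line integrals defining $\Radon f(-t,\theta+\pi)$ and $\Radon f(t,\theta)$ are integrals of the same function over the same line, hence equal. As a sanity check I would also sketch the analytic verification: expanding $\cos(\theta+\pi) = -\cos(\theta)$ and $\sin(\theta+\pi) = -\sin(\theta)$ in the second form of~\eqref{eq:Radon_transform} converts $\Radon f(-t,\theta+\pi)$ into $\int_\R f(t\cos(\theta)+s\sin(\theta),\,t\sin(\theta)-s\cos(\theta))\,\d s$, and the substitution $u = -s$ turns this into $\Radon f(t,\theta)$.

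The only subtlety worth mentioning is that the Radon transform is only defined almost everywhere; the identities therefore have to be read in the almost-everywhere sense, but since the preceding proposition guarantees well-definedness on a full-measure subset of $\R \times [0,\pi)$ (and by $2\pi$-periodicity on all of $\R^2$ up to a null set), no real obstacle arises. In short, there is no hard step here; the proof is a bookkeeping exercise in the defining formula together with the single geometric identity $\ell_{t,\theta+\pi} = \ell_{-t,\theta}$.
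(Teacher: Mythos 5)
Your proof is correct and follows exactly the route the paper takes: the paper's own proof is the one-line remark that the statement follows from the positivity and linearity of the integral together with the identity $\ell_{t,\theta+\pi} = \ell_{-t,\theta}$ from~\eqref{eq:lines_periodicity}, and your write-up simply fills in those details (including the correct substitution $t \mapsto -t$ in that identity for the evenness claim).
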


\begin{proof}
The statement follows from the positivity and linearity of the integral and~\eqref{eq:lines_periodicity}.
\end{proof}

The next theorem is concerned with the continuity of the Radon transform as a mapping
\begin{equation*}
\Radon: \L^1(\R^2) \to \L^1(\R \times [0,\pi)).
\end{equation*}

\begin{proposition}[Continuity of the Radon transform]
\label{prop:Radon_L1_norm}
The Radon transform $\Radon$ is a continuous operator from $\L^1(\R^2)$ to $\L^1(\R \times [0,\pi))$.
In particular, for $f \in \L^1(\R^2)$ we have
\begin{equation*}
\|\Radon f\|_{\L^1(\R \times [0,\pi))} \leq \pi \, \|f\|_{\L^1(\R^2)}.
\end{equation*}
\end{proposition}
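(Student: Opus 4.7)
The plan is to bound the $\L^1$ norm of $\Radon f$ by unfolding it as a triple integral, swapping the order using Tonelli's theorem, and then applying the rotation invariance of Lebesgue measure on $\R^2$ — which is essentially the same trick already used in the proof of well-definedness above.

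First I would estimate pointwise, using the triangle inequality for the integral defining $\Radon f$, to get
\begin{equation*}
|\Radon f(t,\theta)| \leq \int_\R |f(H(s,t,\theta))| \: \d s
\qquad \text{for a.e. } (t,\theta) \in \R \times [0,\pi),
\end{equation*}
with $H$ as in the previous proof. Integrating over $(t,\theta) \in \R \times [0,\pi)$ and invoking Tonelli (the integrand is nonnegative, so we do not need any integrability a priori) gives
\begin{equation*}
\|\Radon f\|_{\L^1(\R \times [0,\pi))} \leq \int_0^\pi \int_\R \int_\R |f(H(s,t,\theta))| \: \d s \: \d t \: \d \theta.
\end{equation*}

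Next, for each fixed $\theta \in [0,\pi)$, the map $(s,t) \mapsto H(s,t,\theta)$ is a rotation of $\R^2$ and hence preserves two-dimensional Lebesgue measure, so the inner double integral equals $\|f\|_{\L^1(\R^2)}$, independent of $\theta$. Integrating the resulting constant over $\theta \in [0,\pi)$ produces the factor $\pi$ and yields the claimed estimate
\begin{equation*}
\|\Radon f\|_{\L^1(\R \times [0,\pi))} \leq \pi \, \|f\|_{\L^1(\R^2)}.
\end{equation*}

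Continuity of $\Radon$ as an operator $\L^1(\R^2) \to \L^1(\R \times [0,\pi))$ is then immediate: by linearity the estimate applied to $f - g$ yields $\|\Radon f - \Radon g\|_{\L^1} \leq \pi \, \|f - g\|_{\L^1}$, so $\Radon$ is even Lipschitz continuous with constant $\pi$. There is no real obstacle here; the only point requiring care is to apply Tonelli (not Fubini) at the switching step, since at that stage we have not yet verified that the triple integral is finite — the argument is designed precisely to derive this finiteness as a by-product.
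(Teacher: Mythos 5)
Your argument is correct and follows essentially the same route as the paper: bound $|\Radon f(t,\theta)|$ by the integral of $|f|$ along the line, integrate in $(t,\theta)$, and use that for fixed $\theta$ the parametrization map is a rotation of $\R^2$ (equivalently, the change of variables with unit Jacobian used in the paper) to identify the inner double integral with $\|f\|_{\L^1(\R^2)}$, the $\theta$-integration then supplying the factor $\pi$. Your explicit remark that Tonelli rather than Fubini is needed at the interchange step is a fair point of care, but it is the same proof.
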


\begin{proof}
Let $f \in \L^1(\R^2)$ and $\theta \in [0 ,\pi)$ be fixed. By the definition of the Radon transform $\Radon$ follows that
\begin{align*}
\int_\R |\Radon f(t,\theta)| \: \d t & = \int_\R \bigg|\int_{\ell_{t,\theta}} f(x,y) \: \d (x,y)\bigg| \: \d t \\
& \leq \int_\R \int_\R |f(t \cos(\theta) - s \sin(\theta),t \sin(\theta) + s \cos(\theta))| \: \d s \, \d t.
\end{align*}
Applying the transformation
\begin{equation*}
x = t \cos(\theta) - s \sin(\theta)
\quad \mbox{ and } \quad
y = t \sin(\theta) + s \cos(\theta),
\end{equation*}
we get $\d x \, \d y = \d t \, \d s$ and, consequently,
\begin{equation*}
\int_\R |\Radon f(t,\theta)| \: \d t \leq \int_\R \int_\R |f(x,y)| \: \d x \, \d y = \|f\|_{\L^1(\R^2)}.
\end{equation*}
This gives
\begin{equation*}
\|\Radon f\|_{\L^1(\R \times [0,\pi))} = \int_0^\pi \int_\R |\Radon f(t,\theta)| \: \d t \, \d \theta \leq \|f\|_{\L^1(\R^2)} \, \int_0^\pi 1 \: \d \theta = \pi \, \|f\|_{\L^1(\R^2)}.
\end{equation*}
Hence, $\Radon$ is a continuous operator from $\L^1(\R^2)$ to $\L^1(\R \times [0,\pi))$.
\end{proof}

A special situation occurs if the function $f: \R^2 \to \R$ is radially symmetric, i.e., $f$ is invariant under rotations.
This means that there exists a function $f_0: \R \to \R$ with
\begin{equation*}
f(x,y) = f_0(\|(x,y)\|_{\R^2})
\quad \forall \, (x,y) \in \R^2.
\end{equation*}

\begin{proposition}
If the function $f \in \L^1(\R^2)$ is radially symmetric, its Radon transform $\Radon f$ depends only on the modulus $|t|$ of the radial variable $t \in \R$, but not on the angle $\theta \in [0,\pi)$.
\end{proposition}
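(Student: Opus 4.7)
The plan is to insert the radial symmetry assumption directly into the integral representation of the Radon transform given in Definition~\ref{def:Radon_transform} and then exploit the identity $x^2+y^2 = t^2+s^2$ from Remark~\ref{rem:points_on_lines} to see that the resulting integrand no longer depends on~$\theta$.

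First, I would write $f(x,y) = f_0(\|(x,y)\|_{\R^2})$ and substitute the parametrization $(x,y) = (t\cos(\theta)-s\sin(\theta),\,t\sin(\theta)+s\cos(\theta))$ of $\ell_{t,\theta}$ into the line integral. A direct computation (or Remark~\ref{rem:points_on_lines}) yields
\begin{equation*}
(t\cos(\theta)-s\sin(\theta))^2 + (t\sin(\theta)+s\cos(\theta))^2 = t^2+s^2,
\end{equation*}
so that
\begin{equation*}
\Radon f(t,\theta) = \int_\R f_0\bigl(\sqrt{t^2+s^2}\bigr) \: \d s.
\end{equation*}
The right-hand side is manifestly independent of $\theta$, which proves the first part of the claim.

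Second, since the integrand depends on $t$ only through $t^2$, the resulting function of $t$ is even, i.e.\ it depends only on $|t|$. This can also be read off from the evenness condition $\Radon f(-t,\theta+\pi) = \Radon f(t,\theta)$ combined with $\theta$-independence.

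I do not expect any real obstacle here: the entire proof reduces to one change of variables and the observation that the Euclidean norm is invariant under the rotation sending $(s,t)$ to the corresponding point on $\ell_{t,\theta}$. The only minor point worth mentioning is that, for fixed $\theta$, the map $(s,t)\mapsto (t\cos\theta-s\sin\theta,\,t\sin\theta+s\cos\theta)$ is an isometry of $\R^2$, which is exactly why $\|(x,y)\|_{\R^2} = \sqrt{t^2+s^2}$ and why the integral over $s$ converges for almost every $t$ by the well-definedness result established earlier.
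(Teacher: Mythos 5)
Your proof is correct and follows essentially the same route as the paper: both substitute the parametrization of $\ell_{t,\theta}$ into the definition of $\Radon f$, use the identity $(t\cos(\theta)-s\sin(\theta))^2+(t\sin(\theta)+s\cos(\theta))^2=t^2+s^2$, and read off that the resulting integral $\int_\R f_0(t^2+s^2)\:\d s$ depends neither on $\theta$ nor on the sign of $t$. The only cosmetic difference is that the paper absorbs the square root into $f_0$ by writing $f(x,y)=f_0(x^2+y^2)$.
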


\begin{proof}
Let $f \in \L^1(\R^2)$ be radially symmetric.
Thus, there exists a function $f_0:\R \to \R$ with
\begin{equation*}
f(x,y) = f_0(x^2+y^2)
\quad \forall \, (x,y) \in \R^2.
\end{equation*}
With this, for all $(t,\theta) \in \R \times [0,\pi)$ follows that
\begin{align*}
\Radon f(t,\theta) & = \int_{\ell_{t,\theta}} f(x,y) \: \d (x,y) = \int_{\R} f(t \cos(\theta) - s \sin(\theta),t \sin(\theta) + s \cos(\theta)) \: \d s \\
& = \int_{\R} f_0((t \cos(\theta) - s \sin(\theta))^2 + (t \sin(\theta) + s \cos(\theta))^2) \: \d s \\
& = \int_{\R} f_0(t^2+s^2) \: \d s.
\end{align*}
This shows that $\Radon f(t,\theta)$ is independent of the angular variable $\theta \in [0,\pi)$ and only depends on the absolute value $|t|$ of the radial variable $t \in \R$.
\end{proof}

Another important property of $\Radon$ is that the compact support of a function $f$ carries over to its Radon transform $\Radon f$.

\begin{proposition}
Let $f \in \L^1(\R^2)$ have compact support, i.e., there exists an $R > 0$ such that
\begin{equation*}
f(x,y) = 0
\quad \forall \, (x,y) \in \R^2: ~ x^2 + y^2 > R^2.
\end{equation*}
Then, $\Radon f$ has compact support as well with
\begin{equation*}
\Radon f(t,\theta) = 0
\quad \forall \, |t| > R, ~ \theta \in [0,\pi).
\end{equation*}
\end{proposition}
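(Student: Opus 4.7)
The plan is to exploit the parametrization of $\ell_{t,\theta}$ from equation~\eqref{eq:line_parametrization} together with the identity $x^2 + y^2 = t^2 + s^2$ noted in Remark~\ref{rem:points_on_lines}. The key geometric observation is that $|t|$ is precisely the distance from the origin to the line $\ell_{t,\theta}$, so if $|t| > R$ then the entire line lies outside the disc of radius $R$ on which $f$ is supported, forcing the integrand in~\eqref{eq:Radon_transform} to vanish identically.

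Concretely, I would fix $(t,\theta) \in \R \times [0,\pi)$ with $|t| > R$ and consider an arbitrary point
\begin{equation*}
(x,y) = (t\cos(\theta) - s\sin(\theta),\, t\sin(\theta) + s\cos(\theta)) \in \ell_{t,\theta}
\end{equation*}
for some $s \in \R$. A direct expansion (or invoking Remark~\ref{rem:points_on_lines}) yields
\begin{equation*}
x^2 + y^2 = t^2 + s^2 \geq t^2 > R^2,
\end{equation*}
so the compact support assumption forces $f(x,y) = 0$. Since $s \in \R$ was arbitrary, the partial mapping $s \longmapsto f(t\cos(\theta) - s\sin(\theta),\, t\sin(\theta) + s\cos(\theta))$ is identically zero, and plugging this into the defining formula~\eqref{eq:Radon_transform} of $\Radon f$ gives $\Radon f(t,\theta) = 0$.

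There is no real obstacle here; the argument is essentially a one-line geometric observation dressed up with the parametrization. The only thing worth emphasizing is why the identity $x^2 + y^2 = t^2 + s^2$ holds, which is why I would cite Remark~\ref{rem:points_on_lines} rather than recompute it (alternatively, one could simply expand the squares and use $\sin^2 + \cos^2 = 1$, with the cross terms in $ts$ cancelling). The conclusion about compact support of $\Radon f$ as a function on $\R \times [0,\pi)$ follows since $\supp(\Radon f) \seq [-R,R] \times [0,\pi)$.
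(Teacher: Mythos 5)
Your proposal is correct and follows essentially the same route as the paper's proof: the paper also fixes $|t| > R$ and observes that $f(t\cos(\theta) - s\sin(\theta),\, t\sin(\theta) + s\cos(\theta)) = 0$ for all $s \in \R$, hence $\Radon f(t,\theta) = 0$. You merely make explicit the identity $x^2 + y^2 = t^2 + s^2$ that the paper leaves implicit, which is a reasonable addition.
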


\begin{proof}
For each fixed $\theta \in [0,\pi)$ and $t \in \R$ with $|t| > R$, we have
\begin{equation*}
f(t \cos(\theta) - s \sin(\theta),t \sin(\theta) + s \cos(\theta)) = 0
\quad \forall \, s \in \R.
\end{equation*}
This implies that
\begin{equation*}
\Radon f(t,\theta) = 0
\quad \forall \, (t,\theta) \in \R \times [0,\pi): ~ |t| > R
\end{equation*}
and, hence, $\Radon f$ has compact support as well.
\end{proof}

We now come to the first basic analytical example.
To this end, let $\rect{R}: \R \to \R$ denote the characteristic function of the interval $[-R,R]$, i.e.,
\begin{equation*}
\rect{R}(S) = \chi_{[-R,R]}(S)
= \begin{cases}
1 & \text{for } |S| \leq R \\
0 & \text{for } |S| > R.
\end{cases}
\end{equation*}

\begin{example}
\label{ex:Radon_ball}
Consider the characteristic function $\chi_{B_R(0)}$ of the ball $B_R(0) \subset \R^2$ of radius $R > 0$ around $0$, i.e.,
\begin{equation*}
\chi_{B_R(0)}(x,y) = \begin{cases}
1 & \text{for } x^2 + y^2 \leq R^2 \\
0 & \text{for } x^2 + y^2 > R^2.
\end{cases}
\end{equation*}
For all $(t,\theta) \in \R \times [0,\pi)$ we then have
\begin{equation*}
\chi_{B_R(0)}(t \cos(\theta) - s \sin(\theta),t \sin(\theta) + s \cos(\theta)) = \begin{cases}
1 & \text{for } t^2 + s^2 \leq R^2 \\
0 & \text{for } t^2 + s^2 > R^2
\end{cases}
\end{equation*}
and, thus, for the Radon transform of $\chi_{B_R(0)}$ follows that
\begin{align*}
\Radon \chi_{B_R(0)}(t,\theta) & = \int_\R \chi_{B_R(0)}(t \cos(\theta) - s \sin(\theta),t \sin(\theta) + s \cos(\theta)) \: \d s \\
& = \begin{cases}
2 \sqrt{R^2 - t^2} & \text{for } |t| \leq R \\
0 & \text{for } |t| > R
\end{cases} = 2 \sqrt{R^2 - t^2} \, \rect{R}(t).
\end{align*}
\end{example}

To calculate the Radon transform of more involved examples, we need some additional basic properties of the Radon transform, namely its shift, scaling and rotation property.

\bigbreak

We start with studying the effect of shifting the argument in the target function $f$.

\begin{proposition}[Shift property of the Radon transform]
Let $f \equiv f(x,y)$ be a bivariate function with Radon transform $\Radon f \equiv \Radon f(t,\theta)$.
For a given vector $c = (c_x,c_y) \in \R^2$ we define the shifted function $f_c$ via
\begin{equation*}
f_c(x,y) = f(x-c_x,y-c_y)
\quad \mbox{ for } (x,y) \in \R^2.
\end{equation*}
Then, the Radon transform $\Radon f_c$ of $f_c$ is given by
\begin{equation*}
\Radon f_c(t,\theta) = \Radon f(t - c_x \cos(\theta) - c_y \sin(\theta),\theta)
\quad \forall \, (t,\theta) \in \R \times [0,\pi).
\end{equation*}
\end{proposition}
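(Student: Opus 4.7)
The plan is to start from the definition of the Radon transform applied to $f_c$, perform a translation in the inner $s$-variable, and recognise the resulting integral as the Radon transform of $f$ evaluated at a shifted radial argument. First I would write, using Definition~\ref{def:Radon_transform} together with the definition of $f_c$,
\[
\Radon f_c(t,\theta) = \int_\R f\bigl(t\cos(\theta) - s\sin(\theta) - c_x,\, t\sin(\theta) + s\cos(\theta) - c_y\bigr) \: \d s
\]
for every $(t,\theta) \in \R \times [0,\pi)$.

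Next I would seek to rewrite the two arguments of $f$ in the form $t'\cos(\theta) - s'\sin(\theta)$ and $t'\sin(\theta) + s'\cos(\theta)$ for suitable new parameters $t',s'$. Remark~\ref{rem:points_on_lines} tells me exactly how to invert such a relation: I project the point $(t\cos(\theta) - s\sin(\theta) - c_x,\, t\sin(\theta) + s\cos(\theta) - c_y)$ onto the directions $\bfn_\theta$ and $\bfn_\theta^\perp$. A short trigonometric simplification using $\cos^2(\theta) + \sin^2(\theta) = 1$ yields
\[
t' = t - c_x\cos(\theta) - c_y\sin(\theta), \qquad s' = s + c_x\sin(\theta) - c_y\cos(\theta).
\]
Since the map $s \mapsto s'$ is a pure translation of $\R$, its Jacobian is one and the integration range $\R$ is preserved, so I may substitute directly without introducing any additional factor.

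After the substitution, the integrand becomes $f(t'\cos(\theta) - s'\sin(\theta),\, t'\sin(\theta) + s'\cos(\theta))$, and the integral over $s' \in \R$ is by definition precisely $\Radon f(t',\theta)$, which gives the claimed identity. The calculation is essentially routine; the only mild obstacle is checking that the shift in $s$ decouples cleanly from the shift in $t$, which is exactly what the orthogonality of $\bfn_\theta$ and $\bfn_\theta^\perp$ ensures. A geometric sanity check confirms the formula: translating $f$ by $c$ is equivalent to translating the line $\ell_{t,\theta}$ by $-c$, producing a line perpendicular to the same $\bfn_\theta$ but at signed distance $t - c_x\cos(\theta) - c_y\sin(\theta)$ from the origin, in agreement with the asserted shift.
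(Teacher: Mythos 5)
Your proof is correct and follows essentially the same route as the paper's: both start from the definition of $\Radon f_c$, introduce the substitution $\tau = t - c_x\cos(\theta) - c_y\sin(\theta)$, $\sigma = s + c_x\sin(\theta) - c_y\cos(\theta)$, and recognise the resulting integral as $\Radon f(\tau,\theta)$. The only cosmetic difference is that you obtain the new parameters by projecting onto $\bfn_\theta$ and $\bfn_\theta^\perp$ via Remark~\ref{rem:points_on_lines}, whereas the paper verifies the same identities by direct algebraic expansion.
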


\begin{proof}
For fixed $(t,\theta) \in \R \times [0,\pi)$, the definition of the Radon transform $\Radon$ yields
\begin{align*}
\Radon f_c(t,\theta) & = \int_\R f_c(t \cos(\theta) - s \sin(\theta),t \sin(\theta) + s \cos(\theta)) \: \d s \\
& = \int_\R f(t \cos(\theta) - s \sin(\theta) - c_x,t \sin(\theta) + s \cos(\theta) - c_y) \: \d s,
\end{align*}
where
\begin{align*}
t \cos(\theta) - s \sin(\theta) - c_x & = (t - c_x \cos(\theta)) \cos(\theta) - (s + c_x \sin(\theta)) \sin(\theta) \\
& = (t - c_x \cos(\theta) - c_y \sin(\theta)) \cos(\theta) - (s + c_x \sin(\theta) - c_y \cos(\theta)) \sin(\theta)
\end{align*}
and
\begin{align*}
t \sin(\theta) + s \cos(\theta) - c_y & = (t - c_y \sin(\theta)) \sin(\theta) + (s - c_y \cos(\theta)) \cos(\theta) \\
& = (t - c_x \cos(\theta) - c_y \sin(\theta)) \sin(\theta) + (s + c_x \sin(\theta) - c_y \cos(\theta)) \cos(\theta).
\end{align*}
Consequently, by substituting
\begin{equation*}
\tau = t - c_x \cos(\theta) - c_y \sin(\theta)
\quad \mbox{ and } \quad
\sigma = s + c_x \sin(\theta) - c_y \cos(\theta),
\end{equation*}
we obtain $\d \sigma = \d s$ and can conclude that
\begin{align*}
\Radon f_c(t,\theta) & = \int_\R f(\tau \cos(\theta) - \sigma \sin(\theta),\tau \sin(\theta) + \sigma \cos(\theta)) \: \d \sigma = \Radon f(\tau,\theta) \\
& = \Radon f(t - c_x \cos(\theta) - c_y \sin(\theta),\theta),
\end{align*}
which completes the proof.
\end{proof}

We continue with the effect of scaling the argument in the target function $f$.

\begin{proposition}[Scaling property of the Radon transform]
Let $f \equiv f(x,y)$ be a bivariate function with Radon transform $\Radon f \equiv \Radon f(t,\theta)$.
For given positive constants $a,b > 0$ we define the scaled function $f_{a,b}$ via
\begin{equation*}
f_{a,b}(x,y) = f\Big(\frac{x}{a},\frac{y}{b}\Big)
\quad \mbox{ for } (x,y) \in \R^2.
\end{equation*}
Then, the Radon transform $\Radon f_{a,b}$ of $f_{a,b}$ is given by
\begin{equation*}
\Radon f_{a,b}(t,\theta) = \frac{a b}{c_{a,b}(\theta)} \, \Radon f\bigg(\frac{t}{c_{a,b}(\theta)}, \atan\Big(\frac{b}{a} \tan(\theta)\Big)\bigg)
\quad \forall \, (t,\theta) \in \R \times [0,\pi),
\end{equation*}
where
\begin{equation*}
c_{a,b}(\theta) = \sqrt{a^2 \cos^2(\theta) + b^2 \sin^2(\theta)} > 0
\end{equation*}
and
\begin{equation*}
\atan\Big(\frac{b}{a} \tan(\theta)\Big) = \begin{cases}
\arctan\big(\frac{b}{a} \tan(\theta)\big) & \text{for } \sin(\theta) \cos(\theta) > 0 \\
0 & \text{for } \sin(\theta) = 0 \\
\frac{\pi}{2} & \text{for } \cos(\theta) = 0 \\
\arctan\big(\frac{b}{a} \tan(\theta)\big) + \pi & \text{for } \sin(\theta) \cos(\theta) < 0
\end{cases} \in [0,\pi).
\end{equation*}
\end{proposition}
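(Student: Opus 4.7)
The plan is to exploit a geometric interpretation of the anisotropic scaling $\Phi: (x,y) \mapsto (x/a, y/b)$: each straight line $\ell_{t,\theta}$ in the plane is mapped by $\Phi$ onto another straight line, and integrating $f_{a,b}$ along $\ell_{t,\theta}$ is, up to an arclength factor, the same as integrating $f$ along that image line. Accordingly I would (i) identify the image line in Hessian normal form to read off the new parameters $(t', \theta')$, (ii) perform the induced one-variable change of variables inside the defining integral to obtain the prefactor $ab/c_{a,b}(\theta)$, and (iii) translate the identity $\tan\theta' = (b/a)\tan\theta$ into the piecewise $\atan$ formula.

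For step (i), the line $\ell_{t,\theta}$ is $\{(x,y) : x\cos(\theta) + y\sin(\theta) = t\}$, so its image under $\Phi$ consists of those points $(u,v) = (x/a, y/b)$ with $a u \cos(\theta) + b v \sin(\theta) = t$. Dividing this equation by the strictly positive constant $c_{a,b}(\theta) = \sqrt{a^2\cos^2(\theta) + b^2\sin^2(\theta)}$ puts it in Hessian normal form $u \cos(\theta') + v \sin(\theta') = t'$ with
\begin{equation*}
\cos(\theta') = \frac{a \cos(\theta)}{c_{a,b}(\theta)}, \qquad \sin(\theta') = \frac{b \sin(\theta)}{c_{a,b}(\theta)}, \qquad t' = \frac{t}{c_{a,b}(\theta)}.
\end{equation*}
Since $a,b > 0$, the signs of $\cos(\theta')$ and $\sin(\theta')$ match those of $\cos(\theta)$ and $\sin(\theta)$, so $\theta'$ lies in the same half-range $[0,\pi)$ as $\theta$. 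Distinguishing the cases $\sin(\theta) = 0$, $\cos(\theta) = 0$, $\sin(\theta)\cos(\theta) > 0$, and $\sin(\theta)\cos(\theta) < 0$ then translates the relation $\tan(\theta') = (b/a)\tan(\theta)$ into exactly the piecewise $\atan$ formula in the statement.

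For step (ii), starting from
\begin{equation*}
\Radon f_{a,b}(t,\theta) = \int_\R f\!\left(\frac{t\cos(\theta) - s\sin(\theta)}{a},\, \frac{t\sin(\theta) + s\cos(\theta)}{b}\right) \d s,
\end{equation*}
the curve $s \mapsto \Phi(t\cos(\theta) - s\sin(\theta),\, t\sin(\theta) + s\cos(\theta))$ has velocity $(-\sin(\theta)/a,\, \cos(\theta)/b)$ of constant norm $c_{a,b}(\theta)/(ab)$. Thus the affine substitution $\sigma = (c_{a,b}(\theta)/(ab))\, s + \sigma_0$, with $\sigma_0$ chosen so that the two arguments of $f$ become $t'\cos(\theta') - \sigma\sin(\theta')$ and $t'\sin(\theta') + \sigma\cos(\theta')$, yields $\d s = (ab/c_{a,b}(\theta))\, \d\sigma$ and recasts the integral as $(ab/c_{a,b}(\theta))\,\Radon f(t',\theta')$.

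The main obstacle is to verify that a \emph{single} choice of $\sigma(s)$ simultaneously realises both coordinate identities; equivalently, one must check that the affine reparametrisation is consistent with the Hessian normal form derived in step (i). After expanding, this consistency reduces to the algebraic identity $a^2\cos^2(\theta) + b^2\sin^2(\theta) = c_{a,b}(\theta)^2$, which holds by definition, so no genuine difficulty arises. The remaining work is the quadrant bookkeeping needed to express $\theta'$ through $\atan$, which is routine but notationally the most tedious part.
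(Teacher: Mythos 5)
Your proposal is correct and follows essentially the same route as the paper: both identify the new angle via $\cos(\vartheta) = a\cos(\theta)/c_{a,b}(\theta)$, $\sin(\vartheta) = b\sin(\theta)/c_{a,b}(\theta)$ together with $\tau = t/c_{a,b}(\theta)$, and both perform the affine substitution $\sigma = \bigl(c_{a,b}(\theta)/(ab)\bigr)s + \sigma_0$ with Jacobian $ab/c_{a,b}(\theta)$. The only difference is presentational: you derive the substitution from the geometric picture of the image line in Hessian normal form, whereas the paper verifies the two coordinate identities by direct algebraic expansion --- and the consistency issue you flag as the main obstacle is in fact automatic, since the image point lies on $\ell_{t',\theta'}$ and the decomposition $t'\,\bfn_{\theta'} + \sigma\,\bfn_{\theta'}^{\perp}$ of a point on that line is unique.
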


\begin{proof}
For fixed $(t,\theta) \in \R \times [0,\pi)$, the definition of the Radon transform yields
\begin{align*}
\Radon f_{a,b}(t,\theta) & = \int_\R f_{a,b}(t \cos(\theta) - s \sin(\theta),t \sin(\theta) + s \cos(\theta)) \: \d s \\
& = \int_\R f\Big(\frac{t}{a} \cos(\theta) - \frac{s}{a} \sin(\theta),\frac{t}{b} \sin(\theta) + \frac{s}{b} \cos(\theta)\Big) \: \d s.
\end{align*}
By considering the modified angle
\begin{equation*}
\vartheta = \atan\Big(\frac{b}{a} \tan(\theta)\Big) \in [0,\pi),
\end{equation*}
we have
\begin{equation*}
\cos(\vartheta) = \frac{a \cos(\theta)}{c_{a,b}(\theta)}
\quad \mbox{ and } \quad
\sin(\vartheta) = \frac{b \sin(\theta)}{c_{a,b}(\theta)}.
\end{equation*}
Consequently, using the relation
\begin{equation*}
c_{a,b}^2(\theta) = a^2 \cos^2(\theta) + b^2 \sin^2(\theta)
\end{equation*}
we obtain
\begin{align*}
\frac{t}{a} \cos(\theta) - \frac{s}{a} \sin(\theta) & = \frac{t}{c_{a,b}(\theta)} \, \frac{a \cos(\theta)}{c_{a,b}(\theta)} - \frac{t}{a} \, \frac{a^2 - c_{a,b}^2(\theta)}{c_{a,b}^2(\theta)} \, \cos(\theta) - \frac{s}{a} \, \sin(\theta) \\
& = \frac{t}{c_{a,b}(\theta)} \, \cos(\vartheta) - \frac{c_{a,b}(\theta)}{a b} \, \bigg(s + t \, \frac{a^2 - b^2}{c_{a,b}^2(\theta)} \, \sin(\theta) \cos(\theta)\bigg) \, \sin(\vartheta)
\end{align*}
and
\begin{align*}
\frac{t}{b} \sin(\theta) + \frac{s}{b} \cos(\theta) & = \frac{t}{c_{a,b}(\theta)} \, \frac{b \sin(\theta)}{c_{a,b}(\theta)} - \frac{t}{b} \, \frac{b^2 - c_{a,b}^2(\theta)}{c_{a,b}^2(\theta)} \, \sin(\theta) + \frac{s}{b} \, \cos(\theta) \\
& = \frac{t}{c_{a,b}(\theta)} \, \sin(\vartheta) + \frac{c_{a,b}(\theta)}{a b} \, \bigg(s + t \, \frac{a^2 - b^2}{c_{a,b}^2(\theta)} \, \sin(\theta) \cos(\theta)\bigg) \, \cos(\vartheta).
\end{align*}
Therefore, by substituting
\begin{equation*}
\tau = \frac{t}{c_{a,b}(\theta)}
\quad \mbox{ and } \quad
\sigma = \frac{c_{a,b}(\theta)}{a b} \, \bigg(s + t \, \frac{a^2 - b^2}{c_{a,b}^2(\theta)} \, \sin(\theta) \cos(\theta)\bigg),
\end{equation*}
we have
\begin{equation*}
\d \sigma = \frac{c_{a,b}(\theta)}{a b} \: \d s
\end{equation*}
and can conclude that
\begin{align*}
\Radon f_c(t,\theta) & = \frac{a b}{c_{a,b}(\theta)} \int_\R f(\tau \cos(\vartheta) - \sigma \sin(\vartheta),\tau \sin(\vartheta) + \sigma \cos(\vartheta)) \: \d \sigma = \frac{a b}{c_{a,b}(\theta)} \, \Radon f(\tau,\vartheta) \\
& = \frac{a b}{c_{a,b}(\theta)} \, \Radon f\bigg(\frac{t}{c_{a,b}(\theta)}, \atan\Big(\frac{b}{a} \tan(\theta)\Big)\bigg),
\end{align*}
as stated.
\end{proof}

Finally, we come to the effect of rotating the argument in the target function $f$.

\begin{proposition}[Rotation property of the Radon transform]
Let $f \equiv f(x,y)$ be a bivariate function with Radon transform $\Radon f \equiv \Radon f(t,\theta)$.
For a given rotation angle $\varphi \in [-\pi,\pi)$ we define the rotated function $f_\varphi$ via
\begin{equation*}
f_\varphi(x,y) = f(x \cos(\varphi) + y \sin(\varphi),-x \sin(\varphi) + y \cos(\varphi))
\quad \mbox{ for } (x,y) \in \R^2.
\end{equation*}
Then, the Radon transform $\Radon f_\varphi$ of $f_\varphi$ is given by
\begin{equation*}
\Radon f_\varphi(t,\theta) = \Radon f(t,\theta - \varphi)
\quad \forall \, (t,\theta) \in \R \times [0,\pi).
\end{equation*}
\end{proposition}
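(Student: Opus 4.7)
The plan is to proceed directly from the definition of the Radon transform applied to $f_\varphi$ and show that the parametric argument collapses to that of the line $\ell_{t,\theta-\varphi}$ via the angle-subtraction formulas. Since the rotation affecting $f$ does not involve the integration variable $s$, no change of variables will be required in the $s$-integral, which is what makes this the simplest of the three transformation properties.

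First I would write
\begin{equation*}
\Radon f_\varphi(t,\theta) = \int_\R f_\varphi(t \cos(\theta) - s \sin(\theta),\, t \sin(\theta) + s \cos(\theta)) \: \d s
\end{equation*}
and insert the definition of $f_\varphi$. The integrand then becomes $f(X,Y)$, where $X$ and $Y$ are linear combinations of $t\cos(\theta)-s\sin(\theta)$ and $t\sin(\theta)+s\cos(\theta)$ with coefficients $\cos(\varphi),\sin(\varphi)$. Collecting the terms in $t$ and $s$ separately, I would then apply the identities
\begin{equation*}
\cos(\theta)\cos(\varphi)+\sin(\theta)\sin(\varphi) = \cos(\theta-\varphi),\qquad \sin(\theta)\cos(\varphi)-\cos(\theta)\sin(\varphi) = \sin(\theta-\varphi)
\end{equation*}
to obtain $X = t\cos(\theta-\varphi) - s\sin(\theta-\varphi)$ and $Y = t\sin(\theta-\varphi) + s\cos(\theta-\varphi)$. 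Comparing this with the parametrization~\eqref{eq:line_parametrization}, the integrand is precisely $f$ evaluated at the generic point on $\ell_{t,\theta-\varphi}$, and the integral over $s \in \R$ is therefore $\Radon f(t,\theta-\varphi)$ by definition.

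The only genuine subtlety is that $\theta - \varphi$ may fall outside the canonical range $[0,\pi)$ for $\varphi \in [-\pi,\pi)$; strictly speaking, $\Radon f$ was defined on $\R \times [0,\pi)$. I would address this in a single sentence by appealing to the $2\pi$-periodicity in the second argument and the evenness relation $\Radon f(-t,\theta+\pi)=\Radon f(t,\theta)$ already established, so that $\Radon f(t,\theta-\varphi)$ is unambiguously defined via its natural extension. The main obstacle is purely bookkeeping: keeping the four trigonometric summands organized and correctly signed when expanding $X$ and $Y$, which I expect to occupy the bulk of the written-out proof despite carrying no conceptual content.
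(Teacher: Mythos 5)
Your proposal is correct and follows the same route as the paper's proof: insert the definition of $f_\varphi$ into the Radon integral, regroup the coefficients of $t$ and $s$, and apply the angle-subtraction identities to identify the integrand with $f$ on the line $\ell_{t,\theta-\varphi}$, with no substitution in $s$ needed. Your side remark about $\theta-\varphi$ leaving $[0,\pi)$ is handled even more simply than you suggest, since Definition~\ref{def:Radon_transform} already defines $\Radon f(t,\theta)$ for all $(t,\theta)\in\R^2$.
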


\begin{proof}
For fixed $(t,\theta) \in \R \times [0,\pi)$, the definition of the Radon transform $\Radon$ yields
\begin{equation*}
\Radon f_\varphi(t,\theta) = \int_\R f_\varphi(t \cos(\theta) - s \sin(\theta),t \sin(\theta) + s \cos(\theta)) \: \d s = \int_\R f(x(s),y(s)) \: \d s
\end{equation*}
with
\begin{align*}
x(s) & = (t \cos(\theta) - s \sin(\theta)) \cos(\varphi) + (t \sin(\theta) + s \cos(\theta)) \sin(\varphi) \\
& = t (\cos(\theta) \cos(\varphi) + \sin(\theta) \sin(\varphi)) - s (\sin(\theta) \cos(\varphi) - \cos(\theta) \sin(\varphi)) \\
& = t \cos(\theta - \varphi) - s \sin(\theta - \varphi)
\end{align*}
and
\begin{align*}
y(s) & = -(t \cos(\theta) - s \sin(\theta)) \sin(\varphi) + (t \sin(\theta) + s \cos(\theta)) \cos(\varphi) \\
& = t (\sin(\theta) \cos(\varphi) - \cos(\theta) \sin(\varphi)) + s (\sin(\theta) \sin(\varphi) + \cos(\theta) \cos(\varphi)) \\
& = t \sin(\theta - \varphi) + s \cos(\theta - \varphi).
\end{align*}
Consequently, we obtain
\begin{align*}
\Radon f_\varphi(t,\theta) & = \int_\R f(t \cos(\theta - \varphi) - s \sin(\theta - \varphi),t \sin(\theta - \varphi) + s \cos(\theta - \varphi)) \: \d s \\
& = \Radon f(t ,\theta - \varphi)
\end{align*}
and the proof is complete.
\end{proof}

We are now prepared to deal with more evolved examples.
To this end, we first consider the characteristic function of an ellipse with the following parameters:
\begin{tabbing}
\hspace*{2.5cm}\=\hspace*{0.5cm}\=\hspace*{2.5cm}\=\hspace*{0.5cm}\=\hspace*{5cm}\=\hspace*{0.5cm}\= \kill
\> a: \> major axis, \> h: \> x-coordinate of the center, \> $\varphi$: \> rotation angle, \\
\> b: \> minor axis, \> k: \> y-coordinate of the center.
\end{tabbing}

\begin{example}
\label{ex:Radon_ellipse}
Let $f_e$ denote the characteristic function of an ellipse with parameters $a,b > 0$, $h,k \in \R$ and $\varphi \in [-\pi,\pi)$, i.e., for $(x,y) \in \R^2$ we have
\begin{equation*}
f_e(x,y) = \chi_{B_1(0)}\Big(\frac{(x-h)\cos(\varphi) + (y-k)\sin(\varphi)}{a},\frac{-(x-h)\sin(\varphi) + (y-k)\cos(\varphi)}{b}\Big).
\end{equation*}
For the sake of brevity, we define the functions
\begin{equation*}
g(x,y) = \chi_{B_1(0)}\Big(\frac{x}{a},\frac{y}{b}\Big)
\quad \mbox{ for } (x,y) \in \R^2
\end{equation*}
and
\begin{equation*}
g_\varphi(x,y) = g(x \cos(\varphi) + y \sin(\varphi),-x \sin(\varphi) + y \cos(\varphi))
\quad \mbox{ for } (x,y) \in \R^2
\end{equation*}
so that the function $f_e$ can be written as
\begin{equation*}
f_e(x,y) = g_\varphi(x-h,y-k)
\quad \forall \, (x,y) \in \R^2.
\end{equation*}
In Example~\ref{ex:Radon_ball} we have seen that the Radon transform of the characteristic function of the ball $B_R(0)$ is given by
\begin{equation*}
\Radon \chi_{B_R(0)}(t,\theta) = 2 \sqrt{R^2 - t^2} \, \rect{R}(t)
\quad \forall \, (t,\theta) \in \R \times [0,\pi).
\end{equation*}
Consequently, for $(t,\theta) \in \R \times [0,\pi)$, applying the scaling property of the Radon transform yields
\begin{align*}
\Radon g(t,\theta) & = \frac{a b}{\sqrt{a^2 \cos^2(\theta) + b^2 \sin^2(\theta)}} \, \Radon \chi_{B_1(0)}\bigg(\frac{t}{\sqrt{a^2 \cos^2(\theta) + b^2 \sin^2(\theta)}},\atan\Big(\frac{b}{a} \tan(\theta)\Big)\bigg) \\
& = \frac{2 a b}{a^2 \cos^2(\theta) + b^2 \sin^2(\theta)} \, \sqrt{a^2 \cos^2(\theta) + b^2 \sin^2(\theta) - t^2} \, \rect{\sqrt{a^2 \cos^2(\theta) + b^2 \sin^2(\theta)}}(t).
\end{align*}
Furthermore, by defining
\begin{equation*}
c_{a,b,\varphi}(\theta) = \sqrt{a^2 \cos^2(\theta - \varphi) + b^2 \sin^2(\theta - \varphi)}
\end{equation*}
the rotation property of the Radon transform shows that
\begin{equation*}
\Radon g_\varphi(t,\theta) = \Radon g(t, \theta - \varphi) = \frac{2 a b}{c_{a,b,\varphi}^2(\theta)} \, \sqrt{c_{a,b,\varphi}^2(\theta) - t^2} \, \rect{c_{a,b,\varphi}(\theta)}(t).
\end{equation*}
Consequently, by applying the shift property of the Radon transform and setting
\begin{equation*}
t_{h,k}(t,\theta) = t - h \cos(\theta) - k \sin(\theta),
\end{equation*}
for the Radon transform $\Radon f_e$ of $f_e$ follows that
\begin{equation*}
\Radon f_e(t,\theta) = \Radon g_\varphi(t - h \cos(\theta) - k \sin(\theta),\theta) = \frac{2 a b}{c_{a,b,\varphi}^2(\theta)} \, \sqrt{c_{a,b,\varphi}^2(\theta) - t_{h,k}^2(t,\theta)} \, \rect{c_{a,b,\varphi}(\theta)}(t_{h,k}(t,\theta)).
\end{equation*}
\end{example}

Based on Example~\ref{ex:Radon_ellipse} we finally consider two so called {\em mathematical phantoms}, i.e., test cases for numerical simulations, where the Radon transform can be computed analytically.
The first is the popular {\em Shepp-Logan phantom}, which was introduced in~\cite{Shepp1974} and is the superposition of ten ellipses to sketch a cross section of the human head, see Figure~\ref{fig:shepp-logan_phantom}.
Its sinogram, i.e., its Radon transform in the rectangular coordinate system $\R \times [0,\pi)$, is displayed in Figure~\ref{fig:shepp-logan_sinogram}.

\begin{figure}[t]
\centering
\subfigure[Phantom]{\label{fig:shepp-logan_phantom}\includegraphics[viewport=69 17 383 332, height=0.25\textwidth]{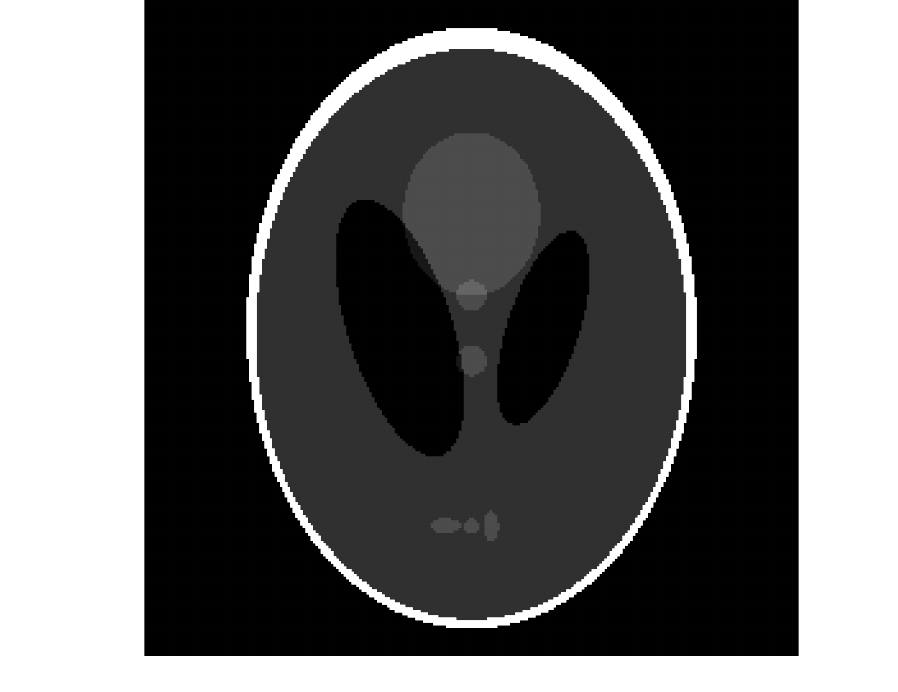}}
\hfil
\subfigure[Sinogram]{\label{fig:shepp-logan_sinogram}\includegraphics[viewport=69 17 383 332, height=0.25\textwidth]{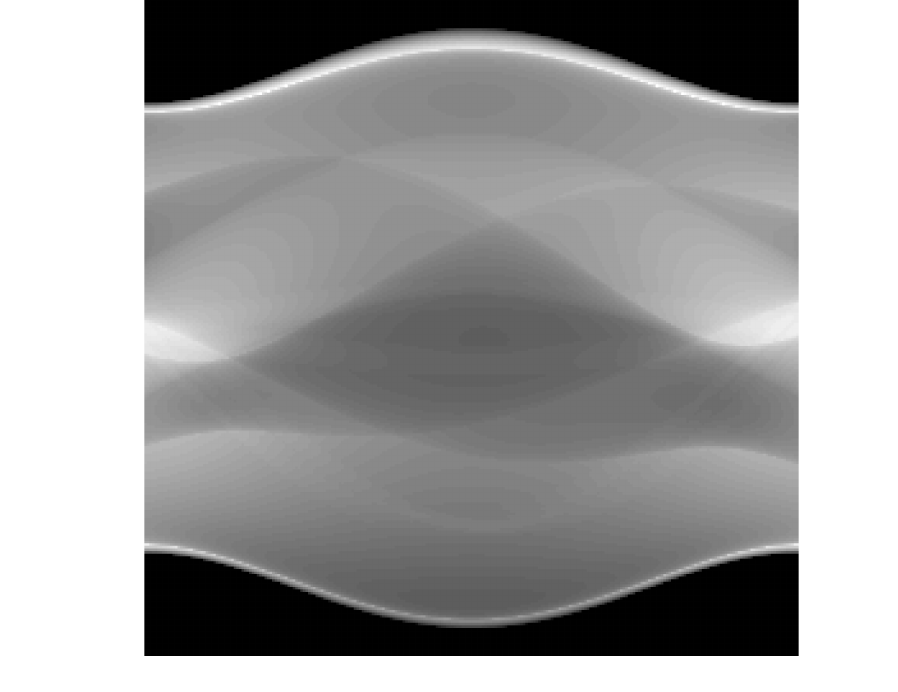}}
\caption{The Shepp-Logan phantom and its sinogram.}
\label{fig:shepp-logan_phantom_sinogram}
\end{figure}

Another mathematical phantom is displayed in Figure~\ref{fig:thorax_phantom}.
It consists of seven ellipses and sketches a cross section of the human thorax.
Its sinogram is displayed in Figure~\ref{fig:thorax_sinogram}.

\begin{figure}[b]
\centering
\subfigure[Phantom]{\label{fig:thorax_phantom}\includegraphics[viewport=69 17 383 332, height=0.25\textwidth]{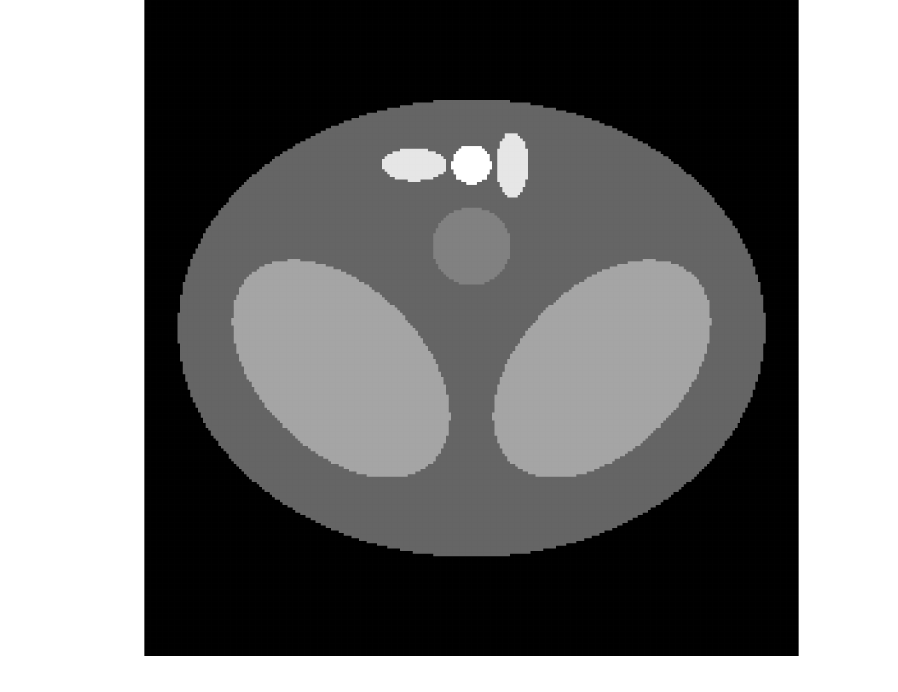}}
\hfil
\subfigure[Sinogram]{\label{fig:thorax_sinogram}\includegraphics[viewport=69 17 383 332, height=0.25\textwidth]{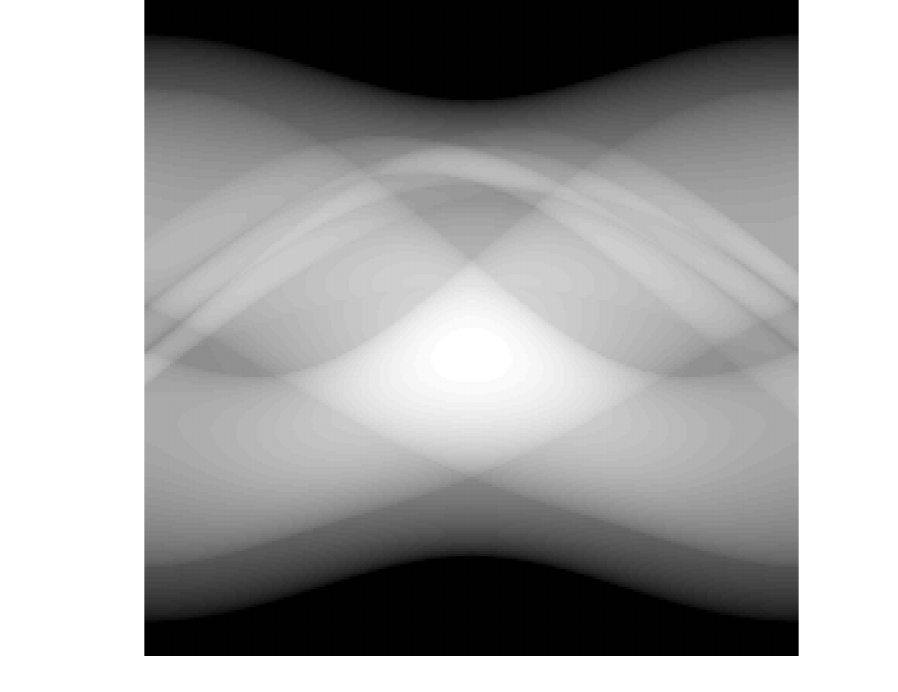}}
\caption{The thorax-shaped phantom and its sinogram.}
\label{fig:thorax_phantom_sinogram}
\end{figure}

\section{Back projection}

We want to recover the function $f \equiv f(x,y)$ from the values $\Radon f(t,\theta)$ with $t \in \R$ and $\theta \in [0,\pi)$.
First, we observe that each fixed point $(x_0,y_0) \in \R^2$ lies on infinitely many different lines $\ell_{t,\theta}$.
But for a fixed angle $\theta \in [0,\pi)$ there exists exactly one $t \in \R$ for which $\ell_{t,\theta}$ passes through the point $(x_0,y_0)$.
Indeed, for suitable $s \in \R$ we have the representation
\begin{equation*}
x_0 = t \cos(\theta) - s \sin(\theta)
\quad \mbox{ and } \quad
y_0 = t \sin(\theta) + s \cos(\theta)
\end{equation*}
if and only if $t = x_0 \cos(\theta) + y_0 \sin(\theta)$, see Remark~\ref{rem:points_on_lines}.
Consequently, the lines passing through $(x_0,y_0)$ are of the form
\begin{equation*}
\ell_{x_0 \cos(\theta) + y_0 \sin(\theta),\theta}
\quad \mbox{ for } \theta \in [0,\pi).
\end{equation*}
The first naive idea is now the following:
To recover $f(x_0,y_0)$, we compute the average value of the line integrals
\begin{equation*}
\Radon f(x_0 \cos(\theta) + y_0 \sin(\theta),\theta)
\quad \mbox{ for } \theta \in [0,\pi)
\end{equation*}
over all lines passing through $(x_0,y_0)$.
This operation is called {\em back projection}.

\begin{definition}[Back projection]
Let $g \in \L^1(\R \times [0,\pi))$ be a bivariate function in polar coordinates.
Then, the {\em back projection} $\Back g$ of $g$ at the point $(x,y) \in \R^2$ is defined as
\begin{equation*}
\Back g(x,y) = \frac{1}{\pi} \int_0^{\pi} g(x \cos(\theta) + y \sin(\theta),\theta) \: \d \theta.
\end{equation*}
\end{definition}

The following proposition shows that the back projection $\Back g$ of a function $g \in \L^1(\R \times [0,\pi))$ is defined almost everywhere and locally integrable on $\R^2$.

\begin{proposition}[Mapping property of the back projection]
For $g \in \L^1(\R \times [0,\pi))$, the back projection $\Back g$ is defined almost everywhere on $\R^2$ and satisfies $\Back g \in \L_\loc^1(\R^2)$.
\end{proposition}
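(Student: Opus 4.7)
The plan is to show local $\L^1$-integrability directly: for every $R>0$ I will bound $\int_{B_R(0)} |\Back g(x,y)| \, \d(x,y)$ by a constant times $\|g\|_{\L^1(\R \times [0,\pi))}$. This simultaneously gives $\Back g \in \L^1_\loc(\R^2)$ and, by the fact that a function with finite integral over a set is finite almost everywhere there, the almost-everywhere well-definedness of the defining integral on each $B_R(0)$, hence on all of $\R^2$.

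First I will move the absolute value inside the definition and invoke Tonelli's theorem (applicable because the integrand is nonnegative and measurable in all three variables jointly, which follows from the continuity of $(x,y,\theta) \mapsto x\cos(\theta)+y\sin(\theta)$ and measurability of $g$) to exchange the order of integration:
\begin{equation*}
\int_{B_R(0)} |\Back g(x,y)| \, \d(x,y) \leq \frac{1}{\pi} \int_0^\pi \int_{B_R(0)} |g(x\cos(\theta)+y\sin(\theta),\theta)| \, \d(x,y) \, \d\theta.
\end{equation*}
Next, for each fixed $\theta \in [0,\pi)$, I will apply the rotation
\begin{equation*}
t = x\cos(\theta) + y\sin(\theta), \qquad s = -x\sin(\theta) + y\cos(\theta),
\end{equation*}
which, by Remark~\ref{rem:points_on_lines}, is a measure-preserving bijection of $\R^2$ mapping $B_R(0)$ onto itself (since $x^2+y^2 = t^2+s^2$). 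Since the integrand depends only on $t$ and $\theta$, the inner integral becomes
\begin{equation*}
\int_{B_R(0)} |g(x\cos(\theta)+y\sin(\theta),\theta)| \, \d(x,y) = \int_{-R}^R |g(t,\theta)| \int_{-\sqrt{R^2-t^2}}^{\sqrt{R^2-t^2}} \d s \, \d t = \int_{-R}^R 2\sqrt{R^2-t^2}\, |g(t,\theta)| \, \d t.
\end{equation*}

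Bounding $2\sqrt{R^2-t^2} \leq 2R$ and extending the $t$-integral to all of $\R$ yields
\begin{equation*}
\int_{B_R(0)} |\Back g(x,y)| \, \d(x,y) \leq \frac{2R}{\pi} \int_0^\pi \int_\R |g(t,\theta)| \, \d t \, \d\theta = \frac{2R}{\pi} \, \|g\|_{\L^1(\R \times [0,\pi))} < \infty.
\end{equation*}
From this, $\Back g \in \L^1(B_R(0))$ for every $R>0$, i.e.\ $\Back g \in \L^1_\loc(\R^2)$, and the finiteness of the integral on $B_R(0)$ forces the defining $\theta$-integral to converge for almost every $(x,y) \in B_R(0)$; exhausting $\R^2$ by such balls gives almost-everywhere definedness on $\R^2$.

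The only mildly delicate point is the joint measurability required to justify Tonelli, but this is immediate from the continuity of the substitution map in $(x,y,\theta)$ together with the Lebesgue-measurability of $g$; once that is noted, everything else is a clean change of variables mirroring the proof of Proposition~\ref{prop:Radon_L1_norm}.
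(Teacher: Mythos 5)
Your proof is correct and follows essentially the same route as the paper's: move the absolute value inside, apply Tonelli, rotate coordinates for each fixed $\theta$, and recognize that the inner integral is $\int_\R |g(t,\theta)| \, \Radon\chi_K(t,\theta)\,\d t$ bounded via $\Radon\chi_K \leq \diam(K)$ (here $K = B_R(0)$ and $\Radon\chi_{B_R(0)}(t,\theta) = 2\sqrt{R^2-t^2} \leq 2R$). The only cosmetic difference is that you work with balls exhausting $\R^2$ rather than arbitrary compact sets, which is an equivalent formulation of local integrability.
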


\begin{proof}
Let $g \in \L^1(\R \times [0,\pi))$.
We define the auxiliary function $H: \R^2 \times [0,\pi) \to \R \times [0,\pi)$ as
\begin{equation*}
H(x,y,\theta) = (x \cos(\theta) + y \sin(\theta),\theta)
\quad \mbox{ for } (x,y,\theta) \in \R^2 \times [0,\pi).
\end{equation*}
Then, $H$ is continuous on $\R^2 \times [0,\pi)$ and we have
\begin{equation*}
\Back g(x,y) = \frac{1}{\pi} \int_0^\pi g(H(x,y,\theta)) \: \d \theta
\quad \forall \, (x,y) \in \R^2.
\end{equation*}
Now, let $K \subset \R^2$ be an arbitrary compact subset of $\R^2$.
We then obtain
\begin{align*}
\|\Back g\|_{\L^1(K)} & = \int_K |\Back g(x,y)| \: \d (x,y) = \int_K \bigg| \frac{1}{\pi} \int_0^\pi g(H(x,y,\theta)) \: \d \theta \bigg| \: \d (x,y) \\
& = \frac{1}{\pi} \int_\R \int_\R \bigg| \int_0^\pi g(H(x,y,\theta)) \: \d \theta \bigg| \, \chi_K(x,y) \: \d x \, \d y \\
& \leq \frac{1}{\pi} \int_\R \int_\R \bigg( \int_0^\pi |g(H(x,y,\theta))| \: \d \theta \bigg) \, \chi_K(x,y) \: \d x \, \d y,
\end{align*}
where $\chi_K$ denotes the characteristic function of $K$.
By applying Fubini's theorem for non-negative functions and integration by substitution for real-valued functions with
\begin{equation*}
x = t \cos(\theta) - s \sin(\theta)
\quad \mbox{ and } \quad
y = t \sin(\theta) + s \cos(\theta),
\end{equation*}
i.e., $\d x \, \d y = \d t \, \d s$ and
\begin{equation*}
t = x \cos(\theta) + y \sin(\theta)
\quad \mbox{ and } \quad
s = -x \sin(\theta) + y \cos(\theta),
\end{equation*}
we obtain
\begin{align*}
\|\Back g\|_{\L^1(K)} & \leq \frac{1}{\pi} \int_0^\pi \int_\R \int_\R |g(x \cos(\theta) + y \sin(\theta),\theta)| \, \chi_K(x,y) \: \d x \, \d y \, \d \theta \\
& = \frac{1}{\pi} \int_0^\pi \int_\R \int_\R |g(t,\theta)| \, \chi_K(t \cos(\theta) - s \sin(\theta),t \sin(\theta) + s \cos(\theta)) \: \d t \, \d s \, \d \theta.
\end{align*}

\bigbreak

Using again Fubini's theorem, the definition of the Radon transform $\Radon$ yields
\begin{align*}
\|\Back g\|_{\L^1(K)} & \leq \frac{1}{\pi} \int_0^\pi \int_\R |g(t,\theta)| \, \bigg( \int_\R \chi_K(t \cos(\theta) - s \sin(\theta),t \sin(\theta) + s \cos(\theta)) \: d s \bigg) \: \d t \, \d \theta \\
& = \frac{1}{\pi} \int_0^\pi \int_\R |g(t,\theta)| \, \Radon \chi_K(t,\theta) \: \d t \, \d \theta \leq \frac{1}{\pi} \, \diam(K) \, \|g\|_{\L^1(\R \times [0,\pi))} < \infty.
\end{align*}
Consequently, we have $\Back g \in \L^1(K)$ for all compact subsets $K \subset \R^2$.
In particular, this shows that the back projection $\Back g$ of $g \in \L^1(\R \times [0,\pi))$ is defined almost everywhere on $\R^2$ and satisfies 
\begin{equation*}
\Back g \in \L_\loc^1(\R^2).
\qedhere
\end{equation*}
\end{proof}

Note that the back projection $\Back g$ of an essentially bounded function $g \in \L^\infty(\R \times [0,\pi))$ is also defined almost everywhere and essentially bounded on $\R^2$.
Moreover, the back projection is continuous as a mapping
\begin{equation*}
\Back: \L^\infty(\R \times [0,\pi)) \to \L^\infty(\R^2),
\end{equation*}
where for $g \in \L^\infty(\R \times [0,\pi))$
\begin{equation*}
\|\Back g\|_{\L^\infty(\R^2)} \leq \|g\|_{\L^\infty(\R \times [0,\pi))}.
\end{equation*}

We continue with some basic properties of the back projection operator $\Back$.

\begin{observation}
The back projection operator $\Back$ maps a bivariate function $g \equiv g(t,\theta)$ in polar coordinates onto a bivariate function $\Back g \equiv \Back g(x,y)$ in Cartesian coordinates.
\end{observation}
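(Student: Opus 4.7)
The observation is essentially a direct reading of the definition of $\Back$, so the proof plan is short and has no genuine obstacle; the main point is to make explicit how the coordinate roles of the input and output are fixed by the formula.

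First, I would note that the domain of $\Back g$ is $\R^2$ in Cartesian coordinates: in the defining identity
\begin{equation*}
\Back g(x,y) = \frac{1}{\pi} \int_0^{\pi} g(x \cos(\theta) + y \sin(\theta),\theta) \: \d \theta,
\end{equation*}
the angle $\theta$ is the integration variable, so after integration the remaining free variables are precisely $(x,y) \in \R^2$. Hence $\Back g$ is a bivariate function in Cartesian coordinates, as claimed.

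Second, I would explain the geometric content that gives the observation its meaning. By Remark~\ref{rem:points_on_lines}, for any fixed angle $\theta \in [0,\pi)$ and any point $(x,y) \in \R^2$, the unique value $t \in \R$ for which the line $\ell_{t,\theta}$ passes through $(x,y)$ is $t = x \cos(\theta) + y \sin(\theta)$. Therefore the integrand $g(x \cos(\theta) + y \sin(\theta),\theta)$ is the value of the polar-coordinate function $g \equiv g(t,\theta)$ along exactly the family of lines through $(x,y)$ parametrized by $\theta \in [0,\pi)$. Integrating (and normalizing by $1/\pi$) produces the average of $g$ over all lines through $(x,y)$, which is a quantity intrinsically attached to the Cartesian point $(x,y)$.

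Finally, I would appeal to the preceding proposition to confirm that this expression makes pointwise sense: for $g \in \L^1(\R \times [0,\pi))$, $\Back g$ is defined almost everywhere on $\R^2$ and lies in $\L_\loc^1(\R^2)$, so the object denoted $\Back g(x,y)$ is indeed a well-defined bivariate function of the Cartesian variables $(x,y)$. No technical step is harder than unpacking the definition; the only thing to be careful about is not confusing the roles of $(t,\theta)$ and $(x,y)$, which is precisely what the observation is flagging.
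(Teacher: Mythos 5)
Your proposal is correct: the observation is immediate from the definition of $\Back$, and indeed the paper offers no proof at all, treating it as a direct reading of the formula just as you do. Your additional remarks on the geometric meaning and on well-definedness via the mapping property are consistent with the surrounding text and add nothing that conflicts with the paper's (absent) argument.
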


As the Radon transform $\Radon$, the back projection operator $\Back$ is a positive linear operator.

\begin{proposition}[Linearity of the back projection]
The back projection $\Back$ is a positive linear integral operator, i.e., for all $\alpha,\beta \in \R$ and $g,h \in \L^1(\R \times [0,\pi))$ we have
\begin{equation*}
\Back (\alpha g + \beta h) = \alpha \, \Back g + \beta \, \Back h
\end{equation*}
and
\begin{equation*}
g \geq 0
\quad \implies \quad
\Back g \geq 0.
\end{equation*}
\end{proposition}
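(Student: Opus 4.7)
The plan is to mirror the proof style used earlier for the linearity of the Radon transform: both properties of $\Back$ will be immediate consequences of the corresponding properties of the Lebesgue integral, since $\Back g$ is by definition the integral
\begin{equation*}
\Back g(x,y) = \frac{1}{\pi} \int_0^{\pi} g(x \cos(\theta) + y \sin(\theta),\theta) \: \d \theta.
\end{equation*}

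First, I would fix an arbitrary point $(x,y) \in \R^2$ where both $\Back g$ and $\Back h$ are defined (which holds almost everywhere by the preceding mapping property proposition). Then, for scalars $\alpha,\beta \in \R$, I would substitute $\alpha g + \beta h$ into the definition of $\Back$ and pull the scalars out using the linearity of the integral over $[0,\pi)$. This yields
\begin{equation*}
\Back(\alpha g + \beta h)(x,y) = \frac{\alpha}{\pi} \int_0^{\pi} g(x\cos\theta + y \sin\theta,\theta) \: \d\theta + \frac{\beta}{\pi} \int_0^{\pi} h(x\cos\theta + y\sin\theta,\theta) \: \d\theta,
\end{equation*}
which is exactly $\alpha \Back g(x,y) + \beta \Back h(x,y)$. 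Since the point $(x,y)$ was arbitrary in the common domain of definition, the identity holds pointwise almost everywhere, hence as functions in $\L_\loc^1(\R^2)$.

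For the positivity claim, I would simply observe that if $g \geq 0$ almost everywhere on $\R \times [0,\pi)$, then the integrand $\theta \mapsto g(x\cos\theta + y \sin\theta,\theta)$ is non-negative for almost all $(x,y)$, so the integral defining $\Back g(x,y)$ is non-negative. Combined with the positive prefactor $\pi^{-1}$, this gives $\Back g \geq 0$ almost everywhere.

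There is no significant obstacle here; the only technical point is being mindful that both sides are only defined almost everywhere, so equalities and inequalities should be read in the almost-everywhere sense on $\R^2$, which is consistent with the $\L_\loc^1(\R^2)$ framework established in the previous proposition.
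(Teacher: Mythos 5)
Your proof is correct and takes essentially the same approach as the paper, which simply notes that the statement follows from the positivity and linearity of the integral. Your additional care about the almost-everywhere sense of the identities is a welcome but inessential elaboration.
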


\begin{proof}
The statement follows from the positivity and linearity of the integral.
\end{proof}

However, an inspection of Figure~\ref{fig:BP_shepp-logan} shows that the back projection is {\em not} the inverse of the Radon transform.
Instead, we have to apply a {\em filtered} back projection, as we will see later.

\begin{observation}
\label{obs:Back_not_inverse}
The back projection $\Back$ is not the inverse of the Radon transform $\Radon$.
\end{observation}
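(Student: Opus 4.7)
The claim is a non-equality $\Back \Radon \neq \Id$, so it suffices to exhibit one explicit $f$ with $\Back \Radon f \neq f$. The plan is to take the simplest function whose Radon transform we already know, namely the characteristic function $f = \chi_{B_R(0)}$ of a disk of radius $R>0$, for which Example~\ref{ex:Radon_ball} gives
\begin{equation*}
\Radon f(t,\theta) = 2\sqrt{R^2 - t^2}\,\rect{R}(t),
\end{equation*}
independent of $\theta$. I would then plug this into the definition of the back projection and evaluate at a single conveniently chosen point.

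The cleanest point is the origin. By definition,
\begin{equation*}
\Back \Radon f(0,0) = \frac{1}{\pi}\int_0^\pi 2\sqrt{R^2 - 0^2}\,\rect{R}(0)\,\d\theta = 2R,
\end{equation*}
whereas $f(0,0) = 1$. For any $R \neq \tfrac{1}{2}$ this already shows $\Back\Radon f \neq f$. To cover the one exceptional radius $R = \tfrac{1}{2}$, I would instead evaluate at a point $(x_0,0)$ with $R < x_0$ only slightly above $R$: the integrand
\begin{equation*}
2\sqrt{R^2 - x_0^2\cos^2(\theta)}\,\rect{R}(x_0\cos(\theta))
\end{equation*}
is strictly positive on the nonempty set $\{\theta \in [0,\pi) : |\cos(\theta)| < R/x_0\}$ and zero elsewhere, hence the angular average is strictly positive while $f(x_0,0) = 0$. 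Either evaluation yields the desired contradiction.

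The argument therefore has essentially no technical obstacle; the only point worth emphasizing is conceptual, namely that the back projection of a compactly supported radial function spreads its support beyond the original disk, so that $\supp(\Back\Radon f) \supsetneq \supp(f)$. This fact alone rules out the identity $\Back\Radon f = f$ and makes clear why a corrective filtering step will be needed later.
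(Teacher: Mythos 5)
Your argument is correct, and it is genuinely different from what the paper does: the text justifies this observation purely by inspection of a numerical experiment (the blurred back projection of the Shepp--Logan phantom in Figure~\ref{fig:BP_shepp-logan}), whereas you give an analytic counterexample. Your computation checks out: with $f = \chi_{B_R(0)}$ and the formula from Example~\ref{ex:Radon_ball}, the back projection at the origin is
\begin{equation*}
\Back(\Radon f)(0,0) = \frac{1}{\pi}\int_0^\pi 2R \: \d\theta = 2R \neq 1 = f(0,0)
\quad \mbox{ for } R \neq \tfrac{1}{2},
\end{equation*}
and your fallback for $R=\tfrac12$ — evaluating at a point $(x_0,0)$ with $x_0 > R$, where the angular average is strictly positive on the nonempty set $\{\theta : |\cos(\theta)| < R/x_0\}$ while $f(x_0,0)=0$ — is valid for \emph{any} $x_0 > R$, not just $x_0$ slightly above $R$, so the "slightly above" qualifier is unnecessary. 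What your approach buys is a rigorous proof and the structural insight you state at the end, namely that $\Back\Radon$ strictly enlarges the support of a compactly supported function (in fact $\Back(\Radon \chi_{B_R(0)}) > 0$ everywhere on $\R^2$); what the paper's approach buys is a visual explanation of \emph{how} the unfiltered back projection fails (low-pass blurring), which motivates the corrective filter $|S|$ in the FBP formula. The two are complementary, and yours could serve as an actual proof where the paper offers only empirical evidence.
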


\begin{figure}[hb]
\centering
\subfigure[Phantom]{\includegraphics[viewport=50 55 257 261, height=0.25\textwidth]{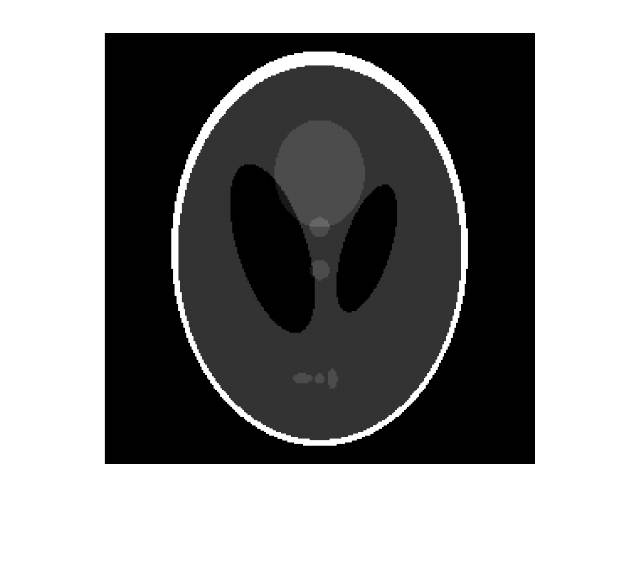}}
\hfil
\subfigure[Back projection]{\includegraphics[viewport=50 55 257 261, height=0.25\textwidth]{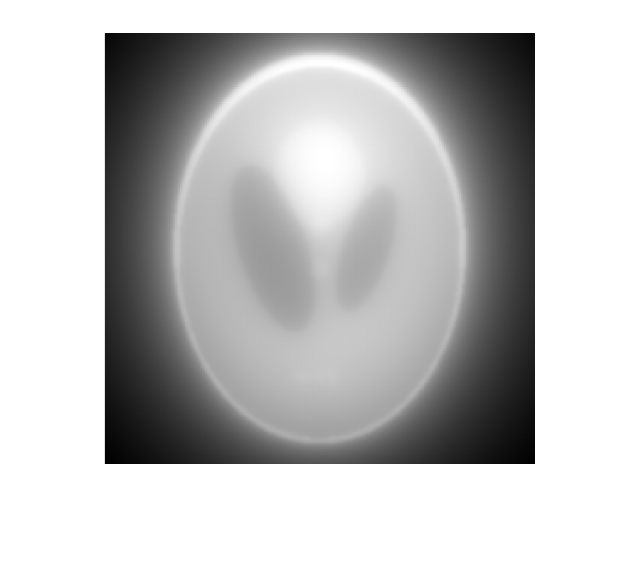}}
\caption{Back projection of the Shepp-Logan phantom.}
\label{fig:BP_shepp-logan}
\end{figure}

We close this paragraph on the back projection operator by stating the following useful relation between the convolution product, the back projection and the Radon transform.
Recall that the convolution product of two bivariate functions $f \equiv f(x,y), \, g \equiv g(x,y) \in \L^1(\R^2)$ in Cartesian coordinates is given by
\begin{equation*}
(f * g)(x,y) = \int_\R \int_\R f(X,Y) \, g(x-X,y-Y) \: \d X \, \d Y
\quad \mbox{ for } (x,y) \in \R^2.
\end{equation*}
Further, we define the convolution product of two bivariate functions $h \equiv h(t,\theta), \, k \equiv k(t,\theta)$ in polar coordinates satisfying $h(\cdot,\theta), \, k(\cdot,\theta) \in \L^1(\R)$ for all $\theta \in [0,\pi)$ as
\begin{equation*}
(h * k)(t,\theta) = \int_\R h(S,\theta) \, k(t-S,\theta) \: \d S
\quad \mbox{ for } (t,\theta) \in \R \times [0,\pi).
\end{equation*}

\bigbreak

The result now reads as follows.

\begin{theorem}
Let $f \equiv f(x,y) \in \L^1(\R^2)$ be a bivariate function in Cartesian coordinates and let $g \equiv g(t,\theta) \in \L^\infty(\R \times [0,\pi))$ be a function in polar coordinates.
Then, we have
\begin{equation}
\Back g * f = \Back(g * \Radon f).
\label{eq:Radon_back_convolution}
\end{equation}
\end{theorem}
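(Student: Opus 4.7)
The plan is to expand the left-hand side using the definitions of Cartesian convolution and back projection, then interchange the order of integration, perform a rotation-type substitution to uncover a Radon transform structure in the interior, and finally recognize the outer integral as a back projection of a polar convolution.

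Concretely, I would start from
\begin{equation*}
(\Back g * f)(x,y) = \int_{\R^2} \Back g(X,Y) \, f(x-X,y-Y) \: \d (X,Y) = \frac{1}{\pi} \int_{\R^2} \int_0^\pi g(X \cos\theta + Y \sin\theta, \theta) \, f(x-X,y-Y) \: \d \theta \, \d (X,Y),
\end{equation*}
and swap the $\theta$-integral with the $(X,Y)$-integral via Fubini. Justifying Fubini is a secondary technical point: from $g \in \L^\infty(\R \times [0,\pi))$ and the continuity estimate in Proposition~\ref{prop:Radon_L1_norm} applied after the substitution below, the integrand is absolutely integrable on compact sets in $(x,y)$, which is enough since both sides are to be compared as locally integrable functions.

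For each fixed $\theta \in [0,\pi)$, I would then change variables in $(X,Y)$ by the rotation $X = T \cos\theta - S \sin\theta$, $Y = T \sin\theta + S \cos\theta$ (Jacobian $1$), turning the inner double integral into
\begin{equation*}
\int_\R g(T,\theta) \left( \int_\R f(x - T\cos\theta + S\sin\theta,\, y - T\sin\theta - S\cos\theta) \: \d S \right) \d T.
\end{equation*}
The main task is to identify the inner $S$-integral as a value of $\Radon f$. Using Remark~\ref{rem:points_on_lines} with $u = x - T \cos\theta$, $v = y - T \sin\theta$, the unique $t'$ such that $\ell_{t',\theta}$ passes through $(u,v)$ is
\begin{equation*}
t' = u \cos\theta + v \sin\theta = x \cos\theta + y \sin\theta - T,
\end{equation*}
so after the substitution $s = -x\sin\theta + y\cos\theta - S$ (with $\d s = -\d S$) the inner integral becomes
\begin{equation*}
\int_\R f(t' \cos\theta - s \sin\theta, t' \sin\theta + s \cos\theta) \: \d s = \Radon f(x\cos\theta + y\sin\theta - T, \theta).
\end{equation*}

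Substituting this back, the expression collapses to
\begin{equation*}
(\Back g * f)(x,y) = \frac{1}{\pi} \int_0^\pi \int_\R g(T,\theta) \, \Radon f(x \cos\theta + y \sin\theta - T, \theta) \: \d T \, \d \theta = \frac{1}{\pi} \int_0^\pi (g * \Radon f)(x \cos\theta + y \sin\theta, \theta) \: \d \theta,
\end{equation*}
where the inner $T$-integral is exactly the polar convolution in the variable $t$ at the point $t = x\cos\theta + y\sin\theta$. By the definition of $\Back$ this equals $\Back(g * \Radon f)(x,y)$, which proves~\eqref{eq:Radon_back_convolution}. The hardest step is the algebraic identification of the inner $S$-integral with $\Radon f$ at the shifted argument; once that substitution is carried out, the rest is bookkeeping with the rotation Jacobian and Fubini.
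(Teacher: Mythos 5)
Your proof is correct and follows essentially the same route as the paper's: expand the convolution, apply Fubini, perform the rotation substitution for fixed $\theta$ to expose $\Radon f$ at the shifted argument, and recognize the result as $\Back(g * \Radon f)$. The only cosmetic difference is that you place the shift on $f$ and rotate the $(X,Y)$-variables of $\Back g$, whereas the paper writes $(\Back g * f)(X,Y) = \int \Back g(X-x,Y-y)\,f(x,y)\,\d x\,\d y$ and substitutes in the arguments of $f$; the two are equivalent by commutativity of the convolution.
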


\begin{proof}
First of all, we note that for $f \in \L^1(\R^2)$ and $g \in \L^\infty(\R \times [0,\pi))$ both expressions in~\eqref{eq:Radon_back_convolution} are well-defined as functions in $\L^\infty(\R^2)$.
Indeed, for $g \in \L^\infty(\R \times [0,\pi))$ its back projection $\Back g$ is essentially bounded on $\R^2$, i.e., $\Back g \in \L^\infty(\R^2)$, and, therefore, Young's inequality shows that
\begin{equation*}
\Back g * f \in \L^\infty(\R^2).
\end{equation*}
On the other hand, for $f \in \L^1(\R^2)$ we have $\Radon f(\cdot,\theta) \in \L^1(\R)$ for all $\theta \in [0,\pi)$ according to Proposition~\ref{prop:Radon_L1_norm} with
\begin{equation*}
\|\Radon f(\cdot,\theta)\|_{\L^1(\R)} \leq \|f\|_{\L^1(\R^2)}
\quad \forall \, \theta \in [0,\pi).
\end{equation*}
Consequently, the convolution product of $\Radon f$ and $g$ is in $\L^\infty(\R \times [0,\pi))$ and we also obtain
\begin{equation*}
\Back(g * \Radon f) \in \L^\infty(\R^2).
\end{equation*}

Now, for $(X,Y) \in \R^2$ the definitions of the back projection and the convolution product give
\begin{equation*}
(\Back g * f)(X,Y) = \int_{\R} \int_{\R} \Back g(X-x,Y-y) \, f(x,y) \: \d x \, \d y,
\end{equation*}
where
\begin{equation*}
\Back g(X-x,Y-y) = \frac{1}{\pi} \int_0^{\pi} g((X-x) \cos(\theta) + (Y-y)\sin(\theta),\theta) \: \d \theta.
\end{equation*}
By substituting
\begin{equation*}
t = x \cos(\theta) + y \sin(\theta)
\quad \mbox{ and } \quad
s = -x \sin(\theta) + y \cos(\theta),
\end{equation*}
we get $\d x \, \d y = \d s \, \d t$ and, therefore, with
\begin{equation*}
\Radon f(t,\theta) = \int_\R f(t \cos(\theta) - s \sin(\theta),t \sin(\theta) + s \cos(\theta)) \: \d s
\end{equation*}
we can conclude that
\begin{align*}
(\Back g * f)(X,Y) & = \frac{1}{\pi} \int_0^{\pi} \int_{\R} g(X \cos(\theta) + Y \sin(\theta) - t,\theta) \, \Radon f(t,\theta) \: \d t \, \d \theta \\
& = \frac{1}{\pi} \int_0^{\pi} (g * \Radon f)(X \cos(\theta) + Y \sin(\theta),\theta) \: \d \theta \\
& = \Back(g * \Radon f)(X,Y),
\end{align*}
as stated.
\end{proof}

%--- Chapter 3
%---------------------------------------------------------------------------
\chapter{Inversion of the Radon transform}

This chapter deals with the inversion of the Radon transform.
To this end, we first state the Fourier slice theorem resulting in the classical {\em filtered back projection formula}, which yields an analytical inversion formula and is the basis for one of the most commonly used reconstruction algorithms in computerized tomography.
The inversion, however, is numerically unstable and we study the degree of ill-posedness of the CT reconstruction problem to explain this fact.

\section{Fourier slice theorem}

One of the most important properties of the Radon transform is given by the classical {\em Fourier slice theorem} (FST), also known as {\em central slice theorem}, which relates the Fourier transform of the Radon transform to the Fourier transform of the function to be reconstructed.

Let us first recall that the Fourier transform $\Fourier f$ of a bivariate function $f \equiv f(x,y) \in \L^1(\R^2)$ in Cartesian coordinates is given by
\begin{equation*}
\Fourier f(X,Y) = \int_\R \int_\R f(x,y) \, \e^{-\i (xX + yY)} \: \d x \, \d y
\quad \mbox{ for } (X,Y) \in \R^2.
\end{equation*}
For a bivariate function $h \equiv h(t,\theta)$ in polar coordinates satisfying $h(\cdot,\theta) \in \L^1(\R)$ for all $\theta \in [0,\pi)$ we define the Fourier transform $\Fourier h$ as the univariate Fourier transform acting only on the radial variable $t$, i.e.,
\begin{equation*}
\Fourier h(S,\theta) = \int_\R h(t,\theta) \, \e^{-\i S t} \: \d t
\quad \mbox{ for } (S,\theta) \in \R \times [0,\pi).
\end{equation*}

Now, the Fourier slice theorem reads as follows.

\begin{theorem}[Fourier slice theorem]
\label{theo:Fourier_slice}
For $f \in \L^1(\R^2)$ we have
\begin{equation*}
\Fourier (\Radon f)(S,\theta) = \Fourier f(S \cos(\theta),S \sin(\theta))
\quad \forall \, (S,\theta) \in \R \times [0,\pi).
\end{equation*}
\end{theorem}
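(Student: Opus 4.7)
The plan is to start from the definition of $\Fourier(\Radon f)$, unfold the definition of $\Radon f$, swap the order of integration via Fubini, and recognise that after a rotation substitution the inner exponential matches the one defining the 2D Fourier transform evaluated at $(S\cos(\theta),S\sin(\theta))$.

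In detail, I would first fix $(S,\theta)\in\R\times[0,\pi)$ and write
\begin{equation*}
\Fourier(\Radon f)(S,\theta) = \int_\R \Radon f(t,\theta)\,\e^{-\i S t}\:\d t
= \int_\R\!\int_\R f(t\cos(\theta)-s\sin(\theta),\,t\sin(\theta)+s\cos(\theta))\,\e^{-\i S t}\:\d s\,\d t,
\end{equation*}
where the inner expression comes from the definition of the Radon transform in~\eqref{eq:Radon_transform}. The interchange requires that the integrand be absolutely integrable on $\R^2$ in the variables $(s,t)$; this follows since the map $(s,t)\mapsto(t\cos(\theta)-s\sin(\theta),t\sin(\theta)+s\cos(\theta))$ is a rotation and hence measure preserving on $\R^2$, so the absolute value of the integrand has the same $\L^1(\R^2)$ norm as $|f|$, which is finite by assumption $f\in\L^1(\R^2)$.

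Next I would perform the substitution
\begin{equation*}
x = t\cos(\theta)-s\sin(\theta),\qquad y = t\sin(\theta)+s\cos(\theta),
\end{equation*}
whose Jacobian has absolute value one, so $\d s\,\d t = \d x\,\d y$. By Remark~\ref{rem:points_on_lines} the inverse substitution gives $t = x\cos(\theta)+y\sin(\theta)$, and consequently
\begin{equation*}
\e^{-\i S t} = \e^{-\i S(x\cos(\theta)+y\sin(\theta))} = \e^{-\i(x\,S\cos(\theta)+y\,S\sin(\theta))}.
\end{equation*}
Substituting back into the double integral yields
\begin{equation*}
\Fourier(\Radon f)(S,\theta) = \int_\R\!\int_\R f(x,y)\,\e^{-\i(x\,S\cos(\theta)+y\,S\sin(\theta))}\:\d x\,\d y = \Fourier f(S\cos(\theta),S\sin(\theta)),
\end{equation*}
which is the claim.

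The only non-routine point is the application of Fubini's theorem, but this is straightforward once one observes that the exponential factor has modulus one and the rotation is measure preserving, reducing the verification to the hypothesis $f\in\L^1(\R^2)$. Everything else is bookkeeping with the parametrisation of $\ell_{t,\theta}$ already set up in the previous section.
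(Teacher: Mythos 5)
Your proposal is correct and follows essentially the same route as the paper's proof: the same rotation substitution with unit Jacobian, the same appeal to Fubini justified by $f \in \L^1(\R^2)$ and the unimodular exponential factor; you merely traverse the chain of equalities in the opposite direction, starting from $\Fourier(\Radon f)$ rather than from $\Fourier f(S\cos(\theta),S\sin(\theta))$. Your explicit justification of the Fubini step is slightly more detailed than the paper's, but the argument is the same.
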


\begin{proof}
For $(S,\theta) \in \R \times [0,\pi)$, the definition of the two-dimensional Fourier transform yields
\begin{equation*}
\Fourier f(S \cos(\theta),S \sin(\theta)) = \int_\R \int_\R f(x,y) \, \e^{-\i S (x \cos(\theta) + y \sin(\theta))} \: \d x \, \d y.
\end{equation*}
Applying the transformation
\begin{equation*}
x = t \cos(\theta) - s \sin(\theta)
\quad \mbox{ and } \quad 
y = t \sin(\theta) + s \cos(\theta),
\end{equation*}
i.e.,
\begin{equation*}
t = x \cos(\theta) + y \sin(\theta)
\quad \mbox{ and } \quad
s = -x \sin(\theta) + y \cos(\theta),
\end{equation*}
again gives $\d x \, \d y = \d t \, \d s$ and, thus, it follows that
\begin{equation*}
\Fourier f(S \cos(\theta),S \sin(\theta)) = \int_\R \int_\R f(t \cos(\theta) - s \sin(\theta),t \sin(\theta) + s \cos(\theta)) \, \e^{-\i S t} \: \d t \, \d s = \Fourier (\Radon f)(S,\theta)
\end{equation*}
by Fubini's theorem and the definition of the Radon transform $\Radon$.
\end{proof}

The importance of the Fourier slice theorem lies in the fact that it links together the Radon transform of a function and its Fourier transform.
This connection can be used to derive properties of the Radon transform from those properties which are known for the Fourier transform.
In particular, the Fourier slice theorem shows the injectivity of the Radon transform on the domain $\L^1(\R^2)$.
Indeed, if $\Radon f$ vanished on $\R \times [0,\pi)$, then $\Fourier f$ vanished on $\R^2$, which implies that $f$ is zero due to the injectivity of the Fourier transform on $\L^1(\R^2)$.

\begin{corollary}[Injectivity of the Radon transform]
For $f \in \L^1(\R^2)$ we have
\begin{equation*}
\Radon f = 0
\quad \implies \quad
f = 0,
\end{equation*}
i.e., the Radon transform $\Radon$ is injective on $\L^1(\R^2)$.
\end{corollary}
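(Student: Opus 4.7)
The plan is to reduce the claim to the injectivity of the two-dimensional Fourier transform on $\L^1(\R^2)$, using the Fourier slice theorem (Theorem~\ref{theo:Fourier_slice}) as the bridge.

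Suppose $f \in \L^1(\R^2)$ with $\Radon f = 0$ almost everywhere on $\R \times [0,\pi)$. By the proof of Proposition~\ref{prop:Radon_L1_norm}, for almost every $\theta \in [0,\pi)$ the slice $\Radon f(\cdot,\theta)$ is a well-defined element of $\L^1(\R)$, and under our assumption this element is the zero function. Consequently, its univariate Fourier transform vanishes, i.e.\ $\Fourier(\Radon f)(S,\theta) = 0$ for almost every $(S,\theta) \in \R \times [0,\pi)$. The Fourier slice theorem then yields
\begin{equation*}
\Fourier f(S\cos(\theta),S\sin(\theta)) = 0
\quad \mbox{ for almost every } (S,\theta) \in \R \times [0,\pi).
\end{equation*}

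The only point that requires a moment of care is the passage from this statement to the vanishing of $\Fourier f$ on all of $\R^2$. I would argue as follows: the map $(S,\theta) \mapsto (S\cos(\theta),S\sin(\theta))$ sends $\R \times [0,\pi)$ surjectively onto $\R^2$, and since $f \in \L^1(\R^2)$ its Fourier transform $\Fourier f$ is continuous on $\R^2$ (in fact bounded, by the Riemann--Lebesgue lemma). A continuous function that vanishes on a dense subset of $\R^2$ vanishes identically, so $\Fourier f \equiv 0$.

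Finally, invoking the injectivity of the Fourier transform on $\L^1(\R^2)$ yields $f = 0$, which completes the argument. The main (and essentially only) obstacle is the regularity bookkeeping in the first step, namely confirming that the almost-everywhere vanishing of $\Radon f$ can be transferred to the almost-everywhere vanishing of $\Fourier(\Radon f)$ and then lifted to a pointwise statement about $\Fourier f$ via continuity; once that is cleanly set up, the result follows immediately from Theorem~\ref{theo:Fourier_slice}.
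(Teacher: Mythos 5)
Your proposal is correct and follows exactly the route the paper takes: apply the Fourier slice theorem to conclude $\Fourier f = 0$ and then invoke the injectivity of the Fourier transform on $\L^1(\R^2)$. The extra care you take with the almost-everywhere bookkeeping and the continuity of $\Fourier f$ (via Riemann--Lebesgue) is a welcome refinement of the paper's one-line argument, but it is the same proof.
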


Moreover, Theorem~\ref{theo:Fourier_slice} immediately provides a scheme for reconstructing a function from the knowledge of its Radon transform.
Assuming $\Radon f(t,\theta)$ to be known for all $(t,\theta) \in \R \times [0,\pi)$, we can gain knowledge about the two-dimensional Fourier transform of $f$ by computing the one-dimensional Fourier transform of $\Radon f$.
Subsequent application of the inverse two-dimensional Fourier transform would yield the function $f$ we want to reconstruct.
We remark that such reconstruction procedures are known as {\em Fourier reconstruction methods}, cf.~\cite{Natterer2001, Natterer2001a}.

\section{Filtered back projection formula}

Based on the Fourier slice Theorem~\ref{theo:Fourier_slice}, we are now prepared to prove an inversion formula for the Radon transform, which is given by the classical {\em filtered back projection (FBP) formula}.

\begin{theorem}[Filtered back projection formula]
\label{theo:filtered_back_projection}
For $f \in \L^1(\R^2) \cap \Cont(\R^2)$ with $\Fourier f \in \L^1(\R^2)$ the {\em filtered back projection formula}
\begin{equation}
f(x,y) = \frac{1}{2} \, \Back\big(\Fourier^{-1}[|S| \Fourier(\Radon f)(S,\theta)]\big)(x,y)
\label{eq:FBP_formula}
\end{equation}
holds for all $(x,y) \in \R^2$.
\end{theorem}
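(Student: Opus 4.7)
The plan is to start from the two-dimensional Fourier inversion formula, pass to polar coordinates in the frequency domain, invoke the Fourier slice theorem to replace the 2D Fourier transform by the 1D Fourier transform of the Radon data, and finally recognize the resulting integrals as a 1D inverse Fourier transform followed by a back projection. The hypotheses $f \in \L^1(\R^2) \cap \Cont(\R^2)$ and $\Fourier f \in \L^1(\R^2)$ are exactly what is needed to guarantee pointwise validity of 2D Fourier inversion, so as a starting point we may write
\begin{equation*}
f(x,y) = \frac{1}{(2\pi)^2} \int_{\R^2} \Fourier f(X,Y) \, \e^{\i(xX + yY)} \: \d X \, \d Y
\quad \forall \, (x,y) \in \R^2.
\end{equation*}

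Next, I would transform $(X,Y) = (S \cos(\theta), S \sin(\theta))$ with $S \geq 0$ and $\theta \in [0,2\pi)$, which yields a factor $S = |S|$ from the Jacobian. The key technical step is then to fold the $\theta$-range $[\pi,2\pi)$ back to $[0,\pi)$ via the substitution $\theta \mapsto \theta + \pi$, combined with $S \mapsto -S$; together with the identities $\cos(\theta+\pi) = -\cos(\theta)$ and $\sin(\theta+\pi) = -\sin(\theta)$, this extends the radial integration to $S \in \R$ and produces the full factor $|S|$:
\begin{equation*}
f(x,y) = \frac{1}{(2\pi)^2} \int_0^\pi \int_\R \Fourier f(S\cos(\theta), S \sin(\theta)) \, \e^{\i S (x \cos(\theta) + y \sin(\theta))} \, |S| \: \d S \, \d \theta.
\end{equation*}

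Now the Fourier slice theorem (Theorem~\ref{theo:Fourier_slice}) replaces $\Fourier f(S \cos(\theta), S \sin(\theta))$ with $\Fourier(\Radon f)(S,\theta)$. The inner integral over $S$ is, up to the factor $2\pi$ of the 1D Fourier inversion convention, the inverse 1D Fourier transform
\begin{equation*}
\frac{1}{2\pi} \int_\R |S| \, \Fourier(\Radon f)(S,\theta) \, \e^{\i S t} \: \d S = \Fourier^{-1}\!\bigl[|S| \Fourier(\Radon f)(S,\theta)\bigr](t,\theta),
\end{equation*}
evaluated at $t = x \cos(\theta) + y \sin(\theta)$. Substituting this back absorbs one factor of $2\pi$, leaving
\begin{equation*}
f(x,y) = \frac{1}{2\pi} \int_0^\pi \Fourier^{-1}\!\bigl[|S| \Fourier(\Radon f)(S,\theta)\bigr](x \cos(\theta) + y \sin(\theta),\theta) \: \d \theta.
\end{equation*}
Comparing with the definition of the back projection $\Back$ (which carries a normalizing factor $1/\pi$) turns the remaining $\theta$-integral into $\frac{1}{2}\Back(\cdots)(x,y)$, giving~\eqref{eq:FBP_formula}.

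The main obstacle is the folding step: it requires carefully tracking how the parity substitutions transform the arguments of $\Fourier f$, the oscillatory exponential, and the differential, and it is really what produces the filter $|S|$. A minor secondary issue is the justification of Fubini in the inner display (the integrand in $(S,\theta)$ need not be absolutely integrable once the factor $|S|$ is introduced), but this is circumvented by viewing the inner $S$-integral as a 1D inverse Fourier transform on its own and invoking $\Fourier f \in \L^1(\R^2)$ only at the very start to guarantee the initial 2D inversion formula holds pointwise.
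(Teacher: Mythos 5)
Your proposal is correct and follows essentially the same route as the paper's proof: two-dimensional Fourier inversion, change to polar coordinates in frequency space producing the Jacobian $|S|$, the Fourier slice theorem, and identification of the inner integral as a one-dimensional inverse Fourier transform and the outer integral as $\frac{1}{2}\Back$. The only difference is that you spell out the folding of $\theta \in [\pi,2\pi)$ onto $[0,\pi)$ via $S \mapsto -S$ explicitly, a detail the paper compresses into the single statement that $(X,Y)$ is reparametrized by $(S,\theta) \in \R \times [0,\pi)$ with $\d X \, \d Y = |S| \, \d S \, \d\theta$; your constants all check out.
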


We remark that the FBP formula is also valid under weaker assumptions on the function $f$.
For the purpose of this course, however, the presented version is sufficient.

\begin{proof}
Let $f \in \L^1(\R^2) \cap \Cont(\R^2)$ with $\Fourier f \in \L^1(\R^2)$ and let $(x,y) \in \R^2$ be fixed.
Applying the two-dimensional Fourier inversion formula to $f$ yields the identity
\begin{equation*}
f(x,y) = \Fourier^{-1}(\Fourier f)(x,y) = \frac{1}{4\pi^2} \int_{\R} \int_{\R} \Fourier f(X,Y) \, \e^{\i (xX + yY)} \: \d X \, \d Y.
\end{equation*}
By changing the variables $(X,Y) \in \R^2$ from Cartesian coordinates to $(S,\theta) \in \R \times [0,\pi)$ in polar coordinates, i.e.,
\begin{equation*}
X = S \cos(\theta)
\quad \mbox{ and } \quad
Y = S \sin(\theta),
\end{equation*}
we get $\d X \, \d Y = |S| \: \d S \, \d \theta$.
Thus, with the Fourier slice Theorem~\ref{theo:Fourier_slice} follows that
\begin{align*}
f(x,y) & = \frac{1}{4\pi^2} \int_0^{\pi} \int_{\R} \Fourier f(S \cos(\theta),S \sin(\theta)) \, \e^{\i S (x \cos(\theta) + y \sin(\theta))} \, |S| \: \d S \, \d \theta \\
& \stackrel{\mathclap{\text{FST}}}{=} \frac{1}{4\pi^2} \int_0^{\pi} \int_{\R} \Fourier (\Radon f)(S,\theta) \, \e^{\i S (x \cos(\theta) + y \sin(\theta))} \, |S| \: \d S \, \d \theta \\
& = \frac{1}{2\pi} \int_0^{\pi} \Fourier^{-1}[|S| \, \Fourier (\Radon f)(S,\theta)](x \cos(\theta) + y \sin(\theta),\theta) \: \d \theta \\
& = \frac{1}{2} \, \Back\big(\Fourier^{-1}[|S| \Fourier(\Radon f)(S,\theta)]\big)(x,y)
\end{align*}
due to the definition of the back projection.
\end{proof}

We remark that without the factor $|S|$ in~\eqref{eq:FBP_formula}, the Fourier transform and its inverse would cancel out and the FBP formula world reduce to simply applying the back projection operator $\Back$ to the Radon data $\Radon f$.
However, as we have seen in Observation~\ref{obs:Back_not_inverse}, this is not sufficient for the exact recovery of the function $f$.

\begin{remark}
The FBP formula~\eqref{eq:FBP_formula} reveals that multiplying the Fourier transform of $\Radon f$ with $|S|$ and applying the inverse Fourier transform is essential before back projecting the Radon data.
In the language of signal processing, we say that the Radon data $\Radon f$ is {\em filtered} by the multiplication with the (radial) filter $|S|$ in Fourier domain, which also explains the expression {\em filtered back projection}.
\end{remark}

With Theorem~\ref{theo:filtered_back_projection} the stated CT reconstruction Problem~\ref{prob:CT_reconstruction_problem_Radon} is solved {\em analytically}.
In practice, however, the application of the FBP formula~\eqref{eq:FBP_formula} causes severe numerical problems.

\begin{observation}[FBP is unstable]
By the application of the filter $|S|$ to the Fourier transform $\Fourier(\Radon f)$ in~\eqref{eq:FBP_formula}, especially the high frequency components in $\Radon f$ are amplified by the magnitude of $|S|$.
Since noise mainly consists of high frequencies, this shows that the filtered back projection formula is highly sensitive with respect to noise and, thus, numerically unstable.
In practice, a direct application of the FBP formula would lead to undesired corruptions in the reconstruction.
\end{observation}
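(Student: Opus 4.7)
The statement is qualitative, but the plan is to make it precise by modeling measurement noise as an additive perturbation of the Radon data and showing that the filtering step of~\eqref{eq:FBP_formula} acts as an unbounded operator, so arbitrarily small perturbations in the data can produce arbitrarily large reconstruction errors. Concretely, if $\widetilde{g} = \Radon f + \eta$ denotes noisy data with noise $\eta \equiv \eta(t,\theta)$, then by the linearity of $\Back$ and of the Fourier transform the reconstructed image $\widetilde{f} = \tfrac{1}{2}\,\Back\bigl(\Fourier^{-1}[|S|\,\Fourier \widetilde{g}(S,\theta)]\bigr)$ satisfies
\begin{equation*}
\widetilde{f} - f \;=\; \tfrac{1}{2}\,\Back\bigl(\Fourier^{-1}[|S|\,\Fourier \eta(S,\theta)]\bigr),
\end{equation*}
so the task reduces to showing that the multiplier operator $T: \eta \mapsto \Fourier^{-1}[|S|\,\Fourier \eta]$, applied in the radial variable, is not bounded on natural spaces of data such as $\L^2(\R \times [0,\pi))$ (or $\L^\infty$), and that this unboundedness survives the back projection.

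The key step is to exhibit an explicit family of admissible noise perturbations that realises this unboundedness. A convenient choice is to take a fixed Schwartz window $\phi \in \Schwartz(\R)$ with $\|\phi\|_{\L^2(\R)} = 1$ and a fixed smooth angular cutoff $\psi \in \Cont([0,\pi))$, and to set, for $n \in \N$,
\begin{equation*}
\eta_n(t,\theta) \;=\; \varepsilon_n \, \cos(n t)\,\phi(t)\,\psi(\theta),
\end{equation*}
with $\varepsilon_n \to 0$. Then $\|\eta_n\|_{\L^2(\R \times [0,\pi))} \to 0$, but the radial Fourier transform $\Fourier \eta_n(S,\theta)$ is essentially a sum of translates of $\Fourier\phi$ concentrated at $S = \pm n$. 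Consequently $\||S|\,\Fourier \eta_n(\cdot,\theta)\|_{\L^2(\R)}$ grows like $n\,\varepsilon_n$ for each fixed $\theta$ in the support of $\psi$, so by choosing $\varepsilon_n = n^{-1/2}$ (say) one obtains $\|\eta_n\| \to 0$ while $\|\Fourier^{-1}[|S|\,\Fourier \eta_n]\| \to \infty$. This makes precise the informal statement that the multiplier $|S|$ amplifies high frequencies without bound.

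The final step is to propagate the amplification through the back projection to conclude that $\|\widetilde{f} - f\|$ can be made arbitrarily large while $\|\eta_n\|$ is arbitrarily small. Since $\Back$ maps into $\L^\infty$ boundedly for bounded inputs (and into $\L^1_\loc$ for integrable inputs), one should test the filtered noise against a fixed smooth reconstruction kernel, or equivalently evaluate $\Back\bigl(\Fourier^{-1}[|S|\,\Fourier \eta_n]\bigr)$ at a point where the back projection integral picks up the high-frequency oscillation coherently. Concretely, using the Fourier slice Theorem~\ref{theo:Fourier_slice} in reverse, $\tfrac{1}{2}\Back\bigl(\Fourier^{-1}[|S|\,\Fourier g]\bigr)$ is the analogue of the Riesz potential $(-\Delta)^{1/2}$ acting through the Radon adjoint, whose symbol grows like the radial frequency, so no smoothing by $\Back$ can compensate the linear growth of $|S|$.

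The main obstacle, as I see it, is not the mere existence of such high-frequency perturbations but the bookkeeping of function spaces: measurement noise naturally sits in $\L^\infty(\R \times [0,\pi))$ or in a discretised $\ell^2$ sense, whereas the amplification estimate is cleanest in $\L^2$. One therefore has to be careful to state the instability in a norm pair that both matches the physical noise model and captures the unboundedness; in particular, one must verify that the perturbations $\eta_n$ above actually arise as Radon data of some legitimate attenuation functions plus noise, or else accept the weaker but still informative statement that the FBP map from data to reconstruction fails to be continuous between any such natural norms.
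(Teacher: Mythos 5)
The statement you are proving is an \emph{Observation}, and the paper offers no formal proof of it: the text justifies it informally (the multiplier $|S|$ grows without bound, noise lives at high frequencies) and by a numerical experiment with $10\%$ Gaussian noise on the Shepp--Logan sinogram; the rigorous content is deferred to the ill-posedness section, where Theorem~\ref{theo:Radon_Sobolev_norm} shows the two-sided estimate $\|f\|_{\H^\alpha(\R^2)} \leq \|\Radon f\|_{\H^{\alpha+\nicefrac{1}{2}}(\R\times[0,\pi))} \leq C_\alpha\|f\|_{\H^\alpha(\R^2)}$, from which the remark following that theorem concludes that $\Radon^{-1}$ is not $\L^2$-continuous. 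Your route is genuinely different and, for the filtering step, sound: the computation that $\eta_n(t,\theta)=\varepsilon_n\cos(nt)\phi(t)\psi(\theta)$ has $\|\eta_n\|_{\L^2}\sim\varepsilon_n$ while $\||S|\Fourier\eta_n\|_{\L^2}\sim n\varepsilon_n$ correctly exhibits the unboundedness of the multiplier $\eta\mapsto\Fourier^{-1}[|S|\Fourier\eta]$ on $\L^2$, which is a more concrete and self-contained demonstration than the paper's. What the paper's approach buys is that it handles the \emph{composed} operator (filter plus back projection) in one stroke, via the lower Sobolev bound, rather than only the intermediate filtering stage.

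The gap --- which you partly flag yourself --- is the last step: you never actually show that the back projection does not absorb the amplification. This is not a formality, because $\Back$ is itself smoothing by half an order in the Sobolev scale (it is essentially the $\L^2$-adjoint of $\Radon$, which smoothes by $\nicefrac{1}{2}$ by Theorem~\ref{theo:Radon_Sobolev_norm}), so the filter's gain of one full order is only partially cancelled, and quantifying the net effect requires exactly the symbol calculus you gesture at but do not carry out. The clean repair is to choose your perturbations \emph{in the range of} $\Radon$: set $\eta_n=\Radon h_n$ for increasingly oscillatory $h_n$ with $\|h_n\|_{\L^2(\R^2)}=1$. Then the exact inversion formula \eqref{eq:FBP_formula} gives
\begin{equation*}
\tfrac{1}{2}\,\Back\bigl(\Fourier^{-1}[|S|\,\Fourier\eta_n(S,\theta)]\bigr)=h_n
\end{equation*}
identically, so the reconstruction error has norm $1$, while the Fourier slice Theorem~\ref{theo:Fourier_slice} together with the Jacobian $|S|\,\d S\,\d\theta$ shows $\|\Radon h_n\|_{\L^2(\R\times[0,\pi))}^2=\tfrac{1}{2\pi}\int_{\R^2}|\Fourier h_n|^2\,(X^2+Y^2)^{-1/2}\,\d X\,\d Y\to 0$ when $\Fourier h_n$ concentrates at high frequency. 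This removes both of your stated worries at once: the perturbations are legitimate Radon data, and no separate analysis of $\Back$ is needed because the composition is evaluated exactly. It is also precisely the content of the paper's lower Sobolev bound specialised to $\alpha=0$.
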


\begin{figure}[t]
\centering
\subfigure[Radon data]{\label{fig:shepp-logan_Radon}\includegraphics[viewport=40 58 210 335, height=0.25\textwidth]{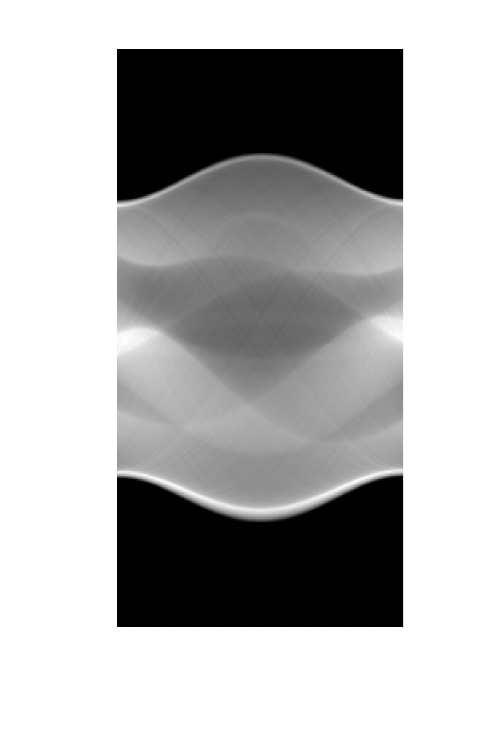}}
\hfill
\subfigure[Noiseless reconstruction]{\label{fig:shepp-logan_reconstruction}\includegraphics[viewport=50 55 257 261, height=0.25\textwidth]{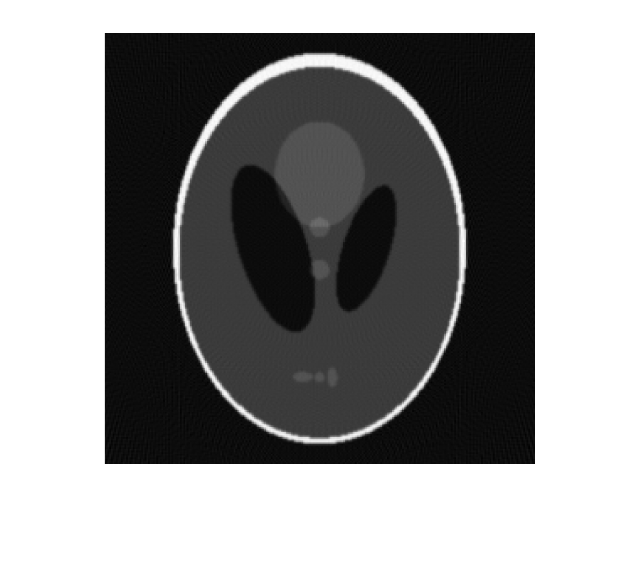}}
\hfill
\subfigure[Noisy data]{\label{fig:shepp-logan_Radon_noisy}\includegraphics[viewport=40 58 210 335, height=0.25\textwidth]{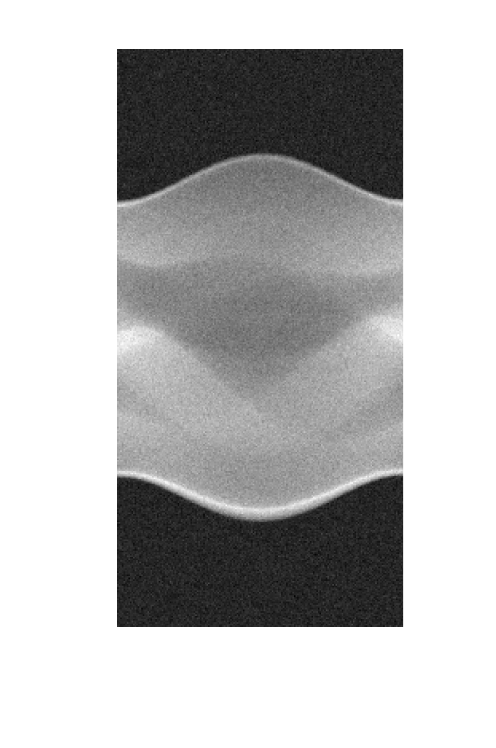}}
\hfill
\subfigure[Noisy reconstruction]{\label{fig:shepp-logan_reconstruction_noisy}\includegraphics[viewport=50 55 257 261, height=0.25\textwidth]{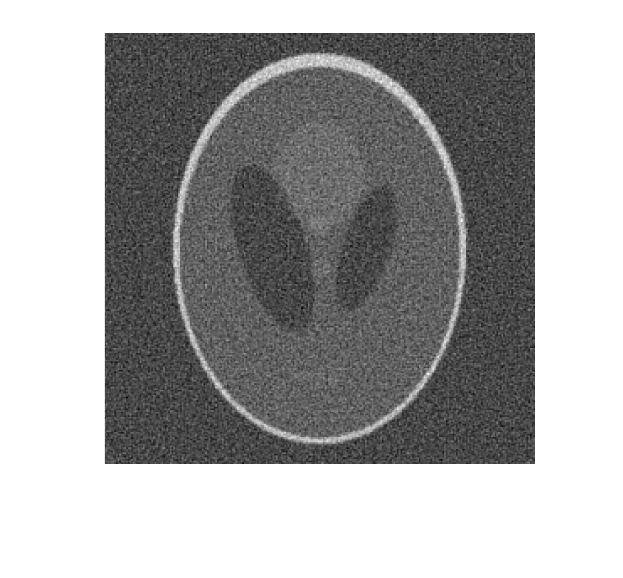}}
\caption{Noise amplification of the filtered back projection formula.}
\label{fig:noise_amplification}
\end{figure}

To illustrate this observation we consider the reconstruction of the Shepp-Logan phantom from {\em noisy} Radon data, see Figure~\ref{fig:noise_amplification}.
The noiseless Radon data is shown in Figure~\ref{fig:shepp-logan_Radon} and the FBP reconstruction from this data can be seen in Figure~\ref{fig:shepp-logan_reconstruction}.
In Figure~\ref{fig:shepp-logan_Radon_noisy} 10\% white Gaussian noise is added to the Radon data and this noisy data is used for the FBP reconstruction in Figure~\ref{fig:shepp-logan_reconstruction_noisy}.
As expected, we clearly observe a severe amplification of the noise.

\smallskip

The noise sensibility of the inversion formula can be explained by analysing the ill-posedness of the CT reconstruction Problem~\ref{prob:CT_reconstruction_problem_Radon}.
Before that, we first want to pass the following remark.

\begin{remark}
The filtered back projection formula assumes the Radon data $\Radon f(t,\theta)$ to be available for all straight lines $\ell_{t,\theta}$ in the plane.
But in practice, only a finite number of X-ray samples are taken and, consequently, we have to recover the function $f$ from a finite set of Radon data
\begin{equation*}
\bigl\{ \Radon f(t_j,\theta_j) \bigm| j = 1,\ldots,M \bigr\}
\quad \mbox{ for some } M \in \N.
\end{equation*}
The reconstruction from discrete Radon data will be discussed in later chapters.
\end{remark}

\section{Ill-posedness}

In the previous paragraph we have presented an inversion formula for the Radon transform, given by the filtered back projection formula.
For the application of this reconstruction formula to real data, we have to assume that perfect Radon data are given.
However, this is not a realistic scenario, because in practice the data is always corrupted by noise.

\bigbreak

Therefore, it is important to know how small perturbations in the measurement data are propagated through the reconstruction process.
In particular, we have to analyse whether the inversion of the Radon transform is continuous, i.e., whether small measurement errors only lead to small errors in the reconstruction.
This leads us to Hadamard's classical definition of {\em well-posed} problems and the notion of {\em ill-posedness}.

For a comprehensive treatment of ill-posed problems and the theory of inverse problems and their regularization, we refer the reader to the monograph~\cite{Engl2000}.

\begin{definition}[Well-posedness]
Let $A: \X \to \Y$ be a mapping between topological spaces $\X$ and $\Y$.
The problem $Ax = y$ is called {\em well-posed} in the sense of Hadamard if the following three conditions are satisfies:
\begin{enumerate}
\item {\em Existence:} For every $y \in \Y$ there exists an $x \in \X$ such that $Ax = y$.

\item {\em Uniqueness:} For every $y \in \Y$ the solution $x \in \X$ of $Ax = y$ is uniquely determined.

\item {\em Stability:} The inverse mapping $A^{-1}: \Y \to \X$ is continuous, i.e., the solution $x \in \X$ depends continuously on the data $y \in \Y$.
\end{enumerate}
If at least one of the above conditions is violated, the problem is called {\em ill-posed}.
\end{definition}

A standard method to classify the degree of ill-posedness of an operator $A$ is based on the decay behaviour of its singular values, see~\cite[Section 2.2]{Engl2000}.
If $A$ is defined on $\L^2$-spaces, another classification of ill-posedness can be defined in terms of Sobolev spaces $\H^\alpha$.

\begin{definition}[Degree of ill-posedness]
\label{def:degree_ill_posedness}
Let $A: \X \to \Y$ be an operator between $\L^2$-spaces $\X$ and $\Y$.
The problem $Ax = y$ is called {\em ill-posed of degree $\alpha > 0$} if, for some $C_1, C_2 > 0$,
\begin{equation*}
C_1 \, \|x\|_{\L^2} \leq \|Ax\|_{\H^\alpha} \leq C_2 \, \|x\|_{\L^2}.
\end{equation*}
\end{definition}

We now state a continuity result for the Radon transform on Sobolev spaces of fractional order, which implies that the CT reconstruction Problem~\ref{prob:CT_reconstruction_problem_Radon} is ill-posed of degree $\frac{1}{2}$.

Let us first recall the definitions of the involved Sobolev spaces.
For functions $f \equiv f(x,y)$ in Cartesian coordinates $(x,y) \in \R^2$ 
the Sobolev space $\H^\alpha(\R^2)$ of order $\alpha \in \R$, defined as
\begin{equation*}
\H^\alpha(\R^2) = \bigl\{ f \in \Schwartz'(\R^2) \bigm| \|f\|_{\H^\alpha(\R^2)} < \infty \bigr\},
\end{equation*}
is equipped with the norm
\begin{equation*}
\|f\|_{\H^\alpha(\R^2)} = \bigg(\int_{\R} \int_{\R} (1+X^2+Y^2)^{\alpha} \, |\Fourier f(X,Y)|^2 \: \d X \, \d Y\bigg)^{\nicefrac{1}{2}}.
\end{equation*}
Further, for an open subset $\Omega \subset \R^2$ the space $\H_0^\alpha(\Omega)$ consists of those Sobolev functions with support in $\overline{\Omega}$, i.e.,
\begin{equation*}
\H_0^\alpha(\Omega) = \bigl\{ f \in \H^\alpha(\R^2) \bigm| \supp(f) \subset \overline{\Omega} \bigr\}.
\end{equation*}

For functions $g \equiv g(t,\theta)$ in polar coordinates $(t,\theta) \in \R \times [0,\pi)$, we define the Sobolev space $\H^\alpha(\R \times [0,\pi))$, $\alpha \in \R$, as the space of all functions $g$ with $g(\cdot,\theta) \in \H^\alpha(\R)$ for almost all $\theta \in [0,\pi)$ and
\begin{equation*}
\|g\|_{\H^\alpha(\R \times [0,\pi))} = \bigg(\int_0^{\pi} \int_{\R} (1+S^2)^{\alpha} \, |\Fourier g(S,\theta)|^2 \: \d S \, \d \theta\bigg)^{\nicefrac{1}{2}} < \infty.
\end{equation*}

Then, we obtain the following continuity result for the Radon transform.
Here, it is essential to assume that we deal with functions with compact support.

\begin{theorem}
\label{theo:Radon_Sobolev_norm}
Let $\Omega \subset \R^2$ be an open and bounded set and let $f \in \L^1(\R^2) \cap \H_0^\alpha(\Omega)$ with $\alpha \in \R$.
Then, we have $\Radon f \in \H^{\alpha+\nicefrac{1}{2}}(\R \times [0,\pi))$ and there exists a constant $C_{\alpha} > 1$ such that
\begin{equation*}
\|f\|_{\H^\alpha(\R^2)} \leq \|\Radon f\|_{\H^{\alpha+\nicefrac{1}{2}}(\R \times [0,\pi))} \leq C_{\alpha} \, \|f\|_{\H^\alpha(\R^2)}.
\end{equation*}
\end{theorem}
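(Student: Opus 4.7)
The plan is to convert both sides of the claimed inequality to polar-coordinate integrals on $[0,\pi)\times\R$, exploit the Fourier slice theorem to match the integrand of $\|\Radon f\|^2_{\H^{\alpha+1/2}}$ with that of $\|f\|^2_{\H^\alpha}$, and then compare the weights $(1+S^2)^{\alpha+1/2}$ and $|S|(1+S^2)^{\alpha}$ separately on the high- and low-frequency regimes, using the compact support hypothesis exactly to control the small-$|S|$ part.

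Concretely, I would first write, using Theorem~\ref{theo:Fourier_slice},
\begin{equation*}
\|\Radon f\|_{\H^{\alpha+\nicefrac{1}{2}}(\R\times[0,\pi))}^2 = \int_0^\pi\int_\R (1+S^2)^{\alpha+\nicefrac{1}{2}}\,|\Fourier f(S\cos(\theta),S\sin(\theta))|^2\:\d S\,\d\theta.
\end{equation*}
Parametrising $\R^2\setminus\{0\}$ by $(S,\theta)\in\R\times[0,\pi)$ via $(X,Y)=(S\cos(\theta),S\sin(\theta))$, the Jacobian is $|S|$ (the range $\theta\in[\pi,2\pi)$ is absorbed into negative $S$ by relation~\eqref{eq:lines_periodicity} applied to $\bfn_\theta$), so that
\begin{equation*}
\|f\|_{\H^\alpha(\R^2)}^2 = \int_0^\pi\int_\R (1+S^2)^{\alpha}\,|\Fourier f(S\cos(\theta),S\sin(\theta))|^2\,|S|\:\d S\,\d\theta.
\end{equation*}

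The lower bound is then immediate: since $|S|\leq (1+S^2)^{\nicefrac{1}{2}}$ pointwise, I obtain $\|f\|_{\H^\alpha(\R^2)}^2 \leq \|\Radon f\|_{\H^{\alpha+\nicefrac{1}{2}}(\R\times[0,\pi))}^2$, which is the left inequality with constant $1$. For the upper bound I would split the $S$-integral at $|S|=1$. On $\{|S|\geq 1\}$ the elementary estimate $(1+S^2)^{\nicefrac{1}{2}} \leq \sqrt{2}\,|S|$ yields
\begin{equation*}
\int_0^\pi\int_{|S|\geq 1}(1+S^2)^{\alpha+\nicefrac{1}{2}}\,|\Fourier f|^2\:\d S\,\d\theta \leq \sqrt{2}\,\|f\|_{\H^\alpha(\R^2)}^2.
\end{equation*}

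The delicate part, and the main obstacle, is the low-frequency piece $\{|S|<1\}$, where the weight $(1+S^2)^{\alpha+\nicefrac{1}{2}}$ is bounded but the comparison weight $|S|$ vanishes; here the compact support of $f$ is essential. I would choose a cut-off $\chi\in\Test(\R^2)$ with $\chi\equiv 1$ on an open neighbourhood of $\overline{\Omega}$, write $\Fourier f(\xi)=\langle f,\chi(\cdot)\,\e^{-\i\xi\cdot}\rangle$, and use the Sobolev duality pairing $\H_0^\alpha(\Omega)\times\H^{-\alpha}(\R^2)\to\C$ to obtain
\begin{equation*}
|\Fourier f(\xi)| \leq \|f\|_{\H^\alpha(\R^2)}\,\|\chi(\cdot)\,\e^{-\i\xi\cdot}\|_{\H^{-\alpha}(\R^2)}.
\end{equation*}
Since $\xi\mapsto \chi(\cdot)\,\e^{-\i\xi\cdot}$ is continuous into $\H^{-\alpha}(\R^2)$, its norm is uniformly bounded by some $M_\alpha$ on the compact set $\{|\xi|\leq 1\}$, and hence
\begin{equation*}
\int_0^\pi\int_{|S|<1}(1+S^2)^{\alpha+\nicefrac{1}{2}}\,|\Fourier f(S\cos(\theta),S\sin(\theta))|^2\:\d S\,\d\theta \leq 2\pi\,2^{|\alpha+\nicefrac{1}{2}|}\,M_\alpha^2\,\|f\|_{\H^\alpha(\R^2)}^2.
\end{equation*}
Adding the two regimes produces the upper estimate with some constant $C_\alpha>1$, completing the plan. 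The only subtlety to verify is the continuity of the map $\xi\mapsto\chi(\cdot)\,\e^{-\i\xi\cdot}$ into $\H^{-\alpha}(\R^2)$, which follows from the Leibniz rule applied to the Schwartz function $\chi$ together with the elementary bound $\|\e^{-\i\xi\cdot}\psi\|_{\H^{-\alpha}}\lesssim(1+|\xi|^2)^{|\alpha|/2}\|\psi\|_{\H^{-\alpha}}$ for $\psi\in\Schwartz(\R^2)$.
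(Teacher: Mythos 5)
Your proposal is correct and follows essentially the same route as the paper's proof: Fourier slice theorem, the pointwise bound $|S|\leq(1+S^2)^{1/2}$ for the lower inequality, a split at $|S|=1$ with $(1+S^2)^{1/2}\leq\sqrt{2}\,|S|$ for the high frequencies, and a cut-off function together with the $\H^\alpha$--$\H^{-\alpha}$ duality (which the paper writes out explicitly via Parseval and Cauchy--Schwarz) to control $\sup_{|\xi|\leq 1}|\Fourier f(\xi)|$ on the low frequencies. The only cosmetic difference is that you estimate the low-frequency piece directly in polar coordinates, which sidesteps the singular weight $(X^2+Y^2)^{-1/2}$ that the paper absorbs into its constant $C_1(\alpha)$.
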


\begin{proof}
Since $f \in \L^1(\R^2)$, the Fourier slice Theorem~\ref{theo:Fourier_slice} gives
\begin{equation*}
\Fourier(\Radon f)(S,\theta) = \Fourier f(S \cos(\theta),S \sin(\theta))
\quad \forall \, (S,\theta) \in \R \times [0,\pi)
\end{equation*}
and we obtain
\begin{equation*}
\|\Radon f\|_{\H^{\alpha+\nicefrac{1}{2}}(\R \times [0,\pi))}^2 = \int_0^{\pi} \int_{\R} (1+S^2)^{\alpha+\frac{1}{2}} \, |\Fourier f(S \cos(\theta),S \sin(\theta))|^2 \: \d S \, \d \theta.
\end{equation*}
Using the estimate
\begin{equation*}
(1+S^2)^{\frac{1}{2}} \geq |S|
\quad \forall \, S \in \R,
\end{equation*}
this implies that
\begin{equation*}
\|\Radon f\|_{\H^{\alpha+\nicefrac{1}{2}}(\R \times [0,\pi))}^2 \geq \int_0^{\pi} \int_{\R} (1+S^2)^\alpha \, |\Fourier f(S \cos(\theta),S \sin(\theta))|^2 \, |S| \: \d S \, \d \theta.
\end{equation*}
By applying the transformation
\begin{equation*}
X = S \cos(\theta)
\quad \mbox{ and } \quad
Y = S \sin(\theta),
\end{equation*}
we get $\d X \, \d Y = |S| \: \d S \, \d \theta$ and with $S^2 = X^2 + Y^2$ follows that
\begin{equation*}
\|\Radon f\|_{\H^{\alpha+\nicefrac{1}{2}}(\R \times [0,\pi))}^2 \geq \int_{\R} \int_{\R} (1+X^2+Y^2)^\alpha \, |\Fourier f(X,Y)|^2 \: \d X \, \d Y = \|f\|_{\H^\alpha(\R^2)}^2,
\end{equation*}
which proves the first inequality.
On the other hand, we have
\begin{align*}
\|\Radon f\|_{\H^{\alpha+\nicefrac{1}{2}}(\R \times [0,\pi))}^2 & = \int_0^{\pi} \int_{\R} (1+S^2)^{\alpha+\frac{1}{2}} \, |\Fourier f(S \cos(\theta),S \sin(\theta))|^2 \: \d S \, \d \theta \\
& = \int_{\R} \int_{\R} (1+X^2+Y^2)^{\alpha+\frac{1}{2}} \, (X^2+Y^2)^{-\frac{1}{2}} \, |\Fourier f(X,Y)|^2 \: \d X \, \d Y,
\end{align*}
where we used the transformation from above and
\begin{equation*}
|S| = \sqrt{X^2 + Y^2}.
\end{equation*}
We now split the above representation of the $\H^{\alpha+\nicefrac{1}{2}}$-norm of $\Radon f$ into the sum of two integrals,
\begin{equation*}
\|\Radon f\|_{\H^{\alpha+\nicefrac{1}{2}}(\R \times [0,\pi))}^2 = I_1 + I_2,
\end{equation*}
where we let
\begin{equation*}
I_1 = \int_{X^2+Y^2 \leq 1} (1+X^2+Y^2)^{\alpha+\frac{1}{2}} \, (X^2+Y^2)^{-\frac{1}{2}} \, |\Fourier f(X,Y)|^2 \: \d (X,Y)
\end{equation*}
and
\begin{equation*}
I_2 = \int_{X^2+Y^2 > 1} (1+X^2+Y^2)^{\alpha+\frac{1}{2}} \, (X^2+Y^2)^{-\frac{1}{2}} \, |\Fourier f(X,Y)|^2 \: \d (X,Y).
\end{equation*}
In what follows, we estimate the integrals $I_1$ and $I_2$ separately with respect to the $\H^\alpha$-norm of~$f$.
To bound the second integral $I_2$, we use the estimate
\begin{equation*}
X^2+Y^2 \geq \frac{1}{2} \, (1+X^2+Y^2)
\quad \forall \, X^2+Y^2 \geq 1.
\end{equation*}
With this, $I_2$ can be estimated from above by
\begin{align*}
I_2 & \leq \sqrt{2} \int_{X^2+Y^2 > 1} (1+X^2+Y^2)^{\alpha+\frac{1}{2}} \, (1+X^2+Y^2)^{-\frac{1}{2}} \, |\Fourier f(X,Y)|^2 \: \d (X,Y) \\
& = \sqrt{2} \int_{X^2+Y^2 > 1} (1+X^2+Y^2)^{\alpha} \, |\Fourier f(X,Y)|^2 \: \d (X,Y) \leq \sqrt{2} \|f\|_{\H^\alpha(\R^2)}^2.
\end{align*}
For the first integral $I_1$, we use the fact that $\Fourier f \in \Cont_0(\R^2)$ for $f \in \L^1(\R^2)$ and get
\begin{align*}
I_1 & \leq \int_{X^2+Y^2 \leq 1} (1+X^2+Y^2)^{\alpha+\frac{1}{2}} \, (X^2+Y^2)^{-\frac{1}{2}} \: \d (X,Y) \sup_{X^2+Y^2 \leq 1} |\Fourier f(X,Y)|^2 \\
& \leq C_1(\alpha) \sup_{X^2+Y^2 \leq 1} |\Fourier f(X,Y)|^2
\end{align*}
for some constant $0 < C_1(\alpha) < \infty$.
In order to estimate the supremum, we choose an even function $\chi \in \Cont_c^\infty(\R^2)$ which is $1$ on $\Omega$ and, for fixed $(X,Y) \in \R^2$, we define the function
\begin{equation*}
\chi_{(X,Y)}(x,y) = \e^{-\i (xX + yY)} \, \chi(x,y)
\quad \mbox{ for } (x,y) \in \R^2.
\end{equation*}
Then, its inverse Fourier transform $\Fourier^{-1} \chi_{(X,Y)}$ exists and is given by
\begin{equation*}
\Fourier^{-1} \chi_{(X,Y)}(x,y) = \Fourier^{-1} \chi(x-X,y-Y)
\quad \forall \, (x,y) \in \R^2.
\end{equation*}
Applying Parseval's identity yields
\begin{align*}
|\Fourier f(X,Y)|^2 & = \bigg|\int_{\R} \int_{\R} f(x,y) \, \e^{-\i (xX+yY)} \: \d x \, \d y\bigg|^2 = \bigg|\int_{\R} \int_{\R} \chi_{(X,Y)}(x,y) \, f(x,y) \: \d x \, \d y\bigg|^2 \\
& = \bigg|\int_{\R} \int_{\R} \Fourier^{-1} \chi_{(X,Y)}(x,y) \, \Fourier f(x,y) \: \d x \, \d y\bigg|^2 \\
& = \bigg|\int_{\R} \int_{\R} \Fourier^{-1} \chi_{(X,Y)}(x,y) \, (1+x^2+y^2)^{-\frac{\alpha}{2}} \, (1+x^2+y^2)^{\frac{\alpha}{2}} \, \Fourier f(x,y) \: \d x \, \d y\bigg|^2
\end{align*}
and with the Cauchy-Schwarz inequality follows that
\begin{align*}
|\Fourier f(X,Y)|^2 \leq \bigg(\int_{\R} \int_{\R} (1+x^2+y^2)^{-\alpha} \, |\Fourier^{-1} \chi_{(X,Y)}(x,y)|^2 \: \d x \, \d y\bigg) \, \|f\|_{\H^\alpha(\R^2)}^2,
\end{align*}
since $\chi \in \Cont_c^\infty(\R^2)$ and $f \in \H_0^\alpha(\Omega)$, i.e., $(1+x^2+y^2)^{\frac{\alpha}{2}} \, |\Fourier f| \in \L^2(\R^2)$.
We further have
\begin{equation*}
\Fourier^{-1} \chi_{(X,Y)}(x,y) = \frac{1}{4\pi^2} \, \Fourier \chi_{(X,Y)}(-x,-y)
\quad \forall \, (x,y) \in \R^2,
\end{equation*}
which implies that
\begin{equation*}
|\Fourier f(X,Y)|^2 \leq \frac{1}{16\pi^4} \, \|\chi_{(X,Y)}\|_{\H^{-\alpha}(\R^2)}^2 \, \|f\|_{\H^\alpha(\R^2)}^2
\quad \forall \, (X,Y) \in \R^2.
\end{equation*}
The $\H^{-\alpha}$-norm of $\chi_{(X,Y)}$ is a continuous function of $(X,Y)$ and, consequently, there exists a constant $C_2(\alpha) > 0$ such that
\begin{equation*}
\sup_{X^2+Y^2 \leq 1} \frac{1}{16\pi^4} \, \|\chi_{(X,Y)}\|_{\H^{-\alpha}(\R^2)}^2 \leq C_2(\alpha).
\end{equation*}
Combining the estimates yields
\begin{equation*}
\|\Radon f\|_{\H^{\alpha+\nicefrac{1}{2}}(\R \times[0,\pi))}^2 \leq \Big(\sqrt{2} + C_1(\alpha) \, C_2(\alpha)\Big) \, \|f\|_{\H^\alpha(\R^2)}^2 = C_\alpha^2 \, \|f\|_{\H^\alpha(\R^2)}^2
\end{equation*}
and the second inequality is also satisfied.
In particular, we have $\Radon f \in \H^{\alpha+\nicefrac{1}{2}}(\R \times [0,\pi))$.
\end{proof}

\begin{remark}
Theorem~\ref{theo:Radon_Sobolev_norm} shows that the Radon transform $\Radon$ admits an inverse and that this inverse is continuous as an operator from $\H^{\alpha+\nicefrac{1}{2}}(\R \times [0,\pi))$ into $\H_0^\alpha(\Omega)$.
Here, the order of the latter Sobolev space is optimal in the sense that the result is wrong for any larger index.
In particular, if $\alpha$ is chosen such that $\H^{\alpha+\nicefrac{1}{2}}(\R \times [0,\pi))$ is simply the $\L^2$-space, i.e., $\alpha = -\frac{1}{2}$, then $\H_0^\alpha(\Omega)$ is a Sobolev space of negative order, whose norm is weaker than the $\L^2$-norm on $\L^2(\Omega)$.
This shows that $\Radon^{-1}$ is not continuous in an $\L^2$-setting, i.e., the CT reconstruction Problem~\ref{prob:CT_reconstruction_problem_Radon} is {\em ill-posed} in the sense of Hadamard.
\end{remark}

We remark that choosing $\alpha = 0$ in Theorem~\ref{theo:Radon_Sobolev_norm} shows that the problem of reconstructing a bivariate function from its Radon data is ill-posed of degree $\frac{1}{2}$ in the sense of Definition~\ref{def:degree_ill_posedness}.
In particular, the Radon transform $\Radon$ smoothes by an order of $\frac{1}{2}$ in the Sobolev scale.
Since the inversion has to reverse the smoothing, the reconstruction process is unstable and regularization strategies have to be applied to stabilize the inversion.

For this purpose, we will follow a standard approach and replace the filter $|S|$ in the filtered back projection formula~\eqref{eq:FBP_formula} by a so called {\em low-pass filter} $F_L(S)$ of finite bandwidth $L$ and with a compactly supported window function.
For other approaches we refer to the standard literature on inverse problems and regularization, see, for example,~\cite{Engl2000}.

\section{Overview of inversion strategies}

To close this chapter we collect different strategies for the inversion of the Radon transform, which all suffer from the ill-posedness of the CT reconstruction problem.

\subsection*{Direct Fourier reconstruction}

The direct Fourier reconstruction method is based on the Fourier slice Theorem~\ref{theo:Fourier_slice}, i.e.,
\begin{equation*}
\Fourier (\Radon f)(S,\theta) = \Fourier f(S \cos(\theta),S \sin(\theta))
\quad \forall \, (S,\theta) \in \R \times [0,\pi),
\end{equation*}
and consists of the following steps:
\begin{enumerate}
\item[(1)] For each angle $\theta \in [0,\pi)$, compute the 1D Fourier transform of the sinogram $\Radon f$
\begin{equation*}
\hat{p}_\theta(S) = \Fourier (\Radon f) (S,\theta)
\quad \mbox{ for } S \in \R.
\end{equation*}
\item[(2)] Create polar representation of the Fourier transform of the target function $f$ via
\begin{equation*}
\hat{f}(S \cos(\theta),S \sin(\theta)) = \hat{p}_\theta(S)
\quad \mbox{ for } (S,\theta) \in \R \times [0,\pi).
\end{equation*}
\item[(3)] Convert from polar coordinates to Cartesian coordinates,
\begin{equation*}
\hat{f}(S \cos(\theta),S \sin(\theta)) \longrightarrow \hat{f}(X,Y).
\end{equation*}
\item[(4)] Compute 2D inverse Fourier transform to reconstruct $f$ via
\begin{equation*}
f(x,y) = \Fourier^{-1} \hat{f}(x,y)
\quad \mbox{ for } (x,y) \in \R^2.
\end{equation*}
\end{enumerate}

\subsection*{Filtered back projection}

The exact filtered back projection method is based on the filtered back projection formula~\eqref{eq:FBP_formula}, i.e.,
\begin{equation*}
f(x,y) = \frac{1}{2} \, \Back\big(\Fourier^{-1}[|S| \Fourier(\Radon f)(S,\theta)]\big)(x,y)
\quad \forall \, (x,y) \in \R^2,
\end{equation*}
and consists of the following steps:
\begin{enumerate}
\item[(1)] For each angle $\theta \in [0,\pi)$, compute the 1D Fourier transform of the sinogram $\Radon f$
\begin{equation*}
\hat{p}_\theta(S) = \Fourier (\Radon f) (S,\theta)
\quad \mbox{ for } S \in \R.
\end{equation*}
\item[(2)] Multiply $\hat{p}_\theta$ by the filter $|S|$ to obtain
\begin{equation*}
\hat{h}_\theta(S) = |S| \, \hat{p}_\theta(S)
\quad \mbox{ for } S \in \R.
\end{equation*}
\item[(3)] Apply 1D inverse Fourier transform to obtain the filtered sinogram
\begin{equation*}
h(S,\theta) = \Fourier^{-1} \hat{h}_\theta(S)
\quad \mbox{ for } (S,\theta) \in \R \times [0,\pi).
\end{equation*}
\item[(4)] Back project the filtered sinogram to reconstruct $f$ via
\begin{equation*}
f(x,y) = \frac{1}{2} \, \Back h(x,y)
\quad \mbox{ for } (x,y) \in \R^2.
\end{equation*}
\end{enumerate}

\subsection*{Filtering the laminogram}

Instead of first filtering the sinogram and then back projecting the result, one can also first back project the sinogram, leading to the so called {\em laminogram},  and then filter the laminogram.
This approach is called filtering the laminogram and involves the {\em Riesz potential} $\Lambda^\alpha f$ of $f \in \Schwartz(\R^2)$, defined as
\begin{equation*}
\Lambda^\alpha f = \Fourier^{-1}\big(\|\cdot\|_{\R^2}^{-\alpha} \, \Fourier f\big)
\quad \mbox{ for } \alpha < 2.
\end{equation*}

\begin{theorem}
For $f \in \Schwartz(\R^2)$ we have the inversion formula
\begin{equation*}
f(x,y) = \frac{1}{2} \, \Lambda^{-1}\big(\Back(\Radon f)\big)(x,y)
\quad \forall \, (x,y) \in \R^2.
\end{equation*}
\end{theorem}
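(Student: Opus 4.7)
The plan is to apply the two-dimensional Fourier transform to both sides of the claimed identity and reduce the statement to the computation of $\Fourier(\Back(\Radon f))$. By Fourier inversion on $\Schwartz(\R^2)$, it suffices to establish
\begin{equation*}
\|(X,Y)\|_{\R^2} \cdot \Fourier(\Back(\Radon f))(X,Y) = 2 \, \Fourier f(X,Y)
\quad \text{for a.e. } (X,Y) \in \R^2,
\end{equation*}
since then $\Lambda^{-1}(\Back(\Radon f)) = \Fourier^{-1}(\|\cdot\|_{\R^2} \Fourier(\Back(\Radon f))) = 2f$.

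First I would plug the definition of $\Back$ into the left-hand side and interchange the order of integration:
\begin{equation*}
\Fourier(\Back(\Radon f))(X,Y) = \frac{1}{\pi} \int_0^\pi \int_{\R^2} \Radon f(x\cos(\theta) + y\sin(\theta), \theta) \, \e^{-\i(xX + yY)} \: \d(x,y) \, \d\theta.
\end{equation*}
Next I would apply the change of variables $t = x\cos(\theta) + y\sin(\theta)$, $s = -x\sin(\theta) + y\cos(\theta)$ familiar from the proof of the Fourier slice theorem. The exponent factorizes as $t(X\cos(\theta) + Y\sin(\theta)) + s(-X\sin(\theta) + Y\cos(\theta))$, so the $s$-integral produces the distribution $2\pi \, \delta(-X\sin(\theta) + Y\cos(\theta))$, while the $t$-integral produces $\Fourier(\Radon f)(X\cos(\theta) + Y\sin(\theta), \theta)$.

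Then I would pass to polar coordinates $(X,Y) = (R\cos(\phi), R\sin(\phi))$ with $R > 0$, so that the delta function reads $\delta(R\sin(\phi - \theta))$. For $\phi \in [0,\pi)$ this vanishes on $[0,\pi)$ exactly at $\theta = \phi$, contributing a Jacobian factor $1/R$, and the surviving $t$-integral equals $\Fourier(\Radon f)(R, \phi)$, which by the Fourier slice Theorem~\ref{theo:Fourier_slice} equals $\Fourier f(R\cos(\phi), R\sin(\phi)) = \Fourier f(X,Y)$. For $\phi \in [\pi, 2\pi)$ the unique zero in $[0,\pi)$ is $\theta = \phi - \pi$, and I would invoke the evenness relation $\Radon f(-t, \theta + \pi) = \Radon f(t, \theta)$ together with the Fourier slice theorem to again reduce the $t$-integral to $\Fourier f(X,Y)$. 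In both cases this yields
\begin{equation*}
\Fourier(\Back(\Radon f))(X,Y) = \frac{1}{\pi} \cdot 2\pi \cdot \frac{1}{R} \cdot \Fourier f(X,Y) = \frac{2}{\|(X,Y)\|_{\R^2}} \, \Fourier f(X,Y),
\end{equation*}
which is the desired identity; the set $\{R = 0\}$ has Lebesgue measure zero so may be ignored.

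The main obstacle is the rigorous justification of the delta-function manipulation: a priori the $s$-integral does not converge absolutely. I would handle this by working in the Schwartz space throughout and pairing against a test function, or equivalently by regularizing with a Gaussian cutoff and passing to the limit using dominated convergence (valid since $f \in \Schwartz(\R^2)$ makes $\Fourier f$ and $\Fourier(\Radon f)$ rapidly decreasing). A secondary technicality is the correct bookkeeping of the two half-angles $\phi \in [0, \pi)$ and $\phi \in [\pi, 2\pi)$, for which the evenness of the Radon transform is essential.
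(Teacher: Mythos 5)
Your proposal is correct in substance but takes the opposite direction from the paper, and the difference matters for rigor. You compute $\Fourier(\Back(\Radon f))$ directly, which forces you through a non-absolutely-convergent double integral and a delta-function identity $\int_\R \e^{-\i s\sigma}\,\d s = 2\pi\,\delta(\sigma)$ that must then be justified by pairing with test functions or Gaussian regularization --- a genuine technical burden, especially since $\Back(\Radon f)$ itself is not in $\L^1(\R^2)$ (it decays only like $\|(x,y)\|^{-1}$), so even the left-hand side of your key identity needs a distributional interpretation. The paper instead starts from $\Lambda f = \Fourier^{-1}(\|\cdot\|^{-1}\Fourier f)$ and applies the two-dimensional Fourier inversion formula in polar coordinates: the factor $\|(X,Y)\|^{-1} = |S|^{-1}$ cancels exactly against the Jacobian $|S|$, the Fourier slice theorem converts $\Fourier f(S\cos\theta, S\sin\theta)$ into $\Fourier(\Radon f)(S,\theta)$, and one reads off $\Lambda f = \tfrac12\,\Back(\Radon f)$ with every integral absolutely convergent (note $\|\omega\|^{-1}\Fourier f \in \L^1(\R^2)$ for $f \in \Schwartz(\R^2)$ since $\|\omega\|^{-1}$ is locally integrable in two dimensions). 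Applying $\Lambda^{-1}$ then finishes the proof. Both arguments establish the same identity $\Back(\Radon f) = 2\,\Lambda f$; yours yields it as a statement about the Fourier transform of the laminogram, which is perhaps more directly interpretable, but the paper's direction gets the result with no distribution theory at all. If you pursue your route, the regularization step you sketch in the last paragraph is not optional decoration --- it is where the entire proof lives, and you should also address in what sense $\Fourier(\Back(\Radon f))$ is defined before manipulating it.
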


\begin{proof}
Let $(x,y) \in \R^2$ be fixed.
Since for $f \in \Schwartz(\R^2)$ we have $\Lambda f \in \L^1(\R^2)$ and $\Fourier(\Lambda f) \in \L^1(\R^2)$, applying the two-dimensional Fourier inversion formula to $\Lambda f$ yields the identity
\begin{equation*}
\Lambda f(x,y) = \Fourier^{-1}(\Fourier \Lambda f)(x,y) = \frac{1}{4\pi^2} \int_{\R} \int_{\R} \|(X,Y)\|_{\R^2}^{-1} \, \Fourier f(X,Y) \, \e^{\i (xX + yY)} \: \d X \, \d Y.
\end{equation*}
By changing the variables $(X,Y) \in \R^2$ from Cartesian coordinates to $(S,\theta) \in \R \times [0,\pi)$ in polar coordinates, i.e.,
\begin{equation*}
X = S \cos(\theta)
\quad \mbox{ and } \quad
Y = S \sin(\theta),
\end{equation*}
we get $\d X \, \d Y = |S| \: \d S \, \d \theta$.
Thus, with the Fourier slice Theorem~\ref{theo:Fourier_slice} follows that
\begin{align*}
\Lambda f(x,y) & = \frac{1}{4\pi^2} \int_0^{\pi} \int_{\R} |S|^{-1} \, \Fourier f(S \cos(\theta),S \sin(\theta)) \, \e^{\i S (x \cos(\theta) + y \sin(\theta))} \, |S| \: \d S \, \d \theta \\
& \stackrel{\mathclap{\text{FST}}}{=} \frac{1}{4\pi^2} \int_0^{\pi} \int_{\R} \Fourier (\Radon f)(S,\theta) \, \e^{\i S (x \cos(\theta) + y \sin(\theta))} \: \d S \, \d \theta \\
& = \frac{1}{2\pi} \int_0^{\pi} \Fourier^{-1}[\Fourier (\Radon f)(S,\theta)](x \cos(\theta) + y \sin(\theta),\theta) \: \d \theta = \frac{1}{2} \, \Back\big(\Radon f\big)(x,y)
\end{align*}
due to the one-dimensional Fourier inversion formula and the definition of the back projection.
Consequently, for the target function $f$ follows that
\begin{equation*}
f = \Fourier^{-1} \big(\Fourier f\big) = \Fourier^{-1} \big(\|\cdot\|_{\R^2} \, \|\cdot\|_{\R^2}^{-1} \, \Fourier f\big) = \Fourier^{-1} \big(\|\cdot\|_{\R^2} \, \Fourier(\Lambda f)\big) = \Lambda^{-1} \big(\Lambda f\big) = \frac{1}{2} \, \Lambda^{-1}\big(\Back(\Radon f)\big),
\end{equation*}
as stated.
\end{proof}

The filtering the laminogram method is now based on the inversion formula
\begin{equation*}
f(x,y) = \frac{1}{2} \Fourier^{-1}\big(\|(X,Y)\|_{\R^2} \, \Fourier(\Back(\Radon f))(X,Y)\big)(x,y)
\quad \forall \, (x,y) \in \R^2
\end{equation*}
and consists of the following steps:
\begin{enumerate}
\item[(1)] Back project the sinogram $\Radon f$ to obtain the laminogram
\begin{equation*}
f_b(x,y) = \Back\big(\Radon f\big)(x,y)
\quad \mbox{ for } (x,y) \in \R^2.
\end{equation*}
\item[(2)] Compute the 2D Fourier transform of the laminogram to obtain
\begin{equation*}
\hat{f}_b(X,Y) = \Fourier f_b(X,Y)
\quad \mbox{ for } (X,Y) \in \R^2.
\end{equation*}
\item[(3)] Multiply $\hat{f}_b$ by the filter $\|(X,Y)\|_{\R^2}$ to obtain
\begin{equation*}
\hat{H}(X,Y) = \|(X,Y)\|_{\R^2} \, \hat{f}_b(X,Y)
\quad \mbox{ for } (X,Y) \in \R^2.
\end{equation*}
\item[(4)] Apply 2D inverse Fourier transform to reconstruct $f$ via
\begin{equation*}
f(x,y) = \frac{1}{2} \, \Fourier^{-1} \hat{H}(x,y)
\quad \mbox{ for } (x,y) \in \R^2.
\end{equation*}
\end{enumerate}

%--- Chapter 4
%---------------------------------------------------------------------------
\chapter{Method of filtered back projection}

We have seen that the basic reconstruction problem in CT can be formulated as the problem of reconstructing a bivariate function $f \in \L^1(\R^2)$ from given Radon data
\begin{equation*}
\left\{ \Radon f (t,\theta) \mid t \in \R, \, \theta \in [0,\pi) \right\}.
\end{equation*}
Based on the FBP formula~\eqref{eq:FBP_formula} and under suitable assumptions on~$f$ we found the reconstruction formula
\begin{equation*}
f(x,y) = \frac{1}{2} \, \Back \big(\Fourier^{-1} [|S| \Fourier(\Radon f)(S,\theta)]\big)(x,y)
\quad \forall \, (x,y) \in \R^2,
\end{equation*}
which is highly sensitive with respect to noise and, thus, cannot be used in practice.

In this chapter we explain how the above FBP formula can be stabilized by incorporating a low-pass filter $F_L: \R \to \R$ of the form
\begin{equation*}
F_L(S) = |S| \, W(\nicefrac{S}{L})
\quad \mbox{ for } S \in \R
\end{equation*}
with bandwidth $L>0$ and an even window $W \in \L^\infty(\R)$ of compact support $\supp(W) \seq [-1,1]$.
This standard approach reduces the noise sensitivity of the reconstruction scheme, but leads to an {\em approximate} FBP reconstruction, which we denote by $f_L$ and can be rewritten as
\begin{equation}
f_L = \frac{1}{2} \, \Back \big(\Fourier^{-1} F_L * \Radon f\big).
\label{eq:FBP_approximate_form}
\end{equation}
An application of~\eqref{eq:FBP_approximate_form} will be called {\em method of filtered back projection} or, in short, {\em FBP method}.

\section{Approximate reconstruction formula}

Based on the FBP formula~\eqref{eq:FBP_formula} we now define the approximate FBP reconstruction $f_L$ by incorporating a suitable low-pass filter $F_L$.

\begin{definition}[Low-pass filter]
Let $L > 0$ and let $W \in \L^\infty(\R)$ be even and compactly supported with
\begin{equation*}
\supp(W) \subseteq [-1,1].
\end{equation*}
A function $F_L: \R \to \R$ of the form
\begin{equation*}
F_L(S) = |S| \, W(\nicefrac{S}{L})
\quad \mbox{ for } S \in \R
\end{equation*}
is called {\em low-pass filter} for the stabilization of the FBP formula~\eqref{eq:FBP_formula}, where $L$ denotes its {\em bandwidth} and $W$ is its {\em window function}.
For the sake of brevity, we set $F \equiv F_1$ so that
\begin{equation*}
F_L(S) = L \, F(\nicefrac{S}{L})
\quad \forall \, S \in \R.
\end{equation*}
\end{definition}

In the following, let $F_L$ be a low-pass filter with bandwidth $L$ and window $W$.
Because of the compact support of $W$ we have $F_L \in \L^p(\R)$ for all $1 \leq p \leq \infty$ and
\begin{equation*}
\supp(F_L) \seq [-L,L].
\end{equation*}
Now, let the target function $f$ satisfy $f \in \L^1(\R^2)$.
Based on the FBP formula~\eqref{eq:FBP_formula} we define the {\em approximate FBP reconstruction} $f_L$ via
\begin{equation}
f_L(x,y) = \frac{1}{2} \, \Back \big(\Fourier^{-1} [F_L(S) \Fourier(\Radon f)(S,\theta)]\big)(x,y)
\quad \mbox{ for } (x,y) \in \R^2.
\label{eq:f_L}
\end{equation}
We will see that~\eqref{eq:f_L} defines a {\em band-limited} approximation of the target function~$f$.

\begin{definition}[Band-limited function]
A function $f$ whose Fourier transform $\Fourier f$ has compact support is called a {\em band-limited} function.
\end{definition}

In the first theorem we show that $f_L$ is defined almost everywhere on $\R^2$ and can be simplified as
\begin{equation*}
f_L = \frac{1}{2} \, \Back \big(\kappa_L * \Radon f\big),
\end{equation*}
where we define the band-limited function $\kappa_L: \R \times [0,\pi) \to \R$ via
\begin{equation*}
\kappa_L(S,\theta) = \Fourier^{-1} F_L(S)
\quad \mbox{ for } (S,\theta) \in \R \times [0,\pi).
\end{equation*}
Note that $\kappa_L$ is well-defined on $\R \times [0,\pi)$ and satisfies $\kappa_L \in \L^\infty(\R \times [0,\pi))$, since $F_L \in \L^1(\R)$.
Furthermore, the definition of $\kappa_L$ is independent of the angle $\theta \in [0,\pi)$ and only depends on the radial variable $S \in \R$.
For the sake of brevity, we set $\kappa \equiv \kappa_1$ and the scaling property of the Fourier transform gives
\begin{equation}
\kappa_L(S,\theta) = L^2 \, \kappa(LS,\theta)
\quad \forall \, (S,\theta) \in \R \times [0,\pi).
\label{eq:scaling_kappa_l}
\end{equation}

\begin{theorem}[Simplification of $f_L$]
Let $f \in \L^1(\R^2)$ and let $F_L$ be be a low-pass filter such that $\kappa \in \L^1(\R \times [0,\pi))$.
Then, the approximate FBP reconstruction $f_L$, given by
\begin{equation*}
f_L(x,y) = \frac{1}{2} \, \Back \big(\Fourier^{-1} [F_L(S) \Fourier(\Radon f)(S,\theta)]\big)(x,y)
\quad \mbox{ for } (x,y) \in \R^2,
\end{equation*}
is defined almost everywhere on $\R^2$ and satisfies $f_L \in \L^\infty(\R^2)$.
Furthermore, $f_L$ can be rewritten as
\begin{equation}
f_L = \frac{1}{2} \, \Back \big(\kappa_L*\Radon f\big).
\label{eq:FBP_method}
\end{equation}
\end{theorem}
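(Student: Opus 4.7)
The plan is to reduce both assertions to the convolution theorem for the Fourier transform, applied in the radial variable for each fixed angle $\theta$. The key fact is that, under the hypothesis $\kappa \in \L^1(\R \times [0,\pi))$, the filter $F_L$ and its inverse Fourier transform $\kappa_L$ are simultaneously in $\L^1(\R)$ and $\L^\infty(\R)$, which allows a clean application of Fourier inversion and of Young's inequality.

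First I would collect the relevant integrability. From $f \in \L^1(\R^2)$ and Proposition~\ref{prop:Radon_L1_norm}, $\Radon f(\cdot,\theta) \in \L^1(\R)$ with $\|\Radon f(\cdot,\theta)\|_{\L^1(\R)} \leq \|f\|_{\L^1(\R^2)}$ for almost every $\theta \in [0,\pi)$. The filter $F_L$ is bounded and supported in $[-L,L]$, hence $F_L \in \L^1(\R) \cap \L^\infty(\R)$. The scaling identity~\eqref{eq:scaling_kappa_l} together with the assumption gives $\kappa_L \in \L^1(\R)$, while $F_L \in \L^1(\R)$ yields $\kappa_L \in \L^\infty(\R)$ with $\|\kappa_L\|_{\L^\infty(\R)} \leq (2\pi)^{-1} \|F_L\|_{\L^1(\R)}$.

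Next, for a fixed $\theta$ with $\Radon f(\cdot,\theta) \in \L^1(\R)$, Young's inequality yields $\kappa_L * \Radon f(\cdot,\theta) \in \L^1(\R)$, and the $\L^1$-convolution theorem together with Fourier inversion (valid because $\kappa_L, F_L \in \L^1(\R)$ ensure $\Fourier \kappa_L = F_L$ a.e.) gives
\begin{equation*}
\Fourier\bigl(\kappa_L * \Radon f(\cdot,\theta)\bigr)(S) = F_L(S) \, \Fourier(\Radon f)(S,\theta) \quad \mbox{ for a.e. } S \in \R.
\end{equation*}
The right-hand side lies in $\L^1(\R)$ (product of an $\L^1$ and an $\L^\infty$ factor), so Fourier inversion applied to both sides yields
\begin{equation*}
\kappa_L * \Radon f(\cdot,\theta) = \Fourier^{-1}\bigl[F_L(S)\, \Fourier(\Radon f)(S,\theta)\bigr] \quad \mbox{ a.e. on } \R.
\end{equation*}
A second application of Young's inequality, this time with $\kappa_L \in \L^\infty(\R)$ and $\Radon f(\cdot,\theta) \in \L^1(\R)$, gives the uniform bound $|\kappa_L * \Radon f(t,\theta)| \leq \|\kappa_L\|_{\L^\infty(\R)}\, \|f\|_{\L^1(\R^2)}$, so $\kappa_L * \Radon f \in \L^\infty(\R \times [0,\pi))$.

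Finally, invoking the continuity of $\Back: \L^\infty(\R \times [0,\pi)) \to \L^\infty(\R^2)$ noted earlier, I apply $\Back$ to both sides of the identity and multiply by $\frac{1}{2}$. This simultaneously shows that $f_L$ is defined almost everywhere on $\R^2$ with $f_L \in \L^\infty(\R^2)$, and establishes the representation~\eqref{eq:FBP_method}. I expect the main subtle point to be the justification of Fourier inversion in the identity $F_L = \Fourier \kappa_L$; this is precisely what the hypothesis $\kappa \in \L^1(\R \times [0,\pi))$ is designed to guarantee, so all intermediate objects stay in spaces where the convolution theorem applies in its elementary $\L^1$-form.
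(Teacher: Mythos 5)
Your argument is correct and follows essentially the same route as the paper's proof: establish $\Fourier\kappa_L = F_L$ via Fourier inversion, pass through the Fourier convolution theorem using $\|\Radon f(\cdot,\theta)\|_{\L^1(\R)} \leq \|f\|_{\L^1(\R^2)}$, invert again, and conclude from $\kappa_L * \Radon f \in \L^\infty(\R \times [0,\pi))$ and the boundedness of $\Back$ on $\L^\infty$. Your version merely makes the Young's-inequality bookkeeping more explicit; no substantive difference.
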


\begin{proof}
Since $F_L \in \L^1(\R)$ and $\kappa_L \in \L^1(\R \times [0,\pi))$ by assumption, the Fourier inversion theorem gives
\begin{equation*}
\Fourier \kappa_L(S,\theta) = F_L(S)
\quad \forall \, (S,\theta) \in \R \times [0,\pi)
\end{equation*}
and with the Fourier convolution theorem follows that
\begin{equation*}
F_L(S) \Fourier(\Radon f)(S,\theta) = \Fourier(\kappa_L * \Radon f)(S,\theta)
\quad \forall \, (S,\theta) \in \R \times [0,\pi),
\end{equation*}
where we use $\Radon f \in \L^1(\R \times [0,\pi))$ with
\begin{equation*}
\|\Radon f(\cdot,\theta)\|_{\L^1(\R)} \leq \|f\|_{\L^1(\R^2)}
\quad \forall \, \theta \in [0,\pi).
\end{equation*}
Applying again the Fourier inversion theorem yields the desired representation
\begin{equation*}
f_L = \frac{1}{2} \, \Back \big(\kappa_L*\Radon f\big) \in \L^\infty(\R^2),
\end{equation*}
since $(\kappa_L * \Radon f) \in \L^\infty(\R \times [0,\pi))$.
In particular, $f_L$ is defined almost everywhere on $\R^2$.
\end{proof}

We can further simplify the representation~\eqref{eq:FBP_method} of the approximate FBP reconstruction~$f_L$ by applying the back projection convolution formula~\eqref{eq:Radon_back_convolution}, i.e.,
\begin{equation*}
\Back(g * \Radon f) = \Back g * f.
\end{equation*}
To this end, we define the convolution kernel $K_L: \R^2 \to \R$ by
\begin{equation*}
K_L(x,y) = \frac{1}{2} \, \Back \kappa_L(x,y)
\quad \mbox{ for } (x,y) \in \R^2.
\end{equation*}
Since $\kappa_L \in \L^\infty(\R \times [0,\pi))$, the convolution kernel $K_L$ is defined almost everywhere on $\R^2$ and satisfies $K_L \in \L^\infty(\R^2)$.
For the sake of brevity, we set $K \equiv K_1$ and with~\eqref{eq:scaling_kappa_l} follows that
\begin{equation}
K_L(x,y) = L^2 \, K(Lx,Ly)
\quad \forall \, (x,y) \in \R^2.
\label{eq:scaling_K_L}
\end{equation}

\begin{corollary}
\label{cor:FBP_method_K_L}
Let $f \in \L^1(\R^2)$ and let $F_L$ be be a low-pass filter such that $\kappa \in \L^1(\R \times [0,\pi))$.
Then, the approximate FBP reconstruction $f_L$ can be rewritten as
\begin{equation}
f_L = f * K_L.
\label{eq:FBP_method_K_L}
\end{equation}
\end{corollary}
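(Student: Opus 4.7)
The plan is to combine the representation~\eqref{eq:FBP_method} from the preceding theorem with the back projection convolution identity~\eqref{eq:Radon_back_convolution}. Since we already know, under the assumptions $f \in \L^1(\R^2)$ and $\kappa \in \L^1(\R \times [0,\pi))$, that
\begin{equation*}
f_L = \frac{1}{2} \, \Back \bigl( \kappa_L * \Radon f \bigr),
\end{equation*}
the natural move is to pull the convolution with $\Radon f$ outside of the back projection.

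First I would verify that the convolution theorem of the preceding chapter applies with the choice $g = \kappa_L$. That theorem requires $f \in \L^1(\R^2)$ (granted by hypothesis) and $g \in \L^\infty(\R \times [0,\pi))$. The latter is immediate from $F_L \in \L^1(\R)$ via the Riemann--Lebesgue lemma, which ensures $\kappa_L = \Fourier^{-1} F_L$ is bounded and, in particular, $\kappa_L \in \L^\infty(\R \times [0,\pi))$, since the dependence on $\theta$ is trivial. Hence identity~\eqref{eq:Radon_back_convolution} yields
\begin{equation*}
\Back \bigl( \kappa_L * \Radon f \bigr) = \Back \kappa_L * f.
\end{equation*}

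Substituting this into the expression for $f_L$ and using the definition $K_L = \frac{1}{2} \, \Back \kappa_L$ gives
\begin{equation*}
f_L = \frac{1}{2} \, \Back \kappa_L * f = K_L * f = f * K_L,
\end{equation*}
where the last equality uses the commutativity of convolution on $\R^2$. This establishes~\eqref{eq:FBP_method_K_L}.

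There is essentially no obstacle here, since all of the heavy lifting was done in the preceding theorem and in the back projection convolution theorem; the only point that needs a line of justification is the membership $\kappa_L \in \L^\infty(\R \times [0,\pi))$ so that the hypotheses of~\eqref{eq:Radon_back_convolution} are met. Everything else is a direct substitution.
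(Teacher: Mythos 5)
Your proposal is correct and follows exactly the route the paper intends: substitute $g = \kappa_L$ into the back projection convolution identity~\eqref{eq:Radon_back_convolution}, using $\kappa_L \in \L^\infty(\R \times [0,\pi))$ (already guaranteed since $F_L \in \L^1(\R)$), and identify $\frac{1}{2}\,\Back\kappa_L$ with $K_L$. Nothing is missing.
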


The representation~\eqref{eq:FBP_method_K_L} allows us to show that $f_L$ is a band-limited function.
To this end, we first determine the Fourier transform of the convolution kernel $K_L$, which in turn requires the definition of the bivariate window function $W_L: \R^2 \to \R$ as
\begin{equation*}
W_L(x,y) = W\bigg(\frac{r(x,y)}{L}\bigg)
\quad \mbox{ for } (x,y) \in \R^2,
\end{equation*}
where we let
\begin{equation*}
r(x,y) = \sqrt{x^2 + y^2}
\quad \mbox{ for } (x,y) \in \R^2.
\end{equation*}

\begin{theorem}[Convolution kernel $K_L$]
\label{theo:convolution_kernel}
Let $F_L$ be be a low-pass filter such that $K \in \L^1(\R^2)$.
Then, the convolution kernel satisfies $K_L \in \Cont_0(\R^2)$ for all $L > 0$ and its Fourier transform is given by
\begin{equation*}
\Fourier K_L(x,y) = W_L(x,y)
\quad \forall \, (x,y) \in \R^2.
\end{equation*}
\end{theorem}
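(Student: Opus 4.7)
The plan is to prove both assertions at once by computing $\Fourier^{-1} W_L$ explicitly and showing it equals $K_L$. Once the identity $K_L = \Fourier^{-1} W_L$ is established, the Fourier inversion theorem immediately yields $\Fourier K_L = W_L$. For the regularity statement, note that $W$ is essentially bounded with $\supp(W) \seq [-1,1]$, so $W_L$ is essentially bounded with support in the closed disk of radius $L$, hence $W_L \in \L^1(\R^2) \cap \L^\infty(\R^2)$. The Riemann--Lebesgue lemma then forces $\Fourier^{-1} W_L \in \Cont_0(\R^2)$, giving $K_L \in \Cont_0(\R^2)$.

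To carry out the computation of $\Fourier^{-1} W_L(x,y)$, I would first switch to polar coordinates $X = S \cos(\theta)$, $Y = S \sin(\theta)$ with $S \in [0,\infty)$ and $\theta \in [0,2\pi)$, producing the Jacobian factor $S$. Using that $W$ is even, so that $W_L$ is rotationally symmetric, I would then fold the angular integration from $[0,2\pi)$ down to $[0,\pi)$ by means of the substitution $\theta \mapsto \theta + \pi$ on the second half, while simultaneously extending the radial integration from $[0,\infty)$ to $\R$ via $S \mapsto -S$. Since the Jacobian $S$ is positive on $[0,\infty)$ and the sign flip in $S$ converts it to $|S|$ on the negative half-line, the integrand on $\R \times [0,\pi)$ becomes $|S| W(S/L) \, \e^{\i S (x \cos(\theta) + y \sin(\theta))} = F_L(S) \, \e^{\i S (x \cos(\theta) + y \sin(\theta))}$. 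The inner radial integral is then, by definition of $\kappa_L = \Fourier^{-1} F_L$, equal to $2\pi \, \kappa_L(x \cos(\theta) + y \sin(\theta), \theta)$, using that $\kappa_L$ is independent of $\theta$. Combining with the normalization $1/(4\pi^2)$ and the prefactor $2\pi$, the remaining angular integral over $[0,\pi)$ reproduces exactly $\frac{1}{2} \, \Back \kappa_L(x,y) = K_L(x,y)$.

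The main obstacle, which is minor, is justifying the interchange of integrals and the folding substitution rigorously. Since $W_L \in \L^1(\R^2)$ with compact support, $F_L \in \L^1(\R)$ with $\supp(F_L) \seq [-L,L]$, and the assumption $\kappa \in \L^1(\R \times [0,\pi))$ gives, via~\eqref{eq:scaling_kappa_l}, that $\kappa_L \in \L^1(\R \times [0,\pi))$, all applications of Fubini's theorem and of the one-dimensional Fourier inversion formula $\Fourier^{-1} F_L = \kappa_L$ are legitimate. The only algebraic care required is in the folding step, where the evenness of $W$ and the parity swap $(\theta, S) \mapsto (\theta + \pi, -S)$ must combine to replace the angular range $[0, 2\pi)$ and the Jacobian $S \geq 0$ by the range $[0,\pi)$ with the factor $|S|$ on all of $\R$. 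Once this bookkeeping is done, the identity $K_L = \Fourier^{-1} W_L$ drops out and both claims follow.
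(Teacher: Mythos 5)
Your proof is correct and follows essentially the same route as the paper's: compute $\Fourier^{-1} W_L$ in polar coordinates, identify the radial integral with $2\pi\,\kappa_L$, recognize the angular integral as $\tfrac{1}{2}\,\Back\kappa_L = K_L$, and conclude via the Riemann--Lebesgue lemma and the Fourier inversion formula. One small correction: the hypothesis of this theorem is $K \in \L^1(\R^2)$, not $\kappa \in \L^1(\R \times [0,\pi))$, and it is this assumption, via the scaling relation~\eqref{eq:scaling_K_L}, that guarantees $K_L \in \L^1(\R^2)$ and thereby licenses the final application of the Fourier inversion theorem yielding $\Fourier K_L = W_L$.
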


\begin{proof}
Since $W \in \L^\infty(\R)$ has compact support, the bivariate window function $W_L$ is compactly supported and satisfies $W_L \in \L^1(\R^2)$.
Hence, with the Riemann-Lebesgue lemma follows that $\Fourier^{-1} W_L \in \Cont_0(\R^2)$.
Furthermore, for all $(x,y) \in \R^2$ we obtain
\begin{align*}
\Fourier^{-1} W_L(x,y) & = \frac{1}{4\pi^2} \int_\R \int_\R W_L(X,Y) \, \e^{\i(xX + yY)} \: \d X \, \d Y \\
& = \frac{1}{4\pi^2} \int_0^\pi \int_\R W(\nicefrac{S}{L}) \, |S| \, \e^{\i S(x \cos(\theta) + y \sin(\theta))} \: \d S \, \d \theta \\
& = \frac{1}{4\pi^2} \int_0^\pi \int_\R F_L(S) \, \e^{\i S(x \cos(\theta) + y \sin(\theta))} \: \d S \, \d \theta
\end{align*}
by transforming $(X,Y) = (S \cos(\theta),S \sin(\theta))$ from Cartesian coordinates to polar coordinates.
With the definition of the band-limited function $\kappa_L$ and the back projection operator $\Back$ follows that
\begin{equation*}
\Fourier^{-1} W_L(x,y) = \frac{1}{2\pi} \int_0^\pi \kappa_L(x \cos(\theta) + y \sin(\theta),\theta) \: \d \theta = \frac{1}{2} \, \Back \kappa_L(x,y) = K_L(x,y).
\end{equation*}
Consequently, we have $K_L \in \Cont_0(\R^2)$ and applying the Fourier inversion formula shows that
\begin{equation*}
\Fourier K_L = W_L,
\end{equation*}
since $K \in \L^1(\R^2)$ implies $K_L \in \L^1(\R^2)$ for all $L > 0$ due to the scaling property~\eqref{eq:scaling_K_L}.
\end{proof}

Before we proceed, we wish to add one remark concerning the convolution kernel $K_L$.

\begin{remark}
Since the bivariate window $W_L$ has compact support and satisfies $W_L \in \L^1(\R^2)$, its inverse Fourier transform is analytic due to the Paley-Wiener theorem.
Consequently, the convolution kernel $K_L$ not only satisfies $K_L \in \Cont_0(\R^2)$, but also lies in $\Cont^\infty(\R^2)$.
Furthermore, due to the Riemann-Lebesgue lemma the assumption $K_L \in \L^1(\R^2)$ implies that $W_L$ is continuous on $\R^2$ and, thus, the univariate window $W$ is continuous on $\R$.
\end{remark}

Combining Theorem~\ref{theo:convolution_kernel} and Corollary~\ref{cor:FBP_method_K_L} allows us to determine the Fourier transform $\Fourier f_L$ of the approximate FBP reconstruction $f_L$.

\begin{corollary}[Fourier transform of $f_L$]
\label{cor:Fourier_f_L}
Let $f \in \L^1(\R^2)$ and let $F_L$ be a low-pass filter such that $\kappa \in \L^1(\R \times [0,\pi))$ and $K \in \L^1(\R^2)$.
Then, the Fourier transform $\Fourier f_L$ of the approximate FBP reconstruction $f_L$ is given by
\begin{equation*}
\Fourier f_L = W_L \cdot \Fourier f.
\end{equation*}
\end{corollary}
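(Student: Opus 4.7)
The plan is to simply combine Corollary~\ref{cor:FBP_method_K_L} with Theorem~\ref{theo:convolution_kernel} via the Fourier convolution theorem. Concretely, Corollary~\ref{cor:FBP_method_K_L} already gives the spatial-domain representation
\begin{equation*}
f_L = f * K_L,
\end{equation*}
so everything reduces to understanding the Fourier transform of this convolution.

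First I would verify that the convolution theorem is applicable here. Since $f \in \L^1(\R^2)$ by assumption and $K \in \L^1(\R^2)$ implies $K_L \in \L^1(\R^2)$ (using the scaling relation~\eqref{eq:scaling_K_L}), Young's inequality yields $f_L = f * K_L \in \L^1(\R^2)$, so $\Fourier f_L$ is well-defined as a bounded continuous function on $\R^2$. Next I would apply the standard Fourier convolution identity on $\L^1(\R^2)$, which gives
\begin{equation*}
\Fourier f_L = \Fourier(f * K_L) = \Fourier f \cdot \Fourier K_L.
\end{equation*}

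Finally, I would invoke Theorem~\ref{theo:convolution_kernel}, which identifies $\Fourier K_L = W_L$ under the current hypothesis $K \in \L^1(\R^2)$. Substituting this into the previous line yields
\begin{equation*}
\Fourier f_L = W_L \cdot \Fourier f,
\end{equation*}
as claimed.

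There is no real obstacle here. The only mild subtlety is making sure the hypotheses of the preceding results are honoured: the assumption $\kappa \in \L^1(\R \times [0,\pi))$ is what licences Corollary~\ref{cor:FBP_method_K_L}, while $K \in \L^1(\R^2)$ is precisely the hypothesis under which Theorem~\ref{theo:convolution_kernel} delivers $\Fourier K_L = W_L$. With both ingredients in place, the conclusion is immediate and the statement is effectively a one-line corollary of the two previous results.
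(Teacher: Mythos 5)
Your proposal is correct and follows exactly the route the paper intends: the corollary is stated there as an immediate consequence of combining Corollary~\ref{cor:FBP_method_K_L} ($f_L = f * K_L$) with Theorem~\ref{theo:convolution_kernel} ($\Fourier K_L = W_L$) via the Fourier convolution theorem on $\L^1(\R^2)$. Your additional check that $f * K_L \in \L^1(\R^2)$ by Young's inequality, so that the convolution theorem applies, is precisely the justification the paper relies on.
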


Since the window $W$ is assumed to be compactly supported, Corollary~\ref{cor:Fourier_f_L} shows that the approximate FBP reconstruction formula~\eqref{eq:f_L} provides a band-limited approximation $f_L$ to the target function $f$.
In particular, the approximation $f_L$ is arbitrarily smooth, $f_L \in \Cont_0^\infty(\R^2)$.
Moreover, the assumptions $f \in \L^1(\R^2)$ and $K \in \L^1(\R^2)$ ensure that we also have $f_L \in \L^1(\R^2)$.

\begin{remark}
All statements of this section also hold in $\L^2$-sense without assuming additional properties of $F_L$.
To be more precise, let $f \in \L^1(\R^2) \cap \L^2(\R^2)$ and let $F_L$ be be a low-pass filter.
Then, the approximate FBP reconstruction $f_L$ satisfies $f_L \in \L^2(\R^2)$ and can be rewritten as
\begin{equation*}
f_L = \frac{1}{2} \, \Back \big(\kappa_L*\Radon f\big) = f * K_L
\quad \mbox{ a.e.\ on } \R^2.
\end{equation*}
Moreover, its Fourier transform $\Fourier f_L \in \L^2(\R^2)$ is given by
\begin{equation*}
\Fourier f_L = W_L \cdot \Fourier f
\quad \mbox{ a.e.\ on } \R^2.
\end{equation*}
If the window $W$ is continuous at $0$ and satisfies $W(0) = 1$, one can prove the {\em $\L^2$-convergence}
\begin{equation*}
\|f - f_L\|_{\L^2(\R^2)} \xrightarrow{L \to \infty} 0.
\end{equation*}
On top of that, one can show that the approximate FBP reconstruction operator
\begin{equation*}
R_L g = \frac{1}{2} \, \Back \big(\kappa_L * g\big)
\end{equation*}
defines a {\em continuous} linear operator $R_L: \L^1(\R \times [0,\pi)) \cap \L^2(\R \times [0,\pi)) \to \L^2(\R^2)$.
\end{remark}

\section{Low-pass filters}
\label{sec:low-pass_filter}

In the method of filtered back projection we replace the exact filer $|S|$ in the FBP formula~\eqref{eq:FBP_formula} by a low-pass filter $F_L$.
In the general context of Fourier analysis, a low-pass filter is a function $F \equiv F(S)$ of the frequency variable $S$ which maps the high-frequency parts of a signal to zero.
To this end, one usually requires a compact support $\supp(F) \subseteq [-L,L]$ for a bandwidth $L > 0$ so that
\begin{equation*}
F(S) = 0
\quad \forall \, |S| > L.
\end{equation*}
In the particular context of the FBP method, we require a sufficient approximation quality for the low-pass filter $F_L$ within the frequency band $[-L,L]$ in the sense that
\begin{equation*}
F_L(S) \approx |S|
\enspace \mbox{ on } [-L,L]
\quad \mbox{ and } \quad
F_L(S) \xrightarrow{L \to \infty} |S|
\enspace \forall \, S \in \R.
\end{equation*}
Therefore, we use the ansatz
\begin{equation*}
F_L(S) = |S| \, W(\nicefrac{S}{L})
\quad \mbox{ for } S \in \R
\end{equation*}
with an even window $W \in \L^\infty(\R)$ with $\supp(W) \subseteq [-1,1]$ and $W(0) = 1$.

\bigbreak

We now list some classical low-pass filters, which are widely used in the FBP method of CT.
To this end, recall that $\rect{L}: \R \to \R$ denotes the characteristic function of the interval $[-L,L]$, i.e.,
\begin{equation*}
\rect{L}(S) = \begin{cases}
1 & \text{for } |S| \leq L \\
0 & \text{for } |S| > L.
\end{cases}
\end{equation*}
For the sake of brevity, we set $\rect{} \equiv \rect{1}$.

\begin{example}
The {\em Ram-Lak filter} is given by the window function
\begin{equation*}
W(S) = \rect{}(S)
\quad \mbox{ for } S \in \R
\end{equation*}
such that
\begin{equation*}
F_L(S) = |S| \cdot \rect{L}(S)
= \begin{cases}
|S| & \text{for } |S| \leq L \\
0 & \text{for } |S| > L.
\end{cases}
\end{equation*}
The Ram-Lak window and filter are shown in Figure~\ref{fig:ram-lak_window} and Figure~\ref{fig:ram-lak_filter}, respectively.
\end{example}

\begin{figure}[t]
\centering
\subfigure[Ram-Lak window]{\label{fig:ram-lak_window} \includegraphics[width=0.3\textwidth]{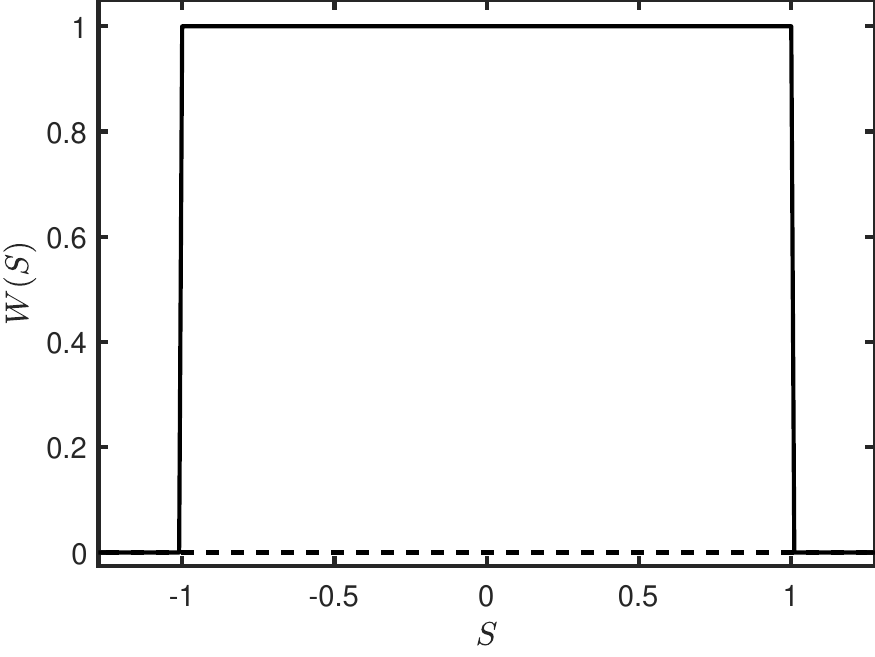}}
\hfil
\subfigure[Shepp-Logan window]{\label{fig:shepp-logan_window} \includegraphics[width=0.3\textwidth]{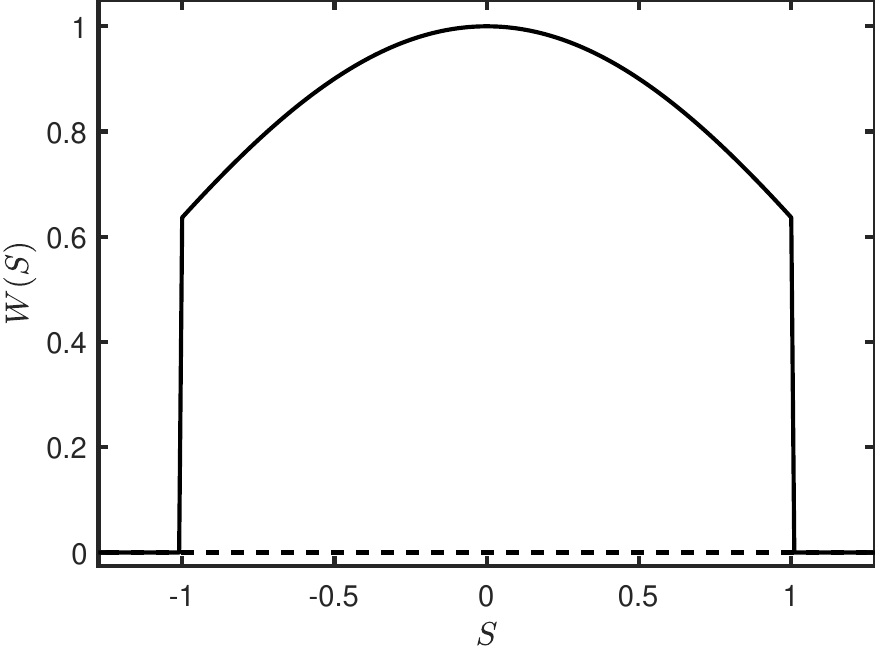}}
\hfil
\subfigure[Cosine window]{\label{fig:cosine_window} \includegraphics[width=0.3\textwidth]{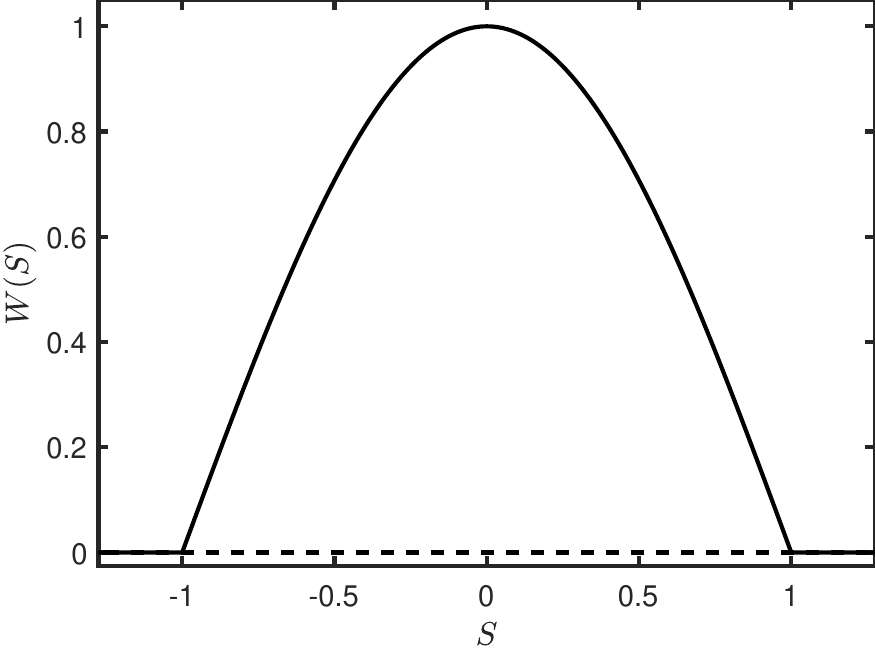}}
\hfil
\subfigure[Ram-Lak filter]{\label{fig:ram-lak_filter} \includegraphics[width=0.3\textwidth]{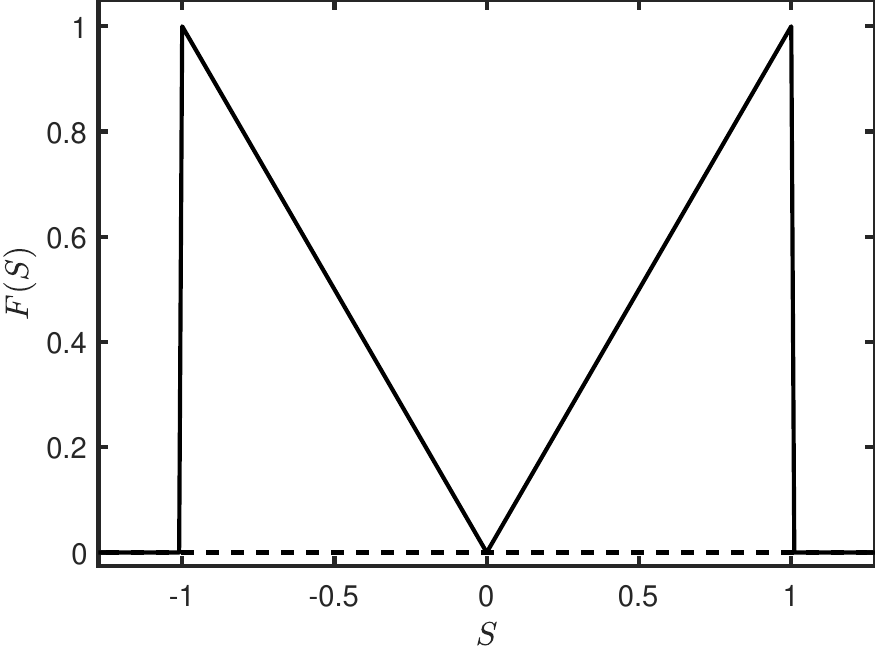}}
\hfil
\subfigure[Shepp-Logan filter]{\label{fig:shepp-logan_filter} \includegraphics[width=0.3\textwidth]{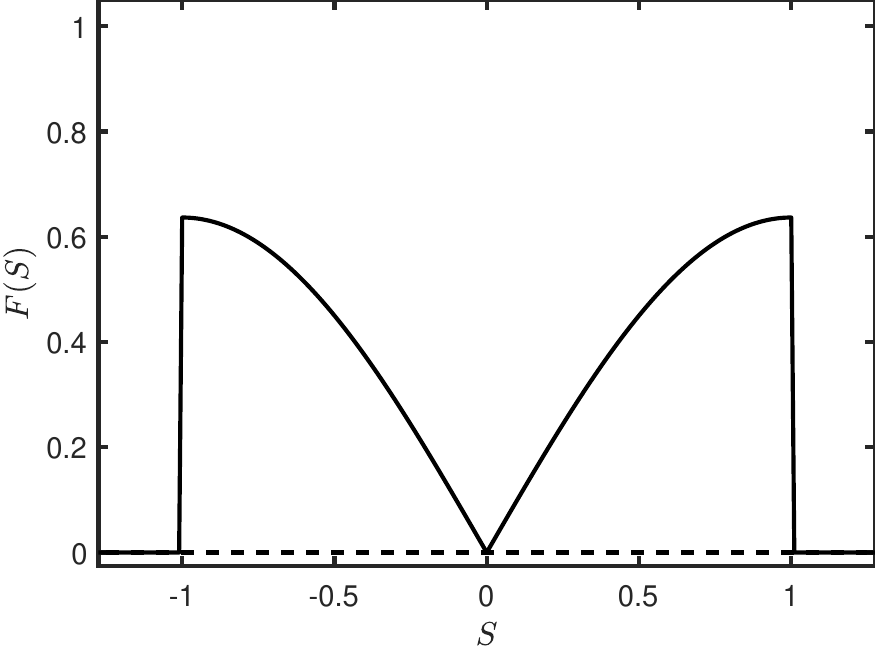}}
\hfil
\subfigure[Cosine filter]{\label{fig:cosine_filter} \includegraphics[width=0.3\textwidth]{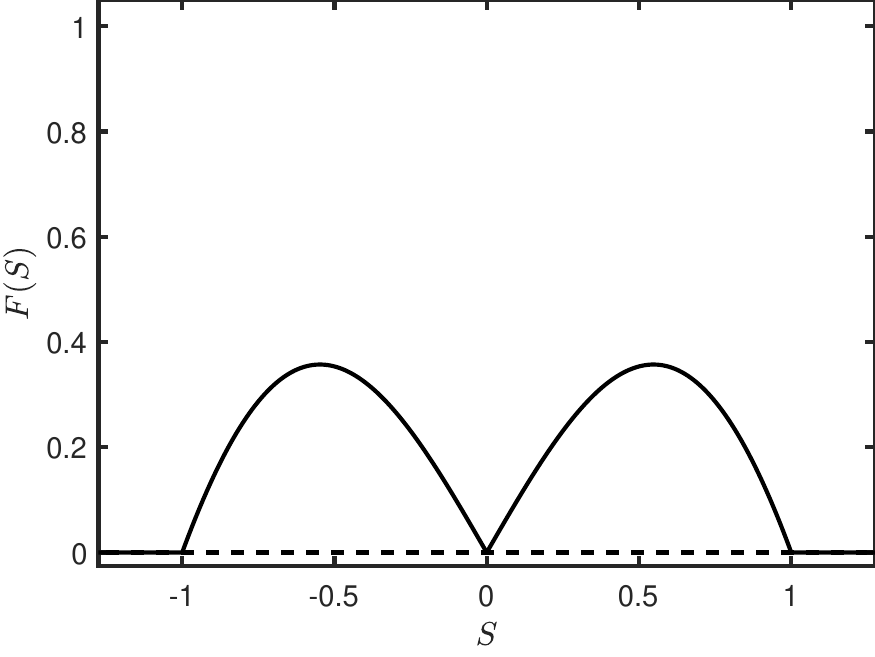}}
\caption{Three typical low-pass filters.}
\end{figure}

In the following, let $\sinc$ denote the {\em unnormalized} cardinal sine function, i.e.,
\begin{equation*}
\sinc(t) = \frac{\sin(t)}{t}
\quad \mbox{ for } t \in \R.
\end{equation*}

\begin{example}
The {\em Shepp-Logan filter} is given by the window function
\begin{equation*}
W(S) = \sinc\Big(\frac{\pi S}{2}\Big) \cdot \rect{}(S)
\quad \mbox{ for } S \in \R
\end{equation*}
such that
\begin{equation*}
F_L(S) = |S| \cdot \sinc\Big(\frac{\pi S}{2L}\Big) \cdot \rect{L}(S)
= \begin{cases}
\frac{2L}{\pi} \big|\sin\big(\frac{\pi S}{2L}\big)\big| & \text{for } |S| \leq L \\
0 & \text{for } |S| > L.
\end{cases}
\end{equation*}
The Shepp-Logan window and filter are shown in Figure~\ref{fig:shepp-logan_window} and Figure~\ref{fig:shepp-logan_filter}, respectively.
\end{example}

Note that the Ram-Lak and Shepp-Logan window have jump discontinuities at $S \in \{-1,1\}$.
In contrast to this, the window of the next low-pass filter is continuous on the whole real line.

\begin{example}
The {\em Cosine filter} is given by the window function
\begin{equation*}
W(S) = \cos\Big(\frac{\pi S}{2}\Big) \cdot \rect{}(S)
\quad \mbox{ for } S \in \R
\end{equation*}
such that
\begin{equation*}
F_L(S) = |S| \cdot \cos\Big(\frac{\pi S}{2L}\Big) \cdot \rect{L}(S)
= \begin{cases}
|S| \cdot \cos\big(\frac{\pi S}{2L}\big) & \text{for } |S| \leq L \\
0 & \text{for } |S| > L.
\end{cases}
\end{equation*}
The Cosine window and filter are shown in Figure~\ref{fig:cosine_window} and Figure~\ref{fig:cosine_filter}, respectively.
\end{example}

Combining the Ram-Lak and a modified Cosine filter yields the so called Hamming filters.

\begin{example}
The {\em Hamming filter} with parameter $\beta \in \big[\frac{1}{2},1\big]$ is given by the window function
\begin{equation*}
W(S) = (\beta + (1-\beta) \cos(\pi S)) \cdot \rect{}(S)
\quad \mbox{ for } S \in \R
\end{equation*}
such that
\begin{equation*}
F_L(S) = \begin{cases}
|S| \cdot \Big(\beta + (1-\beta) \cos\big(\frac{\pi S}{L}\big)\Big) & \text{for } |S| \leq L \\
0 & \text{for } |S| > L.
\end{cases}
\end{equation*}
The Hamming window and filter are shown Figure~\ref{fig:Hamming_filter} for parameter $\beta \in \{0.5,0.75,0.95\}$.
\end{example}

\begin{figure}[t]
\centering
\subfigure[Hamming window ($\beta = 0.5$)]{\includegraphics[width=0.3\textwidth]{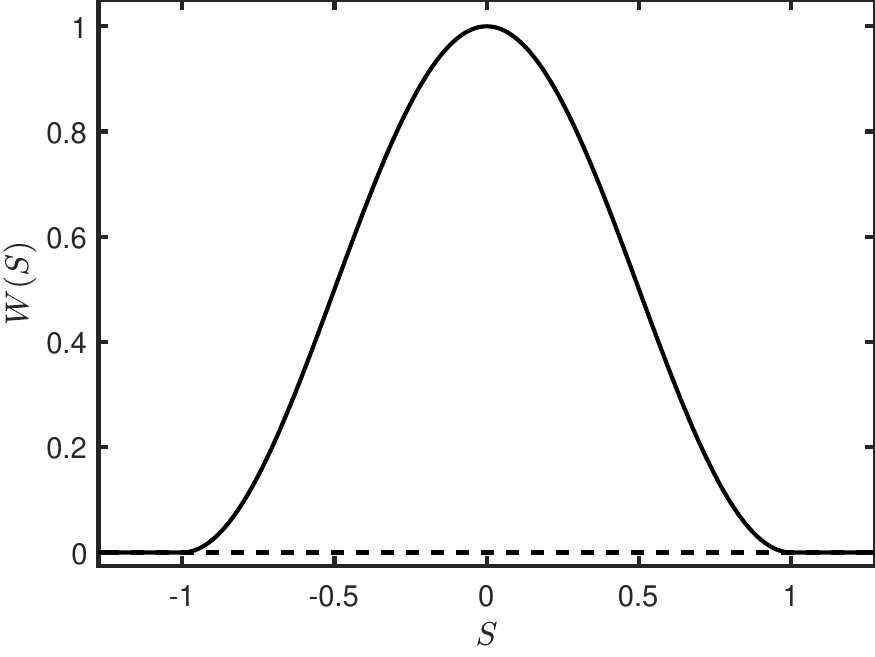}}
\hfil
\subfigure[Hamming window ($\beta = 0.75$)]{\includegraphics[width=0.3\textwidth]{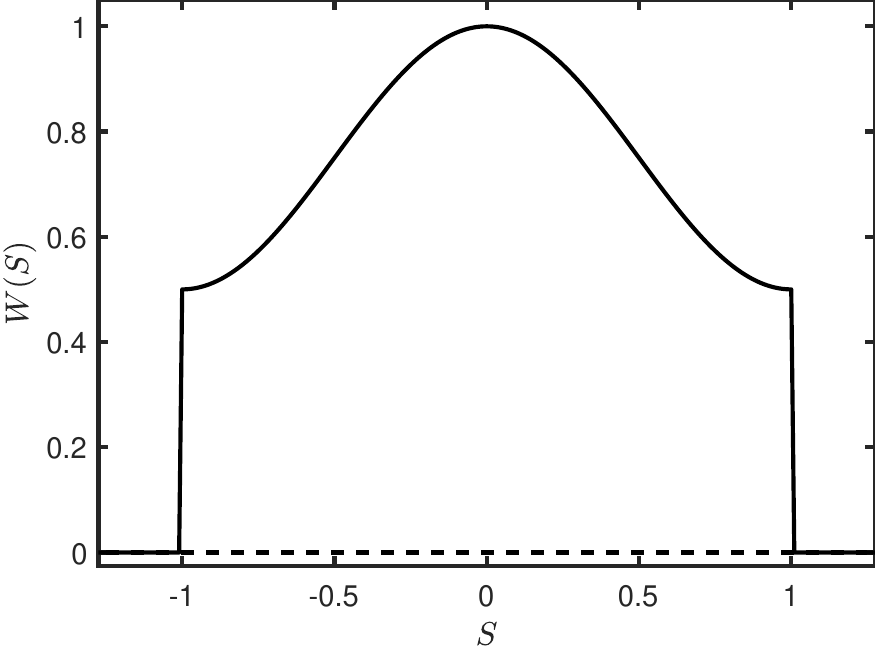}}
\hfil
\subfigure[Hamming window ($\beta = 0.95$)]{\includegraphics[width=0.3\textwidth]{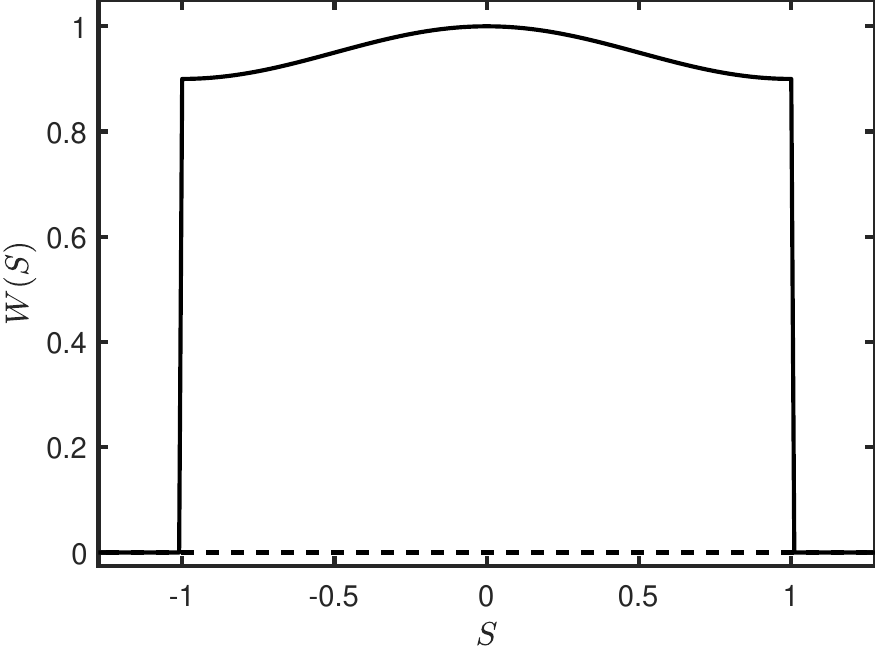}}
\hfil
\subfigure[Hamming filter ($\beta = 0.5$)]{\includegraphics[width=0.3\textwidth]{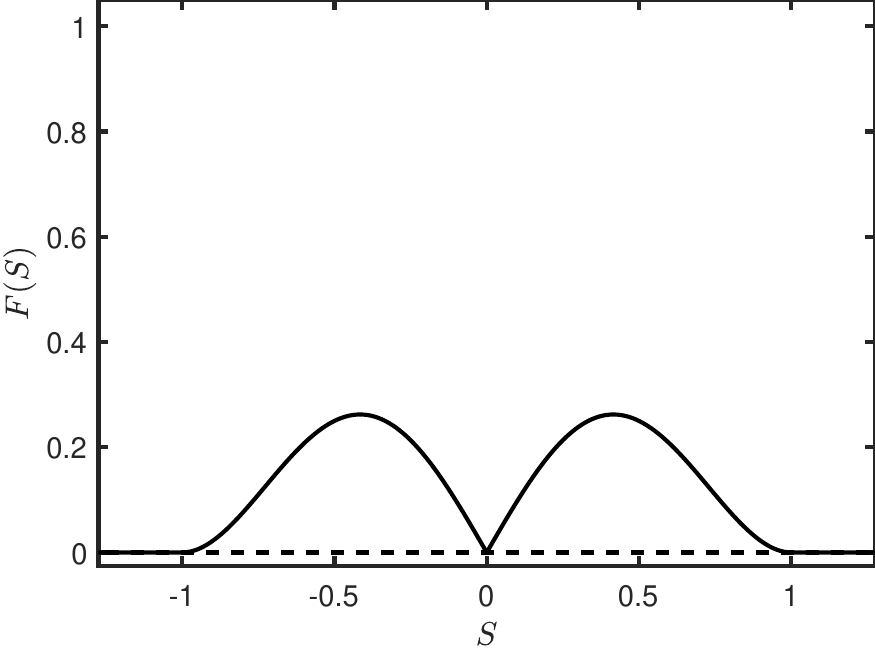}}
\hfil
\subfigure[Hamming filter ($\beta = 0.75$)]{\includegraphics[width=0.3\textwidth]{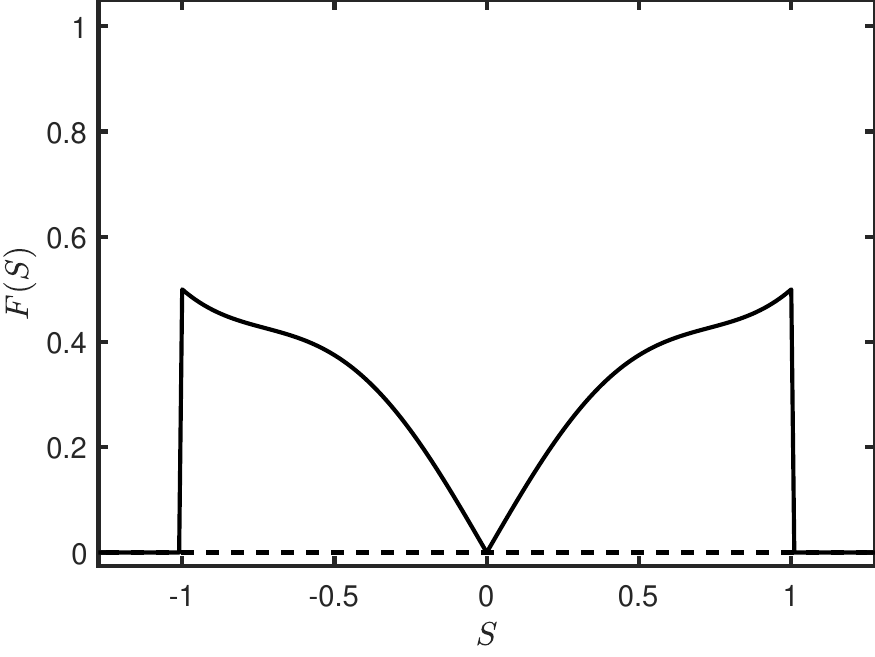}}
\hfil
\subfigure[Hamming filter ($\beta = 0.95$)]{\includegraphics[width=0.3\textwidth]{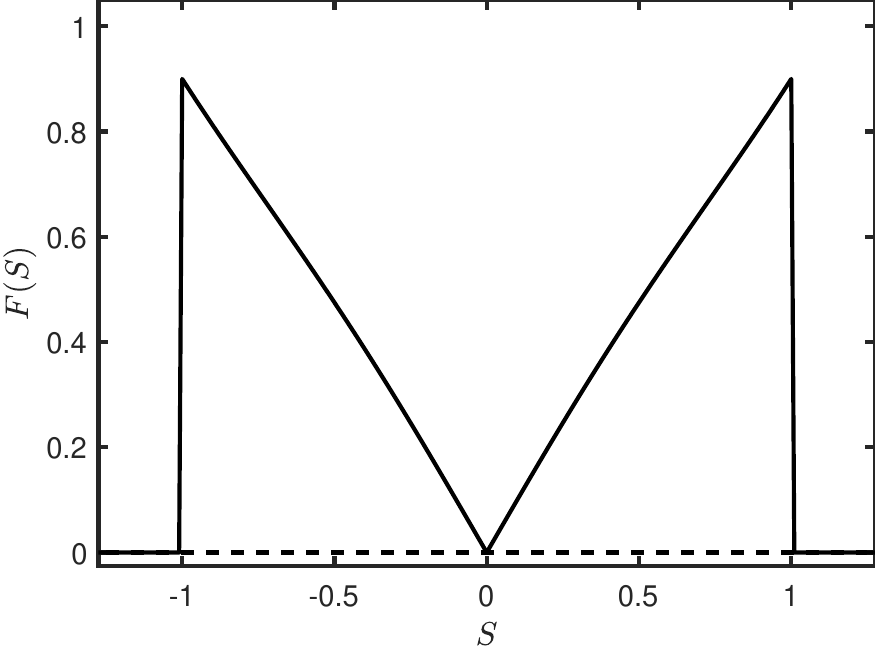}}
\caption{The Hamming filter for $\beta \in \{0.5,0.75,0.95\}$.}
\label{fig:Hamming_filter}
\end{figure}

Another class of low-pass filters depending on a parameter is given by the Gaussian filters.

\begin{example}
The {\em Gaussian filter} with parameter $\beta > 1$ is given by the window function
\begin{equation*}
W(S) = \exp\bigg(-\Big(\frac{\pi S}{\beta}\Big)^2\bigg) \cdot \rect{}(S)
\quad \mbox{ for } S \in \R
\end{equation*}
such that
\begin{equation*}
F_L(S) = \begin{cases}
|S| \cdot \exp\Big(-\big(\frac{\pi S}{\beta L}\big)^2\Big) & \text{for } |S| \leq L \\
0 & \text{for } |S| > L.
\end{cases}
\end{equation*}
The Gaussian window and filter are shown in Figure~\ref{fig:Gaussian_filter} for parameter $\beta \in \{2.5,5,7.5\}$.
\end{example}

\begin{figure}[t]
\centering
\subfigure[Gaussian window ($\beta = 2.5$)]{\includegraphics[width=0.3\textwidth]{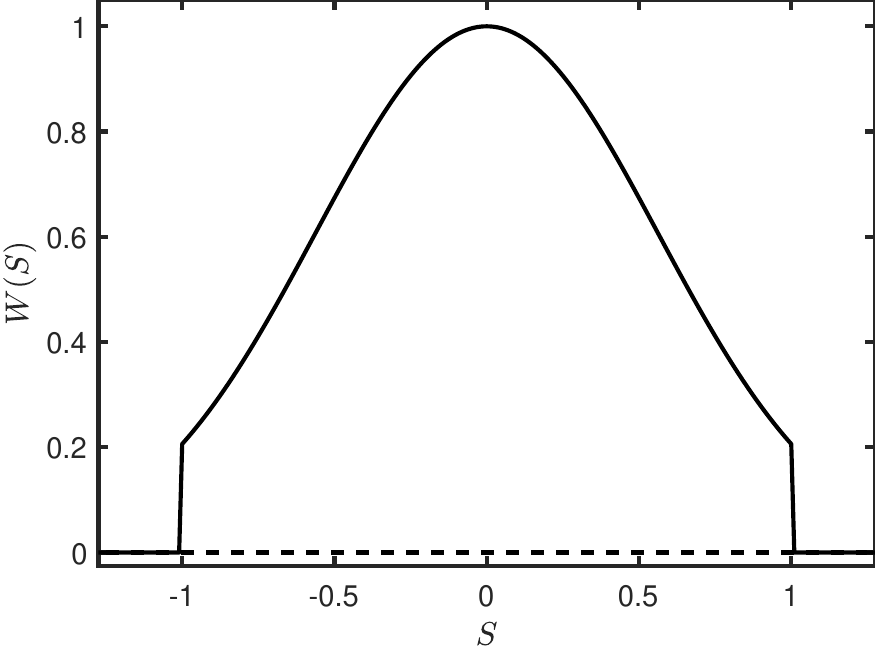}}
\hfil
\subfigure[Gaussian window ($\beta = 5$)]{\includegraphics[width=0.3\textwidth]{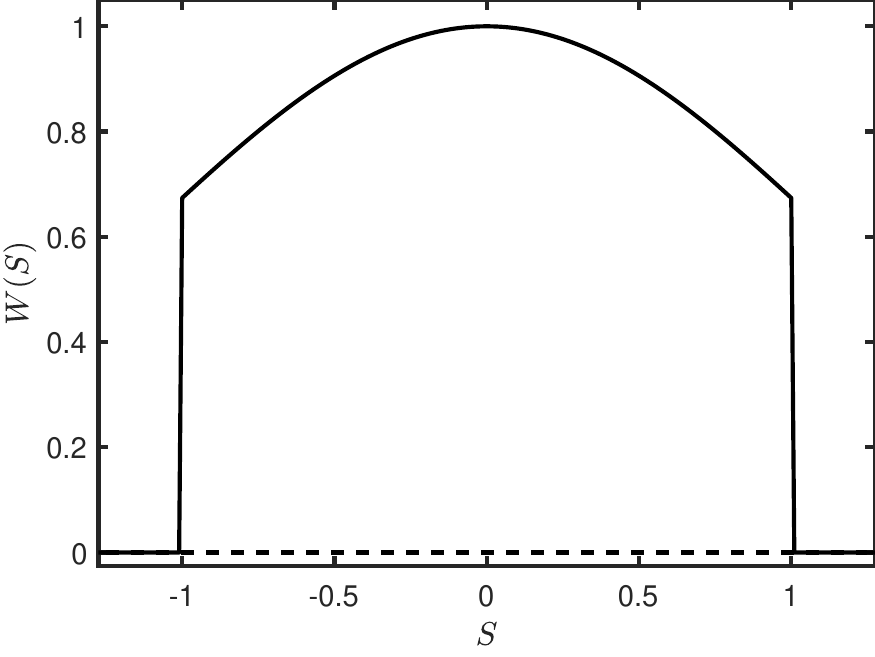}}
\hfil
\subfigure[Gaussian window ($\beta = 7.5$)]{\includegraphics[width=0.3\textwidth]{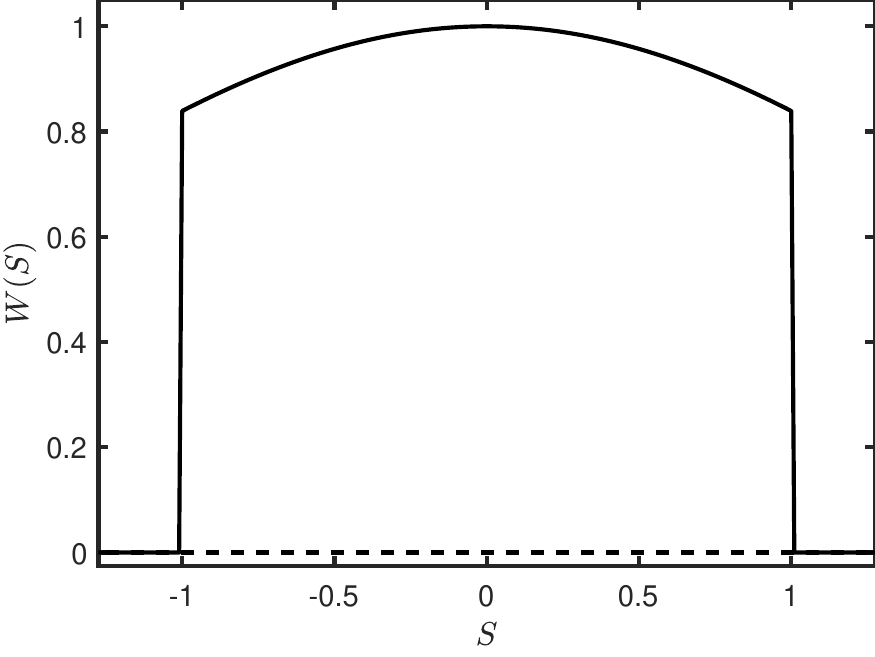}}
\hfil
\subfigure[Gaussian filter ($\beta = 2.5$)]{\includegraphics[width=0.3\textwidth]{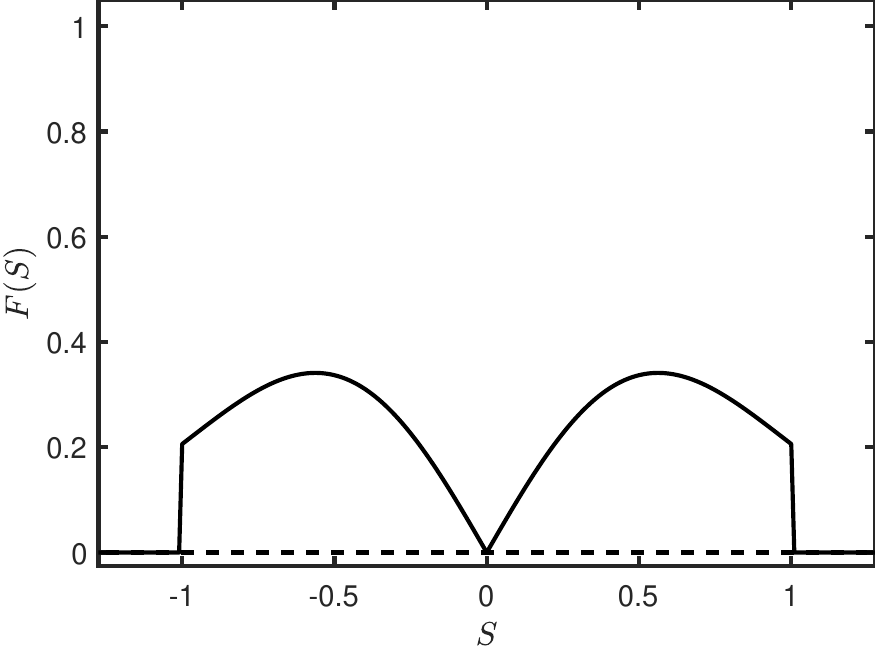}}
\hfil
\subfigure[Gaussian filter ($\beta = 5$)]{\includegraphics[width=0.3\textwidth]{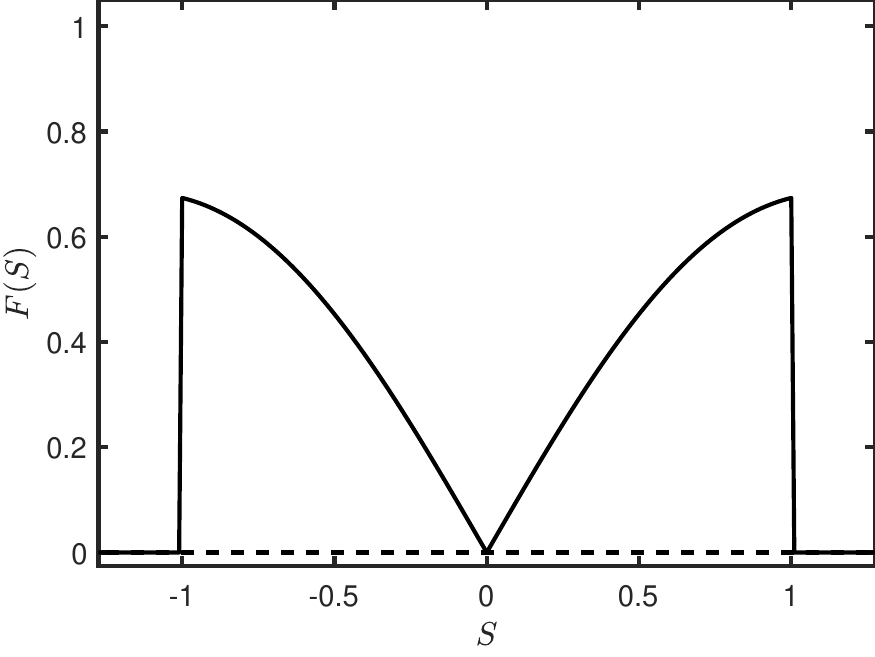}}
\hfil
\subfigure[Gaussian filter ($\beta = 7.5$)]{\includegraphics[width=0.3\textwidth]{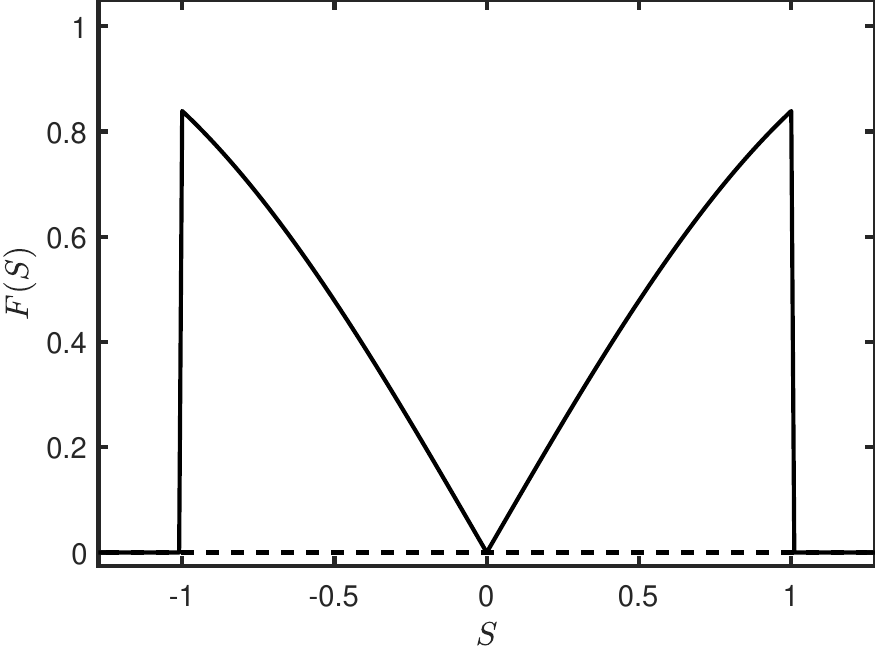}}
\caption{The Gaussian filter for $\beta \in \{2.5,5,7.5\}$.}
\label{fig:Gaussian_filter}
\end{figure}

\section{Reconstruction in parallel beam geometry}

The approximate FBP reconstruction formula~\eqref{eq:FBP_approximate_form} assumes the Radon data $\Radon f(t,\theta)$ to be available for {\em all} $(t,\theta) \in \R \times [0,\pi)$.
In practice, however, only finitely many Radon samples are given and we have to recover the target function $f$ from a finite set of Radon data
\begin{equation*}
\left\{ \Radon f(t_j,\theta_j) \mid j = 1,\ldots,J \right\}
\quad \mbox{ for some } J \in \N.
\end{equation*}
Thus, the implementation of the FBP method requires a suitable discretization of formula~\eqref{eq:FBP_approximate_form}.
To be more precise, we have to discretize the convolution product $*$ and the back projection operator $\Back$.
This also includes the specification of a sampling scheme for the Radon transform $\Radon f$ and the inverse Fourier transform $\Fourier^{-1} F_L$ of the chosen low-pass filter.

A commonly used sampling scheme is given by the {\em parallel beam geometry}, where the Radon lines $\ell_{t,\theta}$ are equally spaced in both the radial variable $t \in \R$ and the angular variable $\theta \in [0,\pi)$.
More precisely, for $N$ uniformly distributed angles we collect Radon samples along $2M+1$ parallel lines per angle with a fixed spacing $d > 0$.
Hence, the Radon data are of the form
\begin{equation}
(\Radon f)_{j,k} = \Radon f(t_j,\theta_k)
\label{eq:parallel_beam_data}
\end{equation}
with
\begin{equation*}
t_j = j \cdot d
\enspace \mbox{ for } j = -M,\ldots,M
\quad \mbox{ and } \quad
\theta_k = k \cdot \frac{\pi}{N}
\enspace \mbox{ for } k = 0,\ldots,N-1
\end{equation*}
so that in total $N \cdot (2M+1)$ Radon samples are taken.
For illustration, Figure~\ref{fig:Parallel_beam_geometry} shows the arrangement of $108$ Radon lines in $[-1,1]^2$ with $N = 12$, $M = 4$ and sampling spacing $d = 0.25$.

\begin{figure}[t]
\centering
\includegraphics[height=0.3\textwidth]{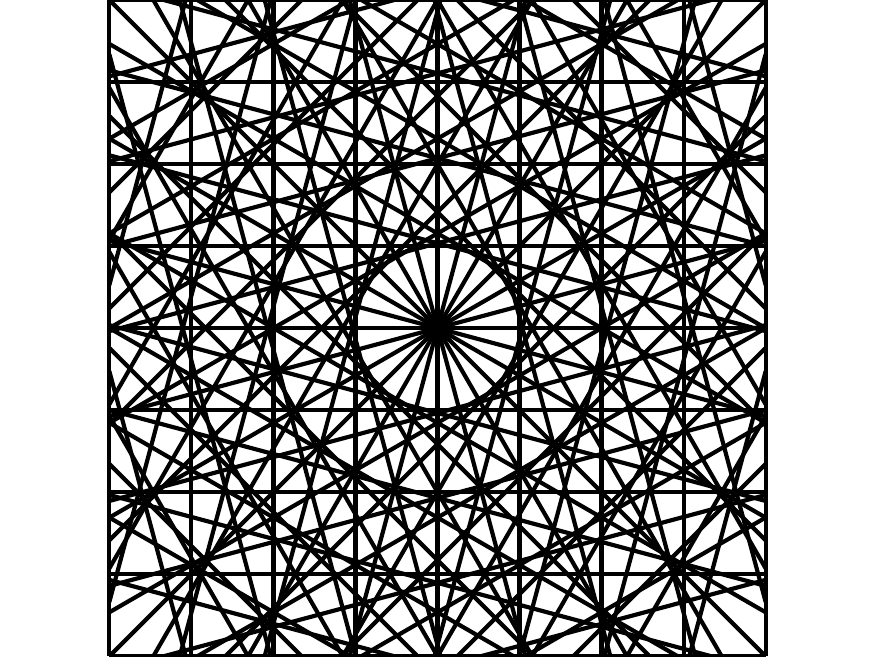}
\caption{Parallel beam geometry with $N = 12$, $M = 4$ and $d = 0.25$.}
\label{fig:Parallel_beam_geometry}
\end{figure}

\subsubsection*{Shannon sampling}

Before we come to the discretization of the FBP method~\eqref{eq:FBP_approximate_form}, we first discuss the sampling process in more detail, which will help to select a sample spacing $d$.
In this paragraph we will see that we can uniquely recover a band-limited function $h \in \L^2(\R)$ from uniformly spaced discrete function values $h(t_j)$, for $j \in \Z$, if the sampling distance $d = t_{j+1}-t_j$ is chosen reasonably.
This is the statement of the classical {\em Shannon sampling theorem} and the sample spacing $d$ corresponds to the smallest detail in $h$ that is still recognizable after sampling the function.

\begin{theorem}[Shannon sampling theorem]
Let $h \in \L^2(\R)$ be a band-limited function with $\supp(\Fourier h) \seq [-L,L]$ for some bandwidth $L > 0$.
Then, $h$ is uniquely determined by the discrete values $h\big(\frac{\pi k}{L}\big)$, for $k \in \Z$, and we have
\begin{equation}
h(t) = \sum_{k\in\Z} h\Big(\frac{\pi k}{L}\Big) \, \sinc(Lt-k\pi)
\quad \forall \, t \in \R.
\label{eq:Shannon_interpolation}
\end{equation}
\end{theorem}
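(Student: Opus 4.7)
The plan is to exploit the compact support of $\Fourier h$ by expanding it in a Fourier series on the interval $[-L,L]$, and then to recognize that the coefficients of this series are precisely (up to a constant) the prescribed samples $h(\pi k/L)$. Inverting the Fourier transform then turns the exponentials in the series into the sinc kernels appearing in~\eqref{eq:Shannon_interpolation}.

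First, since $h \in \L^2(\R)$ and $\supp(\Fourier h) \seq [-L,L]$, Plancherel's theorem yields $\Fourier h \in \L^2([-L,L])$. The system $\{\e^{\i \pi k S/L}/\sqrt{2L}\}_{k \in \Z}$ is an orthonormal basis of $\L^2([-L,L])$, so
\begin{equation*}
\Fourier h(S) = \sum_{k \in \Z} c_k \, \e^{\i \pi k S/L}
\quad \mbox{in } \L^2([-L,L]),
\quad c_k = \frac{1}{2L}\int_{-L}^{L} \Fourier h(S) \, \e^{-\i \pi k S/L} \: \d S.
\end{equation*}
Next, I would identify the coefficients $c_k$ with the samples of $h$. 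Because $\Fourier h$ is compactly supported and in $\L^1(\R) \cap \L^2(\R)$, the inverse Fourier transform of $\Fourier h$ evaluated at $t \in \R$ reads $h(t) = \tfrac{1}{2\pi}\int_{-L}^{L} \Fourier h(S)\,\e^{\i S t}\,\d S$. Setting $t = -\pi k/L$ in this formula and comparing with the expression for $c_k$ shows $c_k = (\pi/L)\,h(-\pi k/L)$, i.e.\ $c_{-k} = (\pi/L)\, h(\pi k/L)$.

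Substituting this into the Fourier series for $\Fourier h$ and plugging the result into the inversion formula for $h$ gives
\begin{equation*}
h(t) = \frac{1}{2\pi}\int_{-L}^{L} \bigg(\frac{\pi}{L}\sum_{k \in \Z} h(\pi k/L)\, \e^{-\i \pi k S/L}\bigg) \e^{\i S t} \: \d S.
\end{equation*}
Interchanging sum and integral (to be justified below) and computing the elementary integral
\begin{equation*}
\int_{-L}^{L} \e^{\i S (t - \pi k/L)} \: \d S = 2L \, \sinc(Lt - k\pi),
\end{equation*}
produces exactly the series~\eqref{eq:Shannon_interpolation}. Uniqueness then follows immediately, since any $h$ satisfying the hypotheses is reconstructed from its samples by this explicit formula.

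The main obstacle is justifying the interchange of summation and integration and, more importantly, upgrading the $\L^2$-identity into the pointwise equality asserted by the theorem. My plan is as follows: the Fourier series converges to $\Fourier h$ in $\L^2([-L,L])$, hence Parseval applied to the inner product against the bounded function $S \mapsto \e^{\i S t}\chi_{[-L,L]}(S)$ yields the claimed identity at every fixed $t \in \R$. This avoids any pointwise convergence issue for the Fourier series of $\Fourier h$ itself. Alternatively, one can invoke the Paley-Wiener theorem to conclude that $h$ extends to an entire function of exponential type, which together with $\L^2$-membership forces the sinc series to converge both in $\L^2(\R)$ and uniformly on compact subsets to the same continuous representative of $h$.
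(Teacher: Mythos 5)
Your proof is correct, and it is the mirror image of the one in the notes rather than a literal copy. The notes fix $t$, expand the \emph{kernel} $\omega \mapsto \e^{\i \omega L t/\pi}$ in a Fourier series on $[-\pi,\pi]$ (whose coefficients are the $\sinc(Lt-k\pi)$), and then integrate term by term against $\Fourier h$, recognizing each resulting integral as a sample $h(\pi k/L)$ via Fourier inversion. You instead expand the \emph{data} $\Fourier h$ in a Fourier series on $[-L,L]$, identify its coefficients directly as $(\pi/L)\,h(-\pi k/L)$, and then integrate term by term against the kernel, recognizing each resulting integral as a $\sinc$. Both arguments are instances of the same Parseval identity for the orthonormal system $\{\e^{\i\pi kS/L}/\sqrt{2L}\}$, and both need exactly the same two ingredients you supply: the pointwise validity of the inversion formula for the continuous representative of $h$ (from $\Fourier h \in \L^2([-L,L]) \subset \L^1([-L,L])$ and Riemann--Lebesgue), and continuity of the $\L^2$ inner product to justify the interchange of sum and integral. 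Your version makes the appearance of the samples immediate and transparent at the cost of a reindexing $k \mapsto -k$; the paper's version keeps the expansion independent of $h$, which localizes the convergence issue in a single fixed function $\Fourier h$. The closing appeal to Paley--Wiener is a valid alternative route to the pointwise statement but is not needed given the Cauchy--Schwarz argument you already give.
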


\begin{proof}
Let $h \in \L^2(\R)$ be band-limited with $\supp(\Fourier h) \seq [-L,L]$ for some bandwidth $L>0$.
By the Rayleigh-Plancherel theorem we then have $\Fourier h \in \L^2([-L,L]) \subset \L^1([-L,L])$, which implies that $h$ has a continuous representative due to the Riemann-Lebesgue lemma.
Thus, the Fourier inversion formula holds pointwise and in $\L^2$-sense yielding
\begin{equation*}
h(t) = \Fourier^{-1} (\Fourier h)(t) = \frac{1}{2\pi} \int_{-L}^L \Fourier h(\omega) \, \e^{\i \omega t} \: \d \omega = \frac{L}{2\pi^2} \int_{-\pi}^\pi \Fourier h\Big(\frac{L \omega}{\pi}\Big) \, \e^{\i \omega \nicefrac{L t}{\pi}} \: \d \omega
\quad \forall \, t \in \R.
\end{equation*}
For fixed $t \in \R$ we now consider the Fourier expansion of $\e^{\i \omega \nicefrac{L t}{\pi}}$ as a function in $\L^2([-\pi,\pi])$, which is given by
\begin{equation*}
\e^{\i \omega \nicefrac{L t}{\pi}} = \sum_{k \in \Z} c_k \, \e^{\i \omega k}
\quad \forall \, \omega \in [-\pi,\pi]
\end{equation*}
with the Fourier coefficients
\begin{equation*}
c_k = \frac{1}{2\pi} \int_{-\pi}^\pi \e^{\i \omega \nicefrac{L t}{\pi}} \, \e^{-\i \omega k} \: \d \omega = \frac{1}{2} \int_{-1}^1 \e^{\i \, (Lt - k\pi) \, \omega} \: \d \omega = \sinc(Lt - k\pi)
\quad \forall \, k \in \Z.
\end{equation*}
Recall that the partial sums of the above Fourier series converge in the $\L^2$-norm, i.e.,
\begin{equation*}
\int_{-\pi}^\pi \bigg|\e^{\i \omega \nicefrac{L t}{\pi}} - \sum_{k=-n}^n c_k \, \e^{\i \omega k}\bigg|^2 \: \d \omega \xrightarrow{n \to \infty} 0.
\end{equation*}
Since $\Fourier h \in \L^2([-L,L])$, this in combination with the Cauchy-Schwarz inequality implies that
\begin{equation*}
\left|\int_{-\pi}^\pi \Fourier h\Big(\frac{L \omega}{\pi}\Big) \, \e^{\i \omega \nicefrac{L t}{\pi}} \: \d \omega - \sum_{k=-n}^n c_k \int_{-\pi}^\pi \Fourier h\Big(\frac{L \omega}{\pi}\Big) \, \e^{\i \omega k} \: \d \omega\right| \xrightarrow{n \to \infty} 0.
\end{equation*}
Consequently, we can interchange the order of summation and integration so that, for all $t \in \R$,
\begin{align*}
h(t) & = \frac{L}{2\pi^2} \int_{-\pi}^\pi \Fourier h\Big(\frac{L \omega}{\pi}\Big) \sum_{k \in \Z} \sinc(Lt - k\pi) \, \e^{\i \omega k} \: \d \omega = \sum_{k \in \Z} \sinc(Lt - k\pi) \frac{L}{2\pi^2} \int_{-\pi}^\pi \Fourier h\Big(\frac{L \omega}{\pi}\Big) \, \e^{\i \omega k} \: \d \omega \\
& = \sum_{k \in \Z} \sinc(Lt - k\pi) \, \frac{1}{2\pi} \int_{-L}^L \Fourier h(\omega) \, \e^{\i \omega \nicefrac{\pi k}{L}} \: \d \omega = \sum_{k\in\Z} h\Big(\frac{\pi k}{L}\Big) \, \sinc(Lt - k\pi),
\end{align*}
where we again use the Fourier inversion formula.
\end{proof}

We remark that the formula~\eqref{eq:Shannon_interpolation} is also called {\em Shannon-Whittaker interpolation formula} and more generally we have
\begin{equation*}
h(t) = \sum_{k\in\Z} h(k \cdot d) \, \sinc\Big(\frac{\pi}{d} \, (t - k \cdot d)\Big)
\quad \forall \, t \in \R
\end{equation*}
if the spacing $d > 0$ of the discrete samples $h(k \cdot d)$, with $k \in \Z$, satisfies the {\em Nyquist condition}
\begin{equation*}
d \leq \frac{\pi}{L}.
\end{equation*}
The largest possible sample spacing $d = \frac{\pi}{L}$ is known as the {\em Nyquist rate}.

\begin{remark}
If $h \in \Schwartz(\R)$ is only {\em essentially $L$-band-limited} in the sense that, for $0 < \varepsilon \ll 1$,
\begin{equation*}
\int_{\R \setminus [-L,L]} |\Fourier h(\omega)| \: \d \omega \leq \varepsilon,
\end{equation*}
the reconstruction of $h$ from discrete samples $h(k \cdot d)$, for $k \in \Z$ and sampling distance $d > 0$, by using
\begin{equation*}
S_d h(t) = \sum_{k\in\Z} h(k \cdot d) \, \sinc\Big(\frac{\pi}{d} \, (t - k \cdot d)\Big)
\quad \mbox{ for } t \in \R
\end{equation*}
is no longer exact.
If $d \leq \frac{\pi}{L}$, however, \cite[Theorem III.1.3]{Natterer2001} shows that the reconstruction error can be bounded by
\begin{equation*}
\|S_d h - h\|_{\L^\infty(\R)} \leq \frac{\varepsilon}{\pi}.
\end{equation*}
\end{remark}

\subsection*{Discrete FBP reconstruction formula for parallel beam geometry}

We now address the discretization of the FBP method~\eqref{eq:FBP_approximate_form} for the approximate reconstruction of a target function $f$ from discrete Radon data $\{(\Radon f)_{j,k}\}$ given in parallel beam geometry~\eqref{eq:parallel_beam_data}.
To this end, from now on we assume that $f$ is compactly supported with
\begin{equation*}
\supp(f) \seq B_r(0)
\quad \mbox{ for some } r \in \N.
\end{equation*}

We start with discretizing the convolution product $*$ in~\eqref{eq:FBP_approximate_form} between the Radon data $\Radon f$ and the inverse Fourier transform $\Fourier^{-1} F_L$ of the low-pass filter $F_L$.
Here, for fixed angle $\theta \in [0,\pi)$, we have to approximate the convolution integral
\begin{equation*}
(\Fourier^{-1} F_L * \Radon f)(S,\theta) = \int_\R \Fourier^{-1} F_L(S-t) \, \Radon(t,\theta) \: \d t
\quad \mbox{ for } S \in \R
\end{equation*}
by only using the discrete data
\begin{equation*}
\Radon f(t_j,\theta) = \Radon f(j \cdot d,\theta)
\quad \mbox{ for } j \in \Z
\end{equation*}
taken at equally spaced sampling points $t_j = j \cdot d$, for $j \in \Z$, with fixed sampling distance $d > 0$.
To achieve this, we apply the composite trapezoidal rule and replace the above convolution integral by the (infinite) sum
\begin{equation*}
(\Fourier^{-1} F_L * \Radon f)(S,\theta) \approx d \sum_{j \in \Z} \Fourier^{-1} F_L(S-t_j) \, \Radon f(t_j,\theta)
\quad \mbox{ for } (S,\theta) \in \R \times [0,\pi).
\end{equation*}
Since $f$ is assumed to have compact support, the above sum is in fact finite and, consequently, we obtain
\begin{equation*}
(\Fourier^{-1} F_L * \Radon f)(S,\theta) \approx d \sum_{j = -M}^M \Fourier^{-1} F_L(S-t_j) \, \Radon f(t_j,\theta)
\quad \mbox{ for } (S,\theta) \in \R \times [0,\pi),
\end{equation*}
where $M \in \N$ is chosen sufficiently large such that, for any angle $\theta \in [0,\pi)$,
\begin{equation*}
\Radon f(t,\theta) = 0
\quad \forall \, |t| > M \cdot d.
\end{equation*}

\bigbreak

Let us continue with the discretization of the back projection operator $\Back$, which is for a function $h \equiv h(S,\theta)$ in polar coordinates given by
\begin{equation*}
\Back h(x,y) = \frac{1}{\pi} \int_0^\pi h(x \cos(\theta) + y \sin(\theta),\theta) \: \d \theta
\quad \mbox{ for } (x,y) \in \R^2.
\end{equation*}
In~\eqref{eq:FBP_approximate_form}, this has to be applied to the function
\begin{equation*}
h(S,\theta) = (\Fourier^{-1} F_L * \Radon f)(S,\theta)
\quad \mbox{ for } (S,\theta) \in \R \times [0,\pi),
\end{equation*}
where the Radon data $\Radon f(t,\theta)$ is only known for a finite set of $N$ angles
\begin{equation*}
\theta_k = k \cdot \frac{\pi}{N}
\quad \mbox{ for } k = 0,\ldots,N-1.
\end{equation*}
Thus, for the discretization of $\Back$ we again use the composite trapezoidal rule and replace the above integral by the sum
\begin{equation*}
\Back h(x,y) \approx \frac{1}{N} \sum_{k=0}^{N-1} h(x \cos(\theta_k) + y \sin(\theta_k),\theta_k)
\quad \mbox{ for } (x,y) \in \R^2.
\end{equation*}

Combining the discretization steps leads us to a discrete version of the FBP method~\eqref{eq:FBP_approximate_form} given by
\begin{equation*}
f_{\text{D}} (x,y) = \frac{d}{2N} \sum_{k=0}^{N-1} \sum_{j = -M}^M \Fourier^{-1} F_L(x\cos(\theta_k)+y\sin(\theta_k) - t_j) \, \Radon f(t_j,\theta_k)
\quad \mbox{ for } (x,y) \in \R^2,
\end{equation*}
which we write in compact form as
\begin{equation*}
f_{\text{D}}  = \frac{1}{2} \, \Back_{\text{D}} \big(\Fourier^{-1} F_L *_{\text{D}}  \Radon f\big).
\end{equation*}

The evaluation of the discrete reconstruction $f_{\text{D}} $ requires the computation of the values
\begin{equation*}
(\Fourier^{-1} F_L *_{\text{D}}  \Radon f)(x \cos(\theta_k) + y \sin(\theta_k),\theta_k)
\quad \forall \, 0 \leq k \leq N-1
\end{equation*}
for each reconstruction point $(x,y) \in \R^2$.
To reduce the computational costs, we evaluate, for each $0 \leq k \leq N-1$, the function
\begin{equation*}
h(t,\theta_k) = (\Fourier^{-1} F_L *_{\text{D}}  \Radon f)(t,\theta_k) = d \sum_{j = -M}^M \Fourier^{-1} F_L(t-t_j) \, \Radon f(t_j,\theta_k)
\quad \mbox{ for } t \in \R
\end{equation*}
only at the sampling points $t_i = i \cdot d$ for $i \in I$ with a sufficiently large index set $I \subset \Z$.
For each reconstruction point $(x,y) \in \R^2$ we then interpolate the value $h(t,\theta_k)$ at $t = x \cos(\theta_k) + y \sin(\theta_k)$ by using a suitable interpolation method $\Int$.
This leads us to the {\em discrete FBP reconstruction formula}
\begin{equation}
f_\FBP = \frac{1}{2} \, \Back_{\text{D}}  \Big(\Int \big[\Fourier^{-1} F_L *_{\text{D}}  \Radon f\big]\Big).
\label{eq:FBP_reconstruction_discrete_parallel}
\end{equation}

There are many possible choices for the interpolation method $\Int$.
In the following, we give two examples that are commonly used.
For the sake of brevity, we set $h_k = h(\cdot,\theta_k)$ for $0 \leq k \leq N-1$.
\begin{itemize}
\item {\bf Nearest neighbour interpolation:}
Let $t \in [t_m,t_{m+1})$ for some $m \in \Z$.
Then, the function value $h_k(t)$ is approximated by
\begin{equation*}
\Int_0 h_k(t) = \begin{cases}
h_k(t_m) & \text{for } t-t_m \leq t_{m+1}-t \\
h_k(t_{m+1}) & \text{for } t-t_m > t_{m+1}-t.
\end{cases}
\end{equation*}
This defines a piecewise constant interpolant $\Int_0 h_k$ of $h_k$, which is discontinuous in general.
\item {\bf Linear spline interpolation:}
Let $t \in [t_m,t_{m+1})$ for some $m \in \Z$.
Then, the function value $h_k(t)$ is approximated by
\begin{equation*}
\Int_1 h_k(t) = \frac{1}{d} \left[(t-t_m) \, h_k(t_{m+1}) + (t_{m+1}-t) \, h_k(t_m)\right].
\end{equation*}
This defines a piecewise linear interpolant $\Int_1 h_k$ of $h_k$, which is globally continuous.
\end{itemize}

\bigbreak

According to~\cite[Section 5.1.1]{Natterer2001a} the optimal sampling conditions for the reconstruction of an essentially $L$-band-limited target function $f$ supported in $B_r(0)$ are given by
\begin{equation*}
d \leq \frac{\pi}{L}, \quad M \geq \frac{r}{d}, \quad N \geq rL
\end{equation*}
leading to the well-known optimal sampling relation
\begin{equation*}
N = \pi \cdot M.
\end{equation*}
Here, the restriction $d \leq \frac{\pi}{L}$ ensures that the convolution $*$ in~\eqref{eq:FBP_approximate_form} is properly discretized, while $N \geq rL$ guarantees a satisfactory discretization of the back projection $\Back$ via the trapezoidal rule.
Since for fixed angle $\theta \in [0,\pi)$ the function
\begin{equation*}
h(S) = (\Fourier^{-1} F_L * \Radon f)(S,\theta)
\quad \mbox{ for } S \in \R
\end{equation*}
is band-limited with bandwidth $L$, the condition on $d$ corresponds to the Nyquist rate for $h$ according to the Shannon sampling theorem.
Finally, the relation $M \geq \frac{r}{d}$ ensures that the whole support of the target function $f$ is covered during the acquisition of the Radon data.

Since we assume that $f$ is supported in $B_r(0)$ for some $r \in \N$ and $N,M$ have to be integers, we couple the discretization parameters $d > 0$ and $M,N \in \N$ with the bandwidth $L$ via
\begin{equation*}
d = \frac{\pi}{L}, \quad M = r \cdot \frac{L}{\pi}, \quad N = 3 \cdot M
\end{equation*}
and choose $L$ to be a multiple of $\pi$, i.e., $L = \pi \cdot J$ for some $J \in \N$.

The computation of the discretized approximate FBP reconstruction $f_\FBP$ in~\eqref{eq:FBP_reconstruction_discrete_parallel} requires the evaluation of the inverse Fourier transform $\Fourier^{-1} F_L$ of the utilized low-pass filter~$F_L$ at the sampling points
\begin{equation*}
t_j = j \cdot \frac{\pi}{L}
\quad \mbox{ for } j \in \Z.
\end{equation*}
Therefore, in the following we give analytical expressions for the samples $\Fourier^{-1} F_L\big(\frac{j \pi}{L}\big)$, for $j \in \Z$, for the Ram-Lak, Shepp-Logan and Cosine filter:
\begin{itemize}
\item[(i)] For the Ram-Lak filter we have
\begin{equation*}
\Fourier^{-1} F_L\Big(\frac{j \pi}{L}\Big) = \begin{cases}
\frac{L^2}{2\pi} & \text{for } j = 0 \\
0 & \text{for } j \neq 0 \text{ even} \\
-\frac{2 L^2}{\pi^3 j^2} & \text{for } j \neq 0 \text{ odd}.
\end{cases}
\end{equation*}
\item[(ii)] For the Shepp-Logan filter we have
\begin{equation*}
\Fourier^{-1} A_L\Big(\frac{j \pi}{L}\Big) = \frac{4 L^2}{\pi^3 (1 - 4 j^2)}.
\end{equation*}
\item[(iii)] For the Cosine filter we have
\begin{equation*}
\Fourier^{-1} A_L\Big(\frac{j \pi}{L}\Big) = \frac{2 L^2}{\pi^2} \left(\frac{(-1)^j}{1 - 4 j^2} - \frac{2 (1 + 4 j^2)}{\pi (1 - 4 j^2)^2}\right).
\end{equation*}
\end{itemize}

We summarize the discrete FBP method in the following image reconstruction algorithm, where we assume that the reconstruction $f_\FBP$ in~\eqref{eq:FBP_reconstruction_discrete_parallel} is evaluated in Cartesian grid points
\begin{equation*}
\left\{ (x_m,y_n) \in \R^2 \mid (m,n) \in I_x \times I_y \right\}
\end{equation*}
with finite index sets $I_x \times I_y \subset \N \times \N$.

\bigbreak

\begin{algorithm}[t]
\caption{Discrete FBP method in parallel beam geometry}
\label{algo:FBP_method}
\begin{algorithmic}[1]
\Require Radon data $(\Radon f)_{j,k} = \Radon f(t_j,\theta_k)$ for $j=-M,\ldots,M$, $k=0,\ldots,N-1$
\Statex
\State \textbf{choose} low-pass filter $F_L$ with bandwidth $L>0$
\Statex
\For{$k=0,\ldots,N-1$}\Comment{Computation of the discrete convolution}
\For{$i \in I$}
\State ${\displaystyle h(t_i,\theta_k) = \frac{\pi}{L} \sum_{j=-M}^M \Fourier^{-1} F_L(t_i - t_j) \, \Radon f(t_j,\theta_k)}$
\EndFor
\EndFor
\Statex
\State \textbf{choose} interpolation method $\Int$
\Statex
\For{$m \in I_x$}\Comment{Computation of the discrete back projection}
\For{$n \in I_y$}
\State ${\displaystyle f_\FBP(x_m,y_n) = \frac{1}{2N} \sum_{k=0}^{N-1} \Int h(x_m \cos(\theta_k) + y_n \sin(\theta_k),\theta_k)}$
\EndFor
\EndFor
\Statex
\Ensure Approximate reconstruction $f_\FBP$ on Cartesian grid $\{(x_m,y_n) \mid (m,n) \in I_x \times I_y\}$
\end{algorithmic}
\end{algorithm}

For illustration, we use Algorithm~\ref{algo:FBP_method} to reconstruct the Shepp-Logan phantom (see Figure~\ref{fig:shepp-logan_phantom_sinogram}) and the thorax phantom (see Figure~\ref{fig:thorax_phantom_sinogram}) from finite Radon data.
Both phantoms are supported in $B_1(0)$, i.e., we set $r = 1$.
The FBP reconstructions of the phantoms are displayed in Figure~\ref{fig:phantom_reconstruction}, where we used linear interpolation and the Ram-Lak filter with window function
\begin{equation*}
W(S) = \rect{}(S)
\quad \mbox{ for } S \in \R
\end{equation*}
and bandwidth $L = 50\pi$ so that
\begin{equation*}
F_L(S) = \begin{cases} |S| & \text{for } |S| \leq 50\pi \\
0 & \text{for } |S| > 50\pi.
\end{cases}
\end{equation*}
This corresponds to $M = 50$ and $N = 150$ so that in total $(2M+1)N = 15150$ Radon samples were taken.
Both reconstructions were evaluated on a square grid with $256 \times 256$ pixels.

\begin{figure}[b]
\centering
\subfigure[Shepp-Logan phantom]{\includegraphics[viewport=69 17 383 332, height=0.25\textwidth]{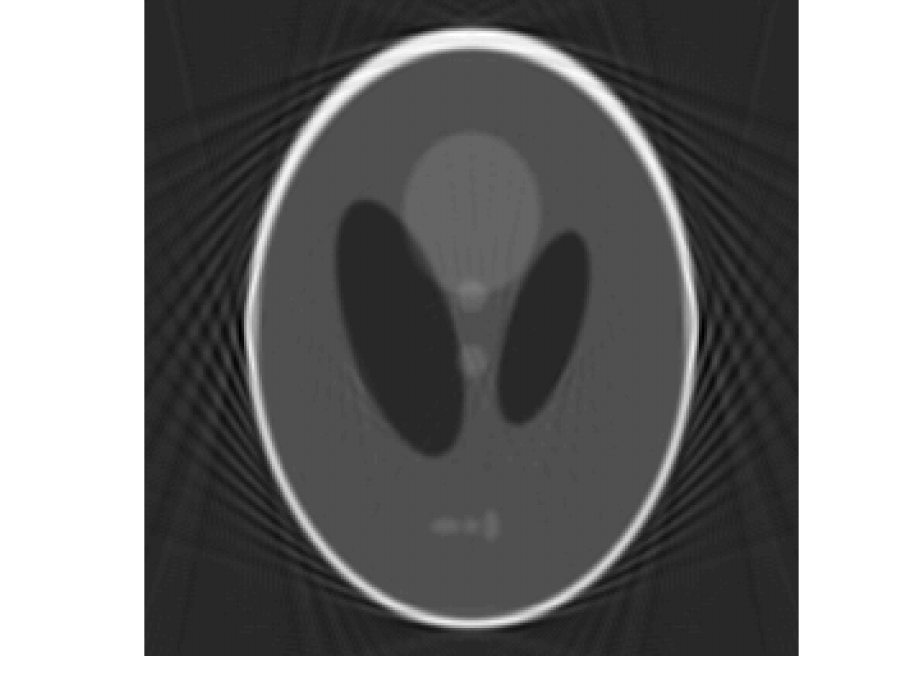}}
\hfil
\subfigure[Thorax phantom]{\includegraphics[viewport=69 17 383 332, height=0.25\textwidth]{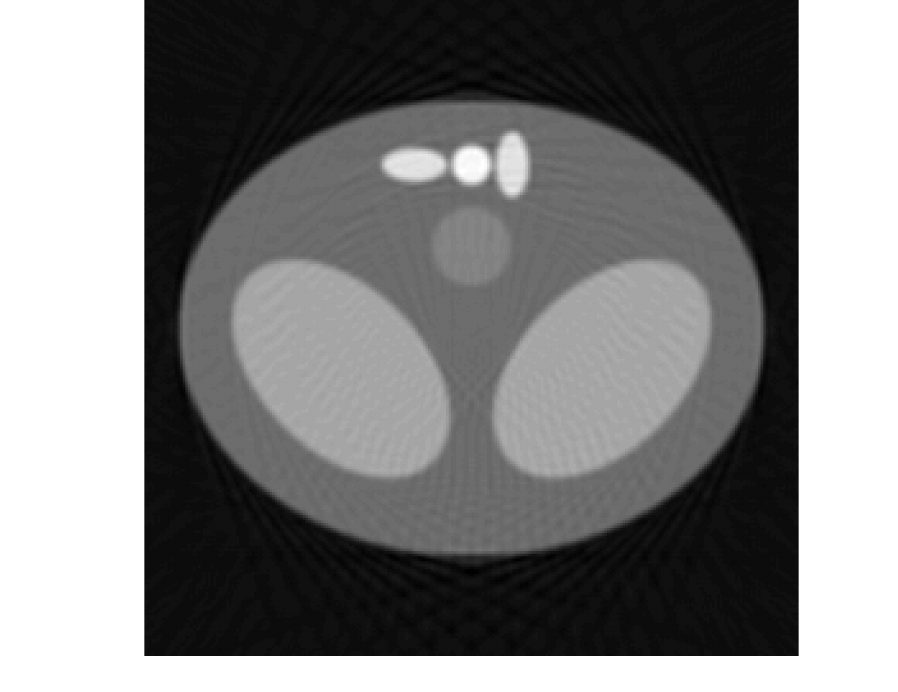}}
\caption{FBP reconstructions of two phantoms with the Ram-Lak filter and $L = 50\pi$.}
\label{fig:phantom_reconstruction}
\end{figure}

\section{Reconstruction in fan beam geometry}
\label{sec:standard_fan_beam}

In this section we describe another commonly used sampling scheme for the Radon transform, called {\em fan beam geometry}.
In this scanning protocol the X-ray source rotates around the object under investigation and emits a fan of X-ray beams that are detected by a large detector arc situated opposite of the source, see Figure~\ref{fig:fan_beam}.
In the following, we again assume that the target function $f$ is compactly supported with
\begin{equation*}
\supp(f) \seq B_r(0)
\quad \mbox{ for some } r > 0
\end{equation*}
and that the X-ray source travels on a full circle of radius $D > r$ so that its position $\bfx_{\text{S}} \in \R^2$ can be expressed as
\begin{equation*}
\bfx_{\text{S}} = D \begin{pmatrix} \cos(\beta) \\ \sin(\beta) \end{pmatrix} = D\bfn_\beta
\quad \mbox{ for } \beta \in [0,2\pi).
\end{equation*}
This gives rise to define the {\em fan beam transform} $\Fan f$ of $f$ as follows, see also Figure~\ref{fig:fan_beam_parametrization}.

\begin{definition}[Fan beam transform]
Let $f \in \L^1(\R^2)$ be a function in Cartesian coordinates.
Then, the {\em fan beam transform} $\Fan f$ of $f$ is defined as
\begin{equation}
\Fan f(\alpha,\beta) = \int_{L_{\alpha,\beta}} f(x,y) \: \d (x,y)
\quad \mbox{ for } (\alpha,\beta) \in \Bigl(-\frac{\pi}{2},\frac{\pi}{2}\Bigr) \times [0,2\pi),
\label{eq:Fan_beam_transform}
\end{equation}
where the {\em fan beam} $L_{\alpha,\beta} \subset \R^2$ denotes the unique straight line passing through $D\bfn_\beta$ that makes the angle $\alpha$ with the line joining $D\bfn_\beta$ and the origin.
\end{definition}

\begin{figure}[b]
\centering
\includegraphics[height=6.5cm,keepaspectratio]{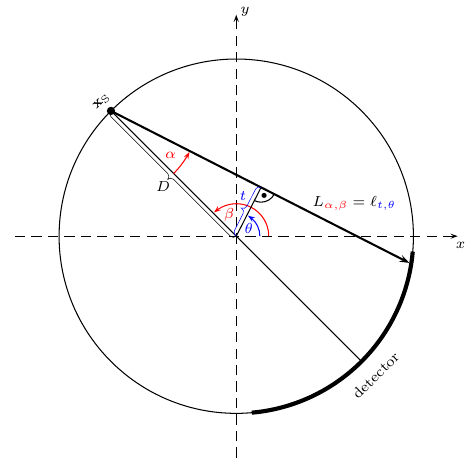}
\caption{Illustration of the fan beam parametrization.}
\label{fig:fan_beam_parametrization}
\end{figure}

An inspection of Figure~\ref{fig:fan_beam_parametrization} reveals that we can express the fan beam transform $\Fan f$ in~\eqref{eq:Fan_beam_transform} in terms of the Radon transform $\Radon f$ in~\eqref{eq:Radon_transform}.

\begin{observation}
For any $(\alpha,\beta) \in (-\frac{\pi}{2},\frac{\pi}{2}) \times [0,2\pi)$ we have $L_{\alpha,\beta} = \ell_{t,\theta}$ with
\begin{equation*}
t = D\sin(\alpha)
\quad \mbox{ and } \quad
\theta = \alpha + \beta - \frac{\pi}{2}
\end{equation*}
so that
\begin{equation}
\Fan f(\alpha,\beta) = \Radon f(D\sin(\alpha),\alpha+\beta-\nicefrac{\pi}{2}).
\label{eq:fan_parallel}
\end{equation}
\end{observation}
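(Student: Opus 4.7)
The plan is to establish the identity of the two lines $L_{\alpha,\beta} = \ell_{t,\theta}$ via elementary planar geometry, after which the claimed equality $\Fan f(\alpha,\beta) = \Radon f(D\sin(\alpha),\alpha+\beta-\pi/2)$ follows immediately by substituting the parameter values into the defining integrals \eqref{eq:Fan_beam_transform} and \eqref{eq:Radon_transform}. Since a straight line in $\R^2$ is uniquely determined by a point lying on it together with a unit normal direction, it suffices to verify that $L_{\alpha,\beta}$ and $\ell_{t,\theta}$ share such a pair.

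First I would identify the direction of $L_{\alpha,\beta}$. The segment joining the origin and the source $D\bfn_\beta$ carries the direction vector $\bfn_\beta$, and rotating this vector through the angle $\alpha$ (with the sign convention fixed by Figure~\ref{fig:fan_beam_parametrization}) yields a direction vector for $L_{\alpha,\beta}$, namely $\bfn_{\alpha+\beta}$. A unit normal to $L_{\alpha,\beta}$ is therefore $\bfn_{\alpha+\beta}^\perp = (-\sin(\alpha+\beta),\cos(\alpha+\beta))$. Now the choice $\theta = \alpha+\beta-\pi/2$ gives
\begin{equation*}
\bfn_\theta = (\cos(\alpha+\beta-\pi/2),\sin(\alpha+\beta-\pi/2)) = (\sin(\alpha+\beta),-\cos(\alpha+\beta)) = -\bfn_{\alpha+\beta}^\perp,
\end{equation*}
so $\bfn_\theta$ is collinear with the normal of $L_{\alpha,\beta}$, which means $\ell_{t,\theta}$ is perpendicular to the same direction as $L_{\alpha,\beta}$ for every $t\in\R$.

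Next I would pin down $t$ by invoking Remark~\ref{rem:points_on_lines}, which asserts that for a fixed angle $\theta$ any point $(x,y)\in\ell_{t,\theta}$ satisfies $t = x\cos(\theta)+y\sin(\theta)$. Applied to the source position $D\bfn_\beta = (D\cos(\beta),D\sin(\beta))$, which lies on $L_{\alpha,\beta}$ by construction, this yields
\begin{equation*}
t = D\bigl(\cos(\beta)\cos(\theta)+\sin(\beta)\sin(\theta)\bigr) = D\cos(\beta-\theta) = D\cos(\pi/2-\alpha) = D\sin(\alpha).
\end{equation*}
With a common normal direction and a common point now secured, the two lines coincide, and the integral identity \eqref{eq:fan_parallel} follows directly from \eqref{eq:Radon_transform}.

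The only real obstacle is angle bookkeeping: one must carefully track the sign convention in the rotation producing the fan beam direction so that it is consistent with Figure~\ref{fig:fan_beam_parametrization}, and correctly collapse $\cos(\beta-\theta)$ to $\sin(\alpha)$ via the addition formula. No analytical subtleties arise; the periodicity relations \eqref{eq:lines_periodicity} of $\ell_{t,\theta}$ in $\theta$ ensure the formula makes sense even though $\theta=\alpha+\beta-\pi/2$ need not lie in $[0,\pi)$.
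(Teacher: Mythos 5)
Your argument is correct: identifying the two lines by exhibiting a common unit normal ($\bfn_\theta$ with $\theta=\alpha+\beta-\nicefrac{\pi}{2}$ antiparallel to $\bfn_{\alpha+\beta}^\perp$) and a common point (the source $D\bfn_\beta$, giving $t=D\cos(\beta-\theta)=D\sin(\alpha)$) is exactly the computation the paper leaves to an ``inspection of Figure~\ref{fig:fan_beam_parametrization}.'' Your write-up simply supplies the details of that same geometric identification, including the correct handling of the case $\theta\notin[0,\pi)$ via the periodicity relations~\eqref{eq:lines_periodicity}.
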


\bigbreak

In standard fan beam geometry, the fan beams $L_{\alpha,\beta}$ are uniformly distributed in both angular variables $\alpha \in (-\frac{\pi}{2},\frac{\pi}{2})$ and $\beta \in [0,2\pi)$.
More precisely, for $p$ equally spaced angular positions of the X-ray source we collect samples along $2q+1$ fan beams with fixed angular spacing $\Delta\alpha > 0$.
Hence, the fan beam data are of the form
\begin{equation}
(\Fan f)_{j,k} = \Fan f(\alpha_j,\beta_k)
\label{eq:fan_beam_data}
\end{equation}
with
\begin{equation*}
\alpha_j = j \cdot \Delta\alpha
\enspace \mbox{ for } j = -q,\ldots,q
\quad \mbox{ and } \quad
\beta_k = k \cdot \Delta\beta
\enspace \mbox{ for } k = 0,\ldots,p-1
\end{equation*}
so that in total $p \cdot (2q+1)$ fan beam samples are taken.
Let $\varphi \in (0,\pi)$ denote the opening angle of the X-ray fan.
Then, the angular spacings $\Delta\alpha$ and $\Delta\beta$ are chosen as
\begin{equation*}
\Delta\alpha = \frac{\varphi}{2q}
\quad \mbox{ and } \quad
\Delta\beta = \frac{2\pi}{p}.
\end{equation*}
The source distance $D > r$ has to be chosen such that the whole reconstruction region $B_r(0)$ is covered by the fan beams, i.e.,
\begin{equation*}
r \leq D\sin(\nicefrac{\varphi}{2}).
\end{equation*}
For illustration, Figure~\ref{fig:Fan_beam_geometry} shows the fan beam arrangement of $102$ lines with $p = 6$ and $q = 8$.
In Figure~\ref{fig:Fan_beam_geometry_3} we chose $D = 3$ and $\varphi = \frac{2\pi}{9}$, which corresponds to the angular spacing $\Delta\alpha = \frac{\pi}{54}$, whereas in Figure~\ref{fig:Fan_beam_geometry_2} we chose $D = 2$ and $\varphi = \frac{\pi}{3}$ so that in this case we have $\Delta\alpha = \frac{\pi}{48}$.

\begin{figure}[t]
\centering
\subfigure[$D = 3$, $\varphi = \frac{2\pi}{9}$]{\label{fig:Fan_beam_geometry_3}\includegraphics[height=0.3\textwidth]{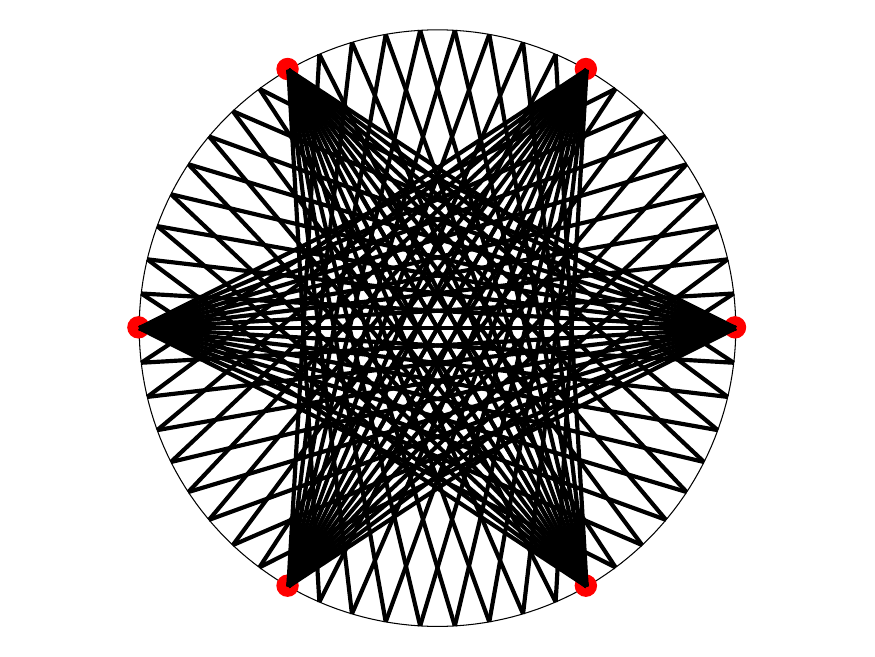}}
\hfil
\subfigure[$D = 2$, $\varphi = \frac{\pi}{3}$]{\label{fig:Fan_beam_geometry_2}\includegraphics[height=0.3\textwidth]{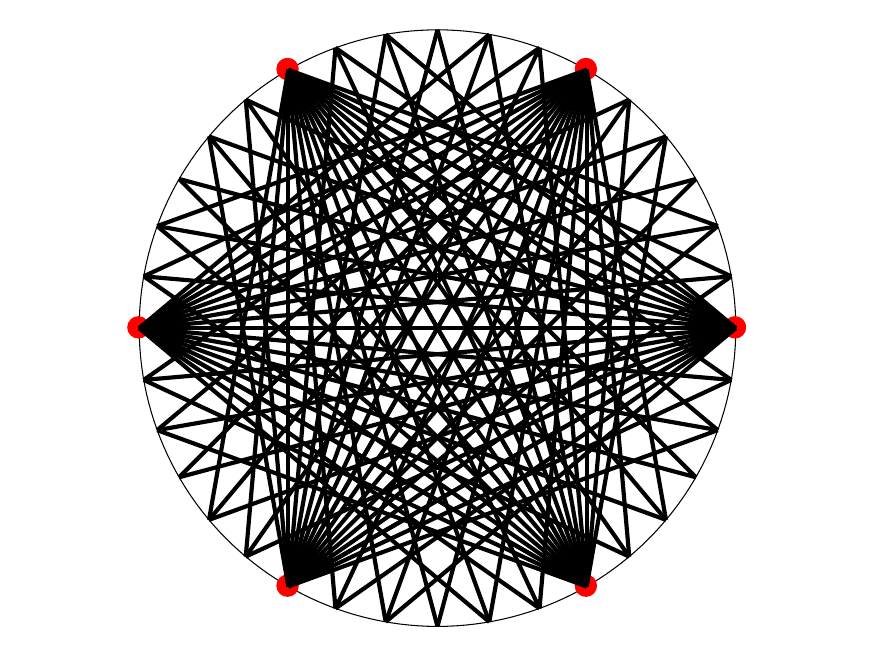}}
\caption{Fan beam geometry with $p = 6$ and $q = 8$.}
\label{fig:Fan_beam_geometry}
\end{figure}

\subsection*{Discrete FBP reconstruction formula for fan beam geometry}

Based on~\eqref{eq:fan_parallel}, one way to reconstruct $f$ from fan beam data~\eqref{eq:fan_beam_data} is to transform the data into parallel beam data by interpolation and apply the discrete FBP reconstruction formula~\eqref{eq:FBP_reconstruction_discrete_parallel}.
This procedure is called {\em rebinning}, but introduces artefacts in the reconstruction.
Hence, it is preferable to develop a specially adapted reconstruction algorithm for the fan beam geometry.

Starting point for the derivation of the reconstruction formula for fan beam data is the approximate FBP formula~\eqref{eq:FBP_approximate_form}, i.e.,
\begin{equation*}
f_L(x,y) = \frac{1}{2\pi} \int_0^\pi \int_\R (\Fourier^{-1} F_L)(x\cos(\theta)+y\sin(\theta)-t) \, \Radon f(t,\theta) \: \d t \, \d \theta,
\end{equation*}
where $F_L$ is a low-pass filter of bandwidth $L > 0$.
Since $\supp(f) \subseteq B_r(0)$ with $r < D$ and the Radon transform satisfies the evenness condition
\begin{equation*}
\Radon f(t,\theta+\pi) = \Radon f(-t,\theta)
\quad \forall \, (t,\theta) \in \R \times [0,\pi),
\end{equation*}
this can be rewritten as
\begin{equation*}
f_L(x,y) = \frac{1}{4\pi} \int_0^{2\pi} \int_{-D}^D (\Fourier^{-1} F_L)(x\cos(\theta)+y\sin(\theta)-t) \, \Radon f(t,\theta) \: \d t \, \d \theta.
\end{equation*}

\bigbreak

According to~\eqref{eq:fan_parallel}, we now apply the transformation
\begin{equation*}
t = D\sin(\alpha)
\quad \mbox{ and } \quad
\theta = \alpha + \beta - \frac{\pi}{2}
\end{equation*}
yielding the approximate FBP formula for the fan beam transform
\begin{equation}
f_L(x,y) = \frac{D}{4\pi} \int_0^{2\pi} \int_{-\nicefrac{\pi}{2}}^{\nicefrac{\pi}{2}} (\Fourier^{-1} F_L)(S_{x,y}(\alpha,\beta)) \, \cos(\alpha) \, \Fan f(\alpha,\beta) \: \d \alpha \, \d \beta,
\label{eq:FBP_approximate_form_fan}
\end{equation}
where
\begin{equation*}
S_{x,y}(\alpha,\beta) = x\cos(\alpha+\beta-\nicefrac{\pi}{2}) + y\sin(\alpha+\beta-\nicefrac{\pi}{2}) - D\sin(\alpha).
\end{equation*}

As the function $\Fourier^{-1} F_L$ is radially symmetric, we now investigate the term $|S_{x,y}(\alpha,\beta)|$ for a fixed reconstruction point $(x,y) \in \R^2$ and fixed angles $(\alpha,\beta) \in (-\frac{\pi}{2},\frac{\pi}{2}) \times [0,2\pi)$.
To this end, let $\bfx = (x,y)$ and again $\bfx_{\text{S}} = D \bfn_{\beta} = (D\cos(\beta),D\sin(\beta))$.
Moreover, let $\gamma$ be the angle between $\bfx - \bfx_{\text{S}}$ and $-\bfx_{\text{S}}$ and let $\bfy$ be the orthogonal projection of $\bfx$ onto the line $L_{\alpha,\beta}$, see Figure~\ref{fig:fan_beam_simplification}.
Then, we have
\begin{equation*}
|S_{x,y}(\alpha,\beta)| = |\bfx \cdot \bfn_{\alpha+\beta-\nicefrac{\pi}{2}} - D\sin(\alpha)| = \|\bfx - \bfy\|_{\R^2} = \|\bfx - \bfx_{\text{S}}\|_{\R^2} \, \sin(|\gamma - \alpha|),
\end{equation*}
where
\begin{equation*}
\|\bfx - \bfx_{\text{S}}\|_{\R^2}^2 = (x-D\cos(\beta))^2 + (y-D\sin(\beta))^2
\end{equation*}
and
\begin{equation*}
\cos(\gamma) = -\frac{(\bfx - \bfx_{\text{S}}) \cdot \bfx_{\text{S}}}{\|\bfx - \bfx_{\text{S}}\|_{\R^2} \, \|\bfx_{\text{S}}\|_{\R^2}} = \frac{D - x\cos(\beta) - y\sin(\beta)}{\sqrt{(x-D\cos(\beta))^2 + (y-D\sin(\beta))^2}}.
\end{equation*}

\begin{figure}[b]
\centering
\includegraphics[height=5.75cm,keepaspectratio]{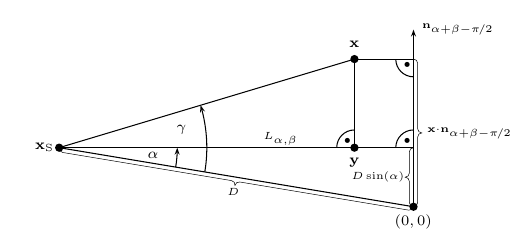}
\caption{Simplification of the fan beam parametrization.}
\label{fig:fan_beam_simplification}
\end{figure}

Moreover, the function $\Fourier^{-1} F_L$ has the homogeneity property
\begin{equation*}
(\Fourier^{-1} F_L)(\sigma S) = \sigma^{-2} (\Fourier^{-1} F_{\sigma L})(S)
\quad \forall \, \sigma > 0
\end{equation*}
so that
\begin{equation*}
(\Fourier^{-1} F_L)(S_{x,y}(\alpha,\beta)) = \|\bfx - \bfx_{\text{S}}\|_{\R^2}^{-2} \, (\Fourier^{-1} F_{\|\bfx - \bfx_{\text{S}}\|_{\R^2} \, L})(\sin(\gamma-\alpha)).
\end{equation*}
With this, equation~\eqref{eq:FBP_approximate_form_fan} can be rewritten as
\begin{equation*}
f_L(x,y) = \frac{D}{4\pi} \int_0^{2\pi} \|\bfx - \bfx_{\text{S}}\|_{\R^2}^{-2} \int_{-\nicefrac{\pi}{2}}^{\nicefrac{\pi}{2}} (\Fourier^{-1} F_{\|\bfx - \bfx_{\text{S}}\|_{\R^2} \, L})(\sin(\gamma-\alpha)) \, \cos(\alpha) \, \Fan f(\alpha,\beta) \: \d \alpha \, \d \beta,
\end{equation*}
where $\|\bfx - \bfx_{\text{S}}\|_{\R^2}$ and $\gamma$ are independent of $\alpha$.
Therefore, the $\alpha$-integral is a convolution.
Unfortunately, the convolution kernel $\Fourier^{-1} F_{\|\bfx - \bfx_{\text{S}}\|_{\R^2} \, L}$ depends on $\bfx$ so that the convolution has to be done for every reconstruction point $\bfx = (x,y) \in \R^2$, which is computationally costly.

\begin{algorithm}[t]
\caption{Discrete FBP method in fan beam geometry}
\label{algo:FBP_method_fan}
\begin{algorithmic}[1]
\Require Fan beam data $(\Fan f)_{j,k} = \Fan f(\alpha_j,\beta_k)$ for $j=-q,\ldots,q$, $k=0,\ldots,p-1$
\Statex
\State \textbf{choose} low-pass filter $F_L$ with bandwidth $L>0$
\Statex
\For{$k=0,\ldots,p-1$}\Comment{Computation of the discrete convolution}
\For{$i \in I$}
\State ${\displaystyle h(\alpha_i,\beta_k) = \Delta\alpha \sum_{j=-q}^q \Fourier^{-1} F_L(D \sin(\alpha_i - \alpha_j)) \, \cos(\alpha_j) \, \Fan f(\alpha_j,\beta_k)}$
\EndFor
\EndFor
\Statex
\State \textbf{choose} interpolation method $\Int$
\Statex
\For{$m \in I_x$}\Comment{Computation of the discrete back projection}
\For{$n \in I_y$}
\State ${\displaystyle f_\FBP(x_m,y_n) = \frac{D^3}{2p} \sum_{k=0}^{p-1} \bigl((x_m - D\cos(\beta_k))^2 + (y_n - D\sin(\beta_k))^2\bigr)^{-1} \, \Int h(\gamma_{m,n,k},\beta_k)}$
\State \textbf{with} ${\gamma_{m,n,k}} = \sgn(x_m\sin(\beta_k) - y_n\cos(\beta_k)) \, \arccos\biggl(\frac{D - x_m\cos(\beta_k) - y_n\sin(\beta_k)}{\sqrt{(x_m - D\cos(\beta_k))^2 + (y_n - D\sin(\beta_k))^2}}\biggr)$
\EndFor
\EndFor
\Statex
\Ensure Approximate reconstruction $f_\FBP$ on Cartesian grid $\{(x_m,y_n) \mid (m,n) \in I_x \times I_y\}$
\end{algorithmic}
\end{algorithm}

To reduce the complexity of the calculations, we make the following approximation.
Assume that $r \ll D$.
Then, $\|\bfx - \bfx_{\text{S}}\|_{\R^2} \approx D$ for $\bfx \in B_r(0)$ and~\eqref{eq:FBP_approximate_form_fan} can be approximately written as
\begin{equation}
f_L(x,y) \approx \frac{D^3}{4\pi} \int_0^{2\pi} \|\bfx - \bfx_{\text{S}}\|_{\R^2}^{-2} \int_{-\nicefrac{\pi}{2}}^{\nicefrac{\pi}{2}} (\Fourier^{-1} F_{L})(D\sin(\gamma-\alpha)) \, \cos(\alpha) \, \Fan f(\alpha,\beta) \: \d \alpha \, \d \beta.
\label{eq:FBP_approximate_fan}
\end{equation}
This formula can now be treated exactly as~\eqref{eq:FBP_approximate_form} in the parallel case, leading to Algorithm~\ref{algo:FBP_method_fan}.
According to~\cite[Section 5.1.3]{Natterer2001a} the fan beam parameters have to satisfy
\begin{equation*}
q \geq \frac{\varphi}{2\pi} DL, \quad p \geq \frac{2D}{D+r} rL, \quad D \geq 3r.
\end{equation*}

\begin{figure}[b]
\centering
\subfigure[Fan beam data]{\includegraphics[viewport=27 33 425 317, height=0.225\textwidth]{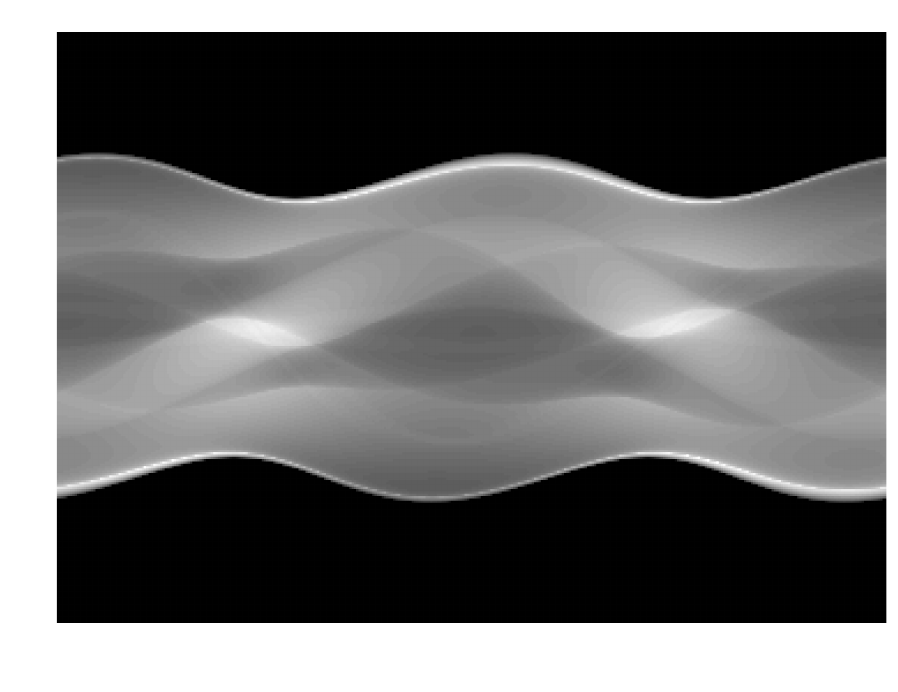}}
\hfil
\subfigure[FBP reconstruction]{\includegraphics[viewport=69 17 383 332, height=0.225\textwidth]{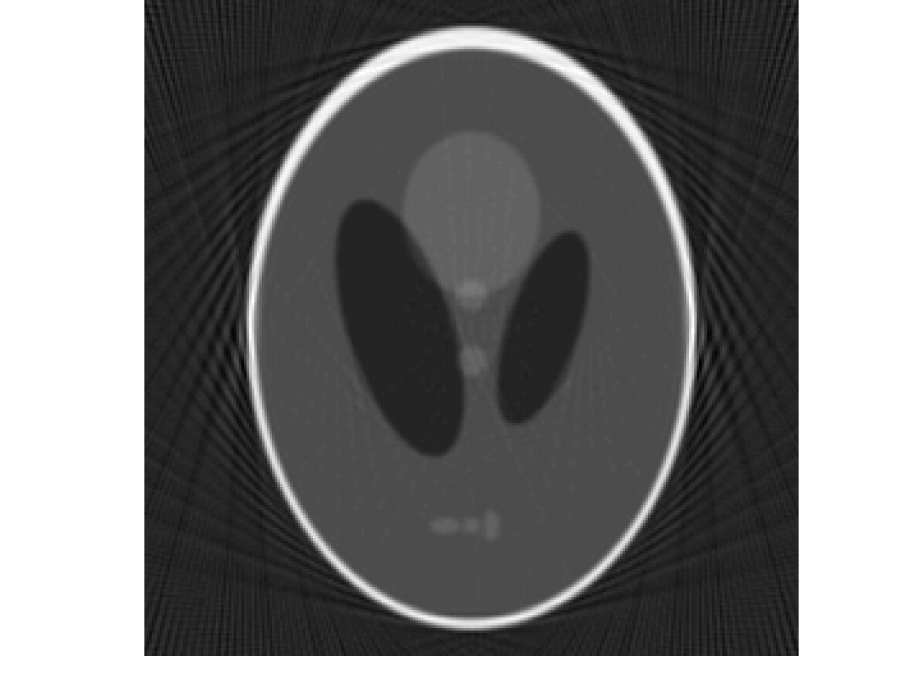}}
\caption{Fan beam reconstruction of Shepp-Logan phantom with Ram-Lak filter and $L = 180$.}
\label{fig:phantom_reconstruction_fan}
\end{figure}

For illustration, we use Algorithm~\ref{algo:FBP_method_fan} to reconstruct the Shepp-Logan phantom from fan beam samples.
The data and FBP reconstruction are displayed in Figure~\ref{fig:phantom_reconstruction_fan}, where we chose $r = 1$, $\varphi = \frac{\pi}{3}$, $D = 3$, $p=270$, $q = 90$ as well as linear interpolation and Ram-Lak filter with $L = 180$.

%--- Chapter 5
%---------------------------------------------------------------------------
\chapter{Algebraic reconstruction techniques}

In this chapter we describe another popular approach, given by the class of {\em algebraic} methods, to solve the basic CT reconstruction problem of recovering a compactly supported target function $f \in \L^2(\Omega)$ with $\supp(f) \subseteq B_r(0)$, for $r>0$, on a rectangular image domain $\Omega \subset \R^2$ from its Radon data
\begin{equation*}
\left\{ \Radon f (t,\theta) \mid t \in \R, \, \theta \in [0,\pi) \right\}.
\end{equation*}
In contrast to the analytic approaches described in the previous chapters, algebraic techniques are not based on analytical inversion formulas for the Radon transform but on a fully discrete formulation of the above reconstruction problem.
One of the most popular algebraic methods is given by the so called {\em algebraic reconstruction technique} (ART), which is an implementation of the classical Kaczmarz method for iteratively solving systems of linear equations.

\section{Discrete reconstruction problem}

We start with deriving the fully discrete formulation of the CT reconstruction problem.
In this setting, the sought function $f$ is discretized beforehand to solving the reconstruction problem.
To this end, a set of basis function $\phi_k \in \L^2(\Omega)$, $k=1,\ldots,N$, of the reconstruction space is fixed and the function $f$ is assumed to be expressible as a linear combination of these basis functions, i.e.,
\begin{equation}
f = \sum_{k=1}^N c_k \, \phi_k
\label{eq:algebraic_ansatz}
\end{equation}
for some coefficient vector $c = (c_1,\ldots,c_N)^T \in \R^N$.
Furthermore, we assume that we deal with a finite number of Radon data
\begin{equation*}
y = (\Radon f(t_1,\theta_1),\ldots,\Radon f(t_M,\theta_M))^T \in \R^M.
\end{equation*}
Then, using the ansatz~\eqref{eq:algebraic_ansatz} for the target function $f$, the linearity of the Radon transform $\Radon$ gives
\begin{equation*}
y_j = \Radon f(t_j,\theta_j) = \sum_{k=1}^N c_k \, \Radon \phi_k(t_j,\theta_j)
\quad \forall \, j=1,\ldots,M
\end{equation*}
and, thus, the fully discrete version of the CT reconstruction problem is given by the linear system of equations
\begin{equation}
Ac = y,
\label{eq:reconstruction_problem_discrete}
\end{equation}
whose system matrix
\begin{equation*}
A = (a_{j,k})_{\substack{j=1,\ldots,M \\ k=1,\ldots,N}} \in \R^{M \times N}
\end{equation*}
is called {\em Radon matrix} and consists of the matrix elements
\begin{equation*}
a_{j,k} = \Radon \phi_k(t_j,\theta_j).
\end{equation*}

\bigbreak

The most common choice is the so called {\em pixel basis}.
In this case, the image domain $\Omega \subset \R^2$ is discretized by using a grid of $I_r \times I_c$ small squares, called {\em picture elements} or, in short, {\em pixels}.
To specify the pixel basis, the pixels $\Box_1,\ldots,\Box_N$ are labelled column-wise with $N = I_r \cdot I_c$ and for $k \in \{1,\ldots,N\}$ we define the basis function $\chi_k$ as
\begin{equation*}
\chi_k(x,y) = \begin{cases}
1 & \text{for } (x,y) \in \Box_k \\
0 & \text{for } (x,y) \not\in \Box_k.
\end{cases}
\end{equation*}
Then, the image $f$ can be written as
\begin{equation*}
f = \sum_{k=1}^N c_k \, \chi_k,
\end{equation*}
where the coefficient $c_k$ corresponds to the image's greyscale value at pixel $\Box_k$.
Moreover, the elements of the Radon matrix are of the form
\begin{equation*}
\Radon \chi_k(t_j,\theta_j) = \int_{\ell_{t_j,\theta_j}} \chi_k(x,y) \: \d(x,y) = \Length\bigl(\ell_{t_j,\theta_j} \cap \Box_k\bigr)
\end{equation*}
i.e., the element $a_{j,k}$ corresponds to the length of the intersection of the $j$th Radon line~$\ell_{t_j,\theta_j}$ with the $k$th pixel $\Box_k$ in the reconstruction grid.
Thus, the Radon matrix $A$ is rather sparse and usually very large.
For example, if we use a reconstruction grid with $256 \times 256$ pixels and Radon data in parallel beam geometry with $300$ angles and $201$ parallel lines per angle, we have $N = 65536$ and $M = 60300$ so that $A \in \R^{60300 \times 65536}$ has $3.95 \cdot 10^{9}$ elements.

\begin{figure}[b]
\centering
\subfigure{\includegraphics[height=0.3\textwidth]{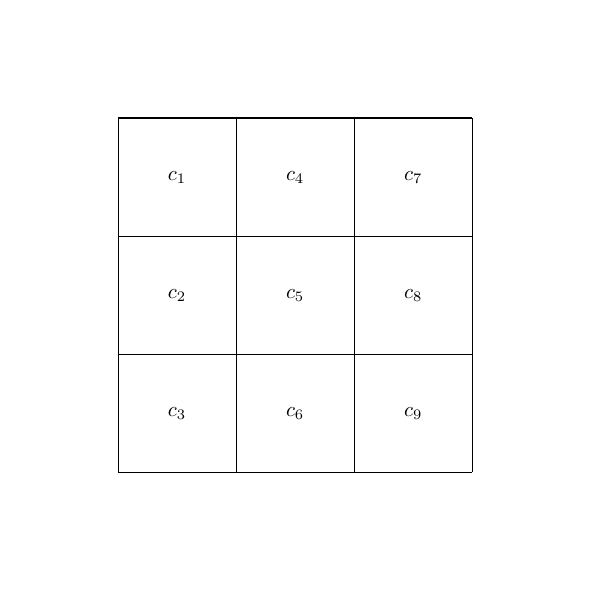}}
\hfil
\subfigure{\includegraphics[height=0.3\textwidth]{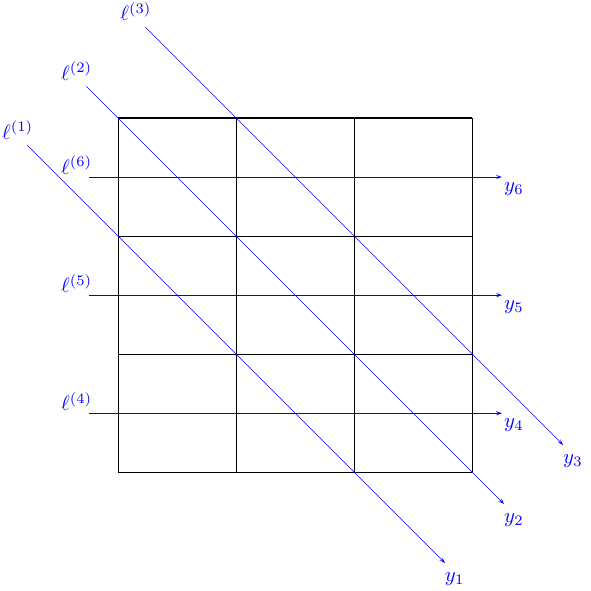}}
\caption{Illustration of the discretization and labeling leading to the linear system in~\eqref{eq:example_lin_system}.}
\label{fig:Radon_matrix_example}
\end{figure}

\begin{example}
We consider the discretization of the image domain $\Omega = [-1,1]^2$ with $3 \times 3$ pixels and the six Radon lines
\begin{equation*}
\ell^{(1)} = \ell_{-\frac{\sqrt{2}}{3},\frac{\pi}{4}}, \quad
\ell^{(2)} = \ell_{0,\frac{\pi}{4}}, \quad
\ell^{(3)} = \ell_{\frac{\sqrt{2}}{3},\frac{\pi}{4}}, \quad
\ell^{(4)} = \ell_{-\frac{2}{3},\frac{\pi}{2}}, \quad
\ell^{(5)} = \ell_{0,\frac{\pi}{2}}, \quad
\ell^{(6)} = \ell_{\frac{2}{3},\frac{\pi}{2}}
\end{equation*}
see also Figure~\ref{fig:Radon_matrix_example}.
Then, the linear system $Ac = y$ takes the form
\begin{equation}
\frac{2}{3} \begin{pmatrix}
0 & \sqrt{2} & 0 & 0 & 0 & \sqrt{2} & 0 & 0 & 0 \\
\sqrt{2} & 0 & 0 & 0 & \sqrt{2} & 0 & 0 & 0 & \sqrt{2} \\
0 & 0 & 0 & \sqrt{2} & 0 & 0 & 0 & \sqrt{2} & 0 \\
0 & 0 & 1 & 0 & 0 & 1 & 0 & 0 & 1 \\
0 & 1 & 0 & 0 & 1 & 0 & 0 & 1 & 0 \\
1 & 0 & 0 & 1 & 0 & 0 & 1 & 0 & 0
\end{pmatrix}
\begin{pmatrix}
c_1 \\ c_2 \\ c_3 \\ c_4 \\ c_5 \\ c_6 \\ c_7 \\ c_8 \\ c_9 
\end{pmatrix}
=
\begin{pmatrix}
y_1 \\ y_2 \\ y_3 \\ y_4 \\ y_5 \\ y_6
\end{pmatrix}.
\label{eq:example_lin_system}
\end{equation}
Note that this system is {\em underdetermined} and, thus, has infinitely many solutions if it is solvable.
\end{example}

\section{Linear least squares approximation}

Recall that the linear system $Ac = y$ in~\eqref{eq:reconstruction_problem_discrete} is called {\em overdetermined} if there are more equations that unknowns, i.e.,
\begin{equation*}
M > N,
\end{equation*}
and {\em underdetermined} if there are more unknowns than equations, i.e.,
\begin{equation*}
N > M.
\end{equation*}
In the case of an overdetermined system, the equation $Ac=y$ is likely to not have an exact solution and one typically applies {\em linear least squares approximation} to instead solve the minimization problem
\begin{equation}
\|Ac - y\|_{\R^M}^2 \longrightarrow \min_{c \in \R^N} !.
\label{eq:least_squares_problem}
\end{equation}
Consequently, its solution
\begin{equation*}
c^* = \argmin_{c \in \R^N} \|Ac - y\|_{\R^M}^2
\end{equation*}
provides the vector $y^* = Ac^*$ which is the closest vector to $y$ in the range $\im(A)$ of $A$.

To solve the minimization problem~\eqref{eq:least_squares_problem} for a given measurement vector $y \in \R^M$, we define the objective function $J: \R^N \to \R_{\geq 0}$ as
\begin{equation*}
J(c) = \|Ac - y\|_{\R^M}^2 = (Ac-y)^T(Ac-y) = c^T A^TA c - 2 c^T A^T y + y^T y
\quad \mbox{ for } c \in \R^N
\end{equation*}
with the gradient
\begin{equation*}
\nabla J(c) = 2 A^TA c - 2 A^T y
\end{equation*}
and the Hessian
\begin{equation*}
\nabla^2 J(c) = 2 A^TA,
\end{equation*}
which is positive semi-definite.
Consequently, any solution to~\eqref{eq:least_squares_problem} is a root of $\nabla J$ and satisfies the so called {\em normal equation}
\begin{equation}
A^TA c = A^T y,
\label{eq:normal_equation}
\end{equation}
i.e., the vector $Ac - y$ belongs to the nullspace $\ker(A^T)$ of $A^T$ and, hence, is orthogonal to $\im(A)$.
If $A \in \R^{M \times N}$ has full rank
\begin{equation*}
\rank(A) = N,
\end{equation*}
the matrix $A^TA \in \R^{N \times N}$ is positive definite so that the normal equation~\eqref{eq:normal_equation} has a unique solution $c^* \in \R^N$, which then also uniquely solves the least squares minimization problem~\eqref{eq:least_squares_problem}.
In general, however, $A^TA$ need not be invertible so that the {\em least squares solution} $c^*$ is not uniquely determined.
Nonetheless, the corresponding range element $y^* = Ac^* \in \R^M$ is the unique element in $\im(A)$ that is closest to the given measurement vector $y \in \R^M$.

The Gram matrix $A^TA \in \R^{N \times N}$ is typically very large, dense and ill-conditioned so that solving~\eqref{eq:normal_equation} is numerically infeasible.
An efficient and numerically stable method for solving the least squares problem~\eqref{eq:least_squares_problem} is based on the QR factorization of the Radon matrix $A \in \R^{M \times N}$,
\begin{equation*}
A = Q R,
\end{equation*}
with an orthogonal matrix $Q \in \R^{M \times M}$ and an upper triangular matrix $R \in \R^{M \times N}$ of the form
\begin{equation*}
R = \begin{pmatrix}
\widehat{R} \\ 0
\end{pmatrix},
\end{equation*}
where
\begin{equation*}
\widehat{R} = \begin{pmatrix}
r_{11} & \ldots & r_{1N} \\
& \ddots & \vdots \\
0 & & r_{NN}
\end{pmatrix}
\in \R^{N \times N}.
\end{equation*}
Note that $A$ has full rank if and only if no diagonal entry of $\widehat{R}$ vanishes, i.e.,
\begin{equation*}
r_{kk} \neq 0
\quad \forall \, k=1,\ldots,N.
\end{equation*}

\bigbreak

Since the orthogonal matrix $Q$ has the inverse $Q^{-1} = Q^T$ and $Q^T$ is norm-preserving, the objective function $J: \R^N \to \R_{\geq 0}$ can be rewritten as
\begin{equation*}
J(c) = \|Ac - y\|_{\R^M}^2 = \|QRc - y\|_{\R^M}^2 = \|Rc -Q^T  y\|_{\R^M}^2 = \|\widehat{R}c - \widehat{y_1}\|_{\R^N}^2 + \|\widehat{y}_2\|_{\R^{M-N}}^2,
\end{equation*}
where
\begin{equation*}
Q^T y = \begin{pmatrix}
\widehat{y}_1 \\ \widehat{y}_2
\end{pmatrix}
\mbox{ with } \widehat{y}_1 \in \R^N \mbox{ and } \widehat{y}_2 \in \R^{M-N}.
\end{equation*}
Consequently, a solution to the minimization problem~\eqref{eq:least_squares_problem} can be computed by solving the triangular system
\begin{equation*}
\widehat{R}c = \widehat{y}_1
\end{equation*}
with a backward substitution and the {\em least squares error} is given by
\begin{equation*}
\|Ac^* - y\|_{\R^M}^2 = \|\widehat{y}_2\|_{\R^{M-N}}^2,
\end{equation*}
where the least squares solution $c^* \in \R^N$ is uniquely determined if and only if $A$ has full rank.

\subsection*{Tikhonov regularization}

When solving the discrete reconstruction problem~\eqref{eq:reconstruction_problem_discrete}, one still has to consider the ill-posedness of the CT reconstruction problem and apply a regularization method.
A typical strategy is to incorporate prior knowledge into the reconstruction procedure.
This can be done by applying the general framework of {\em variational regularization}.
In this setting, a regularized solution of the linear system~\eqref{eq:reconstruction_problem_discrete} is computed by minimizing a {\em Tikhonov type functional} $J_\gamma: \R^N \to \R_{\geq 0}$, defined as
\begin{equation*}
J_\gamma(c) = \|Ac - y\|_{\R^M}^2 + \gamma \, \Lambda(c)
\quad \mbox{ for } c \in \R^N,
\end{equation*}
where $\gamma > 0$ is the regularization parameter and $\Lambda: \R^N \to \R_{\geq 0}$ is the {\em prior function}.
The first term denotes the {\em data fidelity term} and controls the data error, whereas the second term acts as a {\em penalty term} and encodes the prior knowledge about the solution.
The parameter $\gamma > 0$ is used to compromise between the approximation quality of a solution $c^*_\gamma$ and its regularity.

There are many possible choices for the prior term, depending on the particular features of the unknown object that shall be preserved or emphasized.
One prominent example is the total variation (TV) seminorm, which can be used for enforcing edge-preserving reconstructions.
In this paragraph, we focus on another relevant special case, where the functional $\Lambda$ is of the form
\begin{equation*}
\Lambda(c) = \|c\|_B^2 = c^TBc
\quad \mbox{ for } c \in \R^N
\end{equation*}
with a symmetric and positive definite matrix $B \in \R^{N \times N}$.
We remark that choosing the identity matrix $B = \Id$ corresponds to the classical {\em Tikhonov regularization}.

\begin{theorem}[Regularized least squares]
Let $A \in \R^{M \times N}$ and let $B \in \R^{N \times N}$ be symmetric and positive definite.
Then, for any $\gamma > 0$ and $y \in \R^M$ the {\em regularized least squares problem}
\begin{equation}
\|Ac - y\|_{\R^M}^2 + \gamma \, \|c\|_B^2 \longrightarrow \min_{c \in \R^N} !
\label{eq:least_squares_problem_regularized}
\end{equation}
has the unique solution
\begin{equation*}
c^*_\gamma = (A^TA + \gamma \, B)^{-1} A^T y \in \R^N,
\end{equation*}
which satisfies
\begin{equation*}
c^*_\gamma \xrightarrow{\gamma \to \infty} 0
\quad \mbox{ and } \quad
c^*_\gamma \xrightarrow{\gamma \to 0} c^*_0,
\end{equation*}
where $c^*_0 \in \R^N$ is that solution of the linear least squares problem
\begin{equation*}
\|Ac - y\|_{\R^M}^2 \longrightarrow \min_{c \in \R^N} !
\end{equation*}
that minimizes the norm $\|\cdot\|_B$ induced by the matrix $B$.
\end{theorem}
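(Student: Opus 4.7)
The plan is to handle the three assertions in turn. For existence, uniqueness, and the explicit formula, I would expand the objective as
\begin{equation*}
J_\gamma(c) = c^T(A^TA + \gamma B)c - 2c^T A^T y + y^T y,
\end{equation*}
whose gradient is $\nabla J_\gamma(c) = 2(A^TA + \gamma B)c - 2 A^T y$ and whose Hessian is $2(A^TA + \gamma B)$. Since $A^TA$ is positive semi-definite and $\gamma B$ is positive definite for $\gamma > 0$, the sum is positive definite and hence invertible. Strict convexity of $J_\gamma$ then guarantees a unique minimizer, determined by the first-order condition $\nabla J_\gamma(c) = 0$, which gives precisely $c^*_\gamma = (A^TA + \gamma B)^{-1} A^T y$.

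For the limit $\gamma \to \infty$, I would factor out $\gamma$ to write
\begin{equation*}
c^*_\gamma = \gamma^{-1} \bigl(\gamma^{-1} A^TA + B\bigr)^{-1} A^T y.
\end{equation*}
Since matrix inversion is continuous on the open set of invertible matrices and $\gamma^{-1} A^TA + B \to B$ as $\gamma \to \infty$, the bracketed term converges to $B^{-1}$, so the prefactor $\gamma^{-1}$ forces $c^*_\gamma \to 0$.

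For the limit $\gamma \to 0$, my plan is to reduce to the classical Tikhonov case via the substitution $\tilde c = B^{1/2} c$ using the unique symmetric positive-definite square root $B^{1/2}$. Setting $\tilde A = A B^{-1/2}$, the problem~\eqref{eq:least_squares_problem_regularized} becomes the ordinary regularized least squares problem
\begin{equation*}
\|\tilde A \tilde c - y\|_{\R^M}^2 + \gamma \|\tilde c\|_{\R^N}^2 \longrightarrow \min_{\tilde c \in \R^N} !,
\end{equation*}
with solution $\tilde c^*_\gamma = (\tilde A^T \tilde A + \gamma I)^{-1} \tilde A^T y$. Inserting the singular value decomposition $\tilde A = U\Sigma V^T$ diagonalizes the filter to $\sigma_i/(\sigma_i^2 + \gamma)$, which converges componentwise to the Moore--Penrose filter $\sigma_i^+$ as $\gamma \to 0$. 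Hence $\tilde c^*_\gamma \to \tilde A^+ y$, the minimum-$\|\cdot\|_{\R^N}$ least squares solution of $\tilde A \tilde c = y$. Since $\|Ac - y\|_{\R^M} = \|\tilde A \tilde c - y\|_{\R^M}$ and $\|c\|_B = \|\tilde c\|_{\R^N}$, undoing the substitution via $c^*_0 = B^{-1/2} \tilde A^+ y$ yields a least squares solution of $Ac = y$ that minimizes $\|\cdot\|_B$, and $c^*_\gamma \to c^*_0$ follows from continuity of $B^{-1/2}$.

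The main obstacle is the third limit: one has to argue carefully that the filter $\sigma/(\sigma^2+\gamma)$ tends to $\sigma^+$ \emph{and} that the resulting pseudoinverse solution is precisely the $B$-minimal least squares solution in the original coordinates. The SVD substitution makes both parts transparent, but the identification of the limit as the minimum $B$-norm solution hinges on the isometry between $(\R^N, \|\cdot\|_B)$ and $(\R^N, \|\cdot\|_{\R^N})$ given by $B^{1/2}$; the remaining convergence as $\gamma \to \infty$ and the uniqueness claims are routine.
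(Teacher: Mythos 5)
Your proposal is correct and follows essentially the same route as the paper: the normal-equation/convexity argument for existence and uniqueness, then the substitution $\tilde c = B^{\nicefrac{1}{2}}c$, $\tilde A = AB^{-\nicefrac{1}{2}}$ and the singular value decomposition to identify the limit $\gamma \to 0$ as the minimum-$\|\cdot\|_B$ least squares solution. The only (harmless) deviation is that you treat the limit $\gamma \to \infty$ by a direct continuity-of-inversion argument, whereas the paper reads it off from the same SVD filter $\sigma_j/(\sigma_j^2+\gamma)$.
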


\begin{proof}
For fixed $\gamma > 0$ and $y \in \R^M$ we regard the cost function $J_\gamma: \R^N \to \R_{\geq 0}$ with
\begin{equation*}
J_\gamma(c) = \|Ac - y\|_{\R^M}^2 + \gamma \, \|c\|_B^2 = c^T (A^TA + \gamma \, B) c - 2 c^T A^T y + y^T y.
\end{equation*}
Then, for any $c \in \R^N$ we have
\begin{equation*}
\nabla J_\gamma(c) = 2(A^TA + \gamma \, B) c - 2 A^T y
\quad \mbox{ and } \quad
\nabla^2 J_\gamma(c) = 2(A^TA + \gamma \, B).
\end{equation*}
Since $B$ is positive definite by assumption, for any $v \in \R^N\setminus\{0\}$ follows that
\begin{equation*}
v^T(A^TA + \gamma \, B)v = \|Av\|_{\R^M}^2 + \gamma \, \|v\|_B^2 > 0,
\end{equation*}
i.e., $A^TA + \gamma \, B$ is positive definite as well.
Consequently, $J_\gamma$ is strictly convex and possesses a unique global minimum which is characterized by the generalized normal equation
\begin{equation*}
(A^TA + \gamma \, B) \, c = A^T y,
\end{equation*}
i.e., the regularized least squares problem~\eqref{eq:least_squares_problem_regularized} has the unique solution
\begin{equation*}
c^*_\gamma = (A^TA + \gamma \, B)^{-1} A^T y.
\end{equation*}

To characterize the asymptotic behaviour of $c^*_\gamma$ for $\gamma \to \infty$ and $\gamma \to 0$, we first establish a suitable representation of $c^*_\gamma$.
Since $B$ is positive definite, we can rewrite $J_\gamma$ as
\begin{equation*}
J_\gamma(c) = \|Ac - y\|_{\R^M}^2 + \gamma \, \|c\|_B^2 = \|A B^{-\nicefrac{1}{2}} B^{\nicefrac{1}{2}} c - y\|_{\R^M}^2 + \gamma \, \|B^{\nicefrac{1}{2}} c\|_{\R^N}^2
\end{equation*}
and, thus, by defining
\begin{equation*}
C = A B^{-\nicefrac{1}{2}} \in \R^{M \times N}
\quad \mbox{ and } \quad
b = B^{\nicefrac{1}{2}} c \in \R^N
\end{equation*}
the minimization problem~\eqref{eq:least_squares_problem_regularized} can be equivalently written as
\begin{equation}
\|Cb - y\|_{\R^M}^2 + \gamma \, \|b\|_{\R^N}^2 \longrightarrow \min_{b \in \R^N} !.
\label{eq:least_squares_problem_regularized_reformulated}
\end{equation}
To solve~\eqref{eq:least_squares_problem_regularized_reformulated}, we employ the singular value decomposition of $C$,
\begin{equation*}
C = U \Sigma V^T,
\end{equation*}
where $U = (u_1,\ldots,u_M) \in \R^{M \times M}$ and $V = (v_1,\ldots,v_N) \in  \R^{N \times N}$ are orthogonal matrices and
\begin{equation*}
\Sigma = \begin{pmatrix}
\sigma_1 & & & 0 \\
& \ddots & & \\
& &  \sigma_\rho & \\
0 & & & 0 
\end{pmatrix}
\in \R^{M \times N}
\end{equation*}
carries the singular values $\sigma_1 \geq \ldots \geq \sigma_\rho > 0$ of $C$ with $\rho = \rank(A)$.
Then, the solution to~\eqref{eq:least_squares_problem_regularized_reformulated} is given by
\begin{equation*}
b^*_\gamma = \sum_{j=1}^\rho \frac{\sigma_j}{\sigma_j^2 + \gamma} \, (u_j^T y) \, v_j \in \R^N
\end{equation*}
and satisfies
\begin{equation*}
b^*_\gamma \xrightarrow{\gamma \to \infty} 0
\end{equation*}
as well as
\begin{equation*}
b^*_\gamma \xrightarrow{\gamma \to 0} b^*_0 = \sum_{j=1}^\rho \sigma_j^{-1} \, (u_j^T y) \, v_j = C^+ y,
\end{equation*}
where $C^+$ is the pseudoinverse of $C$, i.e., $b^*_0$ is the norm-minimal solution of the non-regularized least squares problem
\begin{equation*}
\|Cb - y\|_{\R^M}^2 \longrightarrow \min_{b \in \R^N} !.
\end{equation*}
Consequently, for the solution $c^*_\gamma = B^{-\nicefrac{1}{2}} b^*_\gamma$ to~\eqref{eq:least_squares_problem_regularized} follows that
\begin{equation*}
c^*_\gamma \xrightarrow{\gamma \to \infty} 0
\quad \mbox{ and } \quad
c^*_\gamma \xrightarrow{\gamma \to 0} B^{-\nicefrac{1}{2}} b^*_0 = c^*_0.
\qedhere
\end{equation*}
\end{proof}

\bigbreak

For illustration, we use Tikhonov regularized least squares approximation with $B = \Id$ and $\gamma = 0.05$ to reconstruct the Shepp-Logan phantom (see Figure~\ref{fig:shepp-logan_phantom}) and the thorax phantom (see Figure~\ref{fig:thorax_phantom}) from finite Radon data in parallel beam geometry with $M = 130$ and $N = 260$.
To this end, we use a reconstruction grid with $256 \times  256$ pixels and solve the generalized normal equation
\begin{equation*}
(A^TA + \gamma \, B) \, c = A^T y
\end{equation*}
by applying the conjugated gradient method.
The reconstructions can be found in Figure~\ref{fig:phantom_reconstruction_tikhonov}.

\begin{figure}[t]
\centering
\subfigure[Shepp-Logan phantom]{\includegraphics[viewport=69 17 383 332, height=0.25\textwidth]{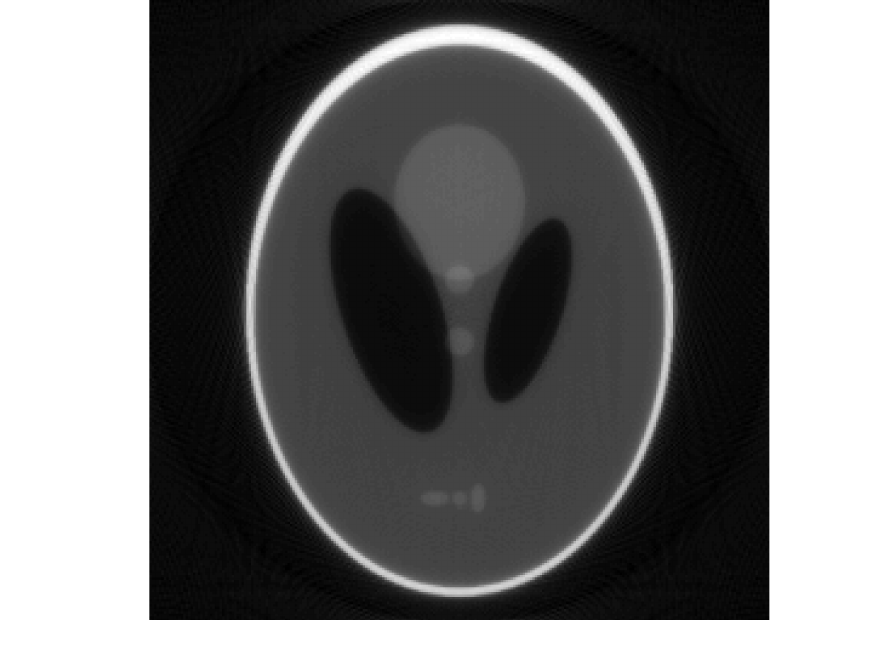}}
\hfil
\subfigure[Thorax phantom]{\includegraphics[viewport=69 17 383 332, height=0.25\textwidth]{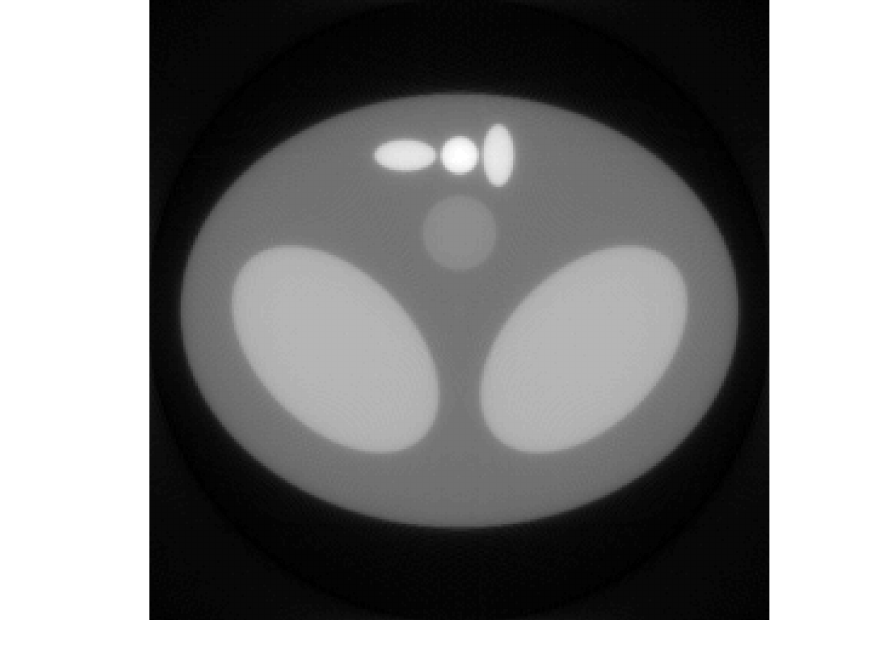}}
\caption{Tikhonov reconstructions of two phantoms with $B = \Id$ and $\gamma = 0.05$.}
\label{fig:phantom_reconstruction_tikhonov}
\end{figure}

\section{Kaczmarz's method}

In the case of an underdetermined system, the equation $Ac = y$ is likely to have infinitely many solutions and we apply an iterative procedure called {\em Kaczmarz's method}.
Let $a_j \in \R^N$ denote the $j$th row of the Radon matrix $A \in \R^{M \times N}$ and $y_j \in \R$ be the $j$th component of the measurement vector $y \in \R^M$.
Then, the linear system $A c = y$ consists of the $M$ linear equations
\begin{equation*}
a_j^T c = y_j
\quad \mbox{ for } 1 \leq j \leq M
\end{equation*}
and the idea of Kaczmarz's method is to generate a sequence $c^{(0)}, c^{(1)}, \ldots$ of vectors $c^{(k)} \in \R^N$ satisfying {\em one} of the above equations.
To this end, the previous iterate $c^{(k-1)}$ is projected onto the {\em affine space} $S_{a_{j(k)},y_{j(k)}}$ generated by the $j(k)$th equation $a_{j(k)}^T c = y_{j(k)}$.

\begin{definition}[Affine space]
For $r \in \R^N$ and $p \in \R$, the {\em affine space} $S_{r,p}$ is defined as
\begin{equation*}
S_{r,p} = \Bigl\{ x \in \R^N \Bigm| r^T x = p \Bigr\} \subset \R^N.
\end{equation*}
\end{definition}

Note that $r$ is orthogonal to $S_{r,p}$ and, thus, the orthogonal projection $\Pi_{S_{r,p}}$ onto $S_{r,p}$ is given by
\begin{equation*}
\Pi_{S{r,p}} u = u - \frac{r^T u - p}{r^T r} \, r
\quad \mbox{ for } u \in \R^N.
\end{equation*}
For the sake of brevity, let $\Pi_j$ denote the orthogonal projection onto the affine space $S_{a_j,y_j}$ and define
\begin{equation*}
\Pi = \Pi_M \cdots \Pi_1.
\end{equation*}
With this, Kaczmarz's method can be formulated as follows.
Choose a starting vector $c^{(0)} \in \R^N$ and iterate
\begin{equation}
c^{(k)} = \Pi c^{(k-1)}
\quad \mbox{ for } k \in \N
\label{eq:Kaczmarz_method}
\end{equation}
until a stopping criterion is fulfilled, see also Algorithm~\ref{algo:Kaczmarz_method}.

\begin{algorithm}[t]
\caption{Kaczmarz's method}
\label{algo:Kaczmarz_method}
\begin{algorithmic}[1]
\Require System matrix $A \in \R^{M \times N}$, measurement vector $y \in \R^M$ and tolerance $\delta > 0$
\Statex
\State \textbf{choose} starting vector $c^{(0)} \in \R^N$
\Statex
\For{$k=1,2,\ldots$}
\State $c^{(k,0)} = c^{(k-1)}$
\For{$j = 1,\ldots,M$}
\State ${\displaystyle c^{(k,j)} = c^{(k,j-1)} - \frac{a_j^T c^{(k,j-1)} - y_j}{a_j^T a_j} \, a_j}$
\EndFor
\State $c^{(k)} = c^{(k,M)}$
\If{$\|c^{(k)} - c^{(k-1)}\|_{\R^N} \leq \delta$ or $\|Ac^{(k)} - y\|_{\R^M} \leq \delta \, \|y\|_{\R^M}$}
\State \textbf{return} $c^\ast = c^{(k)}$
\EndIf
\EndFor
\Statex
\Ensure Approximate solution $c^\ast$ to $Ac = y$
\end{algorithmic}
\end{algorithm}

\bigbreak

We will show that Kaczmarz's method~\eqref{eq:Kaczmarz_method} converges to a solution of $Ac = y$ for $k \to \infty$.
The proof is based on the following result on the iteration of {\em linear} orthogonal projections.

\begin{lemma}
\label{lem:orthogonal_projection_limit}
For $j = 1,\ldots,M$, let $P_j$ be the orthogonal projection onto a linear space $V_j \subset \R^N$ and define
\begin{equation*}
P = P_M \cdots P_1.
\end{equation*} 
Then, we have
\begin{equation*}
P^k x \xrightarrow{k \to \infty} Tx
\quad \forall \, x \in \R^N,
\end{equation*}
where $T$ is the orthogonal projection onto $\ker(\Id - P)$.
\end{lemma}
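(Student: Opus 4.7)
The plan is to show that $P$ is a (non-strict) contraction whose fixed point set is exactly $\bigcap_j V_j$, decompose each $x$ orthogonally with respect to this fixed point set, and argue that the component transverse to the fixed set is driven to zero by the iteration.

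First I would identify $\ker(\Id - P)$. Since each $P_j$ is an orthogonal projection, it is self-adjoint with $\|P_j u\|\le\|u\|$ and equality iff $u\in V_j$. Applied to the product, $\|Px\|\le\|P_1 x\|\le\|x\|$, and a careful unwinding shows $\|Px\|=\|x\|$ forces $x\in V_1$, then $x\in V_2$, and so on. Hence
\begin{equation*}
V := \ker(\Id - P) = \bigcap_{j=1}^M V_j,
\end{equation*}
and in particular $V$ is a linear subspace on which $P$ acts as the identity. Let $T$ denote the orthogonal projection onto $V$; then $P^k(Tx)=Tx$ for every $k\in\N$.

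Next I would show that the component $y := x - Tx\in V^\perp$ satisfies $P^k y\to 0$. The key auxiliary fact is that $V^\perp$ is $P$-invariant: because each $P_j$ is self-adjoint we have $P^* = P_1\cdots P_M$, and $P^* u = u$ whenever $u\in V$, so for $y\in V^\perp$ and any $u\in V$
\begin{equation*}
\langle Py, u\rangle = \langle y, P^* u\rangle = \langle y, u\rangle = 0,
\end{equation*}
i.e., $Py\in V^\perp$. Since $P$ is a contraction, the sequence $\|P^k y\|$ is nonincreasing and bounded below by $0$, hence converges to some $\alpha\ge 0$. By finite dimensionality of $\R^N$, the bounded sequence $\{P^k y\}$ has a convergent subsequence $P^{k_i} y\to z$, where necessarily $z\in V^\perp$ (as this subspace is closed) and $\|z\|=\alpha$. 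Applying $P$ once more, $P^{k_i+1}y\to Pz$, so $\|Pz\|=\alpha=\|z\|$. By the characterization above, this forces $Pz = z$, i.e., $z\in V$. Combining $z\in V\cap V^\perp$ gives $z=0$ and thus $\alpha=0$. Since the monotone sequence of norms already tends to $0$, we conclude $P^k y\to 0$.

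Putting the two parts together, $P^k x = P^k(Tx) + P^k(x - Tx) \to Tx$ for every $x\in\R^N$, which is the claim. The main obstacle is the middle step: showing that the fixed-point norm $\alpha$ must vanish. The clean way to handle it is the compactness/self-adjointness argument above, which turns the mere monotonicity of $\|P^k y\|$ into genuine convergence of $P^k y$ to $0$; without the invariance of $V^\perp$ (a consequence of self-adjointness of the $P_j$) the subsequential limit $z$ would not be forced into $V\cap V^\perp=\{0\}$.
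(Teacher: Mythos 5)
Your proof is correct, but it follows a genuinely different route from the one in the notes. You first identify the fixed-point set $\ker(\Id-P)=\bigcap_{j=1}^M V_j$ via the equality case of $\|Px\|\leq\|P_1x\|\leq\|x\|$, use self-adjointness of the $P_j$ to show that $V^\perp$ is $P$-invariant, and then run a compactness argument: extract a convergent subsequence of $(P^ky)_k$ for $y\in V^\perp$, observe that its limit $z$ satisfies $\|Pz\|=\|z\|$ and hence is a fixed point lying in $V\cap V^\perp=\{0\}$, so the monotone norm sequence tends to $0$. The notes instead prove, by induction on the number $M$ of factors, that any sequence $(x_k)$ with $\|x_k\|_{\R^N}\leq 1$ and $\|Px_k\|_{\R^N}\to 1$ satisfies $(\Id-P)x_k\to 0$, apply this to the normalized iterates $\|P^kx\|_{\R^N}^{-1}P^kx$, and conclude via the orthogonal decomposition $\R^N=\ker(\Id-P)\oplus\im(\Id-P)$. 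Your version is shorter and avoids the induction, and your explicit identification of $\ker(\Id-P)$ with $\bigcap_j V_j$ is a bonus (the notes use exactly this fact later, in the convergence proof for Kaczmarz's method, without isolating it). The trade-off is that your argument leans on the Bolzano--Weierstrass property of $\R^N$, whereas the inductive argument in the notes is the one that survives in infinite-dimensional Hilbert spaces, where bounded sequences need not have norm-convergent subsequences; since the lemma is stated for $\R^N$, this costs you nothing here.
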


\begin{proof}
Let $x \in \R^N$.
Since $P_j$ is an orthogonal projection, its operator norm is bounded by $1$, i.e.,
\begin{equation*}
\|P_j\| \leq 1,
\end{equation*}
and, thus, we also have
\begin{equation*}
\|P\| \leq \|P_M\| \cdot \ldots \cdot \|P_1\| \leq 1.
\end{equation*}
Consequently, the sequence $(\|P^k x\|_{\R^N})_{k \in \N}$ is monotonically decreasing and bounded from below so that its limit $p \in \R_{\geq 0}$ exists.
If $p = 0$, this implies that
\begin{equation*}
P^k x \xrightarrow{k \to \infty} 0.
\end{equation*}
To prove that we then also have $Tx = 0$, recall the orthogonal decomposition
\begin{equation*}
\R^N = \ker(\Id - P) \oplus \ker(\Id - P)^\perp = \ker(\Id - P) \oplus \im(\Id - P).
\end{equation*}
Hence, $\Id - T$ is the orthogonal projection onto $\im(\Id - P)$ so that
\begin{equation*}
(\Id - T)(\Id - P) = \Id - P
\quad \implies \quad
T = TP
\end{equation*}
and
\begin{equation*}
(\Id - P) T = 0
\quad \implies \quad
T = PT.
\end{equation*}
In particular, we indeed have
\begin{equation*}
Tx = P^k Tx = T P^k x \xrightarrow{k \to \infty} 0
\end{equation*}
so that
\begin{equation*}
P^k x \xrightarrow{k \to \infty} Tx.
\end{equation*}

\bigbreak

To deal with the case $p > 0$, we first prove by induction on the number $M$ of factors in $P$ that for any sequence $(x_k)_{k \in \N}$ with $\|x_k\|_{\R^N} \leq 1$ and $\|Px_k\|_{\R^N} \xrightarrow{k \to \infty} 1$ we have
\begin{equation*}
\lim_{k \to \infty} (\Id - P)x_k = 0.
\end{equation*}
For $M = 1$ holds that
\begin{align*}
\|(\Id - P)x_k\|_{\R^N}^2 & = \|(\Id - P_1)x_k\|_{\R^N}^2 = \|x_k\|_{\R^N}^2 - 2 x_k^T P_1 x_k + \|P_1x_k\|_{\R^N}^2 \\
& = \|x_k\|_{\R^N}^2 - 2 (x_k-P_1x_k)^T P_1 x_k - \|P_1x_k\|_{\R^N}^2 = \|x_k\|_{\R^N}^2 - \|P_1x_k\|_{\R^N}^2.
\end{align*}
Since $\|x_k\|_{\R^N} \leq 1$ and $\|P_1x_k\|_{\R^N} \xrightarrow{k \to \infty} 1$ by assumption, we have $\|(\Id - P)x_k\|_{\R^N} \xrightarrow{k \to \infty} 0$ and the induction seed holds.
Now, assume that the induction hypothesis holds for $M-1$ factors.
We put
\begin{equation*}
P = P_M S
\quad \mbox{ for } \quad
S = P_{M-1} \cdots P_1
\end{equation*}
and obtain
\begin{equation*}
(\Id - P) x_k = (\Id - S)x_k + (S - P)x_k = (\Id - S)x_k + (\Id - P_M)Sx_k.
\end{equation*}
The first term goes to $0$ by the induction hypothesis and the second term goes to $0$ by the induction seed, since
\begin{equation*}
\|Sx_k\|_{\R^N} \leq \|P_{M-1}\| \cdot \ldots \cdot \|P_1\| \cdot \|x_k\|_{\R^N} \leq 1
\end{equation*}
and
\begin{equation*}
\|P_MSx_k\|_{\R^N} = \|Px_k\|_{\R^N} \xrightarrow{k \to \infty} 1.
\end{equation*}

We now set
\begin{equation*}
x_k = \|P^kx\|_{\R^N}^{-1} \, P^k x \in \R^N.
\end{equation*}
Then,
\begin{equation*}
\|x_k\|_{\R^N} = 1
\quad \land \quad
\|Px_k\|_{\R^N} = \|P^kx\|_{\R^N}^{-1} \, \|P^{k+1}x\|_{\R^N} \xrightarrow{k \to \infty} 1
\end{equation*}
and, consequently,
\begin{equation*}
\lim_{k \to \infty} (\Id - P) x_k = 0
\quad \implies \quad
\lim_{k \to \infty} (\Id - P) P^k x = 0 = \lim_{k \to \infty} P^k (\Id - P) x.
\end{equation*}
This implies that $P^k z$ goes to $0$ for all $z \in \im(\Id - P)$.
On the other hand, for $z \in \ker(\Id - P)$ we have
\begin{equation*}
P^k z = z
\quad \forall \, k \in \N.
\end{equation*}
Hence, with the orthogonal decomposition
\begin{equation*}
\ker(\Id - P) \oplus \im(\Id - P) = \R^N
\end{equation*}
follows that
\begin{equation*}
P^k x = P^k Tx + P^k (\Id - T)x = Tx + P^k (\Id - T)x \xrightarrow{k \to \infty} Tx
\end{equation*}
and the proof is complete.
\end{proof}

With Lemma~\ref{lem:orthogonal_projection_limit} we are now prepared to prove the convergence of Kaczmarz's method~\eqref{eq:Kaczmarz_method}.

\begin{theorem}
\label{theo:Kaczmarz_method_convergence}
Let $A \in \R^{M \times N}$ and $y \in \R^M$ be given.
If the linear system $Ac = y$ has at least one solution, then Kaczmarz's method~\eqref{eq:Kaczmarz_method} converges to a solution $c^\ast$ of the system.
Moreover, if $c^{(0)} \in \im(A^T)$, e.g. $c^{(0)} = 0$, then Kaczmarz's method converges to a minimal norm solution, i.e.,
\begin{equation*}
c^\ast = \argmin_{Ac = y} \|c\|_{\R^N}.
\end{equation*}
\end{theorem}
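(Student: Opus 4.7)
The plan is to reduce the affine projections in Kaczmarz's method to the linear setting of Lemma~\ref{lem:orthogonal_projection_limit}. Fix any solution $c^* \in \R^N$ of $Ac = y$, so that $a_j^T c^* = y_j$ for every $j$. Introduce the shifted iterates $d^{(k)} = c^{(k)} - c^*$. Subtracting $c^*$ from the update formula
\begin{equation*}
\Pi_j c^{(k-1)} = c^{(k-1)} - \frac{a_j^T c^{(k-1)} - y_j}{a_j^T a_j} \, a_j
\end{equation*}
and using $y_j = a_j^T c^*$ gives $\Pi_j c^{(k-1)} - c^* = d^{(k-1)} - \frac{a_j^T d^{(k-1)}}{a_j^T a_j}\, a_j = P_j d^{(k-1)}$, where $P_j$ is the linear orthogonal projection onto the hyperplane $V_j = \ker(a_j^T)$. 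Hence, with $P = P_M \cdots P_1$, one has $d^{(k)} = P^k d^{(0)}$.

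Applying Lemma~\ref{lem:orthogonal_projection_limit} then yields $d^{(k)} \to T d^{(0)}$ as $k \to \infty$, where $T$ is the orthogonal projection onto $\ker(\Id - P)$. The next step is to identify this kernel as $\ker(A)$. One inclusion is immediate, since any $x \in \ker(A)$ satisfies $P_j x = x$ for all $j$. For the converse, if $Px = x$, then the chain of inequalities $\|x\|_{\R^N} = \|Px\|_{\R^N} \leq \|P_{M-1}\cdots P_1 x\|_{\R^N} \leq \ldots \leq \|P_1 x\|_{\R^N} \leq \|x\|_{\R^N}$ forces equality throughout, and since each $P_j$ is an orthogonal projection, equality $\|P_j z\|_{\R^N} = \|z\|_{\R^N}$ is possible only when $z \in V_j$. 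Working from the inside outwards, this successively gives $x \in V_1, V_2, \ldots, V_M$, i.e.\ $x \in \bigcap_j \ker(a_j^T) = \ker(A)$. Therefore $c^{(k)}$ converges to $c^\ast := c^* + T(c^{(0)} - c^*)$, and because $c^\ast - c^* \in \ker(A)$ we have $A c^\ast = A c^* = y$, which proves the first claim.

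For the minimal norm statement, the key observation is that each affine update satisfies $\Pi_j u - u \in \mathrm{span}(a_j) \subseteq \im(A^T)$. Consequently the affine flat $c^{(0)} + \im(A^T)$ is invariant under $\Pi$, so all iterates $c^{(k)}$ lie in it. Since $\im(A^T)$ is closed in $\R^N$, if $c^{(0)} \in \im(A^T)$ the limit $c^\ast$ lies in $\im(A^T) = \ker(A)^\perp$. Any other solution $c'$ of $Ac = y$ differs from $c^\ast$ by an element of $\ker(A)$, and orthogonality gives $\|c'\|_{\R^N}^2 = \|c^\ast\|_{\R^N}^2 + \|c' - c^\ast\|_{\R^N}^2$, so $c^\ast$ indeed minimizes the Euclidean norm among all solutions.

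The main obstacle I anticipate is the careful identification $\ker(\Id - P) = \ker(A)$: the direction $\ker(\Id - P) \subseteq \ker(A)$ must exploit the strict contractivity of orthogonal projections outside their image, unfolded stepwise across the product $P_M \cdots P_1$. Everything else is a reduction — affine to linear via the shift by $c^*$, and an invariance argument for the minimum-norm refinement — both of which are short once this kernel computation is in place.
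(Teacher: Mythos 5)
Your proof is correct and follows essentially the same route as the paper: shift by a fixed solution to turn the affine projections into the linear projections $P_j$ onto $\ker(a_j^T)$, apply Lemma~\ref{lem:orthogonal_projection_limit} to the product $P = P_M \cdots P_1$, and identify $\ker(\Id - P)$ with $\ker(A)$. Your stepwise norm-equality argument for the inclusion $\ker(\Id - P) \subseteq \ker(A)$ is in fact more careful than the paper's one-line chain of identities, and your invariance-plus-Pythagoras closing for the minimal-norm claim is an equally valid alternative to the paper's direct computation $\T c^{(0)} = 0$ and identification of the limit with $A^+ y$.
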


\begin{proof}
Let $\Q_j$, for $j = 1,\ldots,M$, be the orthogonal projection onto $\ker(a_j)$, i.e.,
\begin{equation*}
\Q_j u = u - \frac{a_j^T u}{a_j^T a_j} \, a_j
\quad \mbox{ for } u \in \R^N,
\end{equation*}
and define
\begin{equation*}
\Q = \Q_M \cdots \Q_1.
\end{equation*}
Moreover, let $c \in \R^N$ be a solution to $Ac = y$ and let $c^{(0)} \in \R^N$.
Then, for any $x \in \R^N$ we have
\begin{equation*}
\Pi_j x = x - \frac{a_j^T x - y_j}{a_j^T aj} \, a_j = c + (x-c) - \frac{a_j^T (x-c)}{a_j^T aj} \, a_j = c + \Q_j(x - c)
\end{equation*}
so that
\begin{equation*}
\Pi x = c + \Q(x - c)
\end{equation*}
and, consequently,
\begin{equation*}
c^{(k)} = \Pi^k c^{(0)} = c + \Q^k(c^{(0)} - c)
\quad \forall \, k \in \N.
\end{equation*}
Let $\T$ be the orthogonal projection onto the nullspace $\ker(A)$ of $A$, which satisfies
\begin{equation*}
\ker(A) = \bigcap_{j = 1}^M \ker(a_j) = \im(\Q) = \ker(\Id - \Q).
\end{equation*}
Then, with Lemma~\ref{lem:orthogonal_projection_limit} we have
\begin{equation*}
c^{(k)} = c + \Q^k(c^{(0)} - c) \xrightarrow{k \to \infty} c + \T(c^{(0)} - c) = (\Id - \T) \, c + \T c^{(0)} = c^\ast,
\end{equation*}
where
\begin{equation*}
Ac^\ast = Ac - A \T c + A \T c^{(0)} = Ac = y,
\end{equation*}
i.e., $c^\ast \in \R^N$ is a solution to $Ac = y$.
If $c^{(0)} \in \im(A^T) = \ker(A)^\perp$, we have $\T c^{(0)} = 0$ and, hence,
\begin{equation*}
c^\ast = (\Id - \T) c = A^+ y
\end{equation*}
with the pseudoinverse $A^+$ of $A$, since the orthogonal projection $\T$ onto $\ker(A)$ can be expressed as
\begin{equation*}
\T = \Id - A^+ A.
\end{equation*}
Consequently, $c^\ast$ is the minimal norm solution to $Ac = y$.
\end{proof}

Note that Kaczmarz's method can be modified by introducing a relaxation parameter $\omega > 0$.
To this end, we replace the affine-linear projections $\Pi_j$ by $\Pi_j^\omega = (1-\omega) \, \Id + \omega \, \Pi_j$, i.e.,
\begin{equation*}
\Pi_j^\omega u = u - \omega \, \frac{a_j^T u - y_j}{a_j^T a_j} \, a_j
\quad \mbox{ for } u \in \R^N.
\end{equation*}
One can show that Theorem~\ref{theo:Kaczmarz_method_convergence} also holds for Kaczmarz's method with relaxation if $\omega \in (0,2)$.

\begin{remark}
We collect the following remarks on the convergence of Kaczmarz's method~\eqref{eq:Kaczmarz_method}.
\begin{itemize}
\item[(a)] If the affine spaces $S_{a_j,y_j}$ are almost orthogonal, Kaczmarz's method usually produces a good approximate solution after only a few iterations.
\item[(b)] If the affine spaces $S_{a_j,y_j}$ are almost parallel, which is usually the case in tomography, then Kaczmarz's method may converge very slowly.
The convergence can sometimes be accelerated by randomly permuting the rows of the system matrix $A \in \R^{M \times N}$.
\item[(c)] One still has to address the ill-posedness of the CT reconstruction problem and apply a regularization strategy.
One option is to stop the iteration early enough before convergence.
\end{itemize}
\end{remark}

\bigbreak

The original {\em algebraic reconstruction technique} (ART) involves a non-negativity constraint.
This can be incorporated by using positive basis functions in~\eqref{eq:algebraic_ansatz} and ensuring $c \geq 0$ by applying the projection
\begin{equation*}
\Pi^+ = \Pi_M^+ \cdots \Pi_1^+,
\end{equation*}
where
\begin{equation*}
\Pi_j^+ u = \max\biggl(0,u - \frac{a_j^T u - y_j}{a_j^T a_j} \, a_j\biggr)
\quad \mbox{ for } u \in \R^N.
\end{equation*}
With this, Kaczmarz's method with non-negativity constraint can be formulated as follows.
Choose a starting vector $c^{(0)} \in \R^N$ and iterate
\begin{equation*}
c^{(k)} = \Pi^+ c^{(k-1)}
\quad \mbox{ for } k \in \N
\end{equation*}
until a stopping criterion is fulfilled.
This classical version of ART is summarized in Algorithm~\ref{algo:Kaczmarz_method_non_negative}.

\begin{algorithm}[t]
\caption{Kaczmarz's method with non-negativity constraint}
\label{algo:Kaczmarz_method_non_negative}
\begin{algorithmic}[1]
\Require System matrix $A \in \R^{M \times N}$, measurement vector $y \in \R^M$ and tolerance $\delta > 0$
\Statex
\State \textbf{choose} starting vector $c^{(0)} \in \R^N$
\Statex
\For{$k=1,2,\ldots$}
\State $c^{(k,0)} = c^{(k-1)}$
\For{$j = 1,\ldots,M$}
\State ${\displaystyle c^{(k,j)} = \max\biggl(0,c^{(k,j-1)} - \frac{a_j^T c^{(k,j-1)} - y_j}{a_j^T a_j} \, a_j\biggr)}$
\EndFor
\State $c^{(k)} = c^{(k,M)}$
\If{$\|c^{(k)} - c^{(k-1)}\|_{\R^N} \leq \delta$ or $\|Ac^{(k)} - y\|_{\R^M} \leq \delta \, \|y\|_{\R^M}$}
\State \textbf{return} $c^\ast = c^{(k)}$
\EndIf
\EndFor
\Statex
\Ensure Approximate solution $c^\ast$ to $Ac = y$ with side condition $c \geq 0$
\end{algorithmic}
\end{algorithm}

To illustrate the reconstruction with ART, we use Algorithm~\ref{algo:Kaczmarz_method_non_negative} to recover the Shepp-Logan phantom (see Figure~\ref{fig:shepp-logan_phantom}) and the thorax phantom (see Figure~\ref{fig:thorax_phantom}) on a reconstruction grid with $256 \times  256$ pixels from Radon data in parallel beam geometry with $M = 120$ and $N = 240$.
The ART reconstructions of both phantoms are displayed in Figure~\ref{fig:phantom_reconstruction_art}. 

\begin{figure}[b]
\centering
\subfigure[Shepp-Logan phantom]{\includegraphics[viewport=69 17 383 332, height=0.25\textwidth]{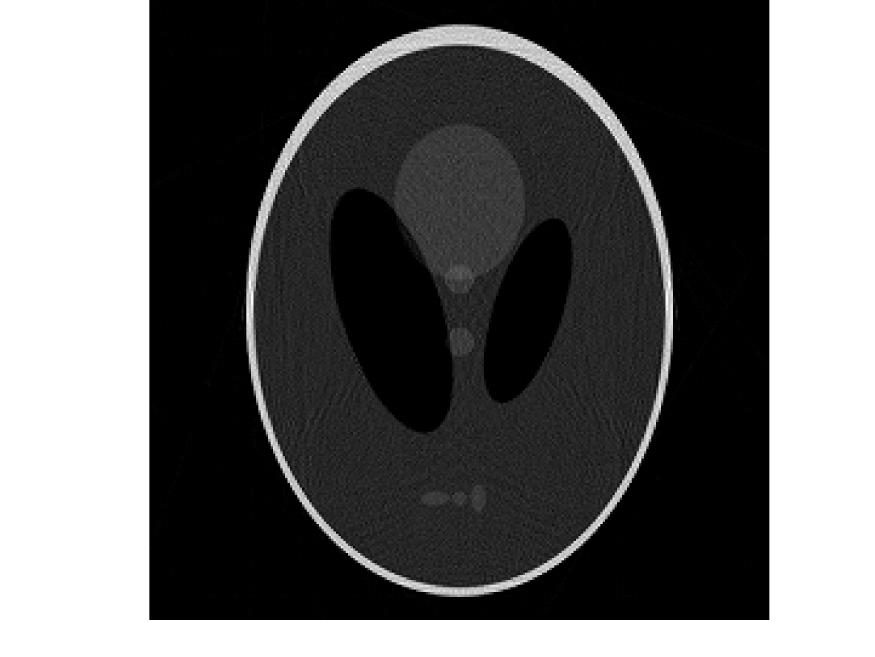}}
\hfil
\subfigure[Thorax phantom]{\includegraphics[viewport=69 17 383 332, height=0.25\textwidth]{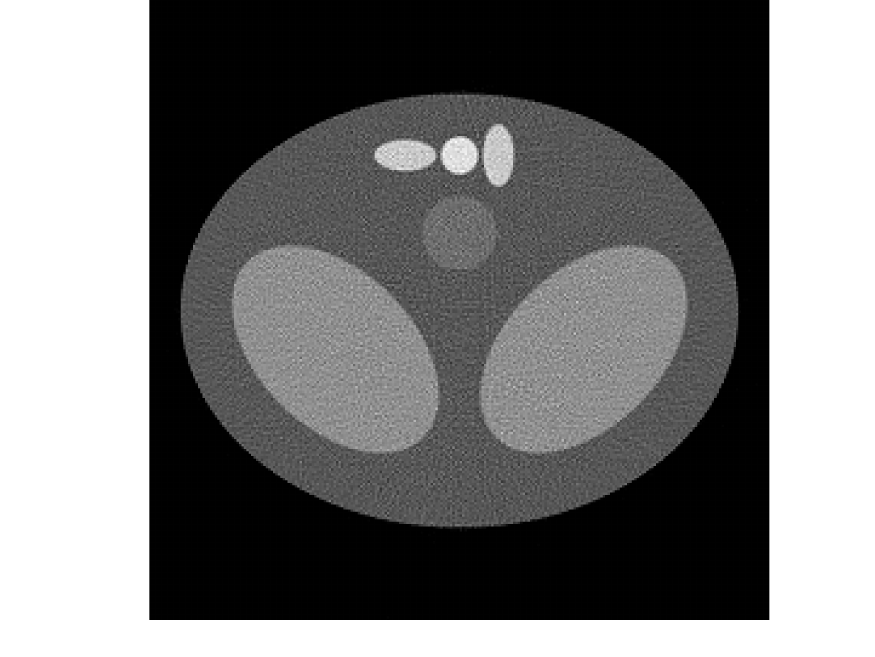}}
\caption{ART reconstructions of two phantoms.}
\label{fig:phantom_reconstruction_art}
\end{figure}

%--- Chapter 6
%---------------------------------------------------------------------------
\chapter{Three-dimensional reconstruction}

In this chapter we introduce two different multivariate versions of the Radon transform, namely the X-ray transform~$\Xray$ and the multivariate Radon transform~$\Radon$, and explain exact as well as approximate reconstruction techniques in the case of three dimensions.
For the sake of simplicity, we restrict the discussion to smooth functions $f \in \Schwartz(\R^n)$ and skip some technical proofs.

\section{Multivariate integral transforms}

To define a multivariate version of the Radon transform for functions on $\R^n$, let us first recall that the bivariate Radon transform integrates functions over straight lines in the plane, which can either be seen as affine subspaces of dimension $1$ or as affine subspaces of codimension $1$.
The latter subspaces are called hyperplanes and in two dimensions the difference between integrals over hyperplanes and integrals over lines is only notational.
In higher dimensions, however, these two concepts lead to different integral transforms, which are introduced in this section.
To this end, let $\Sphere^{n-1}$ denote the unit sphere in $\R^n$, i.e., $\Sphere^{n-1} = \bigl\{x \in \R^n \bigm| \|x\|_{\R^n} = 1\bigr\}$.

\subsection*{The multivariate Radon transform}

The ($n$-dimensional) Radon transform $\Radon$ integrates a function $f$ on $\R^n$ over hyperplanes, i.e., affine subspaces of dimension $n-1$, which are parametrized as follows.

\begin{definition}[Hyperplane]
For any pair $(\theta,t) \in \Sphere^{n-1} \times \R$ of parameters, we define
\begin{equation*}
H_{\theta,t} = \bigl\{x \in \R^n \bigm| x^T \theta = t\bigr\} \subset \R^n
\end{equation*}
to be the unique {\em hyperplane} that is perpendicular to $\theta$ and has (signed) distance $t$ to the origin.
\end{definition}

Note that any hyperplane in $\R^n$ can be characterized as an $H_{\theta,t}$ for suitable $(\theta,t) \in \Sphere^{n-1} \times \R$ and we have
\begin{equation}
H_{-\theta,-t} = H_{\theta,t}.
\label{eq:hyperplane_symmetry}
\end{equation}
For the sake of brevity, we define the unit cylinder $Z^n = \Sphere^{n-1} \times \R$.

\begin{definition}[Radon transform]
Let $f \in \Schwartz(\R^n)$.
Then, the {\em Radon transform} $\Radon f$ of $f$ is defined as
\begin{equation*}
\Radon f(\theta,t) = \int_{H_{\theta,t}} f(x) \: \d x
\quad \mbox{ for } (\theta,t) \in Z^n.
\end{equation*}
\end{definition}

Note that $\Radon$ defines a continuous linear operator from $\Schwartz(\R^n)$ into the space $\Cont_b(Z^n)$ of continuous and uniformly bounded functions on $Z^n$.
Moreover, for any $f \in \Schwartz(\R^n)$ we have
\begin{equation*}
f \geq 0
\quad \implies \quad
\Radon f \geq 0
\end{equation*}
and, due to~\eqref{eq:hyperplane_symmetry},
\begin{equation}
\Radon f(-\theta,-t) = \Radon f(\theta,t).
\label{eq:Radon_eveness_n}
\end{equation}

\bigbreak

For fixed direction $\theta \in \Sphere^{n-1}$ we define the {\em Radon projection} $\Radon_\theta f$ of $f \in \Schwartz(\R^n)$ as
\begin{equation*}
\Radon_\theta f(t) = \Radon f(\theta,t)
\quad \mbox{ for } t \in \R.
\end{equation*}
Then, $\Radon_\theta$ defines a continuous linear operator from $\Schwartz(\R^n)$ into $\Schwartz(\R)$ and we have the following relation between the derivative of $\Radon_\theta f$ and the directional derivative of $f$.

\begin{proposition}
\label{prop:Radon_derivative}
Let $f \in \Schwartz(\R^n)$ and $\theta \in \Sphere^{n-1}$.
Then, we have
\begin{equation*}
\frac{\d}{\d t} \Radon_\theta f(t) = \Radon_\theta (\nabla_\theta f)(t)
\quad \forall \, t \in \R,
\end{equation*}
where $\nabla_\theta f$ is the directional derivative of $f$ in direction $\theta$.
\end{proposition}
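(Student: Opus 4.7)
The plan is to parametrize each hyperplane $H_{\theta,t}$ as an affine translate of the fixed subspace $\theta^\perp = \{y \in \R^n \mid y^T\theta = 0\}$, differentiate under the integral sign, and identify the resulting integrand as $\nabla_\theta f$ evaluated on the translated subspace.

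First I would fix $\theta \in \Sphere^{n-1}$ and an orthonormal basis of $\theta^\perp$, which induces a measure-preserving identification of $\theta^\perp$ with $\R^{n-1}$. Every point $x \in H_{\theta,t}$ admits a unique decomposition $x = t\theta + y$ with $y \in \theta^\perp$, and the surface measure on $H_{\theta,t}$ transports to the Lebesgue measure on $\theta^\perp$. Hence
\begin{equation*}
\Radon_\theta f(t) = \int_{\theta^\perp} f(t\theta + y) \: \d y.
\end{equation*}

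Next I would differentiate with respect to $t$ under the integral sign. Since $f \in \Schwartz(\R^n)$, both $f$ and $\partial_{x_j} f$ decay faster than any polynomial, so for $t$ in any bounded interval the integrand and its $t$-derivative admit a common integrable majorant of the form $C\,(1 + \|y\|_{\R^n})^{-N}$ on $\theta^\perp$ for arbitrarily large $N$. This legitimizes the interchange of $\frac{\d}{\d t}$ with the integral via dominated convergence. Applying the chain rule gives
\begin{equation*}
\frac{\d}{\d t} f(t\theta + y) = \sum_{j=1}^n \theta_j \, \partial_{x_j} f(t\theta + y) = \nabla_\theta f(t\theta + y).
\end{equation*}

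Finally I would reassemble the integral:
\begin{equation*}
\frac{\d}{\d t} \Radon_\theta f(t) = \int_{\theta^\perp} \nabla_\theta f(t\theta + y) \: \d y = \int_{H_{\theta,t}} \nabla_\theta f(x) \: \d x = \Radon_\theta(\nabla_\theta f)(t),
\end{equation*}
which is the claim. The only genuinely non-trivial step is the interchange of differentiation and integration, but the Schwartz-decay of $f$ and its first partials renders this routine; the main conceptual ingredient is simply the affine parametrization $x = t\theta + y$ that makes the $t$-dependence enter linearly in the argument of $f$.
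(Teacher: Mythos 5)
Your proposal is correct and follows essentially the same route as the paper: parametrize $H_{\theta,t}$ as $\{y + t\theta \mid y \in \theta^\perp\}$, differentiate under the integral sign, and apply the chain rule to obtain $\nabla_\theta f$. The only difference is that you spell out the dominated-convergence majorant justifying the interchange, which the paper dispatches with a one-line appeal to $f \in \Schwartz(\R^n)$.
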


\begin{proof}
By the definition of $\Radon_\theta f$, we have
\begin{equation*}
\Radon_\theta f(t) = \int_{H_{\theta,t}} f(x) \: \d x = \int_{\theta^\perp} f(y + t\theta) \: \d y,
\end{equation*}
since the hyperplane $H_{\theta,t}$ can be rewritten as
\begin{equation*}
H_{\theta,t} = \bigl\{x \in \R^n \bigm| x^T \theta = t\bigr\} = \bigl\{y + t\theta \bigm| y \in \theta^\perp\bigr\}.
\end{equation*}
Thus, differentiation with respect to $t$ yields
\begin{align*}
\frac{\d}{\d t} \Radon_\theta f(t) = \int_{\theta^\perp} \frac{\d}{\d t} f(y + t\theta) \: \d t = \int_{\theta^\perp} \nabla f(y + t\theta)^T \theta \: \d t = \Radon_\theta(\nabla_\theta f)(t),
\end{align*}
where the order of differentiation and integration can be changed because $f \in \Schwartz(\R^n)$.
\end{proof}

We now prove the $n$-dimensional version of the Fourier slice theorem, Theorem~\ref{theo:Fourier_slice}.
To this end, recall that the Fourier transform $\Fourier f$ of $f \equiv f(x) \in \L^1(\R^n)$ is given by
\begin{equation*}
\Fourier f(\omega) = \int_{\R^n} f(x) \, \e^{-\i x^T \omega} \: \d x
\quad \mbox{ for } \omega \in \R^n. 
\end{equation*}
For a function $h \equiv h(\theta,t)$ on $Z^n$ satisfying $h(\theta,\cdot) \in \L^1(\R)$ for all $\theta \in \Sphere^{n-1}$ we define its Fourier transform $\Fourier h$ as the univariate Fourier transform acting on the second variable $t$, i.e.,
\begin{equation*}
\Fourier h(\theta,s) = \int_\R h(\theta,t) \, \e^{-\i s t} \: \d t
\quad \mbox{ for } (\theta,s) \in Z^n.
\end{equation*}

With this, the $n$-dimensional Fourier slice theorem reads as follows.

\begin{theorem}[Fourier slice theorem]
\label{theo:Fourier_slice_n}
For any $f \in \Schwartz(\R^n)$ we have
\begin{equation*}
\Fourier(\Radon f)(\theta,s) = \Fourier f(s\theta)
\quad \forall \, (\theta,s) \in Z^n.
\end{equation*}
\end{theorem}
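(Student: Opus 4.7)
The plan is to imitate the two-dimensional proof of Theorem~\ref{theo:Fourier_slice}, using the hyperplane parametrization that already appeared in the proof of Proposition~\ref{prop:Radon_derivative}. The key identity to exploit is the orthogonal decomposition $\R^n = \theta^\perp \oplus \R\theta$ induced by a unit vector $\theta \in \Sphere^{n-1}$, which turns a single $n$-dimensional integral into a nested integral over $\theta^\perp$ and over $\R$ without any Jacobian factor. This is exactly the role played by the rotation $(s,t) \mapsto (t\cos\theta - s\sin\theta,\, t\sin\theta + s\cos\theta)$ in the planar case.

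First, I would fix $(\theta,s) \in Z^n$ and start from the definition of the partial Fourier transform:
\begin{equation*}
\Fourier(\Radon f)(\theta,s) = \int_\R \Radon f(\theta,t)\, \e^{-\i s t} \: \d t.
\end{equation*}
Next, I would rewrite $\Radon f(\theta,t)$ using the parametrization $H_{\theta,t} = \{y + t\theta \mid y \in \theta^\perp\}$ already justified in the proof of Proposition~\ref{prop:Radon_derivative}, giving
\begin{equation*}
\Fourier(\Radon f)(\theta,s) = \int_\R \int_{\theta^\perp} f(y + t\theta)\, \e^{-\i s t} \: \d y\, \d t.
\end{equation*}
I would then interchange the order of integration by Fubini's theorem, which is valid because $f \in \Schwartz(\R^n)$ implies that $(y,t) \mapsto f(y + t\theta)$ is absolutely integrable on $\theta^\perp \times \R$ (the Schwartz decay dominates $|\e^{-\i s t}| = 1$).

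The decisive step is the substitution $x = y + t\theta$ with $y \in \theta^\perp$ and $t \in \R$. Since $\{\theta\} \cup \{\text{orthonormal basis of } \theta^\perp\}$ is an orthonormal basis of $\R^n$, this is a measure-preserving bijection with $\d x = \d y\, \d t$, and we recover $t = x^T \theta$. Substituting yields
\begin{equation*}
\Fourier(\Radon f)(\theta,s) = \int_{\R^n} f(x)\, \e^{-\i s\, x^T \theta} \: \d x = \int_{\R^n} f(x)\, \e^{-\i\, x^T (s\theta)} \: \d x = \Fourier f(s\theta),
\end{equation*}
which is the claimed identity.

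I do not anticipate any real obstacle: the only point requiring a moment of care is verifying that the $(y,t) \leftrightarrow x$ change of variables has unit Jacobian, but this is immediate from $\|\theta\|_{\R^n} = 1$ and orthogonality. All integrability issues are handled by the Schwartz hypothesis, so Fubini and the substitution rule apply without qualification.
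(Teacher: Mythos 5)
Your proof is correct and is essentially the paper's own argument run in the opposite direction: the paper starts from $\Fourier f(s\theta)$, slices $\R^n$ into the hyperplanes $x^T\theta = t$ via Fubini, and recognizes the inner integral as $\Radon f(\theta,t)$, while you start from $\Fourier(\Radon f)(\theta,s)$ and reassemble the iterated integral into the $n$-dimensional one. The decomposition $\R^n = \theta^\perp \oplus \R\theta$ with unit Jacobian is the identical key step in both.
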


\begin{proof}
For $(\theta,s) \in Z^n$, the definition of the $n$-dimensional Fourier transform yields
\begin{align*}
\Fourier f(s\theta) & = \int_{\R^n} f(x) \, \e^{-\i s x^T \theta} \: \d x = \int_\R \int_{x^T \theta = t} f(x) \, \e^{-\i s x^T \theta} \: \d x \, \d t \\
& = \int_\R \left(\int_{x^T \theta = t} f(x) \: \d x\right) \e^{-\i s t} \: \d t = \Fourier(\Radon f)(\theta,s)
\end{align*}
by Fubini's theorem and the definition of the $n$-dimensional Radon transform.
\end{proof}

A direct consequence of the Fourier slice theorem is the injectivity of $\Radon$.

\begin{corollary}[Injectivity of the Radon transform]
For $f \in \Schwartz(\R^n)$ we have
\begin{equation*}
\Radon f = 0
\quad \implies \quad
f = 0,
\end{equation*}
i.e., the Radon transform $\Radon$ is injective on $\Schwartz(\R^n)$.
\end{corollary}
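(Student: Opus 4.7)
The plan is to deduce this directly from the Fourier slice theorem (Theorem~\ref{theo:Fourier_slice_n}) together with the known injectivity of the Fourier transform on $\Schwartz(\R^n)$, in exactly the same way as the two-dimensional case treated earlier in the excerpt.

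First I would assume $\Radon f \equiv 0$ on $Z^n$. Applying $\Fourier$ in the $t$-variable gives $\Fourier(\Radon f)(\theta,s) = 0$ for all $(\theta,s) \in Z^n$. By Theorem~\ref{theo:Fourier_slice_n}, this is the same as $\Fourier f(s\theta) = 0$ for every $\theta \in \Sphere^{n-1}$ and every $s \in \R$.

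Next I would note that the map $(\theta,s) \mapsto s\theta$ from $Z^n$ onto $\R^n$ is surjective: any $\omega \in \R^n \setminus \{0\}$ can be written as $\omega = \|\omega\|_{\R^n} \cdot (\omega/\|\omega\|_{\R^n})$, and $\omega = 0$ corresponds to $s=0$. Hence $\Fourier f(\omega) = 0$ for all $\omega \in \R^n$. Since $f \in \Schwartz(\R^n) \subset \L^1(\R^n)$, the injectivity of the Fourier transform (equivalently, the Fourier inversion formula applied to $f$) gives $f = 0$.

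There is essentially no obstacle here; the argument is a one-line corollary once the Fourier slice theorem is in hand. The only thing worth a moment of care is the passage from knowing $\Fourier f$ vanishes on all radial lines through the origin to knowing it vanishes on $\R^n$, which as noted above is immediate from the surjectivity of $(\theta,s) \mapsto s\theta$.
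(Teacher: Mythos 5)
Your argument is correct and is exactly the route the paper takes: the corollary is stated as a direct consequence of the Fourier slice Theorem~\ref{theo:Fourier_slice_n} combined with the injectivity of the Fourier transform, just as in the two-dimensional case spelled out earlier in the text. Your extra remark on the surjectivity of $(\theta,s) \mapsto s\theta$ is a sensible point of care but introduces nothing beyond what the paper implicitly uses.
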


The inversion of $\Radon$ involves a rescaled version of the $\L^2$-adjoint of $\Radon$, which is given by the $n$-dimensional back projection $\Back$.

\begin{definition}[Back projection]
Let $g \in \Cont_b(Z^n)$.
Then, the {\em back projection} $\Back g$ of $g$ is defined as
\begin{equation*}
\Back g(x) = \frac{1}{(2\pi)^{n-1}} \int_{\Sphere^{n-1}} g(\theta,x^T \theta) \: \d \theta
\quad \mbox{ for } x \in \R^n.
\end{equation*}
\end{definition}

Based on the Fourier slice Theorem~\ref{theo:Fourier_slice_n} we are now prepared to derive the n-dimensional version of the filtered back projection (FBP) formula~\eqref{eq:FBP_formula} for inverting $\Radon$.

\begin{theorem}[FBP formula]
For $f \in \Schwartz(\R^n)$ the {\em filtered back projection formula}
\begin{equation}
f(x) = \frac{1}{2} \, \Back\big(\Fourier^{-1}[|S|^{n-1} \Fourier(\Radon f)(\theta,S)]\big)(x)
\label{eq:FBP_formula_n}
\end{equation}
holds for all $x \in \R^n$.
\end{theorem}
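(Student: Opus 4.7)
The plan is to generalize the 2D argument given in the proof of Theorem~\ref{theo:filtered_back_projection} by replacing polar coordinates in $\R^2$ with spherical coordinates in $\R^n$. The one extra ingredient, compared with the bivariate case, is handling the fact that the Jacobian of $\omega = S\theta$ for $\theta \in \Sphere^{n-1}$ and $S \geq 0$ yields $S^{n-1}$, and we need to turn this into an integral of $|S|^{n-1}$ over $S \in \R$ in order to recognise an inner one-dimensional inverse Fourier transform. This is where the evenness symmetry~\eqref{eq:Radon_eveness_n} comes in.

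First, fix $x \in \R^n$. Since $f \in \Schwartz(\R^n)$ implies $\Fourier f \in \Schwartz(\R^n) \subset \L^1(\R^n)$, the $n$-dimensional Fourier inversion formula gives
\begin{equation*}
f(x) = \frac{1}{(2\pi)^n} \int_{\R^n} \Fourier f(\omega) \, \e^{\i x^T \omega} \: \d \omega.
\end{equation*}
I would then pass to spherical coordinates $\omega = S\theta$ with $\theta \in \Sphere^{n-1}$, $S \geq 0$, picking up $\d\omega = S^{n-1} \: \d S \, \d\theta$, to obtain
\begin{equation*}
f(x) = \frac{1}{(2\pi)^n} \int_{\Sphere^{n-1}} \int_0^\infty \Fourier f(S\theta) \, \e^{\i S x^T \theta} \, S^{n-1} \: \d S \, \d \theta.
\end{equation*}

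Next I would symmetrize the radial integral. Substituting $S \mapsto -S$ and $\theta \mapsto -\theta$ in the half-line integral, together with $\Fourier f(-S(-\theta)) = \Fourier f(S\theta)$, shows that the integrand (against $|S|^{n-1}$) is invariant under $(\theta,S) \mapsto (-\theta,-S)$, so
\begin{equation*}
f(x) = \frac{1}{2 \, (2\pi)^n} \int_{\Sphere^{n-1}} \int_{\R} \Fourier f(S\theta) \, \e^{\i S x^T \theta} \, |S|^{n-1} \: \d S \, \d \theta.
\end{equation*}
At this point I would invoke the Fourier slice Theorem~\ref{theo:Fourier_slice_n} to replace $\Fourier f(S\theta)$ by $\Fourier(\Radon f)(\theta,S)$, recognise the inner integral as a one-dimensional inverse Fourier transform in the radial variable,
\begin{equation*}
\frac{1}{2\pi} \int_{\R} |S|^{n-1} \, \Fourier(\Radon f)(\theta,S) \, \e^{\i S x^T \theta} \: \d S = \Fourier^{-1}\bigl[|S|^{n-1} \Fourier(\Radon f)(\theta,S)\bigr](x^T\theta),
\end{equation*}
and finally identify the remaining angular average over $\Sphere^{n-1}$ as $\Back$ (up to the factor $(2\pi)^{n-1}$ absorbed into its definition), yielding exactly~\eqref{eq:FBP_formula_n}.

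The main obstacle, such as it is, lies in the symmetrization step: one must verify carefully that the Jacobian $S^{n-1}$ for $S \geq 0$ becomes $|S|^{n-1}$ after extending to $S \in \R$, and that the compensating factor of $\tfrac{1}{2}$ matches the factor of $\tfrac{1}{2}$ appearing in~\eqref{eq:FBP_formula_n}. Every other step is a direct transcription of the 2D argument, with the Schwartz hypothesis $f \in \Schwartz(\R^n)$ providing all the integrability needed to apply Fubini, the Fourier inversion formula pointwise, and the Fourier slice theorem without further justification.
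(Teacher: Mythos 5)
Your proposal is correct and follows essentially the same route as the paper's proof: Fourier inversion in spherical coordinates, symmetrization of the radial integral via the evenness symmetry (which is exactly how the paper uses condition~\eqref{eq:Radon_eveness_n}) to turn $\int_0^\infty S^{n-1}\,\d S$ into $\tfrac12\int_\R |S|^{n-1}\,\d S$, then the Fourier slice theorem and the definitions of the one-dimensional inverse Fourier transform and the back projection. The constants, including the compensating factor $\tfrac12$, all check out.
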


\begin{proof}
Let $x \in \R^n$ be fixed.
Applying the $n$-dimensional Fourier inversion formula to $f$ yields the identity
\begin{equation*}
f(x) = \Fourier^{-1}(\Fourier f)(x) = \frac{1}{(2\pi)^n} \int_{\R^n} \Fourier f(\omega) \, \e^{\i x^T \omega} \: \d \omega = \frac{1}{(2\pi)^n} \int_{\Sphere^{n-1}} \int_0^\infty \Fourier f(s\theta) \, \e^{\i s x^T \theta} \, s^{n-1} \: \d s \, \d \theta.
\end{equation*}
With the eveness condition~\eqref{eq:Radon_eveness_n} and the Fourier slice Theorem~\ref{theo:Fourier_slice_n} follows that
\begin{align*}
f(x) & = \frac{1}{2(2\pi)^n} \int_{\Sphere^{n-1}} \int_\R \Fourier (\Radon f)(\theta,S) \, \e^{\i S x^T \theta} \, |S|^{n-1} \: \d S \, \d \theta \\
& = \frac{1}{2(2\pi)^{n-1}} \int_{\Sphere^{n-1}} \Fourier^{-1}[|S|^{n-1} \Fourier(\Radon f)(\theta,S)](\theta,x^T \theta) \: \d \theta \\
& = \frac{1}{2} \, \Back\big(\Fourier^{-1}[|S|^{n-1} \Fourier(\Radon f)(\theta,S)]\big)(x)
\end{align*}
due to the definition of the back projection.
\end{proof}

As in the two-dimensional case, the reconstruction problem from Radon data
\begin{equation*}
\bigl\{\Radon f(\theta,t) \bigm| (\theta,t) \in Z^n\bigr\}
\end{equation*}
is ill-posed.
The degree of ill-posedness can again be determined by studying the smoothing effect of $\Radon$ in the Sobolev scale.
To this end, recall that the Sobolev space $\H^\alpha(\R^n)$ of fractional order $\alpha \in \R$ is given by
\begin{equation*}
\H^\alpha(\R^n) = \bigl\{f \in \Schwartz'(\R^n) \bigm| \|f\|_{\H^\alpha(\R^n)} < \infty\bigr\},
\end{equation*}
where
\begin{equation*}
\|f\|_{\H^\alpha(\R^n)}^2 = \int_{\R^n} \bigl(1 + \|x\|_{\R^n}^2\bigr)^\alpha \, |\Fourier f(x)|^2 \: \d x.
\end{equation*}
Furthermore, for an open subset $\Omega \subset \R^n$ the space $\H^\alpha_0(\Omega)$ consists of those Sobolev functions whose support is contained in $\overline{\Omega}$, i.e.,
\begin{equation*}
\H^\alpha_0(\Omega) = \bigl\{f \in \H^\alpha(\R^n) \bigm| \supp(f) \subset \overline{\Omega}\bigr\}.
\end{equation*}

For functions on $Z^n$ we define the Sobolev space $\H^\alpha(Z^n)$, for $\alpha \in \R$, as the space of all functions $g \equiv g(\theta,t)$ with $g(\theta,\cdot) \in \H^\alpha(\R)$ for almost all $\theta \in \Sphere^{n-1}$ and
\begin{equation*}
\|g\|_{\H^\alpha(Z^n)} = \left(\int_{\Sphere^{n-1}} \int_\R (1+S^2)^\alpha \, |\Fourier g(\theta,S)|^2 \: \d S \, \d \theta\right)^{\nicefrac{1}{2}} < \infty.
\end{equation*}
Then, one can prove that for $f \in \L^1(\R^n) \cap \H^\alpha_0(\Omega)$ with $\alpha \in \R$ and an open, bounded set $\Omega \subset \R^n$ we have $\Radon f \in \H^{\alpha+\nicefrac{(n-1)}{2}}(Z^n)$ and the Sobolev estimate
\begin{equation*}
c_{\alpha,n} \, \|f\|_{\H^\alpha(\R^n)} \leq \|\Radon f\|_{\H^{\alpha+\nicefrac{(n-1)}{2}}(Z^n)} \leq C_{\alpha,n} \, \|f\|_{\H^\alpha(\R^n)}
\end{equation*}
holds with positive constants $c_{\alpha,n}, C_{\alpha,n} > 0$ depending only on smoothness $\alpha$ and dimension $n$.
Consequently, the $n$-dimensional Radon reconstruction problem is ill-posed of order $\frac{n-1}{2}$.

\subsection*{The X-ray transform}

The ($n$-dimensional) X-ray transform $\Xray$ integrates a function $f$ on $\R^n$ over straight lines, i.e., affine subspaces of dimension $1$, which are parametrized as follows.

\begin{definition}[Straight line]
For any pair $(\theta,x) \in \Sphere^{n-1} \times \R^n$ of parameters, we define
\begin{equation*}
\Ell_{\theta,x} = \bigl\{x + t \theta \bigm| t \in \R\bigr\} \subset \R^n
\end{equation*}
to be the unique {\em straight line} in $\R^n$ with direction $\theta$ that passes through the reference point $x$.
\end{definition}

Note that any straight line in $\R^n$ can be characterized as an $\Ell_{\theta,x}$ for suitable $(\theta,x) \in \Sphere^{n-1} \times \R^n$ and we have
\begin{equation*}
\Ell_{\theta,x+y} = \Ell_{\theta,x}
\quad \forall \, y \in \langle\theta\rangle = \bigl\{t\theta \bigm| t \in \R\bigr\}.
\end{equation*}
Thus, it suffices to consider the tangent bundle $T^n$ of $\Sphere^{n-1}$ given by
\begin{equation*}
T^n = \bigl\{(\theta,x) \in \R^{2n} \bigm| \theta \in \Sphere^{n-1}, ~ x \in \theta^\perp\bigr\}.
\end{equation*}

\begin{definition}[X-ray transform]
Let $f \in \Schwartz(\R^n)$.
Then, the {\em X-ray transform} $\Xray f$ of $f$ is defined as
\begin{equation*}
\Xray f(\theta,x) = \int_{\Ell_{\theta,x}} f(y) \: \d y = \int_\R f(x + t \theta) \: \d t
\quad \mbox{ for } (\theta,x) \in T^n.
\end{equation*}
\end{definition}

Note that $\Xray$ defines a continuous linear operator from $\Schwartz(\R^n)$ into the space $\Cont_b(T^n)$ of continuous and uniformly bounded functions on $T^n$.
Moreover, for any $f \in \Schwartz(\R^n)$ we have
\begin{equation*}
f \geq 0
\quad \implies \quad
\Xray f \geq 0
\end{equation*}
and
\begin{equation*}
\Xray f(-\theta,x) = \Xray f(\theta,x).
\end{equation*}

\bigbreak

We now prove that the Radon transform $\Radon f$ of $f$ can be expressed as an integral over $\Xray f$.

\begin{proposition}
Let $f \in \Schwartz(\R^n)$ and $(\omega,s) \in Z^n$.
Then, for any $\theta \in \Sphere^{n-1}$ with $\omega^T \theta = 0$ we have
\begin{equation*}
\Radon f(\omega,s) = \int_{\substack{x \in \theta^\perp \\ x^T \omega = s}} \Xray f(\theta,x) \: \d x.
\end{equation*}
\end{proposition}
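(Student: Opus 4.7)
The plan is to decompose the hyperplane $H_{\omega,s}$ as a disjoint union of lines parallel to $\theta$ and apply Fubini's theorem. The crucial geometric observation is that since $\omega^T\theta = 0$, moving along direction $\theta$ preserves the defining condition $z^T\omega = s$, so each such line lies entirely inside $H_{\omega,s}$.

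First I would exploit the orthogonal decomposition $\R^n = \theta^\perp \oplus \langle\theta\rangle$: every $z \in \R^n$ has a unique representation $z = x + t\theta$ with $x \in \theta^\perp$ and $t \in \R$, and Lebesgue $(n-1)$-dimensional measure on any hyperplane that contains $\theta$ factors as $\d x \otimes \d t$, where $\d x$ denotes $(n-2)$-dimensional Lebesgue measure on the appropriate affine slice of $\theta^\perp$ and $\d t$ is the one-dimensional measure on $\langle\theta\rangle$. Because $\omega \in \theta^\perp$ (by the assumption $\omega^T\theta = 0$), we obtain
\begin{equation*}
z^T\omega = (x + t\theta)^T \omega = x^T\omega + t\,\theta^T\omega = x^T\omega,
\end{equation*}
so $z \in H_{\omega,s}$ if and only if $x \in \theta^\perp$ satisfies $x^T\omega = s$. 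In particular $H_{\omega,s}$ contains $\theta$ as a direction vector, justifying the measure factorization claimed above.

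Next I would write out the Radon integral using this change of variables:
\begin{equation*}
\Radon f(\omega,s) = \int_{H_{\omega,s}} f(z) \: \d z = \int_{\substack{x \in \theta^\perp \\ x^T\omega = s}} \int_\R f(x + t\theta) \: \d t \: \d x,
\end{equation*}
where the application of Fubini is justified by $f \in \Schwartz(\R^n)$ (so the integrand is absolutely integrable on any affine subspace). Recognising the inner integral as $\Xray f(\theta,x)$ by definition gives the desired identity.

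The only mild obstacle is justifying the factorization of the hypersurface measure on $H_{\omega,s}$ in step two; this is standard since $\theta \in H_{\omega,s} - H_{\omega,s}$ and the slice $\{x \in \theta^\perp \mid x^T\omega = s\}$ is the orthogonal complement of $\theta$ inside $H_{\omega,s}$, so the orthogonal decomposition of $H_{\omega,s}$ yields the product measure with unit Jacobian. Once this is in place, the proof is just Fubini.
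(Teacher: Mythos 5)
Your proposal is correct and is essentially the paper's argument: both decompose the hyperplane $H_{\omega,s}$ into lines parallel to $\theta$ and apply Fubini, using the fact that $\omega^T\theta = 0$ forces $\theta$ to be a direction contained in $H_{\omega,s}$. The only cosmetic difference is that the paper makes the orthogonal decomposition explicit by choosing an orthogonal matrix $A$ with $A\omega = e_1$, $A\theta = e_2$ and computing in standard coordinates (which also handles your ``mild obstacle'' about the unit-Jacobian measure factorization), whereas you phrase the same splitting intrinsically.
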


\begin{proof}
As $\omega,\theta \in \Sphere^{n-1}$ with $\omega^T \theta = 0$, there is an orthogonal matrix $A \in \R^{n \times n}$ with
\begin{equation*}
A \omega = e_1
\quad \mbox{ and } \quad
A \theta = e_2.
\end{equation*}
This implies that
\begin{equation*}
\bigl\{x \in \R^n \bigm| x \in \theta^\perp, ~ x^T \omega = s\bigr\} = \bigl\{A^T(s,0,y) \bigm| y \in \R^{n-2}\bigr\}
\end{equation*}
and, consequently,
\begin{align*}
\int_{\substack{x \in \theta^\perp \\ x^T \omega = s}} \Xray f(\theta,x) \: \d x & = \int_{\R^{n-2}} \Xray f\bigl(\theta,A^T(s,0,y)\bigr) \: \d y = \int_{\R^{n-2}} \int_\R f\bigl(A^T((s,0,y) + t \underbrace{A\theta}_{=e_2})\bigr) \: \d t \, \d y \\
& = \int_{\R^{n-1}} f\bigl(A^T(s,z)\bigr) \: \d z = \int_{\omega^\perp} f(y + s \omega) \: \d y = \Radon f(\omega,s),
\end{align*}
as stated.
\end{proof}

For a function $g \equiv g(\theta,x)$ on $T^n$ satisfying $g(\theta,\cdot) \in \L^1(\theta^\perp)$ for all $\theta \in \Sphere^{n-1}$ we define its Fourier transform $\Fourier g$ as the $(n-1)$-dimensional Fourier transform on $\theta^\perp$ acting on the second variable $x$, i.e.,
\begin{equation*}
\Fourier g(\theta,y) = \int_{\theta^\perp} g(\theta,x) \, \e^{-\i x^T y} \: \d x
\quad \mbox{ for } (\theta,y) \in T^n.
\end{equation*}

With this, the Fourier slice theorem for the X-ray transform $\Xray$ reads as follows.

\begin{theorem}[X-ray Fourier slice theorem]
\label{theo:Fourier_slice_Xray}
For any $f \in \Schwartz(\R^n)$ we have
\begin{equation*}
\Fourier(\Xray f)(\theta,y) = \Fourier f(y)
\quad \forall \, (\theta,y) \in T^n.
\end{equation*}
\end{theorem}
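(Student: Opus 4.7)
The plan is to unfold the definitions of both sides and show they agree by a suitable change of variables, exploiting the orthogonality constraint $y \in \theta^\perp$.

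First, I would write out $\Fourier(\Xray f)(\theta,y)$ directly from the definitions:
\begin{equation*}
\Fourier(\Xray f)(\theta,y) = \int_{\theta^\perp} \Xray f(\theta,x) \, \e^{-\i x^T y} \: \d x = \int_{\theta^\perp} \int_\R f(x + t\theta) \, \e^{-\i x^T y} \: \d t \, \d x.
\end{equation*}
Since $f \in \Schwartz(\R^n)$, Fubini's theorem applies without difficulty, and I can rearrange integrals freely.

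The key observation is that $y \in \theta^\perp$ by the definition of $T^n$, so $\theta^T y = 0$. This allows me to rewrite the phase factor as
\begin{equation*}
\e^{-\i x^T y} = \e^{-\i (x + t\theta)^T y},
\end{equation*}
which is the preparation step that makes the subsequent substitution natural. Next I would perform the change of variables $z = x + t\theta$ with $(x,t) \in \theta^\perp \times \R$. Because $\{\theta\} \cup \text{(orthonormal basis of } \theta^\perp\text{)}$ is an orthonormal basis of $\R^n$, this map is an isometric bijection from $\theta^\perp \times \R$ to $\R^n$ with Jacobian determinant equal to $1$, giving $\d z = \d t \, \d x$. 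Substituting yields
\begin{equation*}
\Fourier(\Xray f)(\theta,y) = \int_{\R^n} f(z) \, \e^{-\i z^T y} \: \d z = \Fourier f(y),
\end{equation*}
which is the desired identity.

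There is no real obstacle here: the proof is essentially the same as the two-dimensional Fourier slice theorem, and mirrors the argument for $\Radon$ in Theorem~\ref{theo:Fourier_slice_n}. The only subtlety worth noting is the orthogonal decomposition $\R^n = \langle\theta\rangle \oplus \theta^\perp$ that legitimizes the change of variables; one could phrase this explicitly by choosing an orthogonal matrix $A$ with $A\theta = e_1$ and observing that $A(x + t\theta) = (t, A x)$ with $Ax \in e_1^\perp \cong \R^{n-1}$. All integrability issues are trivial because $f$ is Schwartz, so $\Xray f(\theta,\cdot) \in \L^1(\theta^\perp)$ and the $(n-1)$-dimensional Fourier transform on the slice is well defined.
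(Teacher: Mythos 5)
Your proof is correct and is essentially the paper's own argument run in the opposite direction: the paper starts from $\Fourier f(y)$, splits $\R^n = \theta^\perp \oplus \langle\theta\rangle$ and applies Fubini, whereas you start from $\Fourier(\Xray f)(\theta,y)$ and reassemble the integral; both hinge on the same observation that $\theta^T y = 0$ makes the phase factor constant along the line. No gaps.
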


\begin{proof}
For $(\theta,y) \in T^n$, the definition of the $n$-dimensional Fourier transform yields
\begin{equation*}
\Fourier f(y) = \int_{\R^n} f(x) \, \e^{-\i x^T y} \: \d x = \int_{\theta^\perp} \left(\int_\R f(x + t \theta) \: \d t\right) \e^{-\i x^T y} \: \d x = \Fourier(\Xray f)(\theta,y)
\end{equation*}
by Fubini's theorem and the definition of the X-ray transform.
\end{proof}

A direct consequence of the X-ray Fourier slice theorem is the injectivity of $\Xray$.

\begin{corollary}[Injectivity of the X-ray transform]
For $f \in \Schwartz(\R^n)$ we have
\begin{equation*}
\Xray f = 0
\quad \implies \quad
f = 0,
\end{equation*}
i.e., the X-ray transform $\Xray$ is injective on $\Schwartz(\R^n)$.
\end{corollary}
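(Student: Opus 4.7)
The plan is to derive injectivity of $\Xray$ as an almost immediate corollary of the X-ray Fourier slice Theorem~\ref{theo:Fourier_slice_Xray}, following exactly the same pattern as the injectivity proof given earlier for the Radon transform $\Radon$. The key observation is that although $\Fourier(\Xray f)(\theta, \cdot)$ only captures the Fourier transform of $f$ restricted to the hyperplane $\theta^\perp$, by varying $\theta \in \Sphere^{n-1}$ these hyperplanes sweep out all of $\R^n$.

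First I would assume $\Xray f = 0$ on $T^n$. Applying the $(n-1)$-dimensional Fourier transform in the second variable then gives $\Fourier(\Xray f)(\theta, y) = 0$ for all $(\theta, y) \in T^n$. By Theorem~\ref{theo:Fourier_slice_Xray}, this yields
\begin{equation*}
\Fourier f(y) = 0
\quad \forall \, (\theta, y) \in T^n,
\end{equation*}
i.e., $\Fourier f(y) = 0$ whenever $y \in \theta^\perp$ for some $\theta \in \Sphere^{n-1}$.

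Next I would argue that this covering condition in fact forces $\Fourier f \equiv 0$ on all of $\R^n$. For any $y \in \R^n \setminus \{0\}$, I can choose any unit vector $\theta \in \Sphere^{n-1}$ orthogonal to $y$ (such a $\theta$ exists since $n \geq 2$), giving $(\theta, y) \in T^n$ and hence $\Fourier f(y) = 0$. For $y = 0$, any $\theta \in \Sphere^{n-1}$ satisfies $0 \in \theta^\perp$, so again $\Fourier f(0) = 0$. Therefore $\Fourier f \equiv 0$ on $\R^n$, and the injectivity of the Fourier transform on $\Schwartz(\R^n)$ (equivalently, the Fourier inversion formula, which applies since $f \in \Schwartz(\R^n) \subset \L^1(\R^n)$) yields $f = 0$.

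I do not anticipate any substantial obstacle: the entire content of the argument has already been packaged into the X-ray Fourier slice theorem, and the only additional ingredient is the elementary geometric observation that $\bigcup_{\theta \in \Sphere^{n-1}} \theta^\perp = \R^n$ when $n \geq 2$. The case $n = 1$ is degenerate and need not be treated, since the X-ray transform in one dimension reduces to integration over all of $\R$ and cannot be injective; implicitly the statement presumes $n \geq 2$, consistent with the multivariate setting established at the start of the chapter.
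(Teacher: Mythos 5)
Your proof is correct and follows exactly the route the paper intends: the corollary is stated as a direct consequence of the X-ray Fourier slice Theorem~\ref{theo:Fourier_slice_Xray}, with the only extra ingredient being the observation that $\bigcup_{\theta \in \Sphere^{n-1}} \theta^\perp = \R^n$ for $n \geq 2$, which you state and justify explicitly. Your side remark on the degenerate case $n=1$ is a sensible clarification but not needed in the paper's multivariate setting.
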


The inversion of $\Xray$ involves a rescaled version of the $\L^2$-adjoint of $\Xray$, which is given by the X-ray back projection $\Xback$.

\begin{definition}[X-ray back projection]
Let $g \in \Cont_b(T^n)$.
Then, the {\em X-ray back projection} $\Xback g$ of $g$ is defined as
\begin{equation*}
\Xback g(x) = \frac{1}{|\Sphere^{n-2}|} \int_{\Sphere^{n-1}} g(\theta,E_\theta x) \: \d \theta
\quad \mbox{ for } x \in \R^n,
\end{equation*}
where $E_\theta$ is the orthogonal projection onto $\theta^\perp$, i.e.,
\begin{equation*}
E_\theta x = x - (x^T \theta) \, \theta
\quad \mbox{ for } x \in \R^n.
\end{equation*}
\end{definition}

Based on the X-ray Fourier slice Theorem~\ref{theo:Fourier_slice_Xray} and the general integral formula
\begin{equation}
\int_{\R^n} h(x) \: \d x = \frac{1}{|\Sphere^{n-2}|} \int_{\Sphere^{n-1}} \int_{\theta^\perp} \|y\|_{\R^n} \, h(y) \: \d y \, \d \theta
\label{eq:integral_formula}
\end{equation}
we are now prepared to derive the X-ray version of the FBP formula~\eqref{eq:FBP_formula_n} for inverting $\Xray$.

\begin{theorem}[X-ray FBP formula]
\label{theo:filtered_back_projection_Xray}
For $f \in \Schwartz(\R^n)$ the {\em X-ray filtered back projection formula}
\begin{equation}
f(x) = \frac{1}{2\pi} \, \Xback\big(\Fourier^{-1}[\|y\|_{\R^n} \Fourier(\Xray f)(\theta,y)]\big)(x)
\label{eq:FBP_formula_Xray}
\end{equation}
holds for all $x \in \R^n$.
\end{theorem}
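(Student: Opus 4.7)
The plan is to mimic the structure of the proof of Theorem~\ref{theo:filtered_back_projection_Xray}'s Radon-transform analogue in Theorem~\ref{theo:filtered_back_projection}, but now using the integral formula~\eqref{eq:integral_formula} as the substitute for the polar change of variables and the X-ray Fourier slice Theorem~\ref{theo:Fourier_slice_Xray} in place of Theorem~\ref{theo:Fourier_slice_n}. Concretely, I would fix $x \in \R^n$ and start with the $n$-dimensional Fourier inversion formula
\begin{equation*}
f(x) = \frac{1}{(2\pi)^n} \int_{\R^n} \Fourier f(y) \, \e^{\i x^T y} \: \d y,
\end{equation*}
which is justified for $f \in \Schwartz(\R^n)$.

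Next I would apply~\eqref{eq:integral_formula} with $h(y) = \Fourier f(y) \, \e^{\i x^T y}$ to rewrite this as
\begin{equation*}
f(x) = \frac{1}{(2\pi)^n \, |\Sphere^{n-2}|} \int_{\Sphere^{n-1}} \int_{\theta^\perp} \|y\|_{\R^n} \, \Fourier f(y) \, \e^{\i x^T y} \: \d y \, \d \theta.
\end{equation*}
The X-ray Fourier slice Theorem~\ref{theo:Fourier_slice_Xray} then gives $\Fourier f(y) = \Fourier(\Xray f)(\theta,y)$ for every $y \in \theta^\perp$, so after substituting I would be left with the inner integral
\begin{equation*}
\int_{\theta^\perp} \|y\|_{\R^n} \, \Fourier(\Xray f)(\theta,y) \, \e^{\i x^T y} \: \d y.
\end{equation*}

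The key observation is that for $y \in \theta^\perp$ one has $x^T y = (E_\theta x)^T y$, since the component of $x$ along $\theta$ is orthogonal to $y$. Therefore the inner integral is exactly the $(n-1)$-dimensional inverse Fourier transform on $\theta^\perp$ of $\|y\|_{\R^n}\,\Fourier(\Xray f)(\theta,y)$ evaluated at $E_\theta x$, producing a factor $(2\pi)^{n-1}$. Collecting the prefactors $\tfrac{(2\pi)^{n-1}}{(2\pi)^n \, |\Sphere^{n-2}|} = \tfrac{1}{2\pi \, |\Sphere^{n-2}|}$ and matching with the definition of $\Xback$ yields the desired formula~\eqref{eq:FBP_formula_Xray}.

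The bookkeeping is the only real obstacle. In particular, one must be careful that the notation $\Fourier^{-1}$ appearing inside $\Xback$ refers to the $(n-1)$-dimensional inverse Fourier transform on $\theta^\perp$ (consistent with the definition adopted for functions on $T^n$), and that the evaluation point is $E_\theta x$, which is the image of $x$ under the orthogonal projection used in $\Xback$; otherwise the reduction $x^T y \mapsto (E_\theta x)^T y$ makes no sense. All other steps (Fubini, Schwartz decay to justify the exchange of orders, pointwise validity of Fourier inversion) are routine for $f \in \Schwartz(\R^n)$ and do not require technical work beyond what is already implicit in the preceding results.
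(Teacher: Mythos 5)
Your proposal is correct and follows essentially the same route as the paper's proof: Fourier inversion, the integral formula~\eqref{eq:integral_formula}, the X-ray Fourier slice Theorem~\ref{theo:Fourier_slice_Xray}, and then identifying the inner integral as the $(n-1)$-dimensional inverse Fourier transform on $\theta^\perp$ evaluated at $E_\theta x$ to match the definition of $\Xback$. Your explicit remark that $x^T y = (E_\theta x)^T y$ for $y \in \theta^\perp$ makes precise a step the paper leaves implicit, but the argument is the same.
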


\begin{proof}
Let $x \in \R^n$ be fixed.
Applying the $n$-dimensional Fourier inversion formula to $f$ yields the identity
\begin{equation*}
f(x) = \Fourier^{-1}(\Fourier f)(x) = \frac{1}{(2\pi)^n} \int_{\R^n} \Fourier f(\omega) \, \e^{\i x^T \omega} \: \d \omega.
\end{equation*}
With the integral formula~\eqref{eq:integral_formula} and the X-ray Fourier slice Theorem~\ref{theo:Fourier_slice_Xray} follows that
\begin{align*}
f(x) & = \frac{1}{(2\pi)^n \, |\Sphere^{n-2}|} \int_{\Sphere^{n-1}} \int_{\theta^\perp} \|y\|_{\R^n} \, \Fourier f(y) \, \e^{\i x^T y} \: \d y \, \d \theta \\
& = \frac{1}{(2\pi)^n \, |\Sphere^{n-2}|} \int_{\Sphere^{n-1}} \int_{\theta^\perp} \|y\|_{\R^n} \, \Fourier(\Xray f)(\theta,y) \, \e^{\i x^T y} \: \d y \, \d \theta \\
& = \frac{1}{2\pi \, |\Sphere^{n-2}|} \int_{\Sphere^{n-1}} \Fourier^{-1}[\|y\|_{\R^n} \Fourier(\Xray f)(\theta,y)](\theta,E_\theta x) \: \d \theta \\
& = \frac{1}{2\pi} \, \Xback\big(\Fourier^{-1}[\|y\|_{\R^n} \Fourier(\Xray f)(\theta,y)]\big)(x)
\end{align*}
due to the definition of the X-ray back projection.
\end{proof}

We close this section with studying the degree of ill-posedness of the reconstruction problem from X-ray data
\begin{equation*}
\bigl\{\Xray f(\theta,x) \bigm| (\theta,x) \in T^n\bigr\}.
\end{equation*}
To this end, we define the Sobolev space $\H^\alpha(T^n)$ of fractional order $\alpha \in \R$ on $T^n$ as the space of all functions $g \equiv g(\theta,x)$ with $g(\theta,\cdot) \in \H^\alpha(\theta^\perp)$ for almost all $\theta \in \Sphere^{n-1}$ and
\begin{equation*}
\|g\|_{\H^\alpha(T^n)} = \left(\int_{\Sphere^{n-1}} \int_{\theta^\perp} \bigl(1+\|y\|_{\R^n}^2\bigr)^\alpha \, |\Fourier g(\theta,y)|^{\nicefrac{1}{2}} \: \d y \, \d \theta\right)^{\nicefrac{1}{2}} < \infty.
\end{equation*}
Then, one can prove that for $f \in \L^1(\R^n) \cap \H^\alpha_0(\Omega)$ with $\alpha \in \R$ and an open, bounded set $\Omega \subset \R^n$ we have $\Xray f \in \H^{\alpha+\nicefrac{1}{2}}(T^n)$ and the Sobolev estimate
\begin{equation*}
c_{\alpha,n} \, \|f\|_{\H^\alpha(\R^n)} \leq \|\Xray f\|_{\H^{\alpha+\nicefrac{1}{2}}(T^n)} \leq C_{\alpha,n} \, \|f\|_{\H^\alpha(\R^n)}
\end{equation*}
holds with positive constants $c_{\alpha,n}, C_{\alpha,n} > 0$ depending only on smoothness $\alpha$ and dimension $n$.
Consequently, the X-ray reconstruction problem is ill-posed of order $\frac{1}{2}$, independent of~$n$.

\section{Analytic inversion of the X-ray transform on \texorpdfstring{$\R^3$}{R\^{}3}}

We now study the X-ray transform~$\Xray$ in three dimensions, which describes the three-dimensional X-ray model and, thus, is the most important case for applications.
In this case, the filtered back projection formula~\eqref{eq:FBP_formula_Xray} for inverting $\Xray$ reads
\begin{equation*}
f(x) = \frac{1}{2\pi} \, \Xback\big(\Fourier^{-1}[\|y\|_{\R^3} \Fourier(\Xray f)(\theta,y)]\big)(x) = \frac{1}{(2\pi)^4} \int_{\Sphere^2} \int_{\theta^\perp} \|y\|_{\R^3} \, \Fourier(\Xray f)(\theta,y) \, \e^{\i x^T y} \: \d y \, \d \theta.
\end{equation*}
Thus, one needs $\Xray f(\theta,y)$ for all $\theta \in \Sphere^2$ and $y \in \theta^\perp$ in order to find the function $f$.
In practice, however, $\Xray f$ is usually not known on all of $\Sphere^2$. 
This shows that the FBP formula for inverting~$\Xray$ in Theorem~\ref{theo:filtered_back_projection_Xray} is not as useful as the FBP formula for inverting $\Radon$ in Theorem~\ref{theo:filtered_back_projection}.
Instead, we now present two different inversion formulas that are more suitable for applications.

\subsection*{Orlov's formula}

In the mid-1970s, Orlov derived an inversion formula for the X-ray transform $\Xray$ using parallel X-ray projections only for a subset $\Sphere_0^2 \subset \Sphere^2$ of directions.
Here, $\Sphere_0^2$ is the spherical zone around the equator given by
\begin{equation}
\Sphere_0^2 = \bigl\{\theta(\varphi,\vartheta) \bigm| \vartheta_-(\varphi) \leq \vartheta \leq \vartheta_+(\varphi), ~ 0 \leq \varphi < 2\pi\bigr\}
\label{eq:spherical_zone}
\end{equation}
with spherical coordinates
\begin{equation*}
\theta(\varphi,\vartheta) = \begin{pmatrix}
\cos(\varphi) \cos(\vartheta) \\
\sin(\varphi) \cos(\vartheta) \\
\sin(\vartheta)
\end{pmatrix}
\quad \mbox{ for } 0 \leq \varphi < 2\pi, ~ |\vartheta| \leq \frac{\pi}{2}
\end{equation*}
and functions $\vartheta_\pm: [0,2\pi) \to \R$ such that
\begin{equation*}
-\frac{\pi}{2} < \vartheta_-(\varphi) < 0 < \vartheta_+(\varphi) < \frac{\pi}{2}
\quad \forall \, 0 \leq \varphi < 2\pi.
\end{equation*}
For example, if $\vartheta_\pm = \pm \vartheta_0$ are constant functions with $\vartheta_0 \in (0,1)$, then $\Sphere_0^2$ is the spherical zone between the horizontal planes $x_3 = \pm \sin(\vartheta_0)$, see Figure~\ref{fig:spherical_zone}.

\begin{figure}[t]
\centering
\includegraphics[height=5.75cm,keepaspectratio]{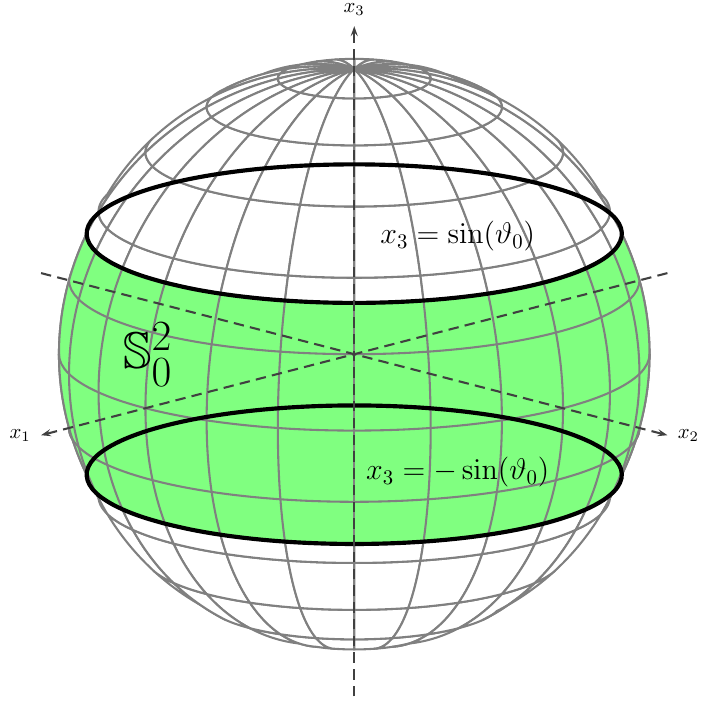}
\caption{Illustration of the spherical zone $\Sphere_0^2$ in~\eqref{eq:spherical_zone} for constant functions $\vartheta_\pm = \pm \vartheta_0$.}
\label{fig:spherical_zone}
\end{figure}

\medskip

To state Orlov's formula, let $\ell(x,y)$ denotes the length of the intersection of $\Sphere_0^2$ with the subspace spanned by the vectors $x,y \in \R^3$.
Due to the assumptions on $\vartheta_\pm$, we have $\ell(x,y) > 0$ if $x$ and $y$ are linearly independent.
In this case, $\ell(x,y)$ is the length of the intersection of $\Sphere_0^2$ with the plane spanned by $0,x,y \in \R^3$.

\begin{theorem}[Orlov's formula]
Let $f \in \Schwartz(\R^3)$.
Then, {\em Orlov's inversion formula}
\begin{equation}
f(x) = \Delta \int_{\Sphere_0^2} h(\theta,E_\theta x) \: \d \theta
\label{eq:Orlov_formula_part1}
\end{equation}
holds for all $x \in \R^3$, where $\Delta$ is the Laplace operator acting on $x$, i.e., $\Delta = \frac{\partial^2}{\partial x_1^2} + \frac{\partial^2}{\partial x_2^2} + \frac{\partial^2}{\partial x_3^2}$, and $h$ is obtained from $\Xray f$ by
\begin{equation}
h(\theta,x) = \frac{1}{4\pi^2} \int_{\theta^\perp} \frac{\Xray f(\theta,x - y)}{\ell(\theta,y) \, \|y\|_{\R^3}} \: \d y
\quad \mbox{ for } (\theta,x) \in T^3.
\label{eq:Orlov_formula_part2}
\end{equation}
\end{theorem}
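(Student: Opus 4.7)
The plan is to imitate the derivation of the X-ray FBP formula of Theorem~\ref{theo:filtered_back_projection_Xray} but to restrict the angular integration from the full sphere to the zone $\Sphere_0^2$ and absorb one power of $\|\xi\|$ into a Laplacian. First, apply the three-dimensional Fourier inversion formula to $f \in \Schwartz(\R^3)$, then use an Orlov-type integral identity
\begin{equation*}
\int_{\R^3} F(\xi)\, \d\xi \,=\, \int_{\Sphere_0^2}\int_{\theta^\perp} \frac{\|\xi\|_{\R^3}}{\ell(\theta,\xi)}\, F(\xi)\, \d\xi\, \d\theta,
\end{equation*}
which I would verify by disintegrating the product measure on $T^3 \cap (\Sphere_0^2 \times \R^3)$ along the fibres of $(\theta,\xi)\mapsto\xi$ (so that for each $\xi\ne0$ the set of admissible $\theta$ is an arc in $\Sphere_0^2$ whose length the Orlov condition on $\vartheta_\pm$ controls via $\ell$). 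Invoking the X-ray Fourier slice Theorem~\ref{theo:Fourier_slice_Xray} to replace $\Fourier f(\xi)$ by $\Fourier(\Xray f)(\theta,\xi)$ for $\xi \in \theta^\perp$, and using $(E_\theta x)^T\xi = x^T\xi$ on $\theta^\perp$, one arrives at
\begin{equation*}
f(x) \,=\, \frac{1}{(2\pi)^3} \int_{\Sphere_0^2}\int_{\theta^\perp} \frac{\|\xi\|_{\R^3}}{\ell(\theta,\xi)}\, \Fourier(\Xray f)(\theta,\xi)\, e^{\i (E_\theta x)^T\xi}\, \d\xi\, \d\theta.
\end{equation*}

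Second, I would pull a Laplacian outside. Viewing $x \mapsto e^{\i(E_\theta x)^T\xi}$ as a function on $\theta^\perp$ through $E_\theta x$, one has $\Delta_x e^{\i(E_\theta x)^T\xi} = -\|\xi\|_{\R^3}^2\, e^{\i(E_\theta x)^T\xi}$; this follows from the elementary identity $\Delta_x[F(E_\theta x)] = \Delta_{\theta^\perp}F(E_\theta x)$ for smooth $F$ on $\theta^\perp$, checked in an orthonormal frame whose last vector is $\theta$. Writing $\|\xi\| = -(-\|\xi\|^2)\cdot\|\xi\|^{-1}$ and exchanging $\Delta_x$ with the integrals (permissible since $f \in \Schwartz(\R^3)$) converts the identity, up to sign, into
\begin{equation*}
f(x) \,=\, \Delta_x \left[ \frac{1}{(2\pi)^3} \int_{\Sphere_0^2}\int_{\theta^\perp} \frac{1}{\|\xi\|_{\R^3}\, \ell(\theta,\xi)}\, \Fourier(\Xray f)(\theta,\xi)\, e^{\i (E_\theta x)^T\xi}\, \d\xi\, \d\theta \right].
\end{equation*}

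The third step is to identify the bracketed expression as $\int_{\Sphere_0^2} h(\theta, E_\theta x)\, \d\theta$. By the 2D Fourier-convolution theorem on $\theta^\perp$, this reduces to the distributional identity
\begin{equation*}
\Fourier_{\theta^\perp}(K_\theta)(\xi) \,=\, \frac{2\pi}{\|\xi\|_{\R^3}\, \ell(\theta,\xi)} \quad\text{in } \Schwartz'(\theta^\perp)
\end{equation*}
for the kernel $K_\theta(y) = (\|y\|_{\R^3}\, \ell(\theta,y))^{-1}$. I would write $K_\theta(y) = \|y\|^{-1}g(\hat y)$ with the even function $g(\hat y) = 1/\ell(\theta,\hat y)$, pass to polar coordinates on $\theta^\perp$, and invoke the distributional evaluation $\int_0^\infty e^{-\i r\alpha}\,\d r = \pi\delta(\alpha) - \i\,\mathrm{PV}(1/\alpha)$: the principal-value contribution vanishes against even $g$, while the $\delta$-part localizes the angular integral to the two unit vectors $\pm\hat v \in \theta^\perp$ perpendicular to $\xi$, producing $(2\pi/\|\xi\|)\,g(\hat v)$. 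A short geometric identification — using that $\{\theta,\hat v,\hat\xi\}$ is an orthonormal basis of $\R^3$, so the plane $\mathrm{span}(\theta,\hat v)$ is intrinsically tied to the subspace appearing in $\ell(\theta,\xi)$ — then brings the expression into the advertised form. I expect this last Fourier step to be the main obstacle: $K_\theta$ is not integrable on $\theta^\perp$, so the computation must be interpreted in $\Schwartz'(\theta^\perp)$ with each limiting procedure justified, and care is needed to trace through the geometric meaning of $\ell$ when passing from the perpendicular direction $\hat v$ back to $\xi$. Once the kernel identity is established, bookkeeping of the overall sign and the prefactor $1/(4\pi^2)$ in~\eqref{eq:Orlov_formula_part2} is routine.
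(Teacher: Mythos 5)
The paper offers no proof of this theorem (it only cites \cite[Theorem 2.16]{Natterer2001a}), and your plan --- Fourier inversion, a coarea formula over the zone $\Sphere_0^2$, the X-ray Fourier slice theorem, the planar Fourier transform of the homogeneous kernel, and extraction of a Laplacian --- is in substance the standard textbook route, so the architecture is sound. However, both of your displayed identities misidentify which plane's intersection with $\Sphere_0^2$ occurs, and with the paper's definition of $\ell$ the first one is simply false. Disintegrating $\int_{\Sphere_0^2}\int_{\theta^\perp}\cdots\,\d\xi\,\d\theta$ over $\xi$ produces the length of the fibre $\{\theta \in \Sphere_0^2 : \theta \perp \xi\} = \Sphere_0^2 \cap \xi^\perp$, i.e.\ the correct identity is $\int_{\R^3}F(\xi)\,\d\xi = \int_{\Sphere_0^2}\int_{\theta^\perp}\|\xi\|\,|\Sphere_0^2\cap\xi^\perp|^{-1}\,F(\xi)\,\d\xi\,\d\theta$. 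Your version with $\ell(\theta,\xi)$, the length of $\Sphere_0^2\cap\mathrm{span}(\theta,\xi)$ --- a plane \emph{containing} $\xi$ --- would require $\int_{\Sphere_0^2\cap\xi^\perp}\ell(\theta,\xi)^{-1}\,\d\theta = 1$; for a constant zone $\vartheta_\pm=\pm\vartheta_0$ and $\xi=e_3$ the fibre is the full equator of length $2\pi$ while $\ell(\theta,e_3)=4\vartheta_0$, giving $\pi/(2\vartheta_0)\neq 1$. Correspondingly, the kernel identity reads $\Fourier_{\theta^\perp}K_\theta(\xi) = 2\pi\,\|\xi\|^{-1}\,\ell(\theta,\hat v)^{-1}$ with $\hat v\in\theta^\perp$, $\hat v\perp\xi$; since $\{\theta,\hat v,\hat\xi\}$ is an orthonormal basis, $\mathrm{span}(\theta,\hat v)=\xi^\perp$ and hence $\ell(\theta,\hat v)=|\Sphere_0^2\cap\xi^\perp|$, \emph{not} $\ell(\theta,\xi)$. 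Your two misstatements carry the same wrong factor and therefore still cancel, which is why the end result appears to come out; but neither intermediate identity is true as written, and the whole point of the proof is precisely that the kernel's Fourier transform manufactures the fibre length $|\Sphere_0^2\cap\xi^\perp|$ that the coarea weight demands.

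The second gap is the sign, which is not routine bookkeeping. Since $\Delta_x \e^{\i x^T\xi} = -\|\xi\|^2\,\e^{\i x^T\xi}$, extracting the Laplacian costs a factor $-1$ that nothing in your derivation compensates: the kernel $1/(\ell(\theta,y)\|y\|)$ is positive, even and homogeneous of degree $-1$ on $\theta^\perp$, so its planar Fourier transform is \emph{positive}, and carrying everything through gives $\Fourier g(\xi) = +\|\xi\|^{-2}\,\Fourier f(\xi)$ for $g(x)=\int_{\Sphere_0^2}h(\theta,E_\theta x)\,\d\theta$, hence $\Delta g = -f$. A direct sanity check confirms this: for the full sphere one has $\ell\equiv 2\pi$ and $\int_{\Sphere^2}\|E_\theta w\|_{\R^3}^{-1}\,\d\theta = 2\pi^2\,\|w\|_{\R^3}^{-1}$, which reduce the right-hand side of~\eqref{eq:Orlov_formula_part1} to $\Delta\bigl(\tfrac{1}{4\pi}\int_{\R^3} f(z)\,\|x-z\|_{\R^3}^{-1}\,\d z\bigr) = -f(x)$. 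So "up to sign" cannot be waved away: you must either exhibit a compensating minus sign (there is none in this argument) or confront the fact that the derivation yields the formula with an extra factor $-1$ relative to the statement as printed.
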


\begin{proof}
See, for example,~\cite[Theorem 2.16]{Natterer2001a}.
\end{proof}

Note that the integral in~\eqref{eq:Orlov_formula_part1} is a back projection over $\Sphere_0^2$ and the integral in~\eqref{eq:Orlov_formula_part2} is a convolution over $\theta^\perp$.
Thus, a standard discretization of Orlov's formula leads to an algorithm of filtered back projection type for the reconstruction from X-ray data in {\em parallel beam geometry}.

\subsection*{Grangeat's formula}

In contrast to Orlov's inversion formula~\eqref{eq:Orlov_formula_part1}, Grangeat's formula does not require parallel projections for all directions in the spherical zone $\Sphere_0^2$.
Instead, it assumes that an X-ray source emits a cone of X-ray beams and travels along a curve $\Gamma \subset \R^3$ around the object together with a two-dimensional detector array.
More precisely, we assume that for every point on the curve~$\Gamma$ we are given the line integrals of a function $f$ along all lines starting at that point and travelling through $\supp(f)$.
This motivates us to define the following variant of the X-ray transform $\Xray$.

\begin{definition}[Cone beam transform]
Let $f \in \Schwartz(\R^3)$.
Then, the {\em cone beam transform} $\Fan f$ of $f$ is defined as
\begin{equation*}
\Fan f(a,\theta) = \int_0^\infty f(a + t \theta) \: \d t
\quad \mbox{ for } (a,\theta) \in \R^3 \times \Sphere^2.
\end{equation*}
\end{definition}

Note that $\Fan$ defines a continuous linear operator from $\Schwartz(\R^3)$ into $\Cont_b(\R^3 \times \Sphere^2)$ and we have
\begin{equation*}
\Fan f(a,\theta) + \Fan f(a,-\theta) = \Xray f(\theta,a).
\end{equation*}
We think of $a$ as the source of an X-ray with direction $\theta$.
Moreover, we extend $\Fan f$ to $\R^3 \times \R^3\setminus\{0\}$ by
\begin{equation*}
\Fan f(a,x) = \int_0^\infty f(a + t x) \: \d t = \|x\|_{\R^3}^{-1} \, \Fan f\bigl(a,\|x\|_{\R^3}^{-1} \, x\bigr)
\quad \mbox{ for } (a,x) \in \R^3 \times \R^3\setminus\{0\},
\end{equation*}
which makes $\Fan f$ a homogeneous function of degree $-1$ in the second argument.

\medskip

Grangeat's method for inverting $\Fan$ requires the curve $\Gamma$ to satisfy the following condition of Tuy-Kirillov.
A practically important example for such a curve is shown in Figure~\ref{fig:Tuy_example}.

\begin{definition}[Tuy's condition]
Let $\Omega \subset \R^3$ be a subset of $\R^3$ and let $\Gamma \subset \R^3$ be a curve with parametrization
\begin{equation*}
\gamma: [0,1] \to \R^3.
\end{equation*}
Then, $\Gamma$ is said to satisfy {\em Tuy's condition} with respect to $\Omega$ if every hyperplane $H \subset \R^3$ that intersects $\Omega$ also intersects $\Gamma$, i.e., for every $(\theta,s) \in Z^3$ with $H_{\theta,s} \cap \Omega \neq \emptyset$ there is a $t_{\theta,s} \in [0,1]$ such that $\gamma(t_{\theta,s}) \in \H_{\theta,s}$, that is
\begin{equation*}
\gamma(t_{\theta,s})^T \theta = s.
\end{equation*}
\end{definition}

\begin{figure}[t]
\centering
\includegraphics[height=5.75cm,keepaspectratio]{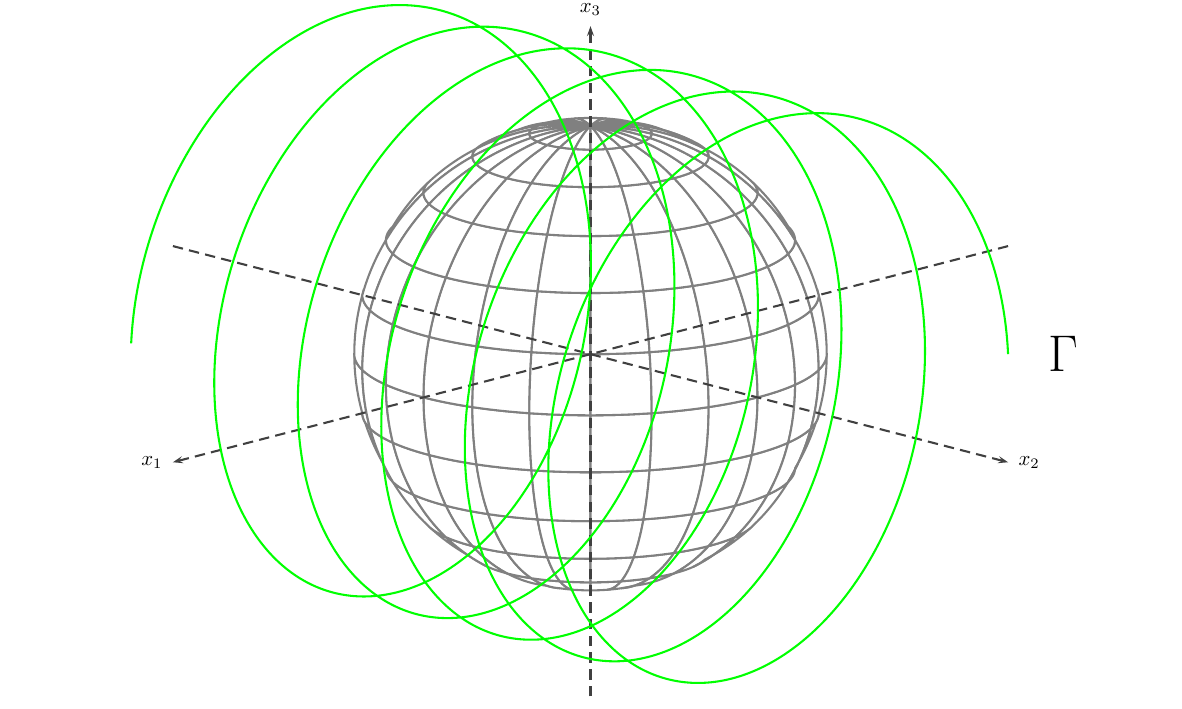}
\caption{Example for a curve $\Gamma \subset \R^3$ satisfying Tuy's condition with respect to $B_1(0)$.}
\label{fig:Tuy_example}
\end{figure}

The gist of Grangeat's method is the following relation between the cone beam transform $\Fan$ and the three-dimensional Radon transform $\Radon$, which is known as {\em Grangeat's formula}.

\begin{theorem}[Grangeat's formula]
\label{theo:Grangeat_formula}
Let $f \in \Schwartz(\R^3)$.
Then, for $(a,\theta) \in \R^3 \times \Sphere^2$ we have
\begin{equation*}
\frac{\partial}{\partial t} \Radon f(\theta,a^T \theta) = \int_{\Sphere^2 \cap \theta^\perp} \nabla_\theta \Fan f(a,\omega) \: \d \omega,
\end{equation*}
where $\nabla_\theta \Fan f$ is the directional derivative of $\Fan f$ in direction $\theta$, acting on the second variable.
\end{theorem}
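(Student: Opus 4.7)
The plan is to rewrite both sides as integrals on the hyperplane $H_{\theta,a^T\theta}$ (which contains~$a$ because $a^T\theta = a^T\theta$) and see that they match once one side is put in polar coordinates centred at~$a$.

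First I would express the left-hand side via Proposition~\ref{prop:Radon_derivative}. Parametrising $H_{\theta,t}$ by $y = t\theta + x$ with $x \in \theta^\perp$ gives
$\Radon f(\theta,t) = \int_{\theta^\perp} f(t\theta + x)\,\d x$, and differentiation under the integral (justified since $f \in \Schwartz(\R^3)$) yields
$\partial_t \Radon f(\theta,t) = \int_{\theta^\perp} (\nabla_\theta f)(t\theta + x)\,\d x$. Evaluating at $t = a^T\theta$ and reparametrising $H_{\theta,a^T\theta}$ by $y = a + z$ with $z \in \theta^\perp$ (using $a \in H_{\theta,a^T\theta}$) gives
\begin{equation*}
\frac{\partial}{\partial t}\Radon f(\theta,a^T\theta) = \int_{\theta^\perp} (\nabla_\theta f)(a + z)\,\d z.
\end{equation*}

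Next I would switch to polar coordinates on the two-dimensional subspace $\theta^\perp \subset \R^3$. Writing $z = r\omega$ with $r \geq 0$ and $\omega \in \Sphere^2 \cap \theta^\perp$ we have $\d z = r\,\d r\,\d\omega$, so
\begin{equation*}
\frac{\partial}{\partial t}\Radon f(\theta,a^T\theta) = \int_{\Sphere^2 \cap \theta^\perp} \int_0^\infty r\,(\nabla_\theta f)(a + r\omega)\,\d r\,\d\omega.
\end{equation*}

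For the right-hand side I would use the homogeneous extension $\Fan f(a,x) = \int_0^\infty f(a + tx)\,\d t$ noted in the excerpt, which makes $\Fan f$ smooth in its second argument away from $0$. Differentiating in the second slot gives $\nabla_x \Fan f(a,x) = \int_0^\infty t\,\nabla f(a + tx)\,\d t$, whence
\begin{equation*}
\nabla_\theta \Fan f(a,\omega) = \int_0^\infty t\,(\nabla_\theta f)(a + t\omega)\,\d t
\quad \mbox{ for } \omega \in \Sphere^2 \cap \theta^\perp.
\end{equation*}
Integrating over $\omega$ and applying Fubini (again justified by the Schwartz decay of $f$) yields exactly the same iterated integral as for the left-hand side, which closes the proof.

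The main obstacles are bookkeeping rather than conceptual: one must be careful that the two occurrences of $\theta$ in $\nabla_\theta \Fan f(a,\omega)$ really refer to the same vector (the one normal to the hyperplane), and that the directional derivative on $\Fan f$ is taken in the second argument, whereas on $f$ it is a spatial derivative; and one must justify the differentiation under the integral and the polar-coordinate change, both of which are routine for $f \in \Schwartz(\R^3)$. No tricky estimate is needed.
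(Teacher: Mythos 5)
Your proposal is correct and follows essentially the same route as the paper's proof: both reduce the left-hand side via Proposition~\ref{prop:Radon_derivative} to $\int_{\theta^\perp}\nabla_\theta f(a+y)\,\d y$, pass to polar coordinates on $\theta^\perp$, and compute $\nabla_\theta \Fan f(a,\omega)=\int_0^\infty t\,\nabla_\theta f(a+t\omega)\,\d t$ from the homogeneous extension by differentiating under the integral. The only difference is notational (you write $\nabla f\cdot\theta$ where the paper spells out $\sum_j\theta_j\,\partial_{x_j}f$), so there is nothing to add.
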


\begin{proof}
According to Proposition~\ref{prop:Radon_derivative} we have
\begin{equation*}
\frac{\partial}{\partial t} \Radon f(\theta,t) = \Radon (\nabla_\theta f)(\theta,t)
\quad \forall \, t \in \R.
\end{equation*}
Thus, with $t = a^T \theta$ follows that
\begin{equation*}
\frac{\partial}{\partial t} \Radon f(\theta,a^T \theta) = \int_{H_{\theta,a^T \theta}} \nabla_\theta f(x) \: \d x = \int_{\theta^\perp} \nabla_\theta f(a + y) \: \d y = \int_{\Sphere^2 \cap \theta^\perp} \int_0^\infty \nabla_\theta f(a+s\omega) \, s \: \d s \, \d \omega.
\end{equation*}
Now observe that
\begin{equation*}
\nabla_\theta f(a+s\omega) = \sum_{j=1}^3 \theta_j \, \frac{\partial}{\partial x_j} f(a+s\omega)
\end{equation*}
and, by interchanging differentiation and integration,
\begin{equation*}
\nabla_\theta \Fan f(a,\omega) = \sum_{j=1}^3 \theta_j \int_0^\infty t \, \frac{\partial}{\partial x_j} f(a+t\omega) \: \d t.
\end{equation*}
Consequently,
\begin{equation*}
\frac{\partial}{\partial t} \Radon f(\theta,a^T \theta) = \int_{\Sphere^2 \cap \theta^\perp} \sum_{j=1}^3 \theta_j \int_0^\infty s \, \frac{\partial}{\partial x_j} f(a+s\omega) \: \d s \, \d \omega = \int_{\Sphere^2 \cap \theta^\perp} \nabla_\theta \Fan f(a,\omega) \: \d \omega,
\end{equation*}
as stated.
\end{proof}

The other ingredient of Grangeat's method is the FBP formula~\eqref{eq:FBP_formula_n} for $n = 3$, i.e.,
\begin{equation*}
f(x) = \frac{1}{2} \, \Back\big(\Fourier^{-1}[|S|^2 \Fourier(\Radon f)(\theta,S)]\big)(x).
\end{equation*}
Using Proposition~\ref{prop:Fourier_properties}, this can be rewritten as
\begin{equation}
f(x) = -\frac{1}{2} \, \Back\biggl(\frac{\partial^2}{\partial t^2} \Radon f(\theta, t)\biggr)(x) = -\frac{1}{8\pi^2} \int_{\Sphere^2} \frac{\partial^2}{\partial t^2} \Radon f(\theta,x^T \theta) \: \d \theta.
\label{eq:FBP_formula_3}
\end{equation}

\begin{corollary}[Grangeat's method]
Let $f \in \Schwartz(\R^3)$ with support $\supp(f)$ and let $\Gamma \subset \R^3$ be a curve parametrized by $\gamma: [0,1] \to \R^3$ that satisfies Tuy's condition with respect to $\supp(f)$.
Then, the inversion formula
\begin{equation*}
f(x) = -\frac{1}{8\pi^2} \int_{\Sphere^2} \frac{\partial}{\partial t} \int_{\Sphere^2 \cap \theta^\perp} \nabla_\theta \Fan f(\gamma(t_{\theta,x}),\omega) \: \d \omega \, \d \theta
\end{equation*}
holds for all $x \in \R^3$, where $t_{\theta,x} \in [0,1]$ satisfies $x^T \theta = \gamma(t_{\theta,x})^T \theta$.
\end{corollary}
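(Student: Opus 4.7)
The plan is to derive the claimed formula by combining the three-dimensional FBP formula stated just above, namely
\begin{equation*}
f(x) = -\frac{1}{8\pi^2} \int_{\Sphere^2} \frac{\partial^2}{\partial t^2} \Radon f(\theta,x^T \theta) \: \d \theta,
\end{equation*}
with Grangeat's formula (Theorem~\ref{theo:Grangeat_formula}). The task reduces to rewriting the second $t$-derivative of $\Radon f$ at the point $t=x^T\theta$ purely in terms of cone beam data along the curve $\Gamma$.

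First, I would fix $x \in \supp(f)$ and a direction $\theta \in \Sphere^2$ for which the hyperplane $H_{\theta,x^T\theta}$ meets $\supp(f)$ (outside this set the Radon data vanishes, so such $\theta$ are the only relevant ones). Tuy's condition, applied to $\Omega = \supp(f)$, then yields $t_{\theta,x} \in [0,1]$ with $\gamma(t_{\theta,x})^T\theta = x^T\theta$. With this choice of source position $a = \gamma(t_{\theta,x})$, Grangeat's formula gives immediately
\begin{equation*}
\frac{\partial}{\partial t}\Radon f(\theta,t)\Big|_{t = x^T\theta} = \int_{\Sphere^2 \cap \theta^\perp} \nabla_\theta \Fan f(\gamma(t_{\theta,x}),\omega) \: \d \omega,
\end{equation*}
since $a^T\theta = x^T\theta$ by construction.

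Second, I would differentiate this identity once more with respect to the radial variable of the Radon transform. Let $s \mapsto \tau(\theta,s) \in [0,1]$ denote the implicit map supplied by Tuy's condition, characterized by $\gamma(\tau(\theta,s))^T\theta = s$ (well-defined and smooth in a neighborhood of $s = x^T\theta$ because $\gamma'(\tau(\theta,s))^T\theta \neq 0$, a consequence of Tuy's transversality). Then the Grangeat identity reads
\begin{equation*}
\frac{\partial}{\partial t}\Radon f(\theta,s) = \int_{\Sphere^2 \cap \theta^\perp} \nabla_\theta \Fan f(\gamma(\tau(\theta,s)),\omega) \: \d \omega
\end{equation*}
as an equality of smooth functions of $s$. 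Differentiating in $s$ and evaluating at $s = x^T\theta$ yields $\partial_t^2 \Radon f(\theta,x^T\theta)$ on the left and exactly the inner derivative on the right-hand side of the corollary on the right. Substitution into the FBP formula completes the proof.

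The main obstacle is the last step: reading the outer $\partial/\partial t$ in the corollary's formula correctly. It is not differentiation in the curve parameter $t$ along $\Gamma$, but differentiation in the Radon radial variable $s = x^T\theta$, with $t_{\theta,x}$ varying implicitly via $\tau(\theta,\cdot)$. Equivalently, one may differentiate in $t$ along $\Gamma$ and then divide by the nonzero factor $\gamma'(t_{\theta,x})^T\theta$. Justifying this chain rule — along with interchanging differentiation with the $\omega$-integral, which is permitted because $f \in \Schwartz(\R^3)$ — is the only genuinely technical part; everything else is algebraic bookkeeping on top of Grangeat's identity and the FBP formula already at our disposal.
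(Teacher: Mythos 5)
Your proposal is correct and follows essentially the same route as the paper: apply Tuy's condition to pick the source $\gamma(t_{\theta,x})$ on the plane $H_{\theta,x^T\theta}$, use Grangeat's formula to express $\frac{\partial}{\partial t}\Radon f(\theta,x^T\theta)$ through the cone beam data, take one more radial derivative, and substitute into the second-derivative form of the three-dimensional FBP formula. Your added care about reading the outer $\frac{\partial}{\partial t}$ as differentiation in the Radon radial variable via the implicit map $s \mapsto \tau(\theta,s)$ only makes explicit what the paper leaves implicit (the paper's stated Tuy condition does not formally include the transversality $\gamma'(t_{\theta,s})^T\theta \neq 0$ you invoke, but the paper's own proof glosses over the same point).
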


\begin{proof}
Since $\Gamma$ satisfies Tuy's condition with respect to $\supp(f)$, for any $x \in \supp(f)$ and $\theta \in \Sphere^2$ there is a $t_{\theta,x} \in [0,1]$ such that
\begin{equation*}
x^T \theta = \gamma(t_{\theta,x})^T \theta.
\end{equation*}
Now, using Grangeat's formula, Theorem~\ref{theo:Grangeat_formula}, with $a = \gamma(t_{\theta,x})$ yields
\begin{equation*}
\frac{\partial}{\partial t} \Radon f(\theta,x^T \theta) = \int_{\Sphere^2 \cap \theta^\perp} \nabla_\theta \Fan f(\gamma(t_{\theta,x}),\omega) \: \d \omega.
\end{equation*}
From this formula, $\frac{\partial^2}{\partial t^2} \Radon f(\theta,t)$ can be computed for $t = x^T \theta$ for each $x \in \R^3$ with $f(x) \neq 0$.
Using this in the three-dimensional FBP formula~\eqref{eq:FBP_formula_3} gives the result.
\end{proof}

Grangeat's method permits exact reconstruction of $f$ if each plane hitting $\supp(f)$ contains at least one source, i.e., point on the curve $\Gamma$.
For the reconstruction of $f$ at a point $x \in \supp(f)$ it uses only those values of $\Fan f(\gamma(t_{\theta,x}),\omega)$ for which $\omega$ is almost perpendicular to $\theta$.
Thus, it requires cone beam data only along X-rays that run in a small cone whose axis joins $x$ with an arbitrary source position on $\Gamma$. 
Consequently, a discretization of Grangeat's method leads to a reconstruction algorithm for X-ray data in so called {\em cone beam geometry}.
In practice, however, the source-detector pair often travels along a circle, which does not satisfy Tuy's condition.

\section{Approximate inversion of the X-ray transform on \texorpdfstring{$\R^3$}{R\^{}3}}

We close this chapter on three-dimensional reconstruction techniques by describing the so called {\em Feldkamp-Davis-Kress (FDK) algorithm}, which is presently the most widely used approximate reconstruction method for cone beam scanning with a circle as source curve, see Figure~\ref{fig:cone_beam_circle}.
Note that none of the exact inversion formulas we described before applies to this scanning geometry.
Therefore, we do not aim at exact inversion but rather derive the ingenious approximate formula which was given by Feldkamp, Davis and Kress in 1984.

\begin{figure}[t]
\centering
\includegraphics[height=5.75cm,keepaspectratio]{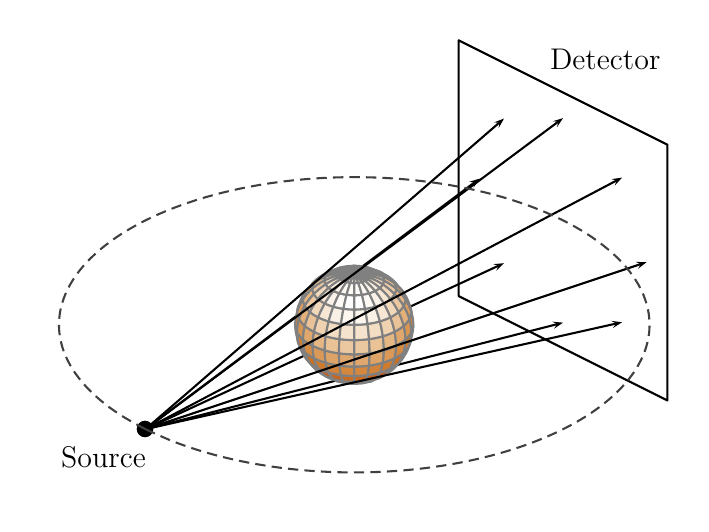}
\caption{Cone beam scanning geometry with source on a circle.}
\label{fig:cone_beam_circle}
\end{figure}

We start with fixing notations.
We assume that the X-ray source travels on a circle of radius $\rho > 0$ in the horizontal $x_1-x_2$-plane and specify its position by $\bfx_{\text{S}} = \rho\theta$ with $\theta \in \Sphere^1_0 = \Sphere^1 \times \{0\}$, i.e.,
\begin{equation*}
\bfx_{\text{S}} = \begin{pmatrix}
\rho\cos(\varphi) \\ \rho\sin(\varphi) \\ 0
\end{pmatrix}
\quad \mbox{ for } 0 \leq \varphi < 2\pi.
\end{equation*}
Moreover, we assume that the target function $f$ is supported in $B_r(0)$ with $0 < r < \rho$ and denote by $g(\theta,y)$ the line integral of $f$ along the line joining $\bfx_{\text{S}} = \rho\theta$ with $y \in \theta^\perp$, i.e.,
\begin{equation*}
g(\theta,y) = \Fan f\left(\rho\theta,\frac{y - \rho\theta}{\|y - \rho\theta\|_{\R^3}}\right).
\end{equation*}

The idea of the FDK algorithm is as follows.
Consider the plane $\Pi(x,\theta)$ through $\rho\theta$ and $x$ that intersects $\theta^\perp$ in a horizontal line.
The reduction of the X-ray cone coming from the source to this plane results in two-dimensional X-ray data in fan beam geometry with a straight detector line.
Thus, we use a linear version of the approximate fan beam reconstruction formula~\eqref{eq:FBP_approximate_fan} to compute the contribution of those beams to the reconstruction of $f$.
Finally, we integrate all those contributions over $\theta$, disregarding the fact that they come from different planes, which form a sheaf with vertex in $x$.
The result is the FDK reconstruction of $f$ at point $x \in B_r(0)$.

\subsection*{Linear fan beam geometry}

Before we come to the derivation of the FDK algorithm, we first explain the linear fan beam geometry in two dimensions and how the FBP formula~\eqref{eq:FBP_approximate_fan} can be adopted to this setting.

\begin{figure}[t]
\centering
\includegraphics[height=5.75cm,keepaspectratio]{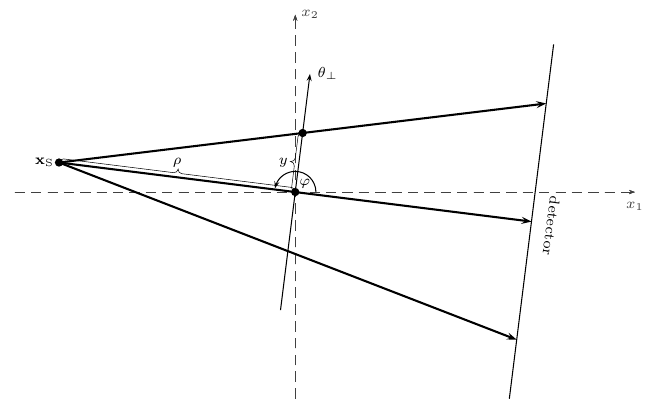}
\caption{Linear fan beam geometry.}
\label{fig:fan_beam_linear}
\end{figure}

In linear fan beam geometry, a source travels on a circle with radius $\rho > 0$ around the object under investigation and emits a fan of X-ray beams that are detected by a straight line detector.
The detector is located opposite the source and the detector positions are uniformly distributed on the detector line orthogonal to the central ray of the fan, see Figure~\ref{fig:fan_beam_linear}.
We express the source position $\bfx_{\text{S}} \in \R^2$ as
\begin{equation*}
\bfx_{\text{S}} = \begin{pmatrix}
\rho \cos(\varphi) \\ \rho \sin(\varphi)
\end{pmatrix}
\quad \mbox{ for } 0 \leq \varphi < 2\pi
\end{equation*}
and denote by $g(\varphi,y)$ the line integral value of $f$ along the line joining $\rho\theta$ and $y\theta_\perp$, where
\begin{equation*}
\theta \equiv \theta(\varphi) = \begin{pmatrix}
\cos(\varphi) \\ \sin(\varphi)
\end{pmatrix}
\quad \mbox{ and } \quad
\theta_\perp \equiv \theta_\perp(\varphi) = \begin{pmatrix}
\sin(\varphi) \\ -\cos(\varphi)
\end{pmatrix}.
\end{equation*}
With the two-dimensional Radon transform $\Radon$ the linear fan beam data $g(\varphi,y)$ can be expressed as
\begin{equation*}
g(\varphi,y) = \Radon f(t,\psi)
\end{equation*}
with
\begin{equation}
t = \frac{\rho y}{\sqrt{\rho^2 + y^2}}
\quad \mbox{ and } \quad
\psi = \varphi + \arctan\Bigl(\frac{y}{\rho}\Bigr)-\frac{\pi}{2}.
\label{eq:coordinates_t_vartheta}
\end{equation}

Recall that, for $\supp(f) \subseteq B_r(0)$ with $0 < r < \rho$, the approximate FBP formula~\eqref{eq:FBP_approximate_form} can be written as
\begin{equation*}
f_L(x) = \frac{1}{4\pi} \int_0^{2\pi} \int_{-\rho}^\rho (\Fourier^{-1} F_L)(x_1 \cos(\psi) + x_2 \sin(\psi)-t) \, \Radon f(t,\psi) \: \d t \, \d \psi,
\end{equation*}
with a low-pass filter $F_L$ of bandwidth $L > 0$.
Introducing the coordinates $(\varphi,y)$, we obtain
\begin{equation*}
f_L(x) = \frac{1}{4\pi} \int_0^{2\pi} \int_{-\rho}^\rho (\Fourier^{-1} F_L)(x_1 \cos(\psi) + x_2 \sin(\psi)-t) \, g(\varphi,y) \frac{\rho^3}{(\rho^2 + y^2)^{\nicefrac{3}{2}}} \: \d y \, \d \varphi,
\end{equation*}
where~\eqref{eq:coordinates_t_vartheta} has to be inserted for $t,\psi$.
After some manipulations similar to those in Section~\ref{sec:standard_fan_beam}, that are justified if $r \ll \rho$, we arrive at the approximate FBP formula for linear fan beam data
\begin{equation}
f_L(x) \approx \frac{1}{4\pi} \int_{\Sphere^1} \frac{\rho^2}{(\rho - x^T \theta)^2} \int_{-\rho}^\rho (\Fourier^{-1} F_L)\biggl(\frac{\rho x^T \theta_\perp}{\rho - x^T \theta} - y\biggr) \, g(\theta,y) \frac{\rho}{(\rho^2 + y^2)^{\nicefrac{1}{2}}} \: \d y \, \d \theta,
\label{eq:FBP_approximate_fan_linear}
\end{equation}
which can be implemented as the approximate FBP formula~\eqref{eq:FBP_approximate_fan} for standard fan beam data.

\subsection*{Feldkamp-Davis-Kress algorithm}

We are now prepared to continue with the derivation of the FDK algorithm.
To this end, we introduce a coordinate system for the hyperplane $\theta^\perp$.
For $\theta = (\cos(\varphi),\sin(\varphi),0)$ with $\varphi \in [0,2\pi)$ we set $\theta_\perp = (\sin(\varphi),-\cos(\varphi),0)$.
Then, $\theta_\perp, e_3$ form an orthonormal basis of~$\theta^\perp$ and the line through $\rho\theta$ and $x$ hits $\theta^\perp$ at $y = y_2 \theta_\perp + y_3 e_3$ with
\begin{equation}
y_2 = \frac{\rho}{\rho - x^T \theta} x^T \theta_\perp
\quad \mbox{ and } \quad
y_3 = \frac{\rho}{\rho - x^T \theta} x_3.
\label{eq:FDK_local_coordinates}
\end{equation}
The plane $\Pi(x,\theta)$ intersects $\theta^\perp$ in the horizontal line
\begin{equation*}
\Ell_{x,\theta} = \bigl\{t \theta_\perp + y_3 e_3 \bigm| t \in \R\bigr\}
\end{equation*}
and we define the point $y_3 e_3$ on $\Ell_{x,\theta}$ to be the origin in $\Pi(x,\theta)$.
Then, the coordinates $x^\prime$ of $x$ in $\Pi(x,\theta)$ are $x^\prime = x - y_3 e_3$ and the direction vector $\theta^\prime$ lying above (or beneath) $\theta$ in $\Pi(x,\theta)$ is given by
\begin{equation}
\theta^\prime = \frac{\rho\theta - y_3 e_3}{\rho^\prime}
\quad \mbox{ with } \quad
\rho^\prime = (\rho^2 + y_3^2)^{\nicefrac{1}{2}},
\label{eq:FDK_local_direction}
\end{equation}
where $\rho^\prime$ is the distance of $\rho\theta$ to the origin in $\Pi(x,\theta)$.
We now use the linear fan beam FBP formula~\eqref{eq:FBP_approximate_fan_linear} to compute the contribution of the direction $\theta^\prime$ to the approximate reconstruction of $f(x)$ as
\begin{equation*}
I(x,\theta) = \frac{{\rho^\prime}^2}{(\rho^\prime - {x^\prime}^T \theta^\prime)^2} \int_{-\rho}^\rho (\Fourier^{-1} F_L)\biggl(\frac{\rho^\prime {x^\prime}^T \theta_\perp}{\rho^\prime - {x^\prime}^T \theta^\prime} - y_2^\prime\biggr) \, g(\theta,y_2^\prime \theta_\perp + y_3 e_3) \frac{\rho^\prime}{({\rho^\prime}^2 + {y_2^\prime}^2)^{\nicefrac{1}{2}}} \: \d y_2^\prime,
\end{equation*}
where
\begin{equation*}
{x^\prime}^T \theta^\prime = \frac{\rho^\prime}{\rho} \, x^T \theta
\quad \mbox{ and } \quad
{x^\prime}^T \theta_\perp = x^T \theta_\perp.
\end{equation*}
Using this and~\eqref{eq:FDK_local_direction}, we obtain
\begin{equation*}
I(x,\theta) = \frac{{\rho}^2}{(\rho - x^T \theta)^2} \int_{-\rho}^\rho (\Fourier^{-1} F_L)(y_2 - y_2^\prime) \, g(\theta,y_2^\prime \theta_\perp + y_3 e_3) \frac{(\rho^2 + y_3^2)^{\nicefrac{1}{2}}}{(\rho^2 + {y_2^\prime}^2 + y_3^2)^{\nicefrac{1}{2}}} \: \d y_2^\prime,
\end{equation*}
where $y_2,y_3$ are given in~\eqref{eq:FDK_local_coordinates}.
This is the contribution of $\theta^\prime$ in $\Pi(x,\theta)$ to the approximate FBP reconstruction of $f$ at $x$.
According to~\eqref{eq:FBP_approximate_fan_linear} we would have to integrate over the corresponding contributions for all directions in $\Pi(x,\theta)$.
However, the line integral data is not given for these directions and the idea of FDK is to instead integrate over the sources we actually have, i.e., with $\Sphere^\prime = \bigl\{\frac{\rho\theta - y_3 e_3}{\rho^\prime} \bigm| \theta \in \Sphere^1_0\bigr\}$ we compute
\begin{equation*}
f_L(x) \approx \frac{1}{4\pi} \int_{\Sphere^\prime} I(x,\theta) \: \d \theta^\prime = \frac{1}{4\pi} \int_{\Sphere^1_0} I(x,\theta) \, \frac{\rho}{(\rho^2 + y_3^2)^{\nicefrac{1}{2}}} \: \d \theta.
\end{equation*}
This results in the {\em FDK approximate reconstruction formula}
\begin{equation*}
f_L(x) \approx \frac{1}{4\pi} \int_{\Sphere^1_0} \frac{{\rho}^2}{(\rho - x^T \theta)^2} \int_{-\rho}^\rho (\Fourier^{-1} F_L)(y_2 - y_2^\prime) \, g(\theta,y_2^\prime \theta_\perp + y_3 e_3) \frac{\rho}{(\rho^2 + {y_2^\prime}^2 + y_3^2)^{\nicefrac{1}{2}}} \: \d y_2^\prime \, \d \theta
\end{equation*}
with $y_2,y_3$ from~\eqref{eq:FDK_local_coordinates} and a low-pass filter $F_L$ of bandwidth $L > 0$ as in Section~\ref{sec:low-pass_filter}.
It can be implemented similar to the linear fan beam FBP reconstruction formula~\eqref{eq:FBP_approximate_fan_linear} and leads us to an approximate reconstruction algorithm of filtered back projection type for X-ray data in {\em cone beam geometry} with X-ray source on a circle.
This reconstruction algorithm is known as the {\em Feldkamp-Davis-Kress (FDK) algorithm}, which is still used in contemporary CT scanners.

%###########################################################################
%### Appendix
%###########################################################################
\appendix

%--- Appendix A
%---------------------------------------------------------------------------
\chapter{Mathematical tools}

In this appendix, we state some general mathematical tools, which are used, but not proven during the course.
For a comprehensive treatment of the topics we refer the reader to~\cite{Bredies2018,Stein1971}.

\section{Fourier analysis}

The Fourier transform is a basic tool in the mathematics of computerized tomography and is used extensively in this course.
In this section we define the $n$-dimensional Fourier transform and collect some important properties.
Furthermore, we introduce the convolution product and describe its interplay with the Fourier transform.

\subsection*{The Fourier transform}

We start with the definition of the Fourier transform on the space $\L^1(\R^n)$ of integrable functions.

\begin{definition}[Fourier transform]
The {\em Fourier transform} $\Fourier f$ of a function $f \in \L^1(\R^n)$ is defined as
\begin{equation*}
\Fourier f(\omega) = \int_{\R^n} f(x) \, \e^{-\i x^T \omega} \: \d x
\quad \mbox{ for } \omega \in \R^n.
\end{equation*}
\end{definition}

We remark that the Fourier transform $\Fourier f$ of a function $f \in \L^1(\R^n)$ is well-defined on $\R^n$.
The first important observation is that in this case the Fourier transform $\Fourier f$ is even continuous and, in particular, its point evaluation makes sense.

\begin{lemma}[Riemann-Lebesgue]
For $f \in \L^1(\R^n)$, its Fourier transform $\Fourier f$ is uniformly continuous on $\R^n$ and satisfies
\begin{equation*}
|\Fourier f(\omega)| \tendsto 0
\quad \mbox{ for } \quad
\|\omega\|_{\R^n} \to \infty.
\end{equation*}
\end{lemma}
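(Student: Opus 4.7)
My plan is to prove the two assertions separately, treating uniform continuity by a direct dominated convergence estimate and the vanishing at infinity via a density argument.

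For the uniform continuity, I would fix $\omega \in \R^n$ and an increment $h \in \R^n$ and estimate
\begin{equation*}
|\Fourier f(\omega + h) - \Fourier f(\omega)| \leq \int_{\R^n} |f(x)| \cdot \bigl|\e^{-\i x^T h} - 1\bigr| \: \d x.
\end{equation*}
The integrand is dominated by $2|f(x)| \in \L^1(\R^n)$ and converges pointwise to $0$ as $h \to 0$, so Lebesgue's dominated convergence theorem yields that the right-hand side tends to $0$ as $\|h\|_{\R^n} \to 0$. Crucially, the bound is independent of $\omega$, which gives uniform continuity of $\Fourier f$ on $\R^n$.

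For the decay at infinity, I would first verify the conclusion on a convenient dense subclass of $\L^1(\R^n)$, namely $\Cont_c^\infty(\R^n)$. For $g \in \Cont_c^\infty(\R^n)$ integration by parts in the $j$-th coordinate gives
\begin{equation*}
\omega_j \, \Fourier g(\omega) = -\i \, \Fourier(\partial_j g)(\omega),
\quad \mbox{ hence } \quad
|\omega_j| \, |\Fourier g(\omega)| \leq \|\partial_j g\|_{\L^1(\R^n)},
\end{equation*}
which forces $|\Fourier g(\omega)| \to 0$ as $\|\omega\|_{\R^n} \to \infty$. Given $f \in \L^1(\R^n)$ and $\varepsilon > 0$ I would then exploit the density of $\Cont_c^\infty(\R^n)$ in $\L^1(\R^n)$ to choose $g \in \Cont_c^\infty(\R^n)$ with $\|f - g\|_{\L^1(\R^n)} < \nicefrac{\varepsilon}{2}$. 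From the definition of the Fourier transform,
\begin{equation*}
|\Fourier f(\omega) - \Fourier g(\omega)| \leq \|f - g\|_{\L^1(\R^n)} < \frac{\varepsilon}{2}
\quad \forall \, \omega \in \R^n,
\end{equation*}
and picking $R > 0$ with $|\Fourier g(\omega)| < \nicefrac{\varepsilon}{2}$ for all $\|\omega\|_{\R^n} > R$ yields $|\Fourier f(\omega)| < \varepsilon$ for such $\omega$.

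Neither part is genuinely hard; the only delicate point is the choice of the approximating class. Taking $\Cont_c^\infty(\R^n)$ trivialises the decay step via integration by parts, but requires quoting the density result; an alternative would be to start from characteristic functions of rectangles, compute their Fourier transforms explicitly (a product of terms of the form $(\e^{-\i b_j \omega_j} - \e^{-\i a_j \omega_j})/(-\i \omega_j)$ which decay componentwise), and then invoke density of simple step functions in $\L^1(\R^n)$. Either route works; the substantive mathematical content is in both cases the combination of an explicit decay estimate on a dense subspace with the $\L^1$-contractivity $\|\Fourier f\|_{\L^\infty(\R^n)} \leq \|f\|_{\L^1(\R^n)}$.
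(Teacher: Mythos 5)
Your proof is correct and complete. Note, however, that the lecture notes do not actually prove this lemma: the stated ``proof'' is only a pointer to \cite[Theorem I.1.2]{Stein1971}, so there is no argument in the paper to compare against step by step. What you supply is the standard self-contained argument: uniform continuity from the $\omega$-independent bound
\begin{equation*}
|\Fourier f(\omega + h) - \Fourier f(\omega)| \leq \int_{\R^n} |f(x)| \, \bigl|\e^{-\i x^T h} - 1\bigr| \: \d x
\end{equation*}
together with dominated convergence (applied along arbitrary sequences $h_k \to 0$, which is the standard way to pass from the sequential theorem to the limit $\|h\|_{\R^n} \to 0$), and decay at infinity by combining the explicit estimate $|\omega_j|\,|\Fourier g(\omega)| \leq \|\partial_j g\|_{\L^1(\R^n)}$ on the dense subclass $\Cont_c^\infty(\R^n)$ with the contractivity $\|\Fourier f - \Fourier g\|_\infty \leq \|f-g\|_{\L^1(\R^n)}$. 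Both halves are watertight; the only external inputs you quote are the density of $\Cont_c^\infty(\R^n)$ in $\L^1(\R^n)$ and dominated convergence, both of which the appendix of these notes treats as standing tools. Your remark on the alternative dense class (characteristic functions of rectangles, whose Fourier transforms are computed explicitly) is also accurate and is essentially the route taken in Stein--Weiss; it trades the integration by parts for an explicit computation and avoids quoting smooth density, at the cost of a slightly more fiddly decay estimate for the product of one-dimensional factors.
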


\begin{proof}
See, for example, \cite[Theorem I.1.2]{Stein1971}.
\end{proof}

Let $\Cont_0(\R^n)$ denote the space of continuous functions vanishing at infinity, i.e.,
\begin{equation*}
\Cont_0(\R^n) = \bigl\{ f \in \Cont(\R^n) \bigm| f(x) \tendsto 0 \mbox{ for } \|x\|_{\R^n} \to \infty \bigr\},
\end{equation*}
which is equipped with the norm $\|\cdot\|_\infty$ given by
\begin{equation*}
\|f\|_\infty = \sup_{x \in \R^n} |f(x)|
\quad \mbox{ for } f \in \Cont_0(\R^n).
\end{equation*}
Then, we have the following continuity result for the Fourier transform.

\begin{theorem}
The Fourier transform $\Fourier: \L^1(\R^n) \to \Cont_0(\R^n)$ is a continuous linear operator with norm $\|\Fourier\| \leq 1$, i.e.,
\begin{equation*}
\|\Fourier f\|_\infty \leq \|f\|_{\L^1(\R^n)}
\quad \forall \, f \in \L^1(\R^n).
\end{equation*}
\end{theorem}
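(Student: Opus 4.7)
The plan is to establish the three claims in sequence: linearity, the pointwise norm bound, and continuity as a consequence.

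First I would note that linearity of $\Fourier$ is immediate from the linearity of the integral: for $f, g \in \L^1(\R^n)$ and scalars $\alpha, \beta \in \C$,
\begin{equation*}
\Fourier(\alpha f + \beta g)(\omega) = \int_{\R^n}(\alpha f(x) + \beta g(x)) \, \e^{-\i x^T \omega} \: \d x = \alpha \, \Fourier f(\omega) + \beta \, \Fourier g(\omega)
\end{equation*}
for every $\omega \in \R^n$. Together with the Riemann-Lebesgue lemma stated directly above, which already gives $\Fourier f \in \Cont_0(\R^n)$ whenever $f \in \L^1(\R^n)$, this shows that $\Fourier$ is a well-defined linear map from $\L^1(\R^n)$ into $\Cont_0(\R^n)$.

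Next I would establish the pointwise estimate. For any $\omega \in \R^n$, since the complex exponential satisfies $|\e^{-\i x^T \omega}| = 1$ for all $x \in \R^n$, the triangle inequality for integrals yields
\begin{equation*}
|\Fourier f(\omega)| = \bigg|\int_{\R^n} f(x) \, \e^{-\i x^T \omega} \: \d x\bigg| \leq \int_{\R^n} |f(x)| \, |\e^{-\i x^T \omega}| \: \d x = \|f\|_{\L^1(\R^n)}.
\end{equation*}
Taking the supremum over $\omega \in \R^n$ on the left gives $\|\Fourier f\|_\infty \leq \|f\|_{\L^1(\R^n)}$, which is exactly the claimed norm estimate and immediately yields $\|\Fourier\| \leq 1$ as an operator between the indicated Banach spaces.

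Finally, continuity of $\Fourier$ as an operator $\L^1(\R^n) \to \Cont_0(\R^n)$ follows at once from the linearity combined with the norm bound: for any $f, g \in \L^1(\R^n)$ we have $\|\Fourier f - \Fourier g\|_\infty = \|\Fourier(f-g)\|_\infty \leq \|f - g\|_{\L^1(\R^n)}$, so $\Fourier$ is in fact Lipschitz with constant $1$. There is no real obstacle here; the whole argument rests on the unit modulus of the Fourier kernel, and the nontrivial statement $\Fourier f \in \Cont_0(\R^n)$ has already been handled by invoking the Riemann-Lebesgue lemma.
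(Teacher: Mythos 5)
Your proof is correct and is the standard argument: the bound follows from the unit modulus of the kernel $\e^{-\i x^T \omega}$ together with the triangle inequality for integrals, and membership in $\Cont_0(\R^n)$ is delegated to the Riemann--Lebesgue lemma, exactly as one would expect. The paper itself does not prove this theorem but only cites \cite[Theorem I.1.1]{Stein1971}, so your write-up simply supplies the (routine) details that the reference contains.
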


\begin{proof}
See, for example, \cite[Theorem I.1.1]{Stein1971}.
\end{proof}

\bigbreak

We now define the inverse Fourier transform on $\L^1(\R^n)$.

\begin{definition}[Inverse Fourier transform]
For $f \in \L^1(\R^n)$, the {\em inverse Fourier transform} $\Fourier^{-1} f$ is defined as
\begin{equation*}
\Fourier^{-1} f(x) = (2\pi)^{-n} \int_{\R^n} f(\omega) \, \e^{\i x^T \omega} \: \d \omega
\quad \mbox{ for } x \in \R^n.
\end{equation*}
\end{definition}

Although, by the Riemann-Lebesgue lemma, the Fourier transform $\Fourier f$ of  $f \in \L^1(\R^n)$ vanishes at infinity, this does not necessarily imply that $\Fourier f \in \L^1(\R^n)$.
Thus, in order to apply the inverse Fourier transform to $\Fourier f$, we have to assume that $\Fourier f \in \L^1(\R^n)$ is satisfied.
In this case we indeed get the following inverse relationship.

\begin{theorem}[Fourier inversion]
Let $f \in \L^1(\R^n)$ with $\Fourier f \in \L^1(\R^n)$.
Then, the identity
\begin{equation*}
\Fourier^{-1}(\Fourier f)(x) = f(x) = \Fourier(\Fourier^{-1} f)(x)
\end{equation*}
holds for almost all $x \in \R^n$ with equality in every continuity point of $f$.
\end{theorem}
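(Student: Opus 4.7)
The plan is to prove the Fourier inversion theorem via a Gaussian regularization argument, since a direct application of Fubini's theorem to $\Fourier^{-1}(\Fourier f)(x) = (2\pi)^{-n} \iint f(y) \e^{\i(x-y)^T\omega} \, \d y \, \d\omega$ fails: the integrand need not be absolutely integrable on $\R^n \times \R^n$. The standard trick is to insert a Gaussian damping factor, apply Fubini, and pass to the limit.

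First I would define, for $\varepsilon > 0$ and $x \in \R^n$, the regularized inverse transform
\begin{equation*}
I_\varepsilon(x) = (2\pi)^{-n} \int_{\R^n} \Fourier f(\omega) \, \e^{\i x^T \omega} \, \e^{-\varepsilon^2 \|\omega\|_{\R^n}^2 / 2} \: \d \omega.
\end{equation*}
Since $\Fourier f \in \L^1(\R^n)$ by assumption and $|\e^{\i x^T \omega} \e^{-\varepsilon^2\|\omega\|^2/2}| \leq 1$, dominated convergence (with dominating function $|\Fourier f|$) yields $I_\varepsilon(x) \to \Fourier^{-1}(\Fourier f)(x)$ as $\varepsilon \to 0$, pointwise in $x$.

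Next I would rewrite $I_\varepsilon$ using Fubini, which is now justified because $(y,\omega) \mapsto f(y) \e^{-\varepsilon^2\|\omega\|^2/2}$ is integrable on $\R^n \times \R^n$. Substituting the definition of $\Fourier f$ gives
\begin{equation*}
I_\varepsilon(x) = \int_{\R^n} f(y) \, G_\varepsilon(x-y) \: \d y = (f * G_\varepsilon)(x),
\end{equation*}
where $G_\varepsilon(z) = (2\pi)^{-n} \int_{\R^n} \e^{\i (x-y)^T \omega} \e^{-\varepsilon^2 \|\omega\|^2/2} \, \d\omega$ is, by the classical one-dimensional Gaussian Fourier calculation tensored over coordinates, the Gaussian kernel $G_\varepsilon(z) = (2\pi)^{-n/2} \varepsilon^{-n} \exp(-\|z\|_{\R^n}^2 / (2\varepsilon^2))$. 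The family $\{G_\varepsilon\}_{\varepsilon > 0}$ is a standard approximate identity: each $G_\varepsilon \geq 0$, $\int_{\R^n} G_\varepsilon = 1$, and for every $\delta > 0$ the mass outside $B_\delta(0)$ tends to zero.

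The core step is then to exploit two classical convergence properties of approximate identities for $f \in \L^1(\R^n)$:
\begin{equation*}
\|f * G_\varepsilon - f\|_{\L^1(\R^n)} \xrightarrow{\varepsilon \to 0} 0
\quad \mbox{ and } \quad
(f * G_\varepsilon)(x) \xrightarrow{\varepsilon \to 0} f(x) \mbox{ at every continuity point } x \mbox{ of } f.
\end{equation*}
The $\L^1$-convergence follows from density of $\Cont_c(\R^n)$ in $\L^1(\R^n)$ together with uniform continuity on compact sets; the pointwise convergence at continuity points follows by splitting the convolution integral into a neighborhood of $x$ (where $|f(y)-f(x)|$ is small) and its complement (where the Gaussian mass is small). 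Combining with the first step, $I_\varepsilon(x) = (f * G_\varepsilon)(x)$ converges both to $\Fourier^{-1}(\Fourier f)(x)$ (pointwise, everywhere) and to $f$ (in $\L^1$, and pointwise at continuity points), forcing $\Fourier^{-1}(\Fourier f) = f$ almost everywhere and with equality at every continuity point of $f$. The identity $\Fourier(\Fourier^{-1} f) = f$ follows from the symmetric roles of $\Fourier$ and $\Fourier^{-1}$, or equivalently by applying the result just proved to $\check{f}(x) = f(-x)$.

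The main technical obstacle is the rigorous verification of the approximate-identity convergence $f * G_\varepsilon \to f$ in $\L^1$ without circularity; this must be done by an $\varepsilon$-$\delta$ argument using density of $\Cont_c(\R^n)$ and the translation-continuity of $\L^1$-functions, rather than by any Fourier-analytic identity. Everything else is bookkeeping around Fubini and dominated convergence.
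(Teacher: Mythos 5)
Your proposal is correct. The paper does not prove this theorem itself but defers to the cited reference (Stein--Weiss, Corollary I.1.21), and your Gaussian-regularization argument is precisely the standard Gauss--Weierstrass summability proof that underlies that reference: damp the frequency integral, apply Fubini, identify the result as convolution with an approximate identity, and pass to the limit in $\L^1$ and at continuity points. One small point of care: for the pointwise claim at a continuity point of a merely integrable $f$, the tail estimate needs not just that the Gaussian \emph{mass} outside $B_\delta(0)$ vanishes but that $\sup_{\|z\| > \delta} G_\varepsilon(z) \to 0$ as $\varepsilon \to 0$, so that $\int_{\|z\|>\delta} G_\varepsilon(z)\,|f(x-z)| \: \d z \leq \|f\|_{\L^1(\R^n)} \sup_{\|z\|>\delta} G_\varepsilon(z)$ can be controlled; the Gaussian satisfies this trivially, so your argument goes through as written.
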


\begin{proof}
See, for example, \cite[Corollary I.1.21]{Stein1971}.
\end{proof}

As a corollary we get the injectivity of the Fourier transform on $\L^1(\R^n)$.

\begin{corollary}[Injectivity of $\Fourier$]
For $f \in \L^1(\R^n)$ we have
\begin{equation*}
\Fourier f = 0
\quad \implies \quad
f = 0,
\end{equation*}
i.e., the Fourier transform $\Fourier$ is injective on $\L^1(\R^n)$.
\end{corollary}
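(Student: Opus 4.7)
The plan is to derive the injectivity of $\Fourier$ on $\L^1(\R^n)$ as an essentially immediate consequence of the Fourier inversion theorem stated just above. Suppose $f \in \L^1(\R^n)$ satisfies $\Fourier f = 0$. The key observation is that the zero function trivially lies in $\L^1(\R^n)$, so the hypothesis ``$\Fourier f \in \L^1(\R^n)$'' required by the Fourier inversion theorem is automatically satisfied in this situation, no matter which $f \in \L^1(\R^n)$ we start from.

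Having verified the hypotheses, I would apply the Fourier inversion theorem directly to $f$, which yields
\begin{equation*}
f(x) = \Fourier^{-1}(\Fourier f)(x) \quad \mbox{ for a.e. } x \in \R^n.
\end{equation*}
Substituting $\Fourier f = 0$ and using the linearity of the integral defining $\Fourier^{-1}$ (so that $\Fourier^{-1}(0) \equiv 0$ on $\R^n$), I conclude that $f(x) = 0$ for almost every $x \in \R^n$, i.e.\ $f = 0$ as an element of $\L^1(\R^n)$.

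There is really no hard step here; the whole content of the corollary is unpacked by recognizing that the inversion theorem supplies a left inverse of $\Fourier$ on its natural domain, and that injectivity of any linear map follows immediately from the existence of a left inverse together with the triviality of $\Fourier^{-1}(0)$. The only point worth being careful about is stating the conclusion in the $\L^1$-sense (equality a.e.) rather than pointwise, since the inversion theorem only guarantees pointwise equality at continuity points of $f$.
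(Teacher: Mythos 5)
Your proof is correct and follows exactly the route the paper intends: the corollary is stated immediately after the Fourier inversion theorem precisely because $\Fourier f = 0$ trivially satisfies the hypothesis $\Fourier f \in \L^1(\R^n)$, whence $f = \Fourier^{-1}(\Fourier f) = 0$ almost everywhere. Your remark about concluding equality in the $\L^1$-sense rather than pointwise is the right level of care.
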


We remark that the inverse Fourier transform $\Fourier^{-1}$ can be expressed in terms of the Fourier transform $\Fourier$ and the parity operator $^\ast: \L^p(\R^n) \to \L^p(\R^n)$ for $1 \leq p \leq \infty$, which is defined as
\begin{equation*}
f^\ast(x) = f(-x)
\quad \mbox{ for } x \in \R^n.
\end{equation*}

\begin{remark}
For $f \in \L^1(\R^n)$ we have $\Fourier^{-1} f = (2\pi)^{-n} \, \Fourier f^\ast = (2\pi)^{-n} \, (\Fourier f)^\ast$.
\end{remark}

We now list some basic properties of the Fourier transform.

\begin{proposition}
\label{prop:Fourier_properties}
For $f \in \L^1(\R^n)$ the following properties hold true.
\begin{enumerate}
\item {\em Translation:} For $y \in \R^n$ we consider the function
\begin{equation*}
g(x) = f(x - y)
\quad \mbox{ for } x \in \R^n.
\end{equation*}
Then,
\begin{equation*}
\Fourier g(x) = \e^{-\i x^T y} \, \Fourier f(x)
\quad \forall \, x \in \R^n.
\end{equation*}

\item {\em Scaling:} For $a > 0$ we consider the function
\begin{equation*}
g(x) = f(ax)
\quad \mbox{ for } x \in \R^n.
\end{equation*}
Then,
\begin{equation*}
\Fourier g(x) = a^{-n} \, \Fourier f(a^{-1} x)
\quad \forall \, x \in \R^n.
\end{equation*}

\item {\em Modulation:} For $y \in \R^n$ we consider the function
\begin{equation*}
g(x) = \e^{\i x^T y} \, f(x)
\quad \mbox{ for } x \in \R^n.
\end{equation*}
Then,
\begin{equation*}
\Fourier g(x) = \Fourier f(x - y)
\quad \forall \, x \in \R^n.
\end{equation*}

\item Let $\alpha \in \N_0^n$ be a multi-index and $\D^\alpha = \frac{\partial^\alpha}{\partial x^\alpha}$.
If $\D^\alpha f$ exists and is in $\L^1(\R^n)$, then
\begin{equation*}
\Fourier (\D^\alpha f) = \i^{|\alpha|} \, x^\alpha \, \Fourier f,
\end{equation*}
whereas, if $x^\alpha f$ is integrable on $\R^n$, then
\begin{equation*}
\Fourier (x^\alpha f) = \i^{|\alpha|} \,  \D^\alpha (\Fourier f).
\end{equation*}
\end{enumerate}
\end{proposition}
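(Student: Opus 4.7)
The plan is to verify each of (i)--(iv) by manipulating the defining integral of $\Fourier f$, using substitution for the first three assertions and integration by parts / differentiation under the integral sign for the fourth. Throughout, $f \in \L^1(\R^n)$ guarantees that the integrals in question exist absolutely, which is what will justify Fubini and dominated convergence whenever they are invoked.

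For (i), I would substitute $u = x - y$ in the integral $\int f(x-y) \e^{-\i x^T\omega}\,\d x$; the Jacobian is $1$, and $\e^{-\i(u+y)^T\omega} = \e^{-\i y^T\omega} \e^{-\i u^T\omega}$, so the factor $\e^{-\i y^T\omega}$ pulls out and leaves $\Fourier f(\omega)$. For (ii), substitute $u = ax$, so $\d x = a^{-n}\,\d u$ and $x^T\omega = a^{-1} u^T\omega$, yielding the factor $a^{-n}$ and the rescaled argument $a^{-1}\omega$. For (iii), no substitution is needed: simply combine the exponentials $\e^{\i x^T y} \e^{-\i x^T\omega} = \e^{-\i x^T(\omega - y)}$ in the integrand.

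For (iv), I would first handle the differentiation rule for $\Fourier(\D^\alpha f)$. By iteration, it suffices to treat $\alpha = e_j$, i.e.\ a single partial derivative $\partial_{x_j} f$; the general case then follows by induction on $|\alpha|$. The core step is the identity
\begin{equation*}
\int_{\R^n} \partial_{x_j} f(x)\,\e^{-\i x^T\omega}\,\d x = \i\,\omega_j \int_{\R^n} f(x)\,\e^{-\i x^T\omega}\,\d x,
\end{equation*}
which by Fubini reduces to a one-dimensional integration by parts in the $x_j$ variable with the remaining variables held fixed. The boundary terms take the form $f(x)\,\e^{-\i x^T\omega}\big|_{x_j=-\infty}^{x_j=+\infty}$, and the hard part will be justifying that these vanish only from the assumptions $f,\partial_{x_j} f \in \L^1(\R^n)$, without any a priori pointwise decay of $f$. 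I would handle this by noting that $f(\cdot,\ldots,x_j,\ldots,\cdot)$ is absolutely continuous in $x_j$ for almost every choice of the other coordinates (since $\partial_{x_j} f \in \L^1$ locally), so the one-dimensional limits $\lim_{x_j\to\pm\infty} f(x)$ exist for almost every such choice; integrability of $f$ on $\R^n$ then forces these limits to be zero a.e., which kills the boundary term. Conclude by Fubini and multiply out the factor $\i\omega_j$.

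For the second part of (iv), i.e.\ $\Fourier(x^\alpha f) = \i^{|\alpha|}\D^\alpha(\Fourier f)$, I would differentiate $\Fourier f(\omega)$ under the integral sign. Each partial derivative $\partial_{\omega_j}$ acting on $\e^{-\i x^T\omega}$ brings down a factor of $-\i x_j$, and the hypothesis $x^\alpha f \in \L^1(\R^n)$ provides an integrable majorant $|x^\beta f(x)|$ (for every $\beta \leq \alpha$) uniform in $\omega$, so the standard differentiation-under-the-integral theorem applies. Iterating yields $\D^\alpha(\Fourier f)(\omega) = (-\i)^{|\alpha|}\Fourier(x^\alpha f)(\omega)$, and rearranging gives the claim since $(-\i)^{|\alpha|}\cdot \i^{|\alpha|} = 1$ up to the sign convention; the identity is then rewritten as stated. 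The only real subtlety in the whole proposition is the vanishing-at-infinity argument in (iv), and everything else is a one-line substitution.
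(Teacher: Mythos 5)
Your argument is correct in its essentials, but note that the paper itself does not prove this proposition: it sits in the appendix of mathematical tools and its proof consists of a citation to Folland. You are therefore supplying the standard direct argument rather than mirroring anything in the text. Parts (i)--(iii) are exactly the one-line substitutions you describe, and your bookkeeping (the Jacobian $a^{-n}$ in (ii), the combination of exponentials in (iii)) is right. For the first identity in (iv), your handling of the boundary term is the one genuinely delicate point and you treat it correctly: reduce to $\alpha = e_j$ and induct, integrate by parts in $x_j$ with the other variables fixed, use Fubini to see that for a.e.\ such choice both $x_j \mapsto f(x)$ and $x_j \mapsto \partial_{x_j}f(x)$ lie in $\L^1(\R)$, and conclude that the limits of $f$ as $x_j \to \pm\infty$ exist and must be zero, since an $\L^1(\R)$ function with a limit at infinity has limit zero. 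This is precisely the argument one finds in the sources the paper cites.

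Two small points deserve attention. First, the existence of those one-dimensional limits rests on recovering $f$ from $\partial_{x_j}f$ by the fundamental theorem of calculus along a.e.\ line parallel to the $x_j$-axis; this is automatic if $\D^{e_j}f$ is understood as a weak derivative or if $f \in \Cont^1$, but the phrase ``$\D^\alpha f$ exists'' should be pinned down before the boundary-term argument is airtight. Second, your claim that the hypothesis $x^\alpha f \in \L^1(\R^n)$ provides an integrable majorant $|x^\beta f|$ for every $\beta \leq \alpha$ is not correct for mixed multi-indices in dimension $n \geq 2$: from $f,\, x_1x_2 f \in \L^1(\R^2)$ one cannot conclude $x_1 f \in \L^1(\R^2)$ (concentrate mass near the points $(k,k^{-3})$ with weights $k^{-3/2}$). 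The iterated differentiation under the integral sign therefore needs the intermediate moments $x^\beta f \in \L^1(\R^n)$ for all $\beta \leq \alpha$ as part of the hypothesis --- which is how the cited references state the result and is presumably the intended reading here. With that reading, your proof goes through.
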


\begin{proof}
See, for example, \cite[Theorem 7.8]{Folland1992}.
\end{proof}

Another important property of the Fourier transform is {\em Parseval's identity}.

\begin{theorem}[Parseval's identity]
For $f,g \in \L^1(\R^n)$ we have
\begin{equation*}
\int_{\R^n} \Fourier f(x) \, g(x) \: \d x = \int_{\R^n} f(x) \, \Fourier g(x) \: \d x.
\end{equation*}
\end{theorem}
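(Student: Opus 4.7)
The plan is to reduce Parseval's identity to a single application of Fubini's theorem, with the symmetry of the kernel $\e^{-\i x^T \omega}$ doing all the real work. First I would simply expand the left-hand side according to the definition of the Fourier transform, obtaining the iterated integral
\begin{equation*}
\int_{\R^n} \Fourier f(x) \, g(x) \: \d x = \int_{\R^n} \bigg( \int_{\R^n} f(\omega) \, \e^{-\i x^T \omega} \: \d \omega \bigg) g(x) \: \d x.
\end{equation*}
Next I would justify interchanging the order of integration, and finally recognize the inner integral on the other side as $\Fourier g(\omega)$, yielding the right-hand side.

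The only step that requires real care is justifying the application of Fubini's theorem on $\R^n \times \R^n$. To do this I would first check the hypothesis of Tonelli's theorem for the non-negative function $(x,\omega) \mapsto |f(\omega)| \, |g(x)|$, whose double integral factors as $\|f\|_{\L^1(\R^n)} \cdot \|g\|_{\L^1(\R^n)} < \infty$ by assumption. Since $|f(\omega) g(x) \e^{-\i x^T \omega}| = |f(\omega)| \, |g(x)|$, the integrand $(x,\omega) \mapsto f(\omega) g(x) \e^{-\i x^T \omega}$ is absolutely integrable on $\R^n \times \R^n$, so Fubini's theorem permits us to interchange the order of integration:
\begin{equation*}
\int_{\R^n} \int_{\R^n} f(\omega) \, g(x) \, \e^{-\i x^T \omega} \: \d \omega \, \d x = \int_{\R^n} f(\omega) \bigg( \int_{\R^n} g(x) \, \e^{-\i \omega^T x} \: \d x \bigg) \: \d \omega,
\end{equation*}
where I used the symmetry $x^T \omega = \omega^T x$ of the scalar product to reorganise the exponential.

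The inner integral in the last expression is, by definition, $\Fourier g(\omega)$, so the right-hand side equals $\int_{\R^n} f(\omega) \, \Fourier g(\omega) \: \d \omega$, which is exactly the desired identity after renaming the integration variable. I do not anticipate any genuine obstacle beyond the bookkeeping of Fubini's hypothesis; the key observation is that absolute integrability of the tensor product $f \otimes g$ trivially upgrades to absolute integrability of $f \otimes g$ times the unimodular kernel $\e^{-\i x^T \omega}$.
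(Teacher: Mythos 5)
Your proposal is correct and complete: the Tonelli check on $|f(\omega)|\,|g(x)|$, whose double integral factors as $\|f\|_{\L^1(\R^n)}\,\|g\|_{\L^1(\R^n)}<\infty$, is exactly the justification needed for Fubini, and the symmetry of the kernel then gives the identity. The paper itself does not prove this statement but defers to the literature, and the argument you give is precisely the standard one found there, so there is nothing to add.
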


\begin{proof}
See, for example, \cite[Theorem I.1.15]{Stein1971}.
\end{proof}

The next theorem is the classical Rayleigh-Plancherel theorem, which shows that the Fourier transform preserves the $\L^2$-norm up to a multiplicative constant.

\begin{theorem}[Rayleigh-Plancherel]
Let $f \in \L^1(\R^n)$ and $f \in \L^2(\R^n)$ or $\Fourier f \in \L^2(\R^n)$.
Then, we have $\Fourier f \in \L^2(\R^n)$ or $f \in \L^2(\R^n)$, respectively, and
\begin{equation*}
\|f\|_{\L^2(\R^n)} = (2\pi)^{-\nicefrac{n}{2}} \, \|\Fourier f\|_{\L^2(\R^n)}.
\end{equation*}
More generally, for $f,g \in \L^1(\R^n) \cap \L^2(\R^n)$ we have
\begin{equation*}
(f,g)_{\L^2(\R^n)} = (2\pi)^{-n} \, (\Fourier f,\Fourier g)_{\L^2(\R^n)}.
\end{equation*}
\end{theorem}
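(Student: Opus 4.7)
The plan is to establish the general polarization identity $(f,g)_{\L^2(\R^n)} = (2\pi)^{-n}(\Fourier f,\Fourier g)_{\L^2(\R^n)}$ for $f,g \in \L^1(\R^n)\cap \L^2(\R^n)$, which yields the norm identity by taking $f=g$ and also accounts for the membership claims. The strategy is two-tiered: first verify the identity on the Schwartz space $\Schwartz(\R^n)$, where the inversion formula applies pointwise and all manipulations are unambiguous, then extend to $\L^1\cap \L^2$ by approximation.

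First I would prove the identity on $\Schwartz(\R^n)$. Given $f,g \in \Schwartz(\R^n)$, set $\tilde g(x) = \overline{g(-x)}$, which is again Schwartz, and compute directly via substitution that $\Fourier \tilde g = \overline{\Fourier g}$. The convolution $h = f * \tilde g$ is also Schwartz, satisfies $h(0) = \int f(y)\overline{g(y)}\,\d y = (f,g)_{\L^2}$, and obeys $\Fourier h = \Fourier f \cdot \overline{\Fourier g}$ (the convolution theorem, trivially valid on $\Schwartz$). Since $h, \Fourier h \in \L^1(\R^n)$, Fourier inversion gives $h(0) = (2\pi)^{-n}\int \Fourier h(\omega)\,\d \omega = (2\pi)^{-n}(\Fourier f,\Fourier g)_{\L^2}$, which is the desired identity. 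Taking $f=g$ yields $\|f\|_{\L^2} = (2\pi)^{-n/2}\|\Fourier f\|_{\L^2}$, so on $\Schwartz(\R^n)$ the Fourier transform is an $\L^2$-isometry up to the constant $(2\pi)^{-n/2}$.

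Next I would extend by density. For $f \in \L^1(\R^n) \cap \L^2(\R^n)$, choose a sequence $(f_k) \seq \Schwartz(\R^n)$ with $f_k \to f$ both in $\L^1$ and in $\L^2$ (standard mollification and truncation). The Schwartz identity applied to $f_k - f_j$ gives $\|\Fourier f_k - \Fourier f_j\|_{\L^2} = (2\pi)^{n/2}\|f_k - f_j\|_{\L^2} \to 0$, so $(\Fourier f_k)$ is Cauchy in $\L^2(\R^n)$ and converges to some $F \in \L^2(\R^n)$. On the other hand, the continuity of $\Fourier\colon \L^1(\R^n) \to \Cont_0(\R^n)$ (Riemann–Lebesgue) gives $\Fourier f_k \to \Fourier f$ uniformly, hence pointwise everywhere. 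Extracting a subsequence of the $\L^2$-convergence that converges a.e. forces $F = \Fourier f$ a.e., so $\Fourier f \in \L^2(\R^n)$ and $\|f\|_{\L^2} = (2\pi)^{-n/2}\|\Fourier f\|_{\L^2}$ by passage to the limit. Polarization (or the same argument with $g_k \to g$) promotes this to the bilinear identity. The reverse case, where only $\Fourier f \in \L^2(\R^n)$ is assumed, follows by applying the same argument with $\Fourier^{-1}$ in place of $\Fourier$, using $\Fourier^{-1}f = (2\pi)^{-n}(\Fourier f)^\ast$.

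The main obstacle is the simultaneous control of $\L^1$ and $\L^2$ in the density step: one must know that $\Fourier f_k$ converges to the object defined by the integral formula, not merely to some abstract $\L^2$-limit. This is bridged by invoking the Riemann–Lebesgue uniform convergence to identify the two candidate limits almost everywhere, which is the only nontrivial glue between the Schwartz-level identity and its $\L^1 \cap \L^2$ extension.
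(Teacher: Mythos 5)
The paper itself gives no proof of this theorem --- it only cites Stein and Weiss --- so your proposal has to be judged as a free-standing argument. For the first case ($f \in \L^1(\R^n) \cap \L^2(\R^n)$) and for the inner-product identity it is correct and complete: the computation on $\Schwartz(\R^n)$ via $h = f * \tilde g$ with $\tilde g(x) = \overline{g(-x)}$, $h(0) = (f,g)_{\L^2(\R^n)}$ and $\Fourier h = \Fourier f \cdot \overline{\Fourier g}$ is the standard route, and the density step is handled properly; in particular you correctly identify and resolve the one delicate point, namely that the abstract $\L^2$-limit of $\Fourier f_k$ must be matched with the integral-defined $\Fourier f$, which your appeal to the uniform convergence $\|\Fourier f_k - \Fourier f\|_\infty \leq \|f_k - f\|_{\L^1(\R^n)}$ together with an a.e.\ convergent subsequence accomplishes.

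The gap is the second case of the statement: $f \in \L^1(\R^n)$ with $\Fourier f \in \L^2(\R^n)$, to be shown to imply $f \in \L^2(\R^n)$. Your one-line reduction, ``apply the same argument with $\Fourier^{-1}$ in place of $\Fourier$,'' does not go through. The density argument needs its input approximated simultaneously in $\L^1$ and $\L^2$, but here the only function you could feed to $\Fourier^{-1}$ is $\Fourier f$, which is known to lie in $\Cont_0(\R^n) \cap \L^2(\R^n)$ but not in $\L^1(\R^n)$; hence $\Fourier^{-1}(\Fourier f)$ is not defined by the integral formula, the identity $\Fourier^{-1}f = (2\pi)^{-n}(\Fourier f)^{\ast}$ says nothing about whether $f$ itself is square-integrable, and $\L^1$-approximation of $f$ alone gives no $\L^2$-control on $\Fourier f_k - \Fourier f$. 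This case needs its own device: for instance, set $f^{\sharp}(x) = \overline{f(-x)}$ and $h = f * f^{\sharp} \in \L^1(\R^n)$, so that $\Fourier h = |\Fourier f|^2 \in \L^1(\R^n)$ by hypothesis; Fourier inversion makes $h$ equal a.e.\ to a continuous function whose value at the origin is $(2\pi)^{-n}\|\Fourier f\|_{\L^2(\R^n)}^2$, and evaluating $h$ at $0$ through a nonnegative approximate identity (together with Fatou's lemma) yields $\|f\|_{\L^2(\R^n)}^2 = (2\pi)^{-n}\|\Fourier f\|_{\L^2(\R^n)}^2 < \infty$. Without some such separate argument the second membership claim of the theorem remains unproved.
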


\begin{proof}
See, for example, \cite[Theorem I.2.1]{Stein1971}.
\end{proof}

Since $\L^1(\R^n) \cap \L^2(\R^n) \subset \L^2(\R^n)$ is dense, the Rayleigh-Plancherel theorem shows that the Fourier transform can be continuously extended to an operator
\begin{equation*}
\Fourier: \L^2(\R^n) \to \L^2(\R^n),
\end{equation*}
which is an isometry up to a multiplicative constant.
Further, the extended operator $\Fourier$ is bijective on $\L^2(\R^n)$ and its inverse $\Fourier^{-1}$ is the continuous extension of the inverse Fourier transform.
In this course, however, we will not distinguish between the regular Fourier transform and its extension.

Consequently, the Fourier transform and its inverse are now defined on the whole of $\L^2(\R^n)$.
But for $f \in \L^2(\R^n)$, the point evaluation of $\Fourier f$ makes sense only almost everywhere and the Fourier inversion formula holds in $\L^2$-sense.

\begin{corollary}[Fourier inversion in $\L^2(\R^n)$]
For $f \in \L^2(\R^n)$ the Fourier inversion formula
\begin{equation*}
\Fourier^{-1} (\Fourier f) = f = \Fourier (\Fourier^{-1} f)
\end{equation*}
holds in $\L^2$-sense and, in particular, almost everywhere on $\R^n$.
\end{corollary}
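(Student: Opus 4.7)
The plan is to reduce the $\L^2$ statement to the already-established $\L^1$ inversion theorem by a density-plus-continuity argument, exploiting the extension of $\Fourier$ to $\L^2(\R^n)$ provided by Rayleigh-Plancherel.

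First I would fix $f \in \L^2(\R^n)$ and choose an approximating sequence $(f_k)_{k \in \N}$ in a dense subspace $\mathcal{D} \subset \L^2(\R^n)$ on which the $\L^1$ inversion theorem applies directly. A convenient choice is $\mathcal{D} = \Schwartz(\R^n)$, since Schwartz functions are dense in $\L^2(\R^n)$ and for every $g \in \Schwartz(\R^n)$ both $g$ and $\Fourier g$ lie in $\L^1(\R^n) \cap \L^2(\R^n) \cap \Schwartz(\R^n)$, so the $\L^1$ Fourier inversion theorem stated earlier yields $\Fourier^{-1}(\Fourier g) = g = \Fourier(\Fourier^{-1} g)$ pointwise (and hence in $\L^2$-sense). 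Select $f_k \in \Schwartz(\R^n)$ with $f_k \to f$ in $\L^2(\R^n)$.

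Next I would invoke the Rayleigh-Plancherel theorem: it provides the continuous (in fact, up to a constant, isometric) extension of $\Fourier$ to an operator $\L^2(\R^n) \to \L^2(\R^n)$ together with a continuous inverse $\Fourier^{-1}$. Applying these extended operators to the convergent sequence gives $\Fourier f_k \to \Fourier f$ in $\L^2(\R^n)$ and then $\Fourier^{-1}(\Fourier f_k) \to \Fourier^{-1}(\Fourier f)$ in $\L^2(\R^n)$. On the other hand, since $f_k \in \Schwartz(\R^n)$, the first step gives $\Fourier^{-1}(\Fourier f_k) = f_k$, which converges to $f$ in $\L^2(\R^n)$. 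By uniqueness of $\L^2$-limits we conclude $\Fourier^{-1}(\Fourier f) = f$ almost everywhere. The symmetric identity $\Fourier(\Fourier^{-1} f) = f$ follows by the same argument with the roles of $\Fourier$ and $\Fourier^{-1}$ interchanged, using the relation $\Fourier^{-1} g = (2\pi)^{-n} (\Fourier g)^\ast$ noted in the excerpt to transfer continuity from $\Fourier$ to $\Fourier^{-1}$.

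The only genuine subtlety, and the step I would flag as the main obstacle, is making sure the continuous extension of $\Fourier$ to $\L^2(\R^n)$ used here really coincides on the dense subspace $\Schwartz(\R^n)$ with the integral-defined Fourier transform so that the $\L^1$ inversion theorem may be applied to $f_k$; this is precisely the content of Rayleigh-Plancherel and needs to be cited explicitly. Everything else is a routine continuity argument, and the resulting equality holds in $\L^2$-sense, hence almost everywhere on $\R^n$, as stated.
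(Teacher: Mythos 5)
Your argument is correct and is essentially the justification the paper leaves implicit: the corollary is stated without proof as an immediate consequence of the preceding assertion that the Rayleigh--Plancherel extension of $\Fourier$ to $\L^2(\R^n)$ is bijective with inverse given by the extension of $\Fourier^{-1}$, and your density-plus-continuity argument is exactly the standard way to establish that fact. Your choice of $\Schwartz(\R^n)$ as the dense subspace (rather than the paper's $\L^1 \cap \L^2$) is a sensible refinement, since for Schwartz functions both $g$ and $\Fourier g$ lie in $\L^1(\R^n)$, so the $\L^1$ inversion theorem applies without further argument.
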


We close this paragraph on the Fourier transform with a variant of the classical {\em Paley-Wiener theorem}, which characterizes the Fourier transform of compactly supported functions.

\begin{theorem}[Paley-Wiener]
Let $f \in \L^1(\R^n) \setminus \{0\}$ be compactly supported.
Then, its Fourier transform $\Fourier f$ is analytic and cannot have compact support.
\end{theorem}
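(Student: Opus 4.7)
The plan is to split the proof into two parts that follow the two assertions: first the analyticity of $\Fourier f$, then the impossibility of $\Fourier f$ having compact support.

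For analyticity, I would exploit the compact support of $f$ to extend $\Fourier f$ from $\R^n$ to $\C^n$. Concretely, fix $R > 0$ such that $\supp(f) \subseteq \overline{B_R(0)}$ and define
\begin{equation*}
\tilde F(z) = \int_{\R^n} f(x) \, \e^{-\i x^T z} \: \d x
\quad \mbox{ for } z \in \C^n,
\end{equation*}
where $x^T z$ is the bilinear (not Hermitian) pairing. For every $z \in \C^n$ one has $|\e^{-\i x^T z}| = \e^{x^T \Im(z)} \leq \e^{R \|\Im(z)\|_{\R^n}}$ on $\supp(f)$, so the integrand is integrable and $\tilde F$ is defined everywhere on $\C^n$. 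Holomorphicity is then obtained by Morera's theorem or, equivalently, by differentiating under the integral sign: each partial derivative $\partial/\partial z_j$ applied to $\e^{-\i x^T z}$ produces a factor $-\i x_j$, which is uniformly bounded on $\supp(f)$, justifying the interchange via dominated convergence. Hence $\tilde F$ is entire on $\C^n$, and $\Fourier f = \tilde F\big|_{\R^n}$ is real-analytic.

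For the second assertion I would argue by contradiction. Suppose $\Fourier f$ has compact support, i.e.\ there is some $R' > 0$ such that $\Fourier f(\omega) = 0$ for all $\|\omega\|_{\R^n} > R'$. Since $\Fourier f$ is real-analytic on $\R^n$ (as shown above) and vanishes on the nonempty open set $\R^n \setminus \overline{B_{R'}(0)}$, the identity theorem for real-analytic functions forces $\Fourier f \equiv 0$ on $\R^n$. By the injectivity of the Fourier transform on $\L^1(\R^n)$ this yields $f = 0$, contradicting the hypothesis $f \neq 0$.

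The main technical point is the justification of analyticity; this is really where compactness of $\supp(f)$ is essential, since without it the integrand $f(x) \e^{-\i x^T z}$ need not be integrable for complex $z$ and one could at best obtain holomorphicity on a strip. Once analyticity is established, the second part is a one-line consequence of the identity principle together with the injectivity corollary already stated for $\Fourier$ on $\L^1(\R^n)$, so no further obstacle arises.
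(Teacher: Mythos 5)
Your argument is correct and is exactly the route the paper intends: it proves the statement by citing the Paley--Wiener theorem in \cite[Theorem 7.23]{Rudin1991} (analytic extension of $\Fourier f$ to an entire function on $\C^n$ via the compact support of $f$) together with the identity theorem for analytic functions, which is precisely the two-step argument you carry out in detail. Both the extension-and-domination step and the contradiction via the identity principle and the injectivity of $\Fourier$ on $\L^1(\R^n)$ are sound.
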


\begin{proof}
See, for example, \cite[Theorem 7.23]{Rudin1991} and the identity theorem for analytic functions.
\end{proof}

\subsection*{The convolution product}

We now define the convolution product of functions in $\L^1(\R^n)$ and investigate its interaction with the Fourier transform.

\begin{definition}[Convolution]
The {\em convolution product} $f * g$ of two functions $f,g \in \L^1(\R^n)$ is defined as
\begin{equation*}
(f * g)(x) = \int_{\R^n} f(x-y) \, g(y) \: \d y
\quad \mbox{ for } x \in \R^n.
\end{equation*}
\end{definition}

We remark that the convolution product of $f,g \in \L^1(\R^n)$ exists and is again in $\L^1(\R^n)$.
More generally, we have the following result.

\begin{theorem}[Young's inequality]
Let $f \in \L^p(\R^n)$ and $g \in \L^q(\R^n)$ with $1 \leq p,q \leq \infty$. 
Then, we have $f * g \in \L^r(\R^n)$ with $1 \leq r \leq \infty$ satisfying
\begin{equation*}
\frac{1}{p} + \frac{1}{q} = \frac{1}{r} + 1
\end{equation*}
and {\em Young's inequality}
\begin{equation*}
\|f*g\|_{\L^r(\R^n)} \leq \|f\|_{\L^p(\R^n)} \, \|g\|_{\L^q(\R^n)}
\end{equation*}
holds with equality if $f$ and $g$ are non-negative almost everywhere on $\R^n$.
\end{theorem}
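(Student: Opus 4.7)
The plan is to follow the classical three-exponent Hölder argument. First, since $|(f*g)(x)| \leq (|f|*|g|)(x)$ pointwise, I can reduce to the case $f, g \geq 0$ and work without absolute values throughout. I would then dispose of the endpoint cases separately: if $p = 1$ (forcing $r = q$), the estimate reduces to Minkowski's integral inequality, or equivalently to Fubini-Tonelli combined with translation invariance of Lebesgue measure; the case $q = 1$ is symmetric; if $r = \infty$ then the Young condition forces $\nicefrac{1}{p} + \nicefrac{1}{q} = 1$, and the bound is a pointwise application of the ordinary Hölder inequality to $f(x-\cdot)$ and $g$.

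For the generic case $1 < p, q, r < \infty$, the central idea is the factorisation
\begin{equation*}
f(x-y) \, g(y) = \bigl[f(x-y)^p \, g(y)^q\bigr]^{\nicefrac{1}{r}} \cdot f(x-y)^{1-\nicefrac{p}{r}} \cdot g(y)^{1-\nicefrac{q}{r}},
\end{equation*}
to which I would apply the three-factor Hölder inequality in the variable $y$ with exponents $r$, $\alpha = pr/(r-p)$ and $\beta = qr/(r-q)$. The choice of $\alpha, \beta$ is dictated by matching the $\L^\alpha$- and $\L^\beta$-norms (in $y$) of the last two factors to $\|f\|_{\L^p}^{1-\nicefrac{p}{r}}$ and $\|g\|_{\L^q}^{1-\nicefrac{q}{r}}$, which are finite constants by translation invariance. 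A short rearrangement shows that the Young relation $\nicefrac{1}{p} + \nicefrac{1}{q} = \nicefrac{1}{r} + 1$ is equivalent to the three-exponent Hölder condition $\nicefrac{1}{r} + \nicefrac{1}{\alpha} + \nicefrac{1}{\beta} = 1$, so the estimate is licit.

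The outcome of Hölder is the pointwise bound
\begin{equation*}
(f*g)(x)^r \leq \|f\|_{\L^p(\R^n)}^{r-p} \, \|g\|_{\L^q(\R^n)}^{r-q} \int_{\R^n} f(x-y)^p \, g(y)^q \: \d y.
\end{equation*}
Integrating in $x$ and invoking Fubini-Tonelli on the non-negative integrand, then using translation invariance on the inner $x$-integral, converts the resulting double integral into $\|f\|_{\L^p}^p \, \|g\|_{\L^q}^q$. Extracting $r$-th roots then yields Young's inequality. The non-negativity reduction at the very start is what accounts for the stated observation that equality with $|f|*|g|$ is attained when $f, g \geq 0$ almost everywhere.

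The only real obstacle is exponent bookkeeping: I need to verify that $\alpha, \beta \in [1,\infty]$ (which holds precisely because the Young relation implies $p, q \leq r$) and that the two algebraic identities linking the Young condition to the Hölder condition are correct. Once this algebra is done, the argument is essentially Hölder plus Fubini. The degenerate sub-cases where $p \in \{1,r\}$ or $q \in \{1,r\}$ must be treated at the start, since the factors $f^{1-\nicefrac{p}{r}}$ or $g^{1-\nicefrac{q}{r}}$ collapse there and the generic argument no longer parses cleanly.
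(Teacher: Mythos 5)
The paper does not prove this theorem at all; it only cites \cite[Theorem 3.13]{Bredies2018}. Your three-exponent H\"older argument is the standard proof of the inequality itself and is correct as far as it goes: the factorisation, the exponents $\alpha = \nicefrac{pr}{(r-p)}$, $\beta = \nicefrac{qr}{(r-q)}$, the verification that $\nicefrac{1}{r}+\nicefrac{1}{\alpha}+\nicefrac{1}{\beta}=1$ is equivalent to the Young relation, and the final Tonelli step all check out, and your separation of the degenerate cases $p=1$, $q=1$, $r=\infty$ (and $p=r$ or $q=r$) is exactly the right bookkeeping. One small point worth making explicit: the pointwise H\"older bound also shows that $(|f|*|g|)(x)<\infty$ for almost every $x$, which is needed before one can even speak of $f*g$ as a measurable function; this is implicit in your reduction to non-negative functions but should be stated.

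The genuine gap is the last clause of the theorem. The statement asserts that \emph{Young's inequality} $\|f*g\|_{\L^r(\R^n)} \leq \|f\|_{\L^p(\R^n)}\,\|g\|_{\L^q(\R^n)}$ holds \emph{with equality} when $f,g \geq 0$ a.e.; what you prove instead is the unrelated (and trivial) fact that $|f*g| = |f|*|g|$ for non-negative $f,g$. These are different assertions, and your proposal never addresses the one actually claimed. The claimed equality requires its own argument and in fact only survives in the case $p=q=r=1$, where Tonelli gives
\begin{equation*}
\int_{\R^n} (f*g)(x) \: \d x = \biggl(\int_{\R^n} f(x) \: \d x\biggr)\biggl(\int_{\R^n} g(x) \: \d x\biggr)
\end{equation*}
for non-negative $f,g$, hence $\|f*g\|_{\L^1} = \|f\|_{\L^1}\|g\|_{\L^1}$. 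For other exponents the clause is false as stated: with $f = g = \chi_{[0,1]}$ on $\R$, $p = q = \nicefrac{4}{3}$ and hence $r = 2$, one computes $\|f*g\|_{\L^2} = \sqrt{\nicefrac{2}{3}} < 1 = \|f\|_{\L^{4/3}}\|g\|_{\L^{4/3}}$. So you should either prove the equality in the $\L^1$ case via Tonelli and flag that the general claim cannot be meant literally, or at minimum not pass off the identity $|f*g|=|f|*|g|$ as a proof of it.
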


\begin{proof}
See, for example, \cite[Theorem 3.13]{Bredies2018}.
\end{proof}

A special situation occurs if $f \in \L^p(\R^n)$ and $g \in \L^q(\R^n)$ with {\em dual} exponents $1 \leq p,q \leq \infty$, i.e.,
\begin{equation*}
\frac{1}{p} + \frac{1}{q} = 1.
\end{equation*}

\begin{theorem}
Let $f \in \L^p(\R^n)$ and $g \in \L^q(\R^n)$ with $1 \leq p,q \leq \infty$ satisfying
\begin{equation*}
\frac{1}{p} + \frac{1}{q} = 1.
\end{equation*}
Then, $f * g$ is bounded and continuous on $\R^n$, i.e., $f * g \in \Cont_b(\R^n)$, where
\begin{equation*}
\Cont_b(\R^n) = \bigl\{ f \in \Cont(\R^n) \bigm| \|f\|_\infty < \infty \bigr\}.
\end{equation*}
If we further have $1 < p,q < \infty$, then $f * g$ vanishes at infinity, i.e., $f * g \in \Cont_0(\R^n)$.
\end{theorem}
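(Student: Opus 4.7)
The plan is to split the theorem into three statements: uniform boundedness, continuity, and (under the stronger hypothesis) decay at infinity, and to address them in that order.

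First I would establish boundedness as a direct application of Hölder's inequality. For every $x \in \R^n$, by the translation-invariance of the Lebesgue measure,
\begin{equation*}
|(f * g)(x)| \leq \int_{\R^n} |f(x-y)| \, |g(y)| \: \d y \leq \|f(x-\cdot)\|_{\L^p(\R^n)} \, \|g\|_{\L^q(\R^n)} = \|f\|_{\L^p(\R^n)} \, \|g\|_{\L^q(\R^n)},
\end{equation*}
which is the case $r = \infty$ of Young's inequality, already stated in the paper. In particular $f * g$ is well-defined pointwise on all of $\R^n$ and $\|f*g\|_\infty \leq \|f\|_{\L^p(\R^n)} \, \|g\|_{\L^q(\R^n)}$.

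For continuity, I would exploit commutativity of the convolution to assume $1 \leq p < \infty$ without loss of generality (if $p = \infty$ then $q = 1 < \infty$, and $f * g = g * f$). The key tool is the standard fact that translations act continuously on $\L^p(\R^n)$ for $1 \leq p < \infty$, i.e., $\|\tau_h f - f\|_{\L^p(\R^n)} \to 0$ as $h \to 0$, where $\tau_h f(x) = f(x-h)$. Writing
\begin{equation*}
(f*g)(x+h) - (f*g)(x) = \int_{\R^n}\bigl(f(x+h-y) - f(x-y)\bigr)\, g(y) \: \d y
\end{equation*}
and applying Hölder once more yields the uniform bound $\|\tau_{-h}(f*g) - (f*g)\|_\infty \leq \|\tau_{-h} f - f\|_{\L^p(\R^n)} \, \|g\|_{\L^q(\R^n)}$, which tends to $0$ as $h \to 0$. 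Hence $f * g$ is uniformly continuous, and together with the previous step we obtain $f * g \in \Cont_b(\R^n)$.

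For the decay statement, assume now $1 < p,q < \infty$. The plan is a density argument. Since $\Cont_c(\R^n)$ is dense in both $\L^p(\R^n)$ and $\L^q(\R^n)$, choose sequences $f_k, g_k \in \Cont_c(\R^n)$ with $f_k \to f$ in $\L^p$ and $g_k \to g$ in $\L^q$. Each convolution $f_k * g_k$ has compact support contained in $\supp(f_k) + \supp(g_k)$, and by the previous step it lies in $\Cont_b(\R^n)$; in particular $f_k * g_k \in \Cont_c(\R^n) \subset \Cont_0(\R^n)$. Using bilinearity and the boundedness estimate already proved,
\begin{equation*}
\|f*g - f_k * g_k\|_\infty \leq \|f - f_k\|_{\L^p(\R^n)} \, \|g\|_{\L^q(\R^n)} + \|f_k\|_{\L^p(\R^n)} \, \|g - g_k\|_{\L^q(\R^n)} \xrightarrow{k\to\infty} 0,
\end{equation*}
so $f*g$ is a uniform limit of functions in $\Cont_0(\R^n)$. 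Since $\Cont_0(\R^n)$ is closed in $(\Cont_b(\R^n),\|\cdot\|_\infty)$, we conclude $f*g \in \Cont_0(\R^n)$.

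The routine part is the boundedness bound; the only genuine subtlety is the continuity argument at the endpoint $\{p,q\} = \{1,\infty\}$, where translation fails to be continuous in $\L^\infty$. This is exactly why I would invoke the commutativity of convolution at the outset to transfer the translation to the $\L^p$-factor with $p < \infty$. For the decay part, the density of $\Cont_c$ in $\L^p$ for $1 < p < \infty$ (which would fail for $p = \infty$) is what makes the hypothesis $1 < p,q < \infty$ essential.
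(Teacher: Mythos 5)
Your proof is correct and complete. The paper itself does not prove this theorem but only refers to \cite[Theorem 3.14]{Bredies2018}; your argument is the standard one and fills that gap properly. All three steps are sound: the H\"older bound $\|f*g\|_\infty \leq \|f\|_{\L^p}\,\|g\|_{\L^q}$, the reduction via commutativity to the case $p < \infty$ so that continuity of translation in $\L^p$ applies (this is indeed the only delicate point, since translation is not continuous on $\L^\infty$), and the density argument for the decay statement, where the hypothesis $1 < p,q < \infty$ enters exactly as you say --- with dual exponents it is equivalent to neither exponent being $\infty$, so $\Cont_c(\R^n)$ is dense in both factors, and the counterexample $f = \chi_{[0,1]^n} \in \L^1$, $g \equiv 1 \in \L^\infty$ with $f*g \equiv 1$ shows the restriction cannot be dropped. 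The only point left tacit is that $\|f_k\|_{\L^p}$ stays bounded along the approximating sequence, which is immediate from convergence in $\L^p$.
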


\begin{proof}
See, for example, \cite[Theorem 3.14]{Bredies2018}.
\end{proof}

We now list some basic properties of the convolution product.

\begin{proposition}
The convolution product satisfies the following properties.
\begin{enumerate}
\item {\em Commutativity:}
\begin{equation*}
f * g = g * f
\quad \forall \, f,g \in \L^1(\R^n)
\end{equation*}

\item {\em Linearity:}
\begin{equation*}
f * (\alpha \, g + \beta \, h) = \alpha \, (f * g) + \beta \, (f * h)
\quad \forall \, \alpha,\beta \in \R, \, f,g,h \in \L^1(\R^n)
\end{equation*}

\item {\em Integration:}
\begin{equation*}
\int_{\R^n} (f * g)(x) \: \d x = \bigg(\int_{\R^n} f(x) \: \d x\bigg) \, \bigg(\int_{\R^n} g(x) \: \d x\bigg)
\quad \forall \, f,g \in \L^1(\R^n)
\end{equation*}

\item {\em Translation:} For $f \in \L^1(\R^n)$ and $a \in \R^n$ we consider the function
\begin{equation*}
f_a(x) = f(x-a)
\quad \mbox{ for } x \in \R^n.
\end{equation*}
Then,
\begin{equation*}
f_a * g = (f * g)_a
\quad \forall \, g \in \L^1(\R^n).
\end{equation*}

\item Let $f \in \L^1(\R^n)$ and let $g \in \Cont^k(\R^n)$, $k \in \N$, be bounded such that its derivatives $\D^\alpha g$ are also bounded for all multi-indices $\alpha \in \N_0^n$ with $|\alpha| \leq k$.
Then, we have $f * g \in \Cont^k(\R^n)$ and
\begin{equation*}
\D^\alpha (f * g) = f * \D^\alpha g.
\end{equation*}
\end{enumerate}
\end{proposition}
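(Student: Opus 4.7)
The plan is to dispatch the five properties in order, reusing computations across parts and invoking Fubini wherever a product of integrals or an interchange is needed. The only preliminary fact I will quietly use throughout is that Young's inequality guarantees $f*g \in \L^1(\R^n)$ whenever $f,g \in \L^1(\R^n)$, so every integral in sight converges absolutely and interchange of integration order is legitimate.

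For commutativity (i), I would substitute $z = x-y$ in $(f*g)(x) = \int_{\R^n} f(x-y)g(y)\,\d y$; the Jacobian is $1$ and the domain is preserved, which turns the integrand into $f(z)g(x-z)$, so the result is $(g*f)(x)$. Linearity (ii) is immediate from linearity of the Lebesgue integral in the second argument, applied pointwise in $x$. For the integration identity (iii), I would write $\int_{\R^n}(f*g)(x)\,\d x = \int_{\R^n}\int_{\R^n} f(x-y)g(y)\,\d y\,\d x$ and apply Fubini, then substitute $z = x-y$ in the inner integral to separate the variables; this factorizes into $(\int f)(\int g)$. Property (iv) is a change of variables: $(f_a * g)(x) = \int f(x-y-a)g(y)\,\d y$ equals $(f*g)(x-a)$ by reading the definition in reverse, so $f_a * g = (f*g)_a$.

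The main technical step is the differentiation property (v). I would argue inductively on $|\alpha|$, so it suffices to treat a single partial derivative $\partial_j$. Writing $(f*g)(x) = \int_{\R^n} f(y)\,g(x-y)\,\d y$ (using commutativity to place the smooth factor in the second slot), I would apply the Leibniz rule for differentiation under the integral sign. The hypothesis that $g$ and all its derivatives up to order $k$ are bounded provides a uniform dominating function $|f(y)|\cdot\|D^\alpha g\|_\infty \in \L^1(\R^n)$ since $f \in \L^1(\R^n)$, which is the hypothesis needed to justify both the interchange of derivative and integral and the continuity of the resulting function in $x$ (continuity of $x \mapsto \int f(y) D^\alpha g(x-y)\,\d y$ follows from continuity of $D^\alpha g$ and dominated convergence). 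Iterating gives $D^\alpha(f*g) = f * D^\alpha g \in \Cont(\R^n)$ for every $|\alpha| \leq k$, hence $f*g \in \Cont^k(\R^n)$.

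The only delicate point in the whole argument is keeping the dominating function uniform in $x$ in part (v); everything else is either Fubini, a linear change of variables, or the definition unrolled in reverse.
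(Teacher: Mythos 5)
Your proposal is correct; note that the paper itself gives no proof of this proposition but simply refers to \cite[Chapter 3.3]{Bredies2018}, and your argument is the standard one that such a reference contains: change of variables for (i) and (iv), linearity of the integral for (ii), Tonelli--Fubini plus a translation substitution for (iii), and difference quotients dominated via the mean value theorem and boundedness of $\D^\alpha g$ for (v). The only cosmetic caveat is in part (v): there $g$ is bounded but not assumed to lie in $\L^1(\R^n)$, so you cannot literally invoke the commutativity of part (i); instead you should observe that the same substitution $z = x - y$ is valid because $|f(x-y)\,g(y)| \leq \|g\|_\infty\,|f(x-y)|$ makes the integral absolutely convergent. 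This does not affect the correctness of the argument.
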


\begin{proof}
See, for example, \cite[Chapter 3.3]{Bredies2018}.
\end{proof}

We finish this section by stating the most important property of the convolution product, which is given by the classical {\em Fourier convolution theorem} and describes the interaction between the convolution product and the Fourier transform.

\begin{theorem}[Fourier convolution theorem]
Let $f,g \in \L^1(\R^n)$ be given functions.
Then, we have
\begin{equation*}
\Fourier (f * g) = \Fourier f \cdot \Fourier g
\end{equation*}
and
\begin{equation*}
\Fourier^{-1} (f * g) = (2\pi)^n \, \Fourier^{-1} f \cdot \Fourier^{-1} g.
\end{equation*}
Additionally, if $\Fourier f, \Fourier g \in \L^1(\R^n)$, then
\begin{equation*}
\Fourier (f \cdot g) = (2\pi)^{-n} \, (\Fourier f) * (\Fourier g).
\end{equation*}
\end{theorem}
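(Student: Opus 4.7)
My plan is to prove the three identities in sequence, with the first serving as the technical core.

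For the first identity $\Fourier(f*g) = \Fourier f \cdot \Fourier g$, I would start from the definition
\begin{equation*}
\Fourier(f*g)(\omega) = \int_{\R^n} \left(\int_{\R^n} f(x-y)\, g(y)\, \d y\right) \e^{-\i x^T \omega} \: \d x
\end{equation*}
and check that the double integrand $(x,y) \mapsto f(x-y)\, g(y)\, \e^{-\i x^T\omega}$ is absolutely integrable on $\R^n \times \R^n$. This follows from Tonelli's theorem and the translation invariance of Lebesgue measure, giving $\|f\|_{\L^1} \|g\|_{\L^1}$ as the $\L^1$-norm of the modulus. Fubini then permits swapping the order of integration. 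After swapping, I substitute $z = x - y$ in the inner integral (with $y$ fixed), so that $\e^{-\i x^T\omega} = \e^{-\i z^T\omega} \e^{-\i y^T\omega}$ factors cleanly, and the double integral decouples into $\bigl(\int f(z) \e^{-\i z^T\omega} \d z\bigr)\bigl(\int g(y) \e^{-\i y^T\omega} \d y\bigr) = \Fourier f(\omega) \, \Fourier g(\omega)$.

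The second identity $\Fourier^{-1}(f*g) = (2\pi)^n \, \Fourier^{-1} f \cdot \Fourier^{-1} g$ follows either by repeating the same Fubini computation with $\e^{+\i x^T\omega}$ and the prefactor $(2\pi)^{-n}$ applied only once (producing an extra $(2\pi)^n$ on the right-hand side), or more slickly from the identity $\Fourier^{-1} h = (2\pi)^{-n} (\Fourier h)^*$ stated earlier in the excerpt combined with the easily verified fact that $(f*g)^* = f^* * g^*$.

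The third identity $\Fourier(f \cdot g) = (2\pi)^{-n}\, (\Fourier f) * (\Fourier g)$ is dual in nature and is where I would need to use the extra hypothesis $\Fourier f, \Fourier g \in \L^1(\R^n)$, together with the Fourier inversion theorem. Setting $\hat{f} = \Fourier f$ and $\hat{g} = \Fourier g$, both lie in $\L^1(\R^n)$, so by Young's inequality $\hat{f} * \hat{g} \in \L^1(\R^n)$ and the second identity applies to $\hat{f}, \hat{g}$, giving $\Fourier^{-1}(\hat{f} * \hat{g}) = (2\pi)^n \, \Fourier^{-1}\hat{f} \cdot \Fourier^{-1}\hat{g} = (2\pi)^n \, f \cdot g$, where the last equality uses Fourier inversion (valid since $f, \hat{f} \in \L^1$ and similarly for $g$). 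Applying $\Fourier$ to both sides and using $\Fourier \Fourier^{-1} = \Id$ on $\L^1$-functions whose Fourier transform is again in $\L^1$, I obtain $\hat{f} * \hat{g} = (2\pi)^n \, \Fourier(f \cdot g)$, which rearranges to the desired formula.

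The main obstacle is the justification of Fubini's theorem in the first step — every subsequent computation reduces to it. Once that is in place, the rest is formal manipulation. A secondary subtlety in the third identity is ensuring that $f \cdot g$ and its Fourier transform both belong to $\L^1$, so that $\Fourier$ can be legitimately applied to the identity $\Fourier^{-1}(\hat{f} * \hat{g}) = (2\pi)^n f \cdot g$ and the inversion theorem used; this is guaranteed because $f, g \in \Cont_0(\R^n)$ (by Riemann-Lebesgue applied to $\hat{f}, \hat{g} \in \L^1$) are bounded, hence $f \cdot g \in \L^1 \cap \L^\infty$, and $\hat{f} * \hat{g} \in \L^1$.
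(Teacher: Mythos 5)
Your proposal is correct. Note that the paper does not prove this theorem at all --- it is stated in the appendix of mathematical tools and the proof is delegated to the literature (Stein--Weiss), so there is no in-paper argument to compare against; what you give is the standard proof that such a reference contains. The Tonelli/Fubini justification of the first identity is exactly right, the bookkeeping of the $(2\pi)^n$ factors in the second identity is correct (the single prefactor $(2\pi)^{-n}$ versus the two carried by $\Fourier^{-1}f$ and $\Fourier^{-1}g$), and you correctly identify and resolve the only delicate point in the third identity, namely that $\Fourier$ may be applied to $\Fourier^{-1}(\Fourier f * \Fourier g) = (2\pi)^n\, f\cdot g$ because $f\cdot g \in \L^1(\R^n)$ (as $f \in \L^1$ and $g$ agrees a.e.\ with a bounded continuous function by Riemann--Lebesgue applied to $\Fourier g$) and $\Fourier f * \Fourier g \in \L^1(\R^n)$ by Young's inequality, so the hypotheses of the Fourier inversion theorem are met.
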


\begin{proof}
See, for example, \cite[Theorem I.1.4]{Stein1971}.
\end{proof}

\subsection*{Distributions}

Distributions or, to be more precise, {\em tempered} distributions play an important role in the definition of Sobolev spaces of fractional order.
Thus, in this section we introduce distributions as generalized functions and extend the Fourier transform to the space of tempered distributions.

\smallbreak

The space of distributions is given by the topological dual of the space of test functions, which is defined as follows.

\begin{definition}[Space of test functions]
Let $\Omega \seq \R^n$ be a domain in $\R^n$.
Then, the space of {\em test functions} on $\Omega$ is defined as
\begin{equation*}
\Test(\Omega) = \bigl\{ f \in \Cont^\infty(\Omega) \bigm| \supp(f) \seq \Omega \mbox{ compact} \bigr\}.
\end{equation*}
\end{definition}

The following lemma explains the expression 'test function'.

\begin{lemma}[Fundamental lemma of variational calculus]
Let $\Omega \seq \R^n$ be a domain in $\R^n$ and $f \in \L_\loc^1(\Omega)$ be locally integrable.
Then, we have
\begin{equation*}
f \equiv 0
\quad \mbox{ a.e.\ on } \Omega
\qquad \iff \qquad
\int_\Omega f(x) \, \phi(x) \: \d x = 0
\quad \forall \, \phi \in \Test(\Omega).
\end{equation*}
\end{lemma}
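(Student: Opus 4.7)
The forward implication is immediate: if $f \equiv 0$ almost everywhere on $\Omega$, then $f \cdot \phi \equiv 0$ almost everywhere for every $\phi \in \Test(\Omega)$, and the integral vanishes. So the content of the lemma lies in the reverse implication, and my plan focuses there.

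The standard approach is via mollification. I would fix a non-negative, radially symmetric function $\rho \in \Test(\R^n)$ supported in the unit ball with $\int_{\R^n} \rho(x) \, \d x = 1$, and set $\rho_\varepsilon(x) = \varepsilon^{-n} \rho(x/\varepsilon)$ for $\varepsilon > 0$. These are the usual Friedrichs mollifiers, so $\rho_\varepsilon \in \Test(\R^n)$ with $\supp(\rho_\varepsilon) \seq \overline{B_\varepsilon(0)}$ and $\int_{\R^n} \rho_\varepsilon = 1$. For a compact subset $K \seq \Omega$, set $\delta = \mathrm{dist}(K,\R^n \setminus \Omega) > 0$ and consider the mollification
\begin{equation*}
f_\varepsilon(x) = \int_\Omega f(y) \, \rho_\varepsilon(x-y) \: \d y
\quad \mbox{ for } x \in K \mbox{ and } 0 < \varepsilon < \delta.
\end{equation*}
Since $f \in \L_\loc^1(\Omega)$ and $\rho_\varepsilon(x-\cdot)$ has compact support in $\Omega$, this integral is well-defined.

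The key step is that for each fixed $x \in K$ and $\varepsilon < \delta$, the function $y \mapsto \rho_\varepsilon(x-y)$ lies in $\Test(\Omega)$, since its support is contained in $\overline{B_\varepsilon(x)} \seq \Omega$ and it is smooth. By the hypothesis that $\int_\Omega f \phi \, \d x = 0$ for every $\phi \in \Test(\Omega)$, this immediately gives
\begin{equation*}
f_\varepsilon(x) = 0
\quad \forall \, x \in K, ~ 0 < \varepsilon < \delta.
\end{equation*}
On the other hand, a classical result on approximation by convolution asserts that $f_\varepsilon \to f$ in $\L^1(K)$ as $\varepsilon \to 0^+$ for any $f \in \L_\loc^1(\Omega)$. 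Combining these two facts yields $\|f\|_{\L^1(K)} = \lim_{\varepsilon \to 0^+} \|f_\varepsilon\|_{\L^1(K)} = 0$, hence $f \equiv 0$ almost everywhere on $K$.

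To conclude, I would exhaust $\Omega$ by an increasing sequence of compact sets $K_j \seq \Omega$ with $\bigcup_j K_j = \Omega$ (which exists because $\Omega$ is an open subset of $\R^n$ and therefore $\sigma$-compact). Applying the argument above to each $K_j$ shows $f \equiv 0$ almost everywhere on each $K_j$, and taking the countable union of null sets gives $f \equiv 0$ almost everywhere on $\Omega$. The main technical point is the $\L^1$-convergence $f_\varepsilon \to f$ for merely locally integrable functions, but this is a standard result on mollifiers (derived from density of $\Cont_c(\Omega)$ in $\L^1(K)$ and continuity of translation in $\L^1$), which I would cite rather than reprove.
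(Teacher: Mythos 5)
Your proposal is correct. Note that the paper does not prove this lemma at all --- it simply cites \cite[Lemma 2.75]{Bredies2018} --- so your mollification argument supplies the standard proof that the text omits: the observation that $y \mapsto \rho_\varepsilon(x-y)$ is itself an admissible test function for $x$ in a compact $K \seq \Omega$ and $\varepsilon < \mathrm{dist}(K,\R^n\setminus\Omega)$, forcing $f_\varepsilon \equiv 0$ on $K$, combined with $f_\varepsilon \to f$ in $\L^1(K)$ and exhaustion of $\Omega$ by compact sets, is exactly the classical route, and all the steps you invoke (well-definedness of the convolution, $\L^1$-convergence of mollifications of locally integrable functions, $\sigma$-compactness of open subsets of $\R^n$) are sound.
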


\begin{proof}
See, for example, \cite[Lemma 2.75]{Bredies2018}.
\end{proof}

Calculating the integral $\int_\Omega f(x) \, \phi(x) \: \d x$ is also called {\em testing} the function $f \in \L_\loc^1(\Omega)$ with $\phi \in \Test(\Omega)$.
Thus, the fundamental lemma of variational calculus states that $f \in \L_\loc^1(\Omega)$ is almost everywhere uniquely determined by testing with all functions $\phi \in \Test(\Omega)$.

\begin{example}
The function $f: \R^n \to \R$ with
\begin{equation*}
f(x) = \begin{cases}
\exp\Big(-\frac{1}{1 - \|x\|_{\R^n}^2}\Big) & \text{for } \|x\|_{\R^n} < 1 \\
0 & \text{for } \|x\|_{\R^n} \geq 1
\end{cases}
\end{equation*}
is a test function on $\R^n$, i.e., it satisfies $f \in \Test(\R^n)$.
\end{example}

We now introduce the notion of a distribution.

\begin{definition}[Distribution]
Let $\Omega \seq \R^n$ be a domain in $\R^n$.
The topological dual space of $\Test(\Omega)$ with respect to the natural topology, denoted by $\Test'(\Omega)$, is called the space of {\em distributions}.
\end{definition}

We note that, for a domain $\Omega \seq \R^n$, each function $f \in \L_\loc^1(\Omega)$ induces a distribution $T_f \in \Test'(\Omega)$ via
\begin{equation*}
T_f(\phi) = \int_\Omega f(x) \, \phi(x) \: \d x
\quad \mbox{ for } \phi \in \Test(\Omega).
\end{equation*}
In this sense we have $\L_\loc^1(\Omega) \subset \Test'(\Omega)$ and distributions of the form $T_f$ are called {\em regular}.
Because of the relation between $f$ and $T_f$, distributions are also called {\em generalized functions}.

However, there also exist distributions that are not regular.
One example is the well-known {\em Dirac distribution} $\delta \in \Test'(\R^n)$ with
\begin{equation*}
\delta(f) = f(0)
\quad \mbox{ for } f \in \Test(\R^n)
\end{equation*}
or, for fixed $x_0 \in \R^n$, the {\em shifted Dirac distribution} $\delta_{x_0} \in \Test'(\R^n)$ with
\begin{equation*}
\delta_{x_0}(f) = f(x_0)
\quad \mbox{ for } f \in \Test(\R^n).
\end{equation*}

In what follows, we denote the action of a distribution $T \in \Test'(\Omega)$ on a test function $\phi \in \Test(\Omega)$ by the {\em duality pairing}
\begin{equation*}
\langle T,\phi \rangle = T(\phi).
\end{equation*}
With this we define the derivative of a distribution as follows.

\begin{definition}[Derivative of a distribution]
Let $\Omega \seq \R^n$ be a domain and $T \in \Test'(\Omega)$.
For $\alpha \in \N_0^n$, we define the {\em derivative} $\D^\alpha T \in \Test'(\Omega)$ of $T$ via
\begin{equation*}
\langle \D^\alpha T,\phi \rangle = (-1)^{|\alpha|} \, \langle T,\D^\alpha \phi \rangle
\quad \mbox{ for } \phi \in \Test(\Omega).
\end{equation*}
\end{definition}

We use the same technique to define the multiplication of a distribution with a $\Cont^\infty$-function.
To this end, we note that the product $g \cdot \phi$ of $g \in \Cont^\infty(\Omega)$ and $\phi \in \Test(\Omega)$ is again in $\Test(\Omega)$.

\begin{definition}[Multiplication with a $\Cont^\infty$-function]
Let $\Omega \seq \R^n$ be a domain.
Further, let $T \in \Test'(\Omega)$ be a distribution and $g \in \Cont^\infty(\Omega)$.
Then, the distribution $g \cdot T \in \Test'(\Omega)$ is defined as
\begin{equation*}
\langle g \cdot T,\phi \rangle = \langle T,g \cdot \phi \rangle
\quad \mbox{ for } \phi \in \Test(\Omega).
\end{equation*}
\end{definition}

We would like to use this technique to also define the Fourier transform of a distribution.
However, the Fourier transform of a non-trivial test function is not a test function, since it cannot have compact support due to the Paley-Wiener theorem.
To resolve this problem, we have to restrict ourselves to a smaller subspace of distributions, the so called {\em tempered distributions}.
This space is given by the topological dual of the so called {\em Schwartz space} of rapidly decreasing functions, which is defined as follows.

\begin{definition}[Schwartz space]
The {\em Schwartz space} $\Schwartz(\R^n)$ of rapidly decaying functions is defined as
\begin{equation*}
\Schwartz(\R^n) = \bigl\{ f \in \Cont^\infty(\R^n) \bigm| \forall \, \alpha,\beta \in \N_0^n: \; |f|_{\alpha,\beta} < \infty \bigr\},
\end{equation*}
where
\begin{equation*}
|f|_{\alpha,\beta} = \sup_{x \in \R^n} |x^\alpha \, \D^\beta f(x)|
\quad \mbox{ for } \alpha,\beta \in \N_0^n.
\end{equation*}
\end{definition}

The space of Schwartz functions plays a central role in the theory of Fourier transforms.

\begin{theorem}
The Fourier transform $\Fourier: \Schwartz(\R^n) \to \Schwartz(\R^n)$ is an automorphism of $\Schwartz(\R^n)$.
In particular,
\begin{equation*}
\Fourier^{-1} (\Fourier f) = f = \Fourier (\Fourier^{-1} f)
\quad \forall \, f \in \Schwartz(\R^n).
\end{equation*}
\end{theorem}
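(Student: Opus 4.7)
The plan is to show that $\Fourier$ maps $\Schwartz(\R^n)$ continuously into itself, and then to invoke the $\L^1$ Fourier inversion theorem from earlier in the appendix to conclude that $\Fourier^{-1}$ is a two-sided inverse on $\Schwartz(\R^n)$, whence the automorphism property follows.

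First I would verify the mapping property $\Fourier(\Schwartz(\R^n)) \subseteq \Schwartz(\R^n)$. Given $f \in \Schwartz(\R^n)$ and multi-indices $\alpha,\beta \in \N_0^n$, I would combine the two identities from Proposition~\ref{prop:Fourier_properties}, namely $\Fourier(\D^\alpha g) = \i^{|\alpha|} x^\alpha \Fourier g$ and $\Fourier(y^\beta f) = \i^{|\beta|} \D^\beta \Fourier f$, to derive
\begin{equation*}
x^\alpha \D^\beta \Fourier f(x) = (-\i)^{|\alpha|+|\beta|} \, \Fourier\bigl(\D^\alpha(y^\beta f)\bigr)(x)
\quad \forall \, x \in \R^n.
\end{equation*}
The Leibniz rule expands $\D^\alpha(y^\beta f)$ into a finite linear combination of terms of the form $y^{\beta-\gamma} \D^{\alpha-\delta} f$, each of which lies in $\Schwartz(\R^n) \subset \L^1(\R^n)$ since $f \in \Schwartz(\R^n)$; hence by Riemann--Lebesgue one obtains the uniform estimate $|x^\alpha \D^\beta \Fourier f(x)| \leq \|\D^\alpha(y^\beta f)\|_{\L^1(\R^n)}$, which shows both that $\Fourier f \in \Schwartz(\R^n)$ and that $\Fourier$ is continuous with respect to the Schwartz seminorm topology.

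Next I would invoke the Fourier inversion theorem in $\L^1$. Since $f \in \Schwartz(\R^n) \subset \L^1(\R^n)$ and now $\Fourier f \in \Schwartz(\R^n) \subset \L^1(\R^n)$, the inversion formula yields $\Fourier^{-1}(\Fourier f) = f$ almost everywhere, and the identity holds pointwise on $\R^n$ because $f$ is continuous there. The same reasoning, applied after the observation $\Fourier^{-1} h = (2\pi)^{-n}(\Fourier h)^\ast$ together with the obvious invariance of $\Schwartz(\R^n)$ under reflection, gives $\Fourier(\Fourier^{-1} f) = f$ on $\Schwartz(\R^n)$. These two identities make $\Fourier$ a linear bijection of $\Schwartz(\R^n)$ onto itself with continuous inverse, i.e.\ an automorphism. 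The main obstacle lies in the seminorm bookkeeping of the first step: one has to expand $\D^\alpha(y^\beta f)$ via the Leibniz rule and control its $\L^1$ norm by a finite combination of Schwartz seminorms $|f|_{\alpha',\beta'}$, using that $(1+\|y\|_{\R^n}^2)^{-N}$ is integrable for $N$ sufficiently large so that every summand becomes integrable. This is conceptually shallow but is the only step where Schwartz estimates are genuinely invoked; all remaining pieces follow by direct citation of results already established in the appendix.
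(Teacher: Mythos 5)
Your argument is correct, but note that the paper itself does not prove this theorem at all: its ``proof'' is the single line ``See, for example, \cite[Theorem 4.15]{Bredies2018}.'' So you are supplying the standard textbook argument that the lecture notes delegate to the literature. Your outline is the right one and uses only ingredients already available in the appendix: the identity $x^\alpha \D^\beta \Fourier f = (-\i)^{|\alpha|+|\beta|}\,\Fourier\bigl(\D^\alpha(y^\beta f)\bigr)$ follows from the two differentiation/multiplication rules in Proposition~\ref{prop:Fourier_properties}, the Leibniz expansion keeps everything inside $\Schwartz(\R^n)\subset\L^1(\R^n)$, the seminorm control via $(1+\|y\|_{\R^2}^2)^{-N}$-weighting is the standard bookkeeping, and the two-sided inverse then comes from the $\L^1$ Fourier inversion theorem (valid pointwise here since Schwartz functions are continuous) together with the remark $\Fourier^{-1}h=(2\pi)^{-n}(\Fourier h)^\ast$ and reflection-invariance of $\Schwartz(\R^n)$. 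One small attribution slip: the uniform bound $|x^\alpha\D^\beta\Fourier f(x)|\leq\|\D^\alpha(y^\beta f)\|_{\L^1(\R^n)}$ is the elementary estimate $\|\Fourier g\|_\infty\leq\|g\|_{\L^1(\R^n)}$ (the continuity theorem for $\Fourier:\L^1\to\Cont_0$), not the Riemann--Lebesgue lemma itself, which you only need if you additionally want decay at infinity --- and here you do not, since membership in $\Schwartz(\R^n)$ is already forced by the seminorm bounds.
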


\begin{proof}
See, for example, \cite[Theorem 4.15]{Bredies2018}.
\end{proof}

Since $\Test(\R^n) \subset \Schwartz(\R^n)$, we have $\Schwartz'(\R^n) \subset \Test'(\R^n)$ and the dual space of $\Schwartz(\R^n)$ consists of a special class of distributions.
These are called {\em tempered distributions}.

\begin{definition}[Tempered distribution]
The topological dual space of $\Schwartz(\R^n)$ with respect to the natural topology, denoted by $\Schwartz'(\R^n)$, is called the space of {\em tempered distributions}.
\end{definition}

Since the Fourier transform of a Schwartz function is again a Schwartz function, we can now define the Fourier transform of tempered distributions.

\begin{definition}[Fourier transform of tempered distributions]
Let $T \in \Schwartz'(\R^n)$ be tempered.
Then, its Fourier transform $\Fourier T \in \Schwartz'(\R^n)$ is defined via
\begin{equation*}
\langle \Fourier T,f \rangle = \langle T,\Fourier f \rangle
\quad \mbox{ for } f \in \Schwartz(\R^n).
\end{equation*}
Analogously, its inverse Fourier transform $\Fourier^{-1} T \in \Schwartz'(\R^n)$ is given by
\begin{equation*}
\langle \Fourier^{-1} T,f \rangle = \langle T,\Fourier^{-1} f \rangle
\quad \mbox{ for } f \in \Schwartz(\R^n).
\end{equation*}
\end{definition}

We remark that, if $T \in \Schwartz'(\R^n)$ is regular and given by $T = T_f$ for some function $f \in \L^1(\R^n)$, we have
\begin{equation*}
\Fourier T_f = T_{\Fourier f}
\end{equation*}
due to Parseval's identity.
Hence, the definition of the classical and the distributional Fourier transform coincide on $\L^1(\R^n)$.

\begin{remark}
We have $\L^p(\R^n) \subset \Schwartz'(\R^n)$ for all $1 \leq p \leq \infty$ in the sense that the functional $T_f: \Schwartz(\R^n) \to \R$,
\begin{equation*}
\langle T_f,\phi \rangle = \int_{\R^n} f(x) \, \phi(x) \: \d x
\quad \mbox{ for } \phi \in \Schwartz(\R^n),
\end{equation*}
is a tempered distribution.
This observation implies that the Fourier transform is now defined for all $\L^p$-spaces.
However, the Fourier transform of $f \in \L^p(\R^n)$ with $p > 2$ is in general not a function, but only a distribution, in contrast to the case of $p \leq 2$.
\end{remark}

Like the Schwartz space $\Schwartz(\R^n)$, also the space of tempered distributions $\Schwartz'(\R^n)$ plays a central role in Fourier analysis.

\begin{theorem}
The Fourier transform $\Fourier: \Schwartz'(\R^n) \to \Schwartz'(\R^n)$ is an automorphism of $\Schwartz'(\R^n)$ with respect to the weak topology.
In particular, we have
\begin{equation*}
\Fourier^{-1} (\Fourier f) = f = \Fourier (\Fourier^{-1} f)
\quad \forall \, f \in \Schwartz'(\R^n).
\end{equation*}
\end{theorem}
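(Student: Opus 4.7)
The plan is to lift the automorphism property from $\Schwartz(\R^n)$ to $\Schwartz'(\R^n)$ by a straightforward duality argument, exploiting the fact that $\Fourier$ and $\Fourier^{-1}$ on tempered distributions are defined precisely so as to be transposes of their actions on Schwartz test functions. There are three things to verify: (i) that $\Fourier$ actually maps $\Schwartz'(\R^n)$ into itself, (ii) that $\Fourier^{-1} \Fourier = \Id = \Fourier \Fourier^{-1}$ on $\Schwartz'(\R^n)$, and (iii) that $\Fourier$ is continuous with respect to the weak topology (and likewise $\Fourier^{-1}$, so that it is a bicontinuous bijection, i.e.\ an automorphism).

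First I would fix $T \in \Schwartz'(\R^n)$ and define $\Fourier T$ by $\langle \Fourier T, f \rangle = \langle T, \Fourier f \rangle$ for $f \in \Schwartz(\R^n)$. Since $\Fourier$ is an automorphism of $\Schwartz(\R^n)$ (the previously stated theorem), the map $f \mapsto \Fourier f$ is continuous on $\Schwartz(\R^n)$, and hence the composition $f \mapsto \langle T, \Fourier f \rangle$ is continuous as well. This shows $\Fourier T \in \Schwartz'(\R^n)$, giving (i); the analogous argument works for $\Fourier^{-1}$.

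Next, for (ii), I would compute for arbitrary $T \in \Schwartz'(\R^n)$ and $f \in \Schwartz(\R^n)$,
\begin{equation*}
\langle \Fourier^{-1}(\Fourier T), f \rangle = \langle \Fourier T, \Fourier^{-1} f \rangle = \langle T, \Fourier(\Fourier^{-1} f) \rangle = \langle T, f \rangle,
\end{equation*}
where the last equality uses the Schwartz-space inversion formula. This holds for all test functions, so $\Fourier^{-1}(\Fourier T) = T$ in $\Schwartz'(\R^n)$; the reverse identity $\Fourier(\Fourier^{-1} T) = T$ follows by the same calculation with the roles of $\Fourier$ and $\Fourier^{-1}$ exchanged. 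In particular $\Fourier$ is bijective with inverse $\Fourier^{-1}$.

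Finally, for (iii), I would check weak continuity directly: if $T_\alpha \to T$ in $\Schwartz'(\R^n)$ with respect to the weak topology, then for every $f \in \Schwartz(\R^n)$, $\Fourier f$ is again a fixed element of $\Schwartz(\R^n)$, so
\begin{equation*}
\langle \Fourier T_\alpha, f \rangle = \langle T_\alpha, \Fourier f \rangle \longrightarrow \langle T, \Fourier f \rangle = \langle \Fourier T, f \rangle,
\end{equation*}
which shows $\Fourier T_\alpha \to \Fourier T$ weakly. The same argument for $\Fourier^{-1}$ completes the proof. There is no real obstacle here beyond unwinding definitions; the only subtle point is ensuring that $\Fourier T$ defined via duality is genuinely continuous on $\Schwartz(\R^n)$, which is immediate from the continuity of $\Fourier$ on $\Schwartz(\R^n)$ established in the preceding theorem.
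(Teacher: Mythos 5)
Your proof is correct and is the standard duality argument: transpose the automorphism of $\Schwartz(\R^n)$ to $\Schwartz'(\R^n)$, verify the inversion identities by unwinding the pairings, and check weak-$*$ continuity on nets. The paper itself offers no proof and simply cites the literature, and your argument is precisely the one found there, so there is nothing to add beyond noting that linearity of $\Fourier$ on $\Schwartz'(\R^n)$ is immediate from the definition.
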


\begin{proof}
See, for example, \cite[Theorem 4.25]{Bredies2018}.
\end{proof}

Many properties of the regular Fourier transform carry over to the distributional Fourier transform.
As an example, we restate that $\Fourier$ translates differentiation into multiplication.
To this end, we first remark that for $T \in \Schwartz'(\R^n)$ and $\alpha \in \N_0^n$ the distributional derivative $\D^\alpha T \in \Schwartz'(\R^n)$ is again tempered.
Further, the multiplication with a function $f \in \Cont^\infty(\R^n)$ of at most polynomial growth is well-defined via
\begin{equation*}
\langle f \cdot T,\phi \rangle = \langle T,f \cdot \phi \rangle
\quad \mbox{ for } \phi \in \Schwartz(\R^n).
\end{equation*}

\begin{proposition}
For $T \in \Schwartz'(\R^n)$ and $\alpha \in \N_0^n$, we have
\begin{equation*}
\Fourier (\D^\alpha T) = \i^{|\alpha|} \, x^\alpha \, \Fourier T.
\end{equation*}
\end{proposition}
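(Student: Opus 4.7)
The plan is to reduce this distributional statement to the corresponding identity for Schwartz functions, which is already established in the Fourier properties Proposition. The argument is a direct manipulation via the defining duality pairings.

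First I would take an arbitrary test function $\phi \in \Schwartz(\R^n)$ and unfold the left-hand side using the definitions. By definition of the distributional Fourier transform followed by the definition of the distributional derivative,
\begin{equation*}
\langle \Fourier(\D^\alpha T), \phi \rangle
= \langle \D^\alpha T, \Fourier \phi \rangle
= (-1)^{|\alpha|} \langle T, \D^\alpha \Fourier \phi \rangle.
\end{equation*}
This moves everything onto the Schwartz function $\Fourier \phi$, which is legal because $\Fourier$ is an automorphism of $\Schwartz(\R^n)$ and distributional derivatives of tempered distributions are again tempered.

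Next I would apply the Schwartz-function identity from the earlier Proposition, namely $\Fourier(x^\alpha \phi) = \i^{|\alpha|} \D^\alpha \Fourier \phi$, rewritten as $\D^\alpha \Fourier \phi = (-\i)^{|\alpha|} \Fourier(x^\alpha \phi)$ (using $\i^{-1} = -\i$). Substituting,
\begin{equation*}
\langle \Fourier(\D^\alpha T), \phi \rangle
= (-1)^{|\alpha|} (-\i)^{|\alpha|} \langle T, \Fourier(x^\alpha \phi) \rangle
= \i^{|\alpha|} \langle T, \Fourier(x^\alpha \phi) \rangle,
\end{equation*}
since $(-1)(-\i) = \i$. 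Now unfolding the Fourier transform on the right factor back onto $T$, and then using the definition of multiplication of a tempered distribution by a $\Cont^\infty$-function of polynomial growth (note $x^\alpha$ is such a function),
\begin{equation*}
\i^{|\alpha|} \langle T, \Fourier(x^\alpha \phi) \rangle
= \i^{|\alpha|} \langle \Fourier T, x^\alpha \phi \rangle
= \langle \i^{|\alpha|} x^\alpha \Fourier T, \phi \rangle.
\end{equation*}
Since $\phi \in \Schwartz(\R^n)$ was arbitrary, the identity $\Fourier(\D^\alpha T) = \i^{|\alpha|} x^\alpha \Fourier T$ holds in $\Schwartz'(\R^n)$.

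The steps are essentially bookkeeping; the only mild obstacle is tracking the powers of $\i$ and the sign $(-1)^{|\alpha|}$ correctly, and justifying that the intermediate objects (in particular $x^\alpha \cdot \Fourier T$ and $\Fourier(x^\alpha \phi)$) are well-defined as tempered distributions and Schwartz functions respectively — but both are immediate from the fact that $x^\alpha$ has polynomial growth and that $\Schwartz(\R^n)$ is closed under multiplication by polynomials and under $\Fourier$.
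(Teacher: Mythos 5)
Your proof is correct: the duality bookkeeping is right, the powers of $\i$ work out since $(-1)^{|\alpha|}(-\i)^{|\alpha|}=\i^{|\alpha|}$, and each intermediate object is justified. The paper does not prove this proposition itself but only cites \cite[Theorem~7.15]{Rudin1991}, and the argument you give is precisely the standard duality proof found there, reducing the distributional identity to the Schwartz-function identity $\Fourier(x^\alpha\phi)=\i^{|\alpha|}\,\D^\alpha(\Fourier\phi)$ from Proposition~\ref{prop:Fourier_properties}.
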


\begin{proof}
See, for example, \cite[Theorem 7.15]{Rudin1991}.
\end{proof}

\section{Sobolev spaces}

Sobolev spaces play an important role in the understanding of the ill-posedness of the CT reconstruction problem.
For this reason, we now define Sobolev spaces of fractional order and list some basic properties.
This is based on a characterization of regular Sobolev spaces by means of the Fourier transform.

\smallbreak

We begin with the standard definition of Sobolev spaces.
To this end, let us recall that for a domain $\Omega \seq \R^n$, a multi-index $\alpha \in \N_0^n$ and a distribution $T \in \Test'(\Omega)$ the derivative $\D^\alpha f \in \Test'(\Omega)$ is defined via
\begin{equation*}
\langle \D^\alpha T,\phi \rangle = (-1)^{|\alpha|} \, \langle T,\D^\alpha \phi \rangle
\quad \mbox{ for } \phi \in \Test(\Omega).
\end{equation*}
In general, the distributional derivative $\D^\alpha f$ of a function $f \in \L_\loc^1(\Omega) \subset \Test'(\Omega)$ is not a function itself.
But in case it is, $\D^\alpha f$ is called {\em weak derivative} of $f$.

\begin{definition}[Weak derivative]
Let $\Omega \seq \R^n$ be a domain, $f \in \L_\loc^1(\Omega)$ be locally integrable and $\alpha \in \N_0^n$.
If there exists a function $g \in \L_\loc^1(\Omega)$ with
\begin{equation*}
\int_\Omega g(x) \, \phi(x) \: \d x = (-1)^{|\alpha|} \int_\Omega f(x) \, \D^\alpha \phi(x) \: \d x
\quad \forall \, \phi \in \Test(\Omega),
\end{equation*}
then $f$ is called {\em weakly differentiable} on $\Omega$ with weak derivative $\D^\alpha f = g$.
If the weak derivatives $\D^\alpha f \in \L_\loc^1(\Omega)$ exist for all $|\alpha| \leq k$ with $k \in \N$, then $f$ is called {\em $k$-times weakly differentiable}.
\end{definition}

\begin{remark}
Weak derivatives are uniquely determined almost everywhere on $\Omega$ according to the fundamental lemma of variational calculus.
\end{remark}

Now, the common Sobolev spaces are defined as spaces of functions whose weak derivatives belong to certain $\L^p$-spaces.

\begin{definition}[Sobolev space]
Let $\Omega \seq \R^n$ be a domain, $1 \leq p \leq \infty$ and $k \in \N_0$.
Then, the {\em Sobolev space} $\H^{k,p}(\Omega)$ is defined as
\begin{equation*}
\H^{k,p}(\Omega) = \bigl\{ f \in \L^p(\Omega) \bigm| \forall \, |\alpha| \leq k: \; \D^\alpha f \in \L^p(\Omega) \bigr\}
\end{equation*}
and equipped with the {\em Sobolev norm}
\begin{equation*}
\|f\|_{\H^{k,p}(\Omega)} = \begin{cases}
\Big(\sum_{|\alpha| \leq k} \|\D^\alpha f\|_{\L^p(\Omega)}^p\Big)^{\nicefrac{1}{p}} & \text{for } p < \infty \\
\max_{|\alpha| \leq k} \|\D^\alpha f\|_{\L^\infty(\Omega)} & \text{for } p = \infty.
\end{cases}
\end{equation*}
\end{definition}

\begin{remark}
For $p = 2$ we simply write $\H^k(\Omega) \equiv \H^{k,2}(\Omega)$ and these spaces are Hilbert spaces with the inner product
\begin{equation*}
(f,g)_{\H^k(\Omega)} = \sum_{|\alpha| \leq k} (\D^\alpha f,\D^\alpha g)_{\L^2(\Omega)}
\quad \mbox{ for } f,g \in \H^k(\Omega).
\end{equation*}
\end{remark}

For $p = 2$, the Fourier transform $\Fourier$ translates weak differentiation into multiplication and vice versa.

\begin{lemma}
Let $f \in \L^2(\R^n)$ and $\alpha \in \N_0^n$ so that the weak derivative $\D^\alpha f$ is also in $\L^2(\R^n)$.
Then, we have
\begin{equation*}
\Fourier (\D^\alpha f) = \i^{|\alpha|} \, x^\alpha \, \Fourier f
\end{equation*}
and, if $x^\alpha f \in \L^2(\R^n)$,
\begin{equation*}
\Fourier (x^\alpha f) = \i^{|\alpha|} \, \D^\alpha \Fourier f.
\end{equation*}
\end{lemma}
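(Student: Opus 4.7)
My plan is to deduce both identities from the already established distributional Fourier transform identity $\Fourier(\D^\alpha T) = \i^{|\alpha|} \, x^\alpha \, \Fourier T$ for $T \in \Schwartz'(\R^n)$, combined with the embedding $\L^2(\R^n) \subset \Schwartz'(\R^n)$ and the fact that the regular Fourier transform on $\L^2(\R^n)$ agrees with the distributional one on the embedded subspace.

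First I would verify that any $f \in \L^2(\R^n)$ defines a tempered distribution $T_f$ via $\langle T_f, \phi \rangle = \int f \phi$, and that weak differentiation of $f$ coincides with distributional differentiation on $T_f$ (this is the defining identity of the weak derivative, extended from $\Test(\R^n)$ to $\Schwartz(\R^n)$ by density and continuity using that $\D^\alpha f \in \L^2(\R^n)$). Next I would invoke the distributional identity to obtain
\begin{equation*}
\Fourier(\D^\alpha T_f) = \i^{|\alpha|} \, x^\alpha \, \Fourier T_f
\end{equation*}
as an equation in $\Schwartz'(\R^n)$. Since $\D^\alpha f \in \L^2(\R^n)$ by assumption, the Rayleigh-Plancherel theorem gives $\Fourier(\D^\alpha f) \in \L^2(\R^n)$, and the left-hand side is the regular $\L^2$-Fourier transform; hence the tempered distribution $x^\alpha \Fourier f$ on the right is a genuine $\L^2$-function, and the two sides agree almost everywhere, yielding the first identity.

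For the second identity I would argue analogously: the hypothesis $x^\alpha f \in \L^2(\R^n)$ ensures $\Fourier(x^\alpha f) \in \L^2(\R^n)$ by Rayleigh-Plancherel, while the distributional dual statement (differentiation of a tempered distribution corresponds under $\Fourier$ to multiplication and vice versa) reads $\Fourier(x^\alpha T_f) = \i^{|\alpha|} \D^\alpha \Fourier T_f$; comparing both sides as tempered distributions and identifying the left-hand side with its $\L^2$-representative shows that $\D^\alpha \Fourier f$ is a weak derivative in $\L^2(\R^n)$ and that the stated equality holds almost everywhere. Alternatively, one can apply the first identity to the Schwartz-class inverse Fourier transform using the automorphism property $\Fourier^{-1}(\Fourier f) = f$ in $\L^2$, which bypasses reproving the dual distributional rule.

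The only real subtlety will be cleanly transferring the distributional identity to an $\L^2$ identity: the right-hand side $\i^{|\alpha|} x^\alpha \Fourier f$ is a priori only a tempered distribution (since $\Fourier f$ alone need not have polynomial decay making $x^\alpha \Fourier f$ locally integrable for all $f \in \L^2$), and it is exactly the hypothesis $\D^\alpha f \in \L^2$ (respectively $x^\alpha f \in \L^2$) that, via Plancherel on the opposite side, forces this distribution to be represented by an $\L^2$-function. Once this regularity bootstrapping is phrased carefully, the rest reduces to invoking the already-proved distributional statement.
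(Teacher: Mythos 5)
The paper offers no argument of its own for this lemma---its ``proof'' is the citation \cite[Lemma 4.28]{Bredies2018}---so there is no in-paper route to match; what matters is whether your derivation stands on its own, and it does. You correctly reduce everything to machinery the appendix already provides: the embedding $\L^2(\R^n) \subset \Schwartz'(\R^n)$, the distributional rule $\Fourier(\D^\alpha T) = \i^{|\alpha|}\, x^\alpha\, \Fourier T$, the Rayleigh--Plancherel theorem, and (implicitly) the fundamental lemma of variational calculus to convert equality in $\Schwartz'(\R^n)$ into equality almost everywhere. The two steps you single out as needing care are exactly the right ones: extending the weak-derivative identity from test functions in $\Test(\R^n)$ to all of $\Schwartz(\R^n)$ (density of $\Test(\R^n)$ in $\Schwartz(\R^n)$ plus continuity of $\phi \mapsto \int g\phi$ for $g \in \L^2(\R^n)$), and identifying $\Fourier T_f$ with $T_{\Fourier f}$ for $f \in \L^2(\R^n)$ (Parseval on $\L^1 \cap \L^2$ plus density and the weak-$*$ continuity of the embedding). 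One small correction to your closing remark: for $f \in \L^2(\R^n)$ the function $x^\alpha\, \Fourier f$ is \emph{always} locally integrable (a polynomial times an $\L^2_\loc$ function) and always defines a tempered distribution of the regular kind; what the hypothesis $\D^\alpha f \in \L^2(\R^n)$ buys you, via Plancherel applied to the other side, is that this function is in fact square-integrable, so the distributional identity is an identity of $\L^2$ functions. With that adjustment the argument is complete, and the second identity follows either from the mirror distributional rule $\Fourier(x^\alpha T) = \i^{|\alpha|}\, \D^\alpha \Fourier T$ (proved by the same duality computation, with the sign working out because $(-1)\cdot \i^{-1} = \i$) or, as you suggest, by applying the first identity after Fourier inversion.
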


\begin{proof}
See, for example, \cite[Lemma 4.28]{Bredies2018}.
\end{proof}

This lemma yields the following characterization of regular Sobolev spaces by means of the Fourier transform.

\begin{theorem}[Characterization of $\H^k(\R^n)$]
For $k \in \N$ we have
\begin{equation*}
f \in \H^k(\R^n)
\quad \iff \quad
\int_{\R^n} (1+\|\omega\|_{\R^n}^2)^k \, |\Fourier f(\omega)|^2 \: \d \omega < \infty.
\end{equation*}
\end{theorem}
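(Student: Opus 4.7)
My plan is to reduce the statement to an equivalence between the finiteness of $\int (1+\|\omega\|^2)^k |\Fourier f|^2 \, \d\omega$ and the condition that $\omega^\alpha \Fourier f \in \L^2(\R^n)$ for every multi-index $|\alpha| \leq k$, and then invoke the Fourier-differentiation lemma stated just above the theorem together with Plancherel.

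The algebraic reduction is the first step. By the binomial expansion $(1+\|\omega\|^2)^k = \sum_{j=0}^{k} \binom{k}{j} \|\omega\|^{2j}$ together with the multinomial expansion of $\|\omega\|^{2j} = (\omega_1^2+\cdots+\omega_n^2)^j$, one obtains a representation
\begin{equation*}
(1+\|\omega\|_{\R^n}^2)^k = \sum_{|\alpha| \leq k} c_\alpha \, \omega^{2\alpha}
\end{equation*}
with strictly positive combinatorial coefficients $c_\alpha > 0$. Consequently, the finiteness of the weighted integral is equivalent to the finiteness of $\int_{\R^n} |\omega^\alpha \Fourier f(\omega)|^2 \, \d\omega$ for every $|\alpha| \leq k$, i.e., to $\omega^\alpha \Fourier f \in \L^2(\R^n)$ for all such $\alpha$.

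For the forward direction, assume $f \in \H^k(\R^n)$. Then $f$ and all weak derivatives $\D^\alpha f$ with $|\alpha| \leq k$ lie in $\L^2(\R^n)$, so the Fourier-differentiation lemma quoted just above yields $\Fourier(\D^\alpha f) = \i^{|\alpha|} \omega^\alpha \Fourier f$ in $\L^2$. The Rayleigh-Plancherel theorem then gives $\|\omega^\alpha \Fourier f\|_{\L^2} = (2\pi)^{n/2} \|\D^\alpha f\|_{\L^2} < \infty$, and summing against the coefficients $c_\alpha$ establishes the integral condition.

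For the backward direction, assume the integral is finite. Taking $\alpha=0$ gives $\Fourier f \in \L^2(\R^n)$, hence $f \in \L^2(\R^n)$ by Plancherel. For each $|\alpha| \leq k$ the function $\omega^\alpha \Fourier f$ lies in $\L^2$, so $g_\alpha := \Fourier^{-1}(\i^{|\alpha|}\omega^\alpha \Fourier f) \in \L^2(\R^n)$ is well-defined. The task is to identify $g_\alpha$ with the weak derivative $\D^\alpha f$. I would do this by viewing $f \in \L^2 \subset \Schwartz'(\R^n)$ as a tempered distribution: the distributional identity $\Fourier(\D^\alpha f) = \i^{|\alpha|} \omega^\alpha \Fourier f$ (stated in the appendix) and the injectivity of $\Fourier$ on $\Schwartz'(\R^n)$ together yield $\D^\alpha f = g_\alpha$ in $\Schwartz'$. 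Since $g_\alpha \in \L^2$, this means the distributional derivative is represented by an $\L^2$ function, which is exactly the weak derivative belonging to $\L^2(\R^n)$. Hence $f \in \H^k(\R^n)$.

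The main obstacle is the backward direction, specifically the bookkeeping needed to justify that the $\L^2$-function $g_\alpha$ really coincides with the weak derivative $\D^\alpha f$. If one avoids the machinery of tempered distributions, one can alternatively test directly: for any $\phi \in \Test(\R^n) \subset \Schwartz(\R^n)$, applying Parseval's identity twice together with the Schwartz-space identity $\Fourier(\D^\alpha \phi) = \i^{|\alpha|} \omega^\alpha \Fourier \phi$ shows that $\int g_\alpha \phi \, \d x = (-1)^{|\alpha|} \int f \, \D^\alpha \phi \, \d x$, which is precisely the defining property of the weak derivative. Either route works, and the remaining verifications are routine once the algebraic reduction is in place.
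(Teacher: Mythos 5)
Your proposal is correct. Note that the lecture notes do not prove this theorem at all: the stated ``proof'' is only a pointer to \cite[Theorem 4.29]{Bredies2018}, so there is no in-paper argument to compare against, and what you have written is a legitimate self-contained substitute. Your three ingredients are exactly the right ones and are all available in the appendix: the expansion $(1+\|\omega\|_{\R^n}^2)^k = \sum_{|\alpha|\leq k} c_\alpha\,\omega^{2\alpha}$ with $c_\alpha = \binom{k}{|\alpha|}\frac{|\alpha|!}{\alpha!} > 0$, which reduces finiteness of the weighted integral to $\omega^\alpha \Fourier f \in \L^2(\R^n)$ for all $|\alpha|\leq k$ (valid because every summand is non-negative and every coefficient strictly positive); the lemma $\Fourier(\D^\alpha f) = \i^{|\alpha|}\,\omega^\alpha\,\Fourier f$ together with Rayleigh--Plancherel for the forward direction; and the identification of $g_\alpha = \Fourier^{-1}\bigl(\i^{|\alpha|}\omega^\alpha \Fourier f\bigr)$ with the weak derivative for the converse. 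The only step that genuinely needs care is that last identification, and you correctly flag it and offer two workable routes. The distributional route is the shorter one given that the appendix already records $\Fourier(\D^\alpha T) = \i^{|\alpha|} x^\alpha \Fourier T$ for $T \in \Schwartz'(\R^n)$ and the injectivity of $\Fourier$ on $\Schwartz'(\R^n)$. If you instead carry out the direct testing computation, be aware that the appendix states Parseval's identity only for $\L^1$ functions and in the conjugate-free form $\int \Fourier u\cdot v = \int u\cdot \Fourier v$; extending it to $u\in\L^2$, $v\in\Schwartz(\R^n)$ by density and tracking the parity operator in $\Fourier^{-1}\phi = (2\pi)^{-n}(\Fourier\phi)^\ast$ does produce $\int g_\alpha\,\phi\:\d x = (-1)^{|\alpha|}\int f\,\D^\alpha\phi\:\d x$ as you claim, but the sign $(-1)^{|\alpha|}$ comes precisely from $(-\omega)^\alpha = (-1)^{|\alpha|}\omega^\alpha$, so it is worth writing that line out rather than leaving it as ``routine.''
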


\begin{proof}
See, for example, \cite[Theorem 4.29]{Bredies2018}.
\end{proof}

Observe that the above theorem relates the weak differentiability of a function $f \in \L^2(\R^n)$ to the decay properties of its Fourier transform $\Fourier f$.
This characterization does not only give a useful tool to investigate the smoothness of a function but also offers a possibility to generalize the definition of Sobolev spaces $\H^k(\R^n)$ of integer order to spaces $\H^\alpha(\R^n)$ of arbitrary smoothness order $\alpha \in \R$.
However, if $\alpha < 0$, we have to enlarge the basic set from $\L^2(\R^n)$ to the space of tempered distributions $\Schwartz'(\R^n)$.

\begin{definition}[Sobolev space of fractional order]
The {\em Sobolev space $\H^\alpha(\R^n)$ of fractional order $\alpha \in \R$} is defined as
\begin{equation*}
\H^\alpha(\R^n) = \bigl\{ f \in \Schwartz'(\R^n) \bigm| \|f\|_{\H^\alpha(\R^n)} < \infty \bigr\},
\end{equation*}
where the Sobolev norm $\|\cdot\|_{\H^\alpha(\R^n)}$ is given by
\begin{equation*}
\|f\|_{\H^\alpha(\R^n)}^2 = \int_{\R^n} (1+\|\omega\|_{\R^n}^2)^\alpha \, |\Fourier f(\omega)|^2 \: \d \omega.
\end{equation*}
Further, for an open subset $\Omega \seq \R^n$, we define the Sobolev space $\H_0^\alpha(\Omega)$ by
\begin{equation*}
\H_0^\alpha(\Omega) = \bigl\{ f \in \H^\alpha(\R^n) \bigm| \supp(f) \seq \overline{\Omega} \bigr\},
\end{equation*}
where the support $\supp(f)$ of a tempered distribution $f \in \Schwartz'(\R^n)$ is defined as the complement of the largest open set $U \subset \R^n$ for which $\langle f,\phi \rangle = 0$ for all $\phi \in \Schwartz(\R^n)$ with $\supp(\phi) \subset U$.
\end{definition}

For $\alpha \in \N_0$, the above theorem shows that the space $\H^\alpha(\R^n)$ consists of those functions whose (distributional) derivatives up to order $\alpha$ are square-integrable.
Therefore, the definition of fractional Sobolev spaces is compatible with the definition of classical Sobolev spaces.
In particular, for $\alpha = 0$ we simply have
\begin{equation*}
\H^0(\R^n) = \L^2(\R^n).
\end{equation*}
By defining the equivalent Sobolev norms $\|\cdot\|_\alpha$ on $\H^\alpha(\R^n)$ for $\alpha \in \R$ via
\begin{equation*}
\|f\|_\alpha^2 = (2\pi)^{-n} \int_{\R^n} (1+\|\omega\|_{\R^n}^2)^\alpha \, |\Fourier f(\omega)|^2 \: \d \omega
\quad \mbox{ for } f \in \H^\alpha(\R^n),
\end{equation*}
we further obtain
\begin{equation*}
\|\cdot\|_0 = \|\cdot\|_{\L^2(\R^n)}
\end{equation*}
according to the Rayleigh-Plancherel theorem.

\bigskip

We close this section with some final remarks on Sobolev spaces.
\begin{enumerate}
\item The Sobolev space $\H^\alpha(\R^n)$ is a Hilbert space with the inner product
\begin{equation*}
(f,g)_{\H^\alpha(\R^n)} = \int_{\R^n} (1+\|\omega\|_{\R^n}^2)^\alpha \, \Fourier f(\omega) \, \overline{\Fourier g}(\omega) \: \d \omega
\quad \mbox{ for } f,g \in \H^\alpha(\R^n).
\end{equation*}

\item For $\alpha < \beta$ we have $\H^\beta(\R^n) \subset \H^\alpha(\R^n)$ and, in particular,
\begin{equation*}
\H^\alpha(\R^n) \subset \L^2(\R^n)
\quad \forall \, \alpha > 0.
\end{equation*}
Thus, for $\alpha > 0$, the Sobolev space $\H^\alpha(\R^n)$ can equivalently be defined as
\begin{equation*}
\H^\alpha(\R^n) = \bigl\{ f \in \L^2(\R^n) \bigm| \|f\|_\alpha < \infty \bigr\}.
\end{equation*}

\item The dual space of $\H^\alpha(\R^n)$ is topologically isomorphic to $\H^{-\alpha}(\R^n)$.
\end{enumerate}

\section{Elements of measure theory}

We finally list some results from measure theory that are used throughout the course.
To this end, recall that a measure space is a triple $(X,\A,\mu)$, where $X$ is a set, $\A$ is a $\sigma$-algebra on $X$ collecting the measurable subsets of $X$ and $\mu$ is a measure on $(X,\A)$.

\smallskip

We start with the following variant of Fubini's theorem.

\begin{theorem}[Fubini]
Let $(X,\A,\mu)$ and $(Y,\E,\nu)$ be $\sigma$-finite measure spaces.
Furthermore, let $f: X \times Y \to \C$ be $(\mu \times \nu)$-measurable.
Define the functions
\begin{equation*}
\varphi(x) = \int_{Y} f(x,\cdot) \: \d \nu
\enspace \mbox{ for } x \in X
\quad \mbox{ and } \quad
\psi(y) = \int_{X} f(\cdot,y) \: \d \mu
\enspace \mbox{ for } y \in Y
\end{equation*}
as well as
\begin{equation*}
\Phi(x) = \int_{Y} |f(x,\cdot)| \: \d \nu
\enspace \mbox{ for } x \in X
\quad \mbox{ and } \quad
\Psi(y) = \int_{X} |f(\cdot,y)| \: \d \mu
\enspace \mbox{ for } y \in Y.
\end{equation*}
\begin{itemize}
\item[(i)] If $0 \leq f \leq \infty$, then $\varphi$ is $\mu$-measurable, $\psi$ is $\nu$-measurable and
\begin{equation*}
\int_{X} \varphi \: \d \mu = \int_{X \times Y} f  \: \d (\mu \times \nu) = \int_{Y} \psi \: \d \nu.
\end{equation*}
\item[(ii)] If $f$ is real-valued and if
\begin{equation*}
\int_{X} \Phi \: \d \mu < \infty
\quad \mbox{ or } \quad
\int_{Y} \Psi \: \d \nu < \infty,
\end{equation*}
then $f \in \L^1(\mu \times \nu)$.
\item[(iii)] If $f \in \L^1(\mu \times \nu)$, then we have $f(x,\cdot) \in \L^1(\nu)$ for almost all $x \in X$, $f(\cdot,y) \in \L^1(\mu)$ for almost all $y \in Y$, $\varphi \in \L^1(\mu)$, $\psi \in \L^1(\nu)$, and
\begin{equation*}
\int_{X} \varphi \: \d \mu = \int_{X \times Y} f  \: \d (\mu \times \nu) = \int_{Y} \psi \: \d \nu.
\end{equation*}
\end{itemize}
\end{theorem}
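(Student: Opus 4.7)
The plan is to prove the three parts of Fubini's theorem in the standard two-stage fashion: first establish Tonelli's statement (part (i)) for non-negative measurable functions, then bootstrap from it to the signed/complex statements in (ii) and (iii).

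For part (i), I would follow the classical measure-theoretic ladder. First I would verify the claim for indicator functions $f = \chi_E$ with $E \in \A \otimes \E$. The key sublemma is that for every such $E$ the sections $E_x = \{y \in Y \mid (x,y) \in E\}$ and $E^y = \{x \in X \mid (x,y) \in E\} $ are measurable, and the functions $x \mapsto \nu(E_x)$ and $y \mapsto \mu(E^y)$ are measurable with
\begin{equation*}
(\mu \times \nu)(E) = \int_X \nu(E_x) \: \d \mu(x) = \int_Y \mu(E^y) \: \d \nu(y).
\end{equation*}
This is proved by checking the statement on the algebra of finite disjoint unions of measurable rectangles (where it is immediate from the definition of the product measure) and extending to the full product $\sigma$-algebra via a monotone class / Dynkin system argument, using $\sigma$-finiteness to reduce to sets of finite measure. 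Once the claim holds for indicators, linearity gives it for non-negative simple functions, and monotone convergence extends it to all non-negative measurable $f$, which is exactly part (i).

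For part (ii), I would apply (i) to the non-negative measurable function $|f|$ to get
\begin{equation*}
\int_{X \times Y} |f| \: \d(\mu \times \nu) = \int_X \Phi \: \d\mu = \int_Y \Psi \: \d\nu,
\end{equation*}
so that finiteness of either iterated integral forces $|f| \in \L^1(\mu \times \nu)$, hence $f \in \L^1(\mu \times \nu)$. For part (iii), decompose $f = f^+ - f^-$ (and real/imaginary parts in the complex case). Each of $f^\pm$ is non-negative, measurable, and dominated by $|f|$, so $f^\pm \in \L^1(\mu \times \nu)$, and (i) applied to $f^\pm$ yields measurability and integrability of the sections and iterated integrals for $f^\pm$. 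The $\mu$-a.e.\ finiteness of $\int_Y f^\pm(x,\cdot) \: \d\nu$ lets us subtract off a null set and define $\varphi = \int_Y f^+(x,\cdot) \: \d\nu - \int_Y f^-(x,\cdot) \: \d\nu$ almost everywhere; symmetrically for $\psi$. Linearity of the integral then yields the chain of equalities in part (iii).

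The main obstacle is the opening step for indicator functions, namely the measurability of $x \mapsto \nu(E_x)$ for arbitrary $E \in \A \otimes \E$. This is not obvious and is exactly where $\sigma$-finiteness enters: one defines the class $\mathcal{M}$ of all $E \in \A \otimes \E$ for which the conclusion holds, shows that $\mathcal{M}$ contains measurable rectangles, is closed under finite disjoint unions, and is a monotone class, and then invokes the monotone class theorem on sets of finite measure before exhausting $X \times Y$ by an increasing sequence of such sets. Everything downstream, including the decomposition into $f^\pm$ in part (iii), is essentially bookkeeping once this measurability fact is in hand.
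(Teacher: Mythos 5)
The paper does not prove this theorem itself; it only cites \cite[Theorem 8.8]{Rudin1987}, and your outline is precisely the standard argument given there: Tonelli for indicators via the section-measurability sublemma and a monotone class argument (where $\sigma$-finiteness enters), extension by linearity and monotone convergence, and then the reduction of (ii) and (iii) to (i) through $|f|$ and the decomposition $f = f^+ - f^-$ (plus real and imaginary parts). Your proposal is correct and matches the cited proof in both structure and the identification of the key technical step.
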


\begin{proof}
See, for example, \cite[Theorem 8.8]{Rudin1987}.
\end{proof}

We continue with the change of variables formula for the Lebesgue measure $\lambda$.

\begin{theorem}[Change of variables]
Let $\Omega, \Sigma \subset \R^d$ be non-empty, open and $\varphi: \Sigma \to \Omega$ be a diffeomorphism, i.e., $\varphi$ is invertible and $\varphi$ as well as $\varphi^{-1}$ are continuously differentiable.
Furthermore, let $f: \Omega \to \C$ be Lebesgue-measurable.
If $0 \leq f \leq \infty$ or $f \in \L^1(\Omega)$, then
\begin{equation*}
\int_{\Omega} f \: \d \lambda = \int_{\Sigma} |\det(J\varphi))| \, (f \circ \varphi) \: \d \lambda,
\end{equation*}
where $J\varphi: \Sigma \to \R^{d \times d}$ denotes the Jacobian of $\varphi$.
\end{theorem}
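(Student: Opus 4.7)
The plan is the classical measure-theoretic buildup: reduce from general integrable functions to characteristic functions, then prove the set version
\begin{equation*}
\lambda(\varphi(A)) = \int_A |\det(J\varphi)| \: \d \lambda
\quad \mbox{ for every Borel set } A \seq \Sigma,
\end{equation*}
by a local linearization argument.

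First I would make the standard reductions. Writing $f = (\Re f)^+ - (\Re f)^- + \i((\Im f)^+ - (\Im f)^-)$ and using linearity of both integrals, it suffices to treat the case $0 \leq f \leq \infty$. By the monotone convergence theorem applied to an increasing sequence of simple functions $s_n \nearrow f$, this in turn reduces to $f = \chi_E$ for a Lebesgue-measurable subset $E \seq \Omega$. Since $\varphi$ is a diffeomorphism, $\varphi^{-1}(E)$ is measurable and the claim becomes
\begin{equation*}
\lambda(E) = \int_{\varphi^{-1}(E)} |\det(J\varphi)| \: \d \lambda,
\end{equation*}
which is the statement I have to establish.

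Next I would verify this for $E$ a bounded Borel set, then pass to general measurable sets via inner regularity and exhaustion. The heart of the argument is the local linearization: for any $x_0 \in \Sigma$ I would write
\begin{equation*}
\varphi(x) = \varphi(x_0) + J\varphi(x_0)(x-x_0) + o(\|x-x_0\|)
\end{equation*}
on a sufficiently small open neighbourhood $U$ of $x_0$. Combined with the known transformation rule for linear bijections, $\lambda(TQ) = |\det(T)| \, \lambda(Q)$ for any measurable $Q$ and invertible linear $T$, this gives, for every cube $Q \seq U$ centred at $x_0$,
\begin{equation*}
(1-\varepsilon) \, |\det(J\varphi(x_0))| \, \lambda(Q) \leq \lambda(\varphi(Q)) \leq (1+\varepsilon) \, |\det(J\varphi(x_0))| \, \lambda(Q)
\end{equation*}
provided $Q$ is small enough. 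Using a Vitali-type covering of $\varphi^{-1}(E)$ by such cubes, together with the continuity of $x \mapsto |\det(J\varphi(x))|$, I would pass from these two-sided cube estimates to the identity above.

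The main obstacle is the Vitali/cube estimate: bounding $\lambda(\varphi(Q))$ in terms of $|\det(J\varphi(x_0))|\,\lambda(Q)$ with a uniform multiplicative error requires that the remainder $\varphi(x)-\varphi(x_0)-J\varphi(x_0)(x-x_0)$ be controlled in a way compatible with Lebesgue measure, not just in norm. The standard trick is to compose with $J\varphi(x_0)^{-1}$ so that $J\varphi(x_0)^{-1}(\varphi(\cdot)-\varphi(x_0))$ has derivative close to the identity on small $U$; then the mean value inequality shows that this map is close to a translation and hence maps a cube into a slightly enlarged cube. Once this local distortion bound is in place, the globalisation via a countable exhaustion and monotone convergence is routine, and the integrability case $f \in \L^1(\Omega)$ follows by splitting into positive and negative parts and invoking the non-negative case for $|f|$ to obtain absolute integrability of $(f\circ\varphi)\,|\det(J\varphi)|$.
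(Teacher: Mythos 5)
The paper does not actually prove this statement: it appears in the appendix of mathematical tools and is dispatched with a citation to the literature (Bredies--Lorenz, Theorem 2.69), so there is no in-text argument to compare yours against. Your sketch follows the classical measure-theoretic route --- reduction to nonnegative $f$, then to simple functions, then to the set identity $\lambda(\varphi(A)) = \int_A |\det(J\varphi)| \: \d\lambda$, proved by local linearization, the linear transformation rule $\lambda(TQ) = |\det(T)|\,\lambda(Q)$, the trick of composing with $J\varphi(x_0)^{-1}$ to get a map with derivative near the identity, and a Vitali covering to globalize --- and this outline is correct; you have also correctly identified the genuinely delicate step, namely the uniform two-sided cube distortion estimate. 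Two points deserve slightly more care if you were to write this out in full. First, when you pass from Borel sets to general Lebesgue-measurable sets $E$, the measurability of $\varphi^{-1}(E)$ and the validity of the formula for it rest on the fact that a $\Cont^1$ map is locally Lipschitz and therefore carries Lebesgue null sets to null sets; this should be stated, since preimages of Lebesgue-measurable sets under merely continuous maps need not be measurable. Second, your cube estimate is stated for cubes centred at a fixed $x_0$, whereas the Vitali argument needs it uniformly for cubes centred at arbitrary points of a compact exhaustion, with $\varepsilon$ controlled by the uniform continuity of $J\varphi$ there; this is routine but is where the exhaustion by compacts actually earns its keep. With those caveats the proposal is a sound blueprint for a complete proof.
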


\begin{proof}
See, for example, \cite[Theorem 2.69]{Bredies2018}.
\end{proof}

We close this section with Lebesgue's theorem on dominated convergence.

\begin{theorem}[Dominated convergence]
Let $(X,\A,\mu)$ be a measure space and suppose that $(f_n)_{n\in\N}$ is a sequence of complex-valued measurable functions such that
\begin{equation*}
f(x) = \lim_{n \to \infty} f_n(x)
\end{equation*}
exists for almost all $x \in X$. If there is a function $g \in \L^1(\mu)$ such that
\begin{equation*}
|f_n(x)| \leq g(x)
\quad \, \forall n \in \N, ~ x \in X,
\end{equation*}
then $f \in \L^1(\mu)$ and
\begin{equation*}
\lim_{n \to \infty} \int_X |f_n - f| \: \d \mu = 0
\quad \mbox{ and } \quad
\lim_{n \to \infty} \int_X f_n \: \d \mu = \int_X f \: \d \mu.
\end{equation*}
\end{theorem}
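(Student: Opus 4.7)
The plan is to reduce the statement to Fatou's lemma applied to a cleverly chosen non-negative sequence. First I would dispatch the measurability and integrability of the limit $f$: since $f_n \to f$ pointwise almost everywhere and each $f_n$ is measurable, the (almost everywhere defined) limit $f$ is measurable, and the pointwise estimate $|f_n| \leq g$ passes to the limit to give $|f| \leq g$ almost everywhere. Combined with $g \in \L^1(\mu)$, this yields $f \in \L^1(\mu)$, so both integrals in the conclusion are well-defined.

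Next I would set up the auxiliary sequence $h_n = 2g - |f_n - f|$, which is non-negative almost everywhere thanks to the triangle inequality $|f_n - f| \leq |f_n| + |f| \leq 2g$, and which converges pointwise almost everywhere to $2g$. Applying Fatou's lemma to $(h_n)_{n \in \N}$ gives
\begin{equation*}
\int_X 2g \: \d \mu = \int_X \liminf_{n \to \infty} h_n \: \d \mu \leq \liminf_{n \to \infty} \int_X h_n \: \d \mu = \int_X 2g \: \d \mu - \limsup_{n \to \infty} \int_X |f_n - f| \: \d \mu.
\end{equation*}
Since $\int_X 2g \: \d \mu$ is finite, I may subtract it from both sides to obtain
\begin{equation*}
\limsup_{n \to \infty} \int_X |f_n - f| \: \d \mu \leq 0,
\end{equation*}
which together with non-negativity of the integrand forces $\lim_{n \to \infty} \int_X |f_n - f| \: \d \mu = 0$. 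The second conclusion then follows from the elementary estimate $\bigl|\int_X f_n \: \d \mu - \int_X f \: \d \mu\bigr| \leq \int_X |f_n - f| \: \d \mu$.

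The main obstacle is that Fatou's lemma is not explicitly stated in the excerpt, so I would need to either invoke it as a standard result (its proof from the monotone convergence theorem is classical and short) or include a one-paragraph derivation: for non-negative measurable $h_n$, set $g_k = \inf_{n \geq k} h_n$, note $g_k \nearrow \liminf h_n$ pointwise with $g_k \leq h_n$ for $n \geq k$, and apply monotone convergence to the sequence $(g_k)$. Aside from this preliminary, the only subtlety is handling the null set on which $f_n \not\to f$ or $|f_n| > g$; this is harmless because modifying the $f_n$ and $f$ on a $\mu$-null set affects none of the integrals involved.
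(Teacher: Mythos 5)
Your argument is correct and complete: the reduction to Fatou's lemma via the auxiliary sequence $h_n = 2g - |f_n - f|$ is the classical proof of dominated convergence. The paper itself does not prove this theorem but only cites \cite[Theorem 1.34]{Rudin1987}, and the argument given there is essentially the one you reconstruct, so there is nothing to add beyond noting that your stated prerequisite (Fatou's lemma, derived from monotone convergence) is likewise standard and correctly sketched.
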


\begin{proof}
See, for example, \cite[Theorem 1.34]{Rudin1987}.
\end{proof}

%###########################################################################
%### Back matter
%###########################################################################
\backmatter

%--- References
%---------------------------------------------------------------------------

%\emergencystretch=1ex

\defbibnote{bibhead}{\markboth{Bibliography}{Bibliography}}
\printbibliography[heading=bibintoc,prenote=bibhead]

\end{document}